\documentclass[12pt]{amsart}
\usepackage{amsmath}
\usepackage{amsfonts}
\usepackage{graphics}
\usepackage{epsfig}
\usepackage{amssymb}
\usepackage{amscd}
\usepackage[all]{xy}
\usepackage{latexsym}
\usepackage{graphicx}
\usepackage{multirow}
\usepackage{geometry}
\usepackage{mathrsfs}

\theoremstyle{plain}
\newtheorem{thm}{Theorem}[section]
\newtheorem{lem}[thm]{Lemma}
\newtheorem{cor}[thm]{Corollary}
\newtheorem{prop}[thm]{Proposition}

\theoremstyle{definition}
\newtheorem{defn}[thm]{Definition}

\theoremstyle{remark}

\numberwithin{equation}{section} \numberwithin{figure}{section}

\renewcommand*{\to}{\rightarrow}
\renewcommand*{\bar}[1]{\overline{#1}}

\renewcommand{\Re}{\operatorname{Re}}
\renewcommand{\Im}{\operatorname{Im}}

\newcommand{\Res}{\operatorname{Res}}

\newcommand{\supp}{\operatorname{supp}}

\newcommand{\mb}[1]{\mathbb{#1}} 

\newcommand{\mc}[1]{\mathcal{#1}}
\newcommand{\mk}[1]{\mathfrak{#1}}

\newcommand{\Tr}{\operatorname{Tr}}

\newcommand{\Spec}{\operatorname{Spec}}

\newcommand{\id}{\operatorname{id}}

\newcommand{\mf}[1]{\mathbf{#1}}

\usepackage{url}
\usepackage{pgfplots}
\pgfplotsset{compat=1.18}
\usetikzlibrary{calc} 
\usepackage{mathtools}
\usepackage{tikz-cd}
\usepackage{graphicx}
\usepackage{xcolor}
\usepackage{tikz}
\usepackage{amsmath,amssymb}
\usetikzlibrary{arrows.meta,calc,positioning}

\definecolor{oldblue}{RGB}{214,231,248}
\definecolor{collaryellow}{RGB}{250,235,185}
\definecolor{newgreen}{RGB}{225,241,211}
\definecolor{targetpink}{RGB}{249,226,239}
\definecolor{glueorange}{RGB}{180,118,15}

\tikzset{
  surface/.style={
    draw=black,
    line width=0.9pt,
    fill=white
  },
  olddisc/.style={
    draw=black,
    dashed,
    line width=0.7pt,
    fill=oldblue
  },
  collar/.style={
    draw=black,
    line width=0.7pt,
    fill=collaryellow
  },
  newpiece/.style={
    draw=black,
    line width=0.8pt,
    fill=newgreen
  },
  core/.style={
    draw=black,
    dashed,
    line width=0.7pt,
    fill=white
  },
  insert/.style={
    draw=black,
    dashed,
    line width=0.7pt,
    fill=newgreen
  },
  panelnumber/.style={
    circle,
    draw=black,
    minimum size=7mm,
    inner sep=0pt,
    font=\large
  },
  every node/.style={
    font=\small
  }
}
\usepackage{xcolor}

\usetikzlibrary{
  arrows.meta,
  calc,
  decorations.markings,
  positioning
}

\definecolor{discblue}{RGB}{222,237,250}
\definecolor{pathblue}{RGB}{44,104,174}
\definecolor{oldred}{RGB}{177,65,65}
\definecolor{newgreen}{RGB}{65,133,86}
\definecolor{piecegreen}{RGB}{225,242,228}
\definecolor{sheetgray}{RGB}{240,240,240}

\tikzset{
  >=Stealth,
  panel/.style={
    draw=black!55,
    rounded corners=3pt,
    line width=.65pt
  },
  title/.style={
    font=\bfseries\normalsize,
    anchor=west
  },
  sphere/.style={
    draw=black,
    line width=.85pt,
    fill=white
  },
  surface/.style={
    draw=black,
    line width=.85pt,
    fill=white
  },
  disc/.style={
    draw=pathblue,
    fill=discblue,
    line width=.75pt
  },
  oldpoint/.style={
    circle,
    fill=oldred,
    inner sep=1.7pt
  },
  newpoint/.style={
    circle,
    fill=newgreen,
    inner sep=1.7pt
  },
  boundary/.style={
    draw=black,
    line width=.75pt,
    fill=sheetgray
  },
  attachdisc/.style={
    draw=newgreen!90!black,
    line width=.8pt,
    fill=piecegreen
  },
  path/.style={
    draw=pathblue,
    line width=.95pt
  },
  loop/.style={
    draw=oldred,
    line width=.95pt,
    postaction={decorate},
    decoration={
      markings,
      mark=at position .58 with {\arrow{Stealth}}
    }
  },
  stagearrow/.style={
    ->,
    line width=1pt,
    black!70
  },
  gluearrow/.style={
    ->,
    line width=.9pt,
    newgreen!80!black
  },
  small/.style={
    font=\scriptsize
  },
  note/.style={
    font=\footnotesize,
    align=center
  }
}

\title{Spectral Geometry of Hurwitz Spaces with Arbitrary Ramification}

\begin{document}

\author{Jia-Ming (Frank) Liou}
\address{Department of Mathematics\\
National Cheng Kung University\\
No.1, University Road, Tainan City 701, Taiwan\\ fjmliou@mail.ncku.edu.tw}

\begin{abstract}
We study the Friedrichs Laplacian associated with the pullback of
the round metric on \(\mb P^1\) by a nonconstant meromorphic function
\(\varphi:X\to\mb P^1\) on a compact Riemann surface.
For arbitrary ramification profiles, including several ramification
points over the same branch value, we prove the local formula
\[
\operatorname{Det}_{\zeta}(\Delta_{[\varphi],\mc F})
=C\,\det\operatorname{Im}B\,|\tau_B|^2
\prod_{k=1}^N\rho(z_k,\overline{z_k})^{c_k}.
\]
The zero eigenvalue is omitted. Here \(B\) is the period matrix,
\(\tau_B\) is the local Bergman
tau-function, \(z_k\) are the branch-value coordinates,
\(\rho(z,\overline z)=4(1+|z|^2)^{-2}\), and
\(c_k=\frac1{12}\sum_j(n_{kj}-n_{kj}^{-1})\), where \(n_{kj}\)
are the ramification indices over the \(k\)-th branch value.
The constant \(C>0\)
is independent of the Hurwitz coordinates, and
\(\det\operatorname{Im}B\) is taken to be \(1\) in genus zero.
The proof combines smooth trivializations and trace-norm variation
of resolvent powers with
matrix comparison to spherical conic models. The Davies--Gaffney
estimate provides the required uniform high-energy control, while
the zero-energy terms are identified through the Schiffer
bidifferential and Rauch variational formulas.
\end{abstract}
\maketitle

\section{Introduction}\label{sec:introduction}

A nonconstant meromorphic function \(\varphi:X\to\mb P^1\) on a
compact connected Riemann surface pulls the round metric on
\(\mb P^1\) back to a metric \(g_\varphi\) on \(X\). This metric has
curvature \(1\) away from the ramification points, while a point of
ramification index \(n\) becomes a conical singularity of angle
\(2\pi n\). Branched coverings therefore provide a natural family
in which the complex geometry of Hurwitz spaces and the spectral
geometry of conic surfaces meet. The purpose of this paper is to
determine the zeta-regularized determinant of the Friedrichs
Laplacian of \(g_\varphi\) for arbitrary ramification profiles.

We write \(\Delta_{[\varphi],\mc F}\) for this Friedrichs Laplacian
and
\(\mathcal D([\varphi])
:=\operatorname{Det}_{\zeta}(\Delta_{[\varphi],\mc F})\), with the
zero eigenvalue omitted. Zeta regularization goes back to Ray and
Singer \cite{RaySinger1971}; the analytic theory of Laplacians on
conic spaces is developed in, among other works,
\cite{Cheeger1983,Mooers1999,Kalvin2021}. Equivalent coverings
produce isometric pullback metrics, so \(\mathcal D\) is naturally
a function on the corresponding Hurwitz space.

The present problem belongs to a line of work relating determinants
of conic Laplacians to boundary asymptotics, \(S\)-matrices, and
tau-functions. Hillairet and Kokotov
\cite{HillairetKokotov2013} established determinant comparison
formulas for self-adjoint extensions on Euclidean conic surfaces.
Hillairet, Kalvin, and Kokotov
\cite{HillairetKalvinKokotov2018} treated the flat metrics
\(|d\varphi|^2\), including higher-order zeros of \(d\varphi\), and
related the zero-energy data to the Schiffer projective connection.
For spherical pullback metrics, Kalvin and Kokotov
\cite{KalvinKokotov2019} obtained an explicit formula in the
simple-ramification case and indicated an extension to general
ramification. Related formulas for pullbacks of more general conic
metrics were obtained by Kalvin \cite{Kalvin2019}. On the
complex-analytic side, Rauch's variational formulas
\cite{Rauch1959} and the Hurwitz-space tau-functions of Kokotov and
Korotkin \cite{KokotovKorotkin2004} supply the natural language for
the variation of periods and determinants.

The passage from simple to arbitrary ramification is not formal.
A ramification point of index \(n\) carries a space of critical
boundary modes whose dimension grows with \(n\). Consequently, the
comparison relevant to the determinant variation can no longer be
reduced to the single coefficient occurring in the simple case and
must instead be organized at the matrix level.
Furthermore, several ramification points, possibly of different
indices, may lie over one branch value. Their local spectral data
are distinct, but their variations occur in the same Hurwitz
coordinate and must ultimately combine into a single invariant
contribution. Finally, the deformation moves both the complex
structure and the conic metric, and hence must be placed on a fixed
operator domain before the resolvent trace can be differentiated.

The flat-metric analysis of Hillairet, Kalvin, and Kokotov already
allows zeros of \(d\varphi\) of arbitrary order and contains the
corresponding higher-angle zero-energy \(S\)-matrix calculation.
The spectral problem here is different: we study the absolute zeta
determinant of the compact spherical pullback metric
\(\varphi^*ds_{\mathrm{rd}}^2\), whereas the flat metric
\(|d\varphi|^2\) leads naturally to a relative determinant problem
with Euclidean or conical ends. The compact spherical problem
requires a separate high-energy comparison and produces the
round-metric factors in the final formula.

This paper grew out of an attempt to carry the
simple-ramification formula of \cite{KalvinKokotov2019} through for
arbitrary ramification. Although a general extension was indicated
there, the explicit calculation and proof were carried out under
simple-ramification assumptions. We give a complete formula for
arbitrary ramification profiles, including fibers containing
several ramification points of different indices. The variational
strategy follows the preceding works, but its implementation in
this setting requires the matrix-valued boundary comparison, the
simultaneous organization of local terms by branch value, and the
fixed-domain operator analysis developed below.

The key structural device is the intrinsic boundary-data curve, or
Weyl curve: the spectral parameter determines
a curve of boundary-data subspaces in the Grassmannian of the
boundary symplectic space. Relative to a Lagrangian decomposition,
the Weyl function is its graph coordinate, and the local
\(S\)-matrix is the matrix representation of that coordinate.
Viewing the local matrices in this way makes it possible to compare
the global boundary data with those of the spherical conic models,
even when a single branch value has several ramification points of
different indices. Thus the \(S\)-matrices are not introduced as
unrelated collections of coefficients: each is a coordinate
representation of the corresponding intrinsic boundary-data curve.
This observation organizes the matrix comparison on which the
arbitrary-ramification calculation rests.

The second organizing principle in the present argument is the use
of classical Hurwitz coordinates. Once a
branch value is used as a deformation coordinate, all ramification
points lying over that value move in the same deformation direction,
while their local ramification indices remain visible in the
individual boundary blocks. This makes clear both how the local
model comparisons extend to arbitrary ramification and why the
resulting residue contributions must be grouped by branch value.
Together, the Weyl-curve viewpoint and the Hurwitz-coordinate
description reveal the structure of the general calculation.

Cutoff diffeomorphisms realizing a moving conic point on a fixed
underlying surface already appear in the flat setting of Hillairet,
Kalvin, and Kokotov \cite{HillairetKalvinKokotov2018} and in the
spherical simple-ramification argument of Kalvin and Kokotov
\cite{KalvinKokotov2019}. We adapt this idea to the present gluing
construction. For every ramification point over a moving branch
value, we construct a local interpolation compatible with the gluing
maps and combine these local maps into a smooth trivialization. The
varying conic metrics are thereby pulled back to one fixed smooth
surface. We then prove uniform equivalence of the relevant graph
norms and identify the maximal, minimal, and Friedrichs domains
throughout the deformation. This gives a common functional-analytic
setting for the resolvents and their traces. The fixed-surface
analysis thereby connects the geometric picture of the Weyl curve
with the operator variation required by the determinant formula.

The Polyakov--Alvarez approach to conformally related conic metrics
\cite{Kalvin2021} provides a complementary point of view. In the
present Hurwitz-space problem, both the complex structure and the
conic metric vary. We instead transport the Laplacians to a fixed
smooth surface and analyze their variation directly. This approach
also retains the local Weyl data from which the determinant
variation is assembled.

We now state the main result. Let
\(H_{g,d,N}^{\mb P^1}\) be a Hurwitz space of connected degree-\(d\)
coverings of genus \(g\) with \(N\) distinct branch values. We assume
\(d\geq2\); the degree-one case is immediate. On a Hurwitz coordinate
ball, the ramification profiles are fixed. Label the branch values
by \(Q_1,\ldots,Q_N\). For each \(k\), choose one of the two
standard affine charts \(\xi_k\) on \(\mb P^1\) containing
\(Q_k\), and set \(z_k=\xi_k(Q_k)\). Write
\(\varphi^{-1}(Q_k)=\{P_{k1},\ldots,P_{ks_k}\}\), with ramification
indices \(n_{k1},\ldots,n_{ks_k}\). Let \(B\) be the period matrix
associated with a local homological marking, let \(\tau_B\) be the
local Bergman tau-function with the residue normalization of
Lemma~\ref{lem:bergman-tau-residue}, and set
\[
c_k:=\frac1{12}\sum_{j=1}^{s_k}
\left(n_{kj}-\frac1{n_{kj}}\right).
\]
In a standard affine coordinate on \(\mb P^1\), write
\(\rho(z,\overline z)=4(1+|z|^2)^{-2}\) for the density of the round
metric.

\begin{thm}\label{thm:introduction-determinant}
The function
\(\mathcal D:H_{g,d,N}^{\mb P^1}\to\mb R_{>0}\) is smooth.
On each Hurwitz coordinate ball, there exists a constant \(C>0\),
independent of the Hurwitz coordinates, such that
\[
\mathcal D
=C\,\det\operatorname{Im}B\,|\tau_B|^2
\prod_{k=1}^N\rho(z_k,\overline{z_k})^{c_k}.
\]
When \(g=0\), the empty determinant
\(\det\operatorname{Im}B\) is understood to be \(1\).
\end{thm}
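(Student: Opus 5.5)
The plan is a variational argument. I will show that \(\log\mathcal D\) is smooth on a Hurwitz coordinate ball and that its \(\partial_{z_k}\)-derivative coincides with that of the right-hand side,
\[
\log\det\operatorname{Im}B+2\log|\tau_B|+\sum_k c_k\log\rho(z_k,\overline{z_k}),
\]
for every \(k\). Both sides are real. Equality of the \(\partial_{z_k}\) derivatives therefore gives equality of the \(\partial_{\overline{z_k}}\) derivatives as well. Since the ball is connected, the difference is then a constant, which I write as \(\log C\).

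\textbf{Step 1: fixed domain.} Near a base point \(\varphi_0\), I will build a smooth trivialization \(f_z:X_0\to X_z\). For each ramification point \(P_{kj}\) it is a cutoff diffeomorphism, written in the local coordinate \(\zeta\) with \(\xi_k\circ\varphi=z_k+\zeta^{n_{kj}}\), and it moves the cone point while staying compatible with the gluing maps. Pulling back gives a family of conic metrics on the fixed surface \(X_0\) with cone points fixed. I will prove uniform equivalence of the graph norms, so that the maximal, minimal and Friedrichs domains are identified throughout the deformation. It then follows that \(z\mapsto(\Delta_z-\lambda)^{-m}\) is differentiable in trace norm for \(m\ge2\). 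Writing \(\log\mathcal D\) through the heat trace, or through \(\operatorname{Tr}(\Delta-\lambda)^{-2}\) integrated in \(\lambda\), I get smoothness together with the variation formula
\[
\partial_{z_k}\log\mathcal D=-\partial_{z_k}\zeta'(0)
\]
expressed as a contour/Mellin integral of \(\operatorname{Tr}(\dot\Delta(\Delta-\lambda)^{-1}\cdots)\). The Davies--Gaffney estimate gives uniform high-energy control, so only finitely many terms of the resolvent expansion contribute.

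\textbf{Step 2: localization.} Because \(\dot\Delta\) is supported near the \(P_{kj}\) over \(Q_k\), the variation localizes. I will compare the local resolvent near each \(P_{kj}\) with the Friedrichs resolvent of the spherical cone of angle \(2\pi n_{kj}\). The comparison is organized through the Weyl curve, equivalently the matrix \(S\)-matrix, of critical boundary modes of dimension about \(n_{kj}\). The result is a decomposition of \(\partial_{z_k}\log\mathcal D\) into two pieces. The first is a model piece, computable explicitly from the spherical cone, which gives \(c_k\,\partial_{z_k}\log\rho(z_k,\overline{z_k})\); the normalization \(n-1/n\) over 12 is the standard conic heat coefficient. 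The second is a zero-energy piece, given by the leading coefficient of the global Weyl matrix at \(\lambda=0\).

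\textbf{Step 3: zero energy.} Summing over \(j\), I will identify the zero-energy piece with a residue involving the Schiffer bidifferential at the \(P_{kj}\). This is
\[
-\tfrac{1}{12\pi i}\sum_j\operatorname{Res}_{P_{kj}}\frac{S_B-S_{\mathrm{proj}}}{d\varphi}
\]
plus a term from the projection onto harmonic differentials. By the Rauch formulas, the latter equals \(\partial_{z_k}\log\det\operatorname{Im}B\). The normalization of Lemma~\ref{lem:bergman-tau-residue} then turns the residue into \(\partial_{z_k}\log\tau_B\), and its complex conjugate accounts for \(|\tau_B|^2\). In genus zero the harmonic term is absent.

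The main obstacle is Step 2 at higher ramification. There the boundary space is multidimensional, and several points of different indices share the single coordinate \(z_k\). One must show that the global Weyl function differs from the block-diagonal model one only at an order that does not affect the residue. I would attempt this first by a Krein-type resolvent formula relative to a common Lagrangian decomposition, estimating the off-diagonal blocks uniformly in \(\lambda\).
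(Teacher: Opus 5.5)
Your outline has the paper's architecture: cutoff trivializations and common domains, trace-norm differentiability of resolvent powers, a Mellin representation, spherical-model comparison at high energy, and Schiffer/Rauch/Bergman at zero energy. But Step~2, where the computation happens, is asserted rather than derived. The missing idea is the identity that turns $\Tr\bigl(\dot{\widetilde\Delta}R(\lambda)^{m+1}\bigr)$ into boundary data. Let $\mc X$ be the lifted deformation field, defined by $d\varphi(\mc X)=\partial_t(\varphi_t\circ\Theta_t)|_{t=0}$; near $P_{kj}$ it equals $(n\zeta^{n-1})^{-1}\partial_\zeta$, with $n=n_{kj}$. Then $\dot{\widetilde\Delta}=[\mc X,\Delta_X]$, and $\mc X\phi_i\in\mc D_{\max}(\Delta_X)$ for Friedrichs eigenfunctions. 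Hence $\langle\dot{\widetilde\Delta}\phi_i,\phi_i\rangle=-\mk q_X(\mc X\phi_i,\phi_i)$, and moreover $\mc Xf_{kj,p}=\frac{\sqrt{p(n-p)}}{n}f^{\#}_{kj,p-n}$ for $1\le p<n$. Combined with $\partial_\lambda^mS_{\ell p}(\lambda)=m!\sum_ic_{i,\ell}\overline{c_{i,p}}(\lambda_i-\lambda)^{-m-1}$, this gives \eqref{eq:trace-resolvent-variation}. Lemma~\ref{lem:mellin-endpoints} then yields $\partial_{z_k}\log\mathcal D=-\sum_j\frac1{n_{kj}}\sum_{p}\sqrt{p(n_{kj}-p)}\,\bigl(S_{kj,n_{kj}-p,-p}(-\infty)-S_{kj,n_{kj}-p,-p}(0)\bigr)$. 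Only this fixes which entries and weights make up your ``model piece'' and ``zero-energy piece.'' In this route $c_k$ is not a heat coefficient. It comes from the $\lambda$-independent model entries $\sqrt{p(n-p)}\,\overline{z_k}/(nA_k)$, which are created by the affine change of singular complement from the model coordinate to the distinguished coordinate $\xi_k\circ\varphi=z_k+\zeta^n$, summed via $\sum_pp(n-p)=n(n^2-1)/6$.

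The obstacle you single out is not the real one. Each pairing $\omega_{P_{kj}}(\pi_{P_{kj}}\mc X\phi_i,\pi_{P_{kj}}\phi_i)$ involves boundary coefficients at a single point. So off-diagonal blocks of the global Weyl function never enter, and the several points over $Q_k$ simply add; no Krein formula or off-diagonal estimate is needed. What is needed is that each diagonal block agree with the spherical model to $O(r^{-\infty})$, together with its $r$-derivatives, as $\lambda=-r\to-\infty$. The paper proves this point by point with a Green identity. The global and transported model Weyl solutions share their singular data at $P_{kj}$, so after a cutoff their difference lies in the Friedrichs domain. This gives $S^X-S^{\mathrm{mod}}=\langle[\Delta_X,\chi]w,u\rangle$ over a fixed annulus, where Davies--Gaffney makes both factors rapidly decreasing. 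If ``uniformly in $\lambda$'' is meant to include $\lambda\to0^-$, no such comparison can hold. There the diagonal blocks differ from the model by the Schiffer data, which is exactly the zero-energy term you must keep. Finally, in Step~3 the normalization is $\partial_{z_k}\log\tau_B=-\frac16\sum_j\Res_{P_{kj}}(S_B/d_k\varphi)$ (Lemma~\ref{lem:bergman-tau-residue}). Your prefactor $-1/(12\pi i)$ is off by a factor $2\pi i$ unless $\Res$ denotes a contour integral.
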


Each ramification point of index \(n\) contributes
\((n^2-1)/(12n)\) to the exponent belonging to its branch value.
Thus several ramification points in the same fiber contribute to
the same Hurwitz coordinate. If each branch value has exactly one
ramification point, of index two, then \(c_k=1/8\), and the theorem
recovers the formula of
\cite{KalvinKokotov2019}. Branch values at infinity are handled by
the other standard affine chart. The constant \(C\) reflects the
multiplicative normalization of \(\tau_B\).

For a fixed conic surface, the comparison formula of
\cite[Theorem~5.3 and Corollary~5.4]{LiouWeyl2026} relates the
Friedrichs determinant in the theorem to the positive-spectrum
zeta determinants of other self-adjoint realizations satisfying the
zeta-regularity hypotheses imposed there.

The proof brings together three ingredients. Smooth
trivializations and common graph domains permit the spectral problem
to be differentiated on a fixed surface; matrix comparison with
spherical conic models determines the high-energy contribution; and
the Schiffer bidifferential and Rauch formulas identify the
zero-energy contribution with the variation of the Schiffer
tau-function. The nontrivial point is that these three descriptions
are compatible with the grouping by branch value. Their combination
produces a globally defined variation, which integrates to the
formula above.

Sections~\ref{sec:conic-analysis}--\ref{sec:local-model-comparison}
develop the conic operator theory and local spectral comparison.
Sections~\ref{sec:hurwitz-spaces}--\ref{sec:rauch-formulas} treat
Hurwitz deformations and the required variational formulas.
Sections~\ref{sec:resolvent-variation}--\ref{sec:hurwitz-determinants}
derive the determinant formula, and
Section~\ref{sec:conclusion} concludes the paper.
The technical estimates for the local interpolation maps are
collected in Appendix~\ref{app:local-interpolation}.

\section{Conic Laplacians and Boundary Symplectic Spaces}\label{sec:conic-analysis}

This section passes from the global Laplacian to its finite-dimensional
boundary symplectic space. We first introduce the maximal and minimal
domains and describe their quotient by critical asymptotic modes. We then discuss
self-adjoint extensions and show that, after choosing a Lagrangian
decomposition, the Weyl function is precisely the graph coordinate
of the intrinsic boundary-data subspace.

A compact conic Riemann surface is a triple
\((X,S,ds_X^2)\), where \(X\) is a compact connected Riemann surface,
\(S\subset X\) is a finite set, and \(ds_X^2\) is a Hermitian metric on
\(X'=X\setminus S\) with the following property: for every \(P\in S\),
there exist a neighborhood \(U\) of \(P\), a holomorphic coordinate
\(z:U\to\mb C\) centered at \(P\), an integer \(n_P\geq2\), and a
smooth positive function \(\rho\in C^\infty(U)\) such that
\[
    ds_X^2
    =
    \rho(z)|z|^{2n_P-2}|dz|^2
\]
on \(U'=U\setminus\{P\}\). The integer \(n_P\) is called the index at
\(P\), and the corresponding cone angle is \(2\pi n_P\). Thus, in this
paper, we restrict attention to conic metrics whose cone angles are
integer multiples of \(2\pi\).
For the spectral analysis of conic singularities and the corresponding
heat-kernel constructions, see \cite{Cheeger1983,Mooers1999}.

Let
\(
    \Delta_{X,c}:C_c^\infty(X')\to C_c^\infty(X')
\)
be the Laplace--Beltrami operator associated with \(ds_X^2\), initially
defined on \(C_c^\infty(X')\). It is a densely defined symmetric
operator on \(L^2(X',dA_X)\), where \(dA_X\) is the area form associated
with \(ds_X^2\). For simplicity, we write
\(L^2(X')=L^2(X',dA_X)\).
All \(L^2\) inner products are linear in the first argument.

\begin{defn}
Let \(u\in L^2(X')\). We say that the distributional Laplacian of \(u\)
belongs to \(L^2(X')\) if there exists \(F\in L^2(X')\) such that
\[
    \langle u,\Delta_{X,c}\varphi\rangle_{L^2(X')}
    =
    \langle F,\varphi\rangle_{L^2(X')}
\]
for every \(\varphi\in C_c^\infty(X')\). In this case, \(F\) is unique
and is denoted by \(\Delta_Xu\).
\end{defn}

Thus \(\Delta_X\) will denote the maximal Laplacian. Its domain is
\[
    \mc D_{\max}(\Delta_X)
    =
    \left\{
        u\in L^2(X'):
        \Delta_Xu\in L^2(X')
    \right\}.
\]
Equipped with the graph norm
\[
    \|u\|_{\Delta_X}
    =
    \left(
        \|u\|_{L^2(X')}^2
        +
        \|\Delta_Xu\|_{L^2(X')}^2
    \right)^{1/2},
\]
the space \(\mc D_{\max}(\Delta_X)\) is a Hilbert space.

The minimal domain \(\mc D_{\min}(\Delta_X)\) is the graph-norm
closure of \(C_c^\infty(X')\) in \(\mc D_{\max}(\Delta_X)\).
The corresponding minimal operator is
\(\Delta_{X,\min}:=\Delta_X|_{\mc D_{\min}(\Delta_X)}\).

We first record a comparison that allows us to use the flat conic
model without imposing a radiality assumption on the smooth metric
factor.

\begin{lem}\label{lem:conformal-domain-comparison}
Let \(g_1\) and \(g_2\) be conic metrics on the same compact
surface \(X\), with the same conic set, and suppose that
\(g_2=h g_1\) for a smooth positive function \(h\) on \(X\).
Write \(\Delta_i\) for the corresponding maximal Laplacians.
After identifying the two \(L^2\) spaces as sets of measurable
functions, one has
\[
\mc D_{\max}(\Delta_1)=\mc D_{\max}(\Delta_2),
\qquad
\mc D_{\min}(\Delta_1)=\mc D_{\min}(\Delta_2),
\]
with equivalent graph norms. The Green forms defined below agree,
and the Friedrichs extensions have the same operator domain.
On the common maximal domain, \(\Delta_2u=h^{-1}\Delta_1u\).
\end{lem}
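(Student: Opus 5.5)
The proof rests on the two-dimensional conformal covariance of the Laplacian. If \(g_2=hg_1\), then the area forms satisfy \(dA_2=h\,dA_1\), and on \(X'\) one has pointwise \(\Delta_2=h^{-1}\Delta_1\) as differential operators. Since \(h\) is smooth and positive on the compact surface \(X\), including at the conic points, there are constants \(0<a\le h\le b\). Hence \(L^2(X',dA_1)\) and \(L^2(X',dA_2)\) coincide as sets with equivalent norms. The plan is to transfer every object through these two facts in turn.

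\emph{Maximal domain.} Take \(u\) in \(L^2\) with \(\Delta_1u=F\in L^2\) in the distributional sense of the definition. For \(\varphi\in C_c^\infty(X')\), compute
\[
\langle u,\Delta_{2,c}\varphi\rangle_{L^2(dA_2)}=\int_{X'}u\,\overline{h^{-1}\Delta_{1,c}\varphi}\,h\,dA_1=\langle u,\Delta_{1,c}\varphi\rangle_{L^2(dA_1)}=\langle F,\varphi\rangle_{L^2(dA_1)}=\langle h^{-1}F,\varphi\rangle_{L^2(dA_2)} .
\]
Here \(h\) is real, and \(h^{-1}\varphi\) still lies in \(C_c^\infty(X')\), so the test classes match. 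Thus \(\Delta_2u=h^{-1}\Delta_1u\in L^2\). The converse follows by symmetry, with \(h\) replaced by \(h^{-1}\). The bounds on \(h\) then give \(\|u\|_{\Delta_2}\asymp\|u\|_{\Delta_1}\).

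\emph{Minimal domain.} Each minimal domain is the graph-norm closure of the same set \(C_c^\infty(X')\). Inside a common space carrying equivalent norms, these closures coincide.

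\emph{Green forms.} The Green form is defined as
\[
\omega_i(u,v)=\langle\Delta_iu,v\rangle_i-\langle u,\Delta_iv\rangle_i .
\]
Substituting \(\Delta_2=h^{-1}\Delta_1\) and \(dA_2=h\,dA_1\), the factors of \(h\) cancel in each term, so \(\omega_2=\omega_1\) on the common maximal domain. If the paper's Green form is instead given by boundary integrals over small circles, the same conclusion follows because \(\star d\) on 1-forms is conformally invariant in dimension two.

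\emph{Friedrichs extension.} For each metric, the Friedrichs domain is \(\mc D_{\max}\cap\mc H^1_{0,i}\), where \(\mc H^1_{0,i}\) is the closure of \(C_c^\infty(X')\) in the norm \(\|\nabla u\|^2_{L^2}+\|u\|^2_{L^2}\) taken with respect to \(g_i\). The Dirichlet integral \(\int|\nabla u|^2\,dA\) is conformally invariant in two dimensions. The \(L^2\) parts of the two norms are equivalent. Hence the two form domains agree, and so do the Friedrichs operator domains.

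The main obstacle I expect is purely definitional. One must make sure that the paper's Green form and Friedrichs domain are exactly these intrinsic objects, and not objects normalized through the flat model coordinates. If they are defined via asymptotic expansions \(u\sim\sum c_\nu r^{\pm\nu}e^{i\nu\theta}\), I would handle this by noting that multiplying the metric by \(h\) leaves the expansion coefficients unchanged, so the identification carries over.
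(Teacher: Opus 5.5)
Your proposal is correct and follows essentially the same route as the paper: the relations \(dA_2=h\,dA_1\) and \(\Delta_2=h^{-1}\Delta_1\), together with the two-sided bounds on \(h\), give the maximal and minimal domains, the graph-norm equivalence and the equality of the Green forms, while conformal invariance of the Dirichlet integral gives a common closed form domain. The only minor difference is in the Friedrichs step, where you use the standard identity \(\mathcal D(A_F)=\mathcal D_{\max}\cap\mathcal Q\) (\(\mathcal Q\) the closed form domain) in place of the paper's direct transfer of the weak equation through \(\langle F,v\rangle_{L^2(g_2)}=\langle hF,v\rangle_{L^2(g_1)}\); both arguments are standard and yield the same conclusion.
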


\begin{proof}
In dimension two, \(dA_{g_2}=h\,dA_{g_1}\) and
\(\Delta_2=h^{-1}\Delta_1\) on \(X'\), also in the distributional
sense. Since \(h\) and \(h^{-1}\) are bounded, these identities
give equality of the maximal domains and equivalence of their
graph norms. Taking the closures of \(C_c^\infty(X')\) gives
equality of the minimal domains. The same identities give
\(\langle\Delta_2u,v\rangle_{L^2(g_2)}
=\langle\Delta_1u,v\rangle_{L^2(g_1)}\), so the Green forms agree.

The Dirichlet forms agree on \(C_c^\infty(X')\) by conformal
invariance in dimension two. Their form norms are equivalent, so
their closed form domains coincide. On this common form domain,
the identity \(\langle F,v\rangle_{L^2(g_2)}
=\langle hF,v\rangle_{L^2(g_1)}\) shows that a weak equation
represented by an \(L^2(g_2)\) function is equivalently represented
by an \(L^2(g_1)\) function. The characterization of the operator
associated with a closed form therefore gives equality of the
Friedrichs operator domains.
\end{proof}

Apply this lemma to \(g=ds_X^2\) by choosing a smooth positive
function \(h\) on \(X\) that agrees with \(\rho\) in a smaller
coordinate neighborhood of each conic point. Then \(h^{-1}g\)
equals \(|z|^{2n_P-2}|dz|^2\) in those neighborhoods.
Consequently, the domain decomposition and normalized boundary
pairings for the flat conic model apply to \(g\) as well.
This supplies the passage from the radial setting of
\cite[Section~2]{LiouWeyl2026} to the general smooth positive factors
allowed here.

It is known that the quotient
\(\mc D_{\max}(\Delta_X)/\mc D_{\min}(\Delta_X)\) is
finite-dimensional over \(\mb C\) and carries a natural
nondegenerate skew-Hermitian form; see, for example,
\cite[Section~2]{LiouWeyl2026}. Indeed, consider the skew-Hermitian
sesquilinear form
\(\mk q_X:\mc D_{\max}(\Delta_X)\times
\mc D_{\max}(\Delta_X)\to\mb C\) defined by
\[
    \mk q_X(u,v)
    =
    \langle\Delta_Xu,v\rangle_{L^2(X')}
    -
    \langle u,\Delta_Xv\rangle_{L^2(X')}.
\]
Its radical is
\[
    \operatorname{rad}\mk q_X
    =
    \left\{
        u\in\mc D_{\max}(\Delta_X):
        \mk q_X(u,v)=0
        \text{ for every }
        v\in\mc D_{\max}(\Delta_X)
    \right\},
\]
and one has
\(
    \operatorname{rad}\mk q_X
    =
    \mc D_{\min}(\Delta_X).
\)
Consequently, \(\mk q_X\) induces a nondegenerate skew-Hermitian form
on
\(
    \mc D_{\max}(\Delta_X)/
    \mc D_{\min}(\Delta_X).
\)

We now pass from the operator domains to their finite-dimensional
boundary model. More precisely, we describe this quotient and its
induced form in terms of the critical asymptotic modes. For each
\(P\in S\), choose a
coordinate disk \(U_P\) with holomorphic coordinate \(z_P\) centered
at \(P\), such that
\(
    ds_X^2
    =
    \rho_P(z_P)|z_P|^{2n_P-2}|dz_P|^2
\)
on \(U_P'=U_P\setminus\{P\}\). Write
\(z_P=r_Pe^{i\theta_P}\).

Define
\[
    f_{P,0}
    =
    \frac{1}{\sqrt{2\pi}},
    \qquad
    f_{P,0}^\#
    =
    -\frac{\log r_P}{\sqrt{2\pi}},
\]
and, for \(1\leq |j|\leq n_P-1\),
\[
    f_{P,j}
    =
    \frac{1}{\sqrt{4\pi|j|}}
    r_P^{|j|}e^{ij\theta_P},
    \qquad
    f_{P,j}^\#
    =
    \frac{1}{\sqrt{4\pi|j|}}
    r_P^{-|j|}e^{ij\theta_P}.
\]
The critical asymptotic space at \(P\) is
\[
    V_P
    =
    \operatorname{span}_{\mb C}
    \left\{
        f_{P,j},f_{P,j}^\#:
        |j|\leq n_P-1
    \right\}.
\]
The elements \(f_{P,j}\) and \(f_{P,j}^\#\) are understood as local
asymptotic terms. After multiplication by a cutoff function supported
in \(U_P\) and equal to \(1\) near \(P\), they determine elements of
\(\mc D_{\max}(\Delta_X)\) and hence classes in
\(
    \mc D_{\max}(\Delta_X)/
    \mc D_{\min}(\Delta_X).
\)
These classes are independent of the choice of cutoff function.

The local boundary form \(\omega_P\) on \(V_P\) is determined by
\[
    \omega_P(f_{P,j},f_{P,k}^\#)
    =
    \delta_{jk},
\]
together with
\[
    \omega_P(f_{P,j},f_{P,k})
    =
    \omega_P(f_{P,j}^\#,f_{P,k}^\#)
    =
    0,
    \qquad
    \omega_P(f_{P,j}^\#,f_{P,k})
    =
    -\delta_{jk}.
\]
For \(u\in\mc D_{\max}(\Delta_X)\), let
\(\pi_P:\mc D_{\max}(\Delta_X)\to V_P\) denote the map which
assigns to \(u\) its critical asymptotic part at
\(P\). Set
\[
    \pi_S
    =
    \bigoplus_{P\in S}\pi_P:
    \mc D_{\max}(\Delta_X)
    \longrightarrow
    \bigoplus_{P\in S}V_P.
\]
The map \(\pi_S\) is surjective and satisfies
\(
    \ker\pi_S=\mc D_{\min}(\Delta_X).
\)
It therefore induces a linear isomorphism
\[
    \mc D_{\max}(\Delta_X)/
    \mc D_{\min}(\Delta_X)
    \longrightarrow
    \bigoplus_{P\in S}V_P.
\]
We identify the quotient with the direct sum
\[
V_S:=\bigoplus_{P\in S}V_P,
\qquad
\omega_S(\xi,\eta):=\sum_{P\in S}\omega_P(\xi_P,\eta_P).
\]
For all
\(u,v\in\mc D_{\max}(\Delta_X)\),
\[
    \mk q_X(u,v)
    =
    \omega_S\bigl(\pi_Su,\pi_Sv\bigr).
\]
Following \cite[Section~2]{LiouWeyl2026}, we call
\((V_S,\omega_S)\) the boundary symplectic space associated with
\(X\). Here the symplectic structure is the nondegenerate
skew-Hermitian form \(\omega_S\). For
\(u\in\mc D_{\max}(\Delta_X)\), its boundary data are
\(\pi_Su\in V_S\); we also call \(\pi_S\) the boundary trace.

The asymptotic projection \(\pi_S\) is continuous in the maximal
graph norm. Indeed, choose a smooth cutoff \(\chi_P\) supported in
\(U_P\) and equal to \(1\) near \(P\). The coefficients of
\(f_{P,j}\) and \(f_{P,j}^{\#}\) in \(\pi_Pu\) are respectively
\(\mk q_X(u,\chi_Pf_{P,j}^{\#})\) and
\(-\mk q_X(u,\chi_Pf_{P,j})\).
The Cauchy--Schwarz inequality gives
\(|\mk q_X(u,v)|\leq2\|u\|_{\Delta_X}\|v\|_{\Delta_X}\),
so each coefficient is a continuous linear functional on
\(\mc D_{\max}(\Delta_X)\). Since \(V_S\) is finite dimensional,
the claimed continuity follows.

It is known that the self-adjoint extensions of
\(\Delta_{X,\min}\) are parametrized by the Lagrangian Grassmannian
\(\operatorname{LGr}(V_S,\omega_S)\); see
\cite[Theorem~3.2]{LiouWeyl2026} and \cite{HillairetKokotov2013}.
For each
\(\mc V\in\operatorname{LGr}(V_S,\omega_S)\), define
\[
    \mc D_{X,\mc V}
    =
    \pi_S^{-1}(\mc V)
    =
    \left\{
        u\in\mc D_{\max}(\Delta_X):
        \pi_S(u)\in\mc V
    \right\},
\]
and let
\(
    \Delta_{X,\mc V}
    =
    \left.
        \Delta_X
    \right|_{\mc D_{X,\mc V}}.
\)
Every self-adjoint extension is uniquely of this form, with
\(\mc V\) equal to the boundary trace of its domain.
\begin{defn}
A Lagrangian pair in \((V_S,\omega_S)\) is an ordered pair
\((\mc V,\mc V^\#)\) of Lagrangian subspaces such that
\(
    V_S
    =
    \mc V\oplus\mc V^\#.
\)
\end{defn}
Let \((\mc V,\mc V^\#)\) be a Lagrangian pair in
\((V_S,\omega_S)\). Denote by \(P_{\mc V}:V_S\longrightarrow\mc V\) and \(P_{\mc V^\#}:V_S\longrightarrow\mc V^\#\) the projections associated with the decomposition
\(V_S=\mc V\oplus\mc V^\#\). Define
\(
    \widetilde\pi_{\mc V}
    :=
    P_{\mc V}\circ\pi_S:
    \mc D_{\max}(\Delta_X)
    \longrightarrow
    \mc V
\)
and
\(
    \widetilde\pi_{\mc V^\#}
    :=
    P_{\mc V^\#}\circ\pi_S:
    \mc D_{\max}(\Delta_X)
    \longrightarrow
    \mc V^\#.
\)

\begin{defn}\label{def:formal-solution}
Let \(\lambda\in\mb C\). An element \(u_\lambda\in\mc D_{\max}(\Delta_X)\) is called
a formal solution of
\begin{equation}\label{eq:model}
    (\Delta_{X}-\lambda)u_\lambda=0
\end{equation}
if this equation holds on \(X'\) in the sense of distributions.
\end{defn}

\begin{defn}
Following \cite[Section~6]{LiouWeyl2026}, for
\(\lambda\in\mb C\) we define the intrinsic boundary-data subspace
of \(\Delta_X-\lambda\) by
\[
\mc K_X(\lambda)
:=\pi_S\bigl(\ker(\Delta_X-\lambda)\bigr)\subset V_S.
\]
Thus \(\mc K_X(\lambda)\) consists of the critical boundary data
of global \(L^2\) formal solutions, without imposing a self-adjoint
boundary condition. It is defined independently of a Lagrangian pair.
\end{defn}

The Weyl function will describe this subspace in coordinates
associated with a Lagrangian decomposition of \(V_S\).
We first construct the formal solution with prescribed
\(\mc V^\#\)-component of its boundary data.

Fix \((\mc V,\mc V^\#)\) and
\(\lambda\in\rho(\Delta_{X,\mc V})\). For \(h\in\mc V^\#\),
denote by \(u_\lambda=\Gamma_{\mc V,\mc V^\#}^X(\lambda)h\)
the unique formal solution with
\(\widetilde\pi_{\mc V^\#}u_\lambda=h\).
To construct it, choose
\(u\in\mc D_{\max}(\Delta_X)\) such that
\(\widetilde\pi_{\mc V^\#}u=h\), and set
\[
    u_\lambda
    =
    u
    +
    R_{X,\mc V}(\lambda)
    (\Delta_X-\lambda)u,
\]
where
\(
    R_{X,\mc V}(\lambda)
    =
    (\lambda-\Delta_{X,\mc V})^{-1}.
\)
When the surface is understood, we write
\(\mc D_{\mc V}=\mc D_{X,\mc V}\) and
\(R_{\mc V}=R_{X,\mc V}\).
The resolvent identity gives \((\Delta_X-\lambda)u_\lambda=0\).
Since the correction lies in \(\mc D_{X,\mc V}\), it leaves the
\(\mc V^\#\)-component unchanged. Uniqueness follows because the
difference of two such solutions belongs to
\(\ker(\Delta_{X,\mc V}-\lambda)=\{0\}\).

On \(\mc D_{\max}(\Delta_X)\), define
\(Y_{X,\mc V}(\lambda):=I+R_{X,\mc V}(\lambda)(\Delta_X-\lambda)\).
Then \(Y_{X,\mc V}(\lambda)\) is a projection satisfying
\[
    \ker Y_{X,\mc V}(\lambda)
    =
    \mc D_{X,\mc V},
    \qquad
    \operatorname{Im}Y_{X,\mc V}(\lambda)
    =
    \ker(\Delta_X-\lambda).
\]
It therefore induces a linear isomorphism
\[
    \overline Y_{X,\mc V}(\lambda):
    \mc D_{\max}(\Delta_X)/\mc D_{X,\mc V}
    \longrightarrow
    \ker(\Delta_X-\lambda).
\]
Likewise, \(\widetilde\pi_{\mc V^\#}\) induces an isomorphism
\(
    \overline\pi_{\mc V^\#}:
    \mc D_{\max}(\Delta_X)/\mc D_{X,\mc V}
    \longrightarrow
    \mc V^\#.
\)
Consequently,
\[
    \Gamma_{\mc V,\mc V^\#}^X(\lambda)
    =
    \overline Y_{X,\mc V}(\lambda)
    \circ
    \overline\pi_{\mc V^\#}^{-1}.
\]

\begin{defn}
Fix a Lagrangian pair \((\mc V,\mc V^\#)\). The Weyl function
associated with this pair is the operator-valued function
\[
    M_{\mc V,\mc V^\#}^X:
    \rho(\Delta_{X,\mc V})
    \longrightarrow
    \operatorname{Hom}_{\mb C}(\mc V^\#,\mc V)
\]
defined by
\(
    M_{\mc V,\mc V^\#}^X(\lambda)
    =
    \widetilde\pi_{\mc V}
    \circ
    \Gamma_{\mc V,\mc V^\#}^X(\lambda).
\)
Equivalently, for \(h^\#\in\mc V^\#\), the unique formal solution
\(
    u_\lambda
    =
    \Gamma_{\mc V,\mc V^\#}^X(\lambda)h^\#
\)
has boundary data
\(
    \pi_S(u_\lambda)
    =
    M_{\mc V,\mc V^\#}^X(\lambda)h^\#+h^\#.
\)
The Weyl function extends meromorphically in \(\lambda\), with possible
poles contained in the spectrum of \(\Delta_{X,\mc V}\).
\end{defn}

The preceding construction depends on a Lagrangian pair, whereas the
underlying boundary data do not. We now make the relation between the
Weyl function and the boundary-data subspace precise. Set
\(m:=\tfrac12\dim_{\mb C}V_S\) and
\(\Sigma_X:=\bigcup_{\mc V\in\operatorname{LGr}(V_S,\omega_S)}
\rho(\Delta_{X,\mc V})\).
This is an open subset of \(\mb C\) containing
\(\mb C\setminus\mb R\), since every \(\Delta_{X,\mc V}\) is
self-adjoint. We consider the geometry of \(\mc K_X(\lambda)\)
for \(\lambda\in\Sigma_X\).

\begin{prop}\label{prop:boundary-data-weyl-graph}
Fix a Lagrangian pair \((\mc V,\mc V^\#)\), and let
\(\lambda\in\rho(\Delta_{X,\mc V})\).
The boundary trace restricts to a linear isomorphism
\(\pi_S:\ker(\Delta_X-\lambda)\to\mc K_X(\lambda)\).
Moreover, \(\dim_{\mb C}\mc K_X(\lambda)=m\),
\(\mc K_X(\lambda)\cap\mc V=\{0\}\), and
\[
\mc K_X(\lambda)
=\left\{
M_{\mc V,\mc V^\#}^X(\lambda)h^\#+h^\#:
h^\#\in\mc V^\#
\right\}.
\]
Thus the boundary-data subspace is the graph of the Weyl function
relative to \(V_S=\mc V\oplus\mc V^\#\).
\end{prop}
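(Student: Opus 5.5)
The plan is to work entirely with the objects already constructed: the projection \(Y_{X,\mc V}(\lambda)\), the Poisson-type operator \(\Gamma^X_{\mc V,\mc V^\#}(\lambda)\), and the identity \(\pi_S(\Gamma^X_{\mc V,\mc V^\#}(\lambda)h^\#)=M^X_{\mc V,\mc V^\#}(\lambda)h^\#+h^\#\). The statement has three parts: injectivity of \(\pi_S\) on \(\ker(\Delta_X-\lambda)\), the graph description, and the dimension count together with the transversality \(\mc K_X(\lambda)\cap\mc V=\{0\}\).

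\emph{Injectivity.} Let \(u\in\ker(\Delta_X-\lambda)\) with \(\pi_Su=0\). Then \(u\in\ker\pi_S=\mc D_{\min}(\Delta_X)\subset\mc D_{X,\mc V}\). Hence \(u\in\ker(\Delta_{X,\mc V}-\lambda)\), and this kernel is \(\{0\}\) because \(\lambda\in\rho(\Delta_{X,\mc V})\). So \(\pi_S\) restricts to an isomorphism \(\ker(\Delta_X-\lambda)\to\mc K_X(\lambda)\).

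\emph{Graph description.} The inclusion ``\(\supseteq\)'' is the defining property of the Weyl function: each \(\Gamma h^\#\) lies in \(\ker(\Delta_X-\lambda)\) and has boundary data \(Mh^\#+h^\#\). For ``\(\subseteq\)'', take \(u\in\ker(\Delta_X-\lambda)\) and set \(h^\#=\widetilde\pi_{\mc V^\#}u\). Both \(u\) and \(\Gamma h^\#\) are formal solutions with the same \(\mc V^\#\)-component. Their difference \(w\) therefore has \(\pi_Sw\in\mc V\), so \(w\in\mc D_{X,\mc V}\) and \((\Delta_{X,\mc V}-\lambda)w=0\). This forces \(w=0\), which is exactly the uniqueness already noted for \(\Gamma\). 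Thus \(\pi_Su=Mh^\#+h^\#\).

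\emph{Dimension and transversality.} Since \(\mc V\) and \(\mc V^\#\) are Lagrangian in the \(2m\)-dimensional space \(V_S\) with a nondegenerate form, each has dimension \(m\). The map \(h^\#\mapsto Mh^\#+h^\#\) is injective, because its \(\mc V^\#\)-component is \(h^\#\). Hence \(\dim\mc K_X(\lambda)=\dim\mc V^\#=m\). For transversality, an element of \(\mc K_X(\lambda)\cap\mc V\) has zero \(\mc V^\#\)-projection, so it is \(Mh^\#+h^\#\) with \(h^\#=0\), and therefore it is zero.

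I expect no serious obstacle. The one point needing care is the inclusion \(\mc D_{\min}(\Delta_X)\subset\mc D_{X,\mc V}\), which holds because \(\pi_S\) vanishes on \(\mc D_{\min}(\Delta_X)\) and \(0\in\mc V\). A second point is the uniform use of \(\ker(\Delta_{X,\mc V}-\lambda)=\{0\}\), which also covers real \(\lambda\) in the resolvent set, not only nonreal ones.
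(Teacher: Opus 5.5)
Your proposal is correct and follows the paper's proof essentially step for step. Injectivity comes from \(\mc D_{\min}(\Delta_X)\subset\mc D_{X,\mc V}\) together with \(\lambda\in\rho(\Delta_{X,\mc V})\); the graph description comes from uniqueness of the formal solution with prescribed \(\mc V^\#\)-component; and the dimension count and transversality follow because projection onto \(\mc V^\#\) is an isomorphism on \(\mc K_X(\lambda)\).
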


\begin{proof}
Surjectivity of the restricted trace follows from the definition
of \(\mc K_X(\lambda)\). If a formal solution \(u\) has
\(\pi_Su=0\), then \(u\in\mc D_{\min}(\Delta_X)
\subset\mc D_{X,\mc V}\). Since
\(\lambda\in\rho(\Delta_{X,\mc V})\), this implies \(u=0\).
The trace is therefore injective.

Every formal solution is uniquely of the form
\(u=\Gamma_{\mc V,\mc V^\#}^X(\lambda)h^\#\), and its boundary
trace is \(M_{\mc V,\mc V^\#}^X(\lambda)h^\#+h^\#\).
This proves the graph representation. Projection onto
\(\mc V^\#\) is consequently an isomorphism on
\(\mc K_X(\lambda)\), which gives both its dimension and its
transversality to \(\mc V\).
\end{proof}

Let \(\operatorname{Gr}_m(V_S)\) denote the complex Grassmannian
of \(m\)-dimensional subspaces of \(V_S\). We use the graph-chart
notation of \cite[Section~6]{LiouWeyl2026}. For
\(E,F\in\operatorname{Gr}_m(V_S)\) with \(V_S=E\oplus F\), set
\(\mc U_E:=\{\mc L\in\operatorname{Gr}_m(V_S):
\mc L\cap E=\{0\}\}\).
Let \(P_E\) and \(P_F\) be the corresponding projections.
For \(\mc L\in\mc U_E\), define
\(A_{\mc L}:=P_E\circ(P_F|_{\mc L})^{-1}\).
Then \(\mc L=\operatorname{graph}_{(E,F)}(A_{\mc L})
:=\{A_{\mc L}h+h:h\in F\}\), and
\[
\kappa_{(E,F)}:\mc U_E\longrightarrow
\operatorname{Hom}_{\mb C}(F,E),
\qquad \mc L\longmapsto A_{\mc L},
\]
is a coordinate chart.

For \(\lambda\in\rho(\Delta_{X,\mc V})\),
Proposition~\ref{prop:boundary-data-weyl-graph} gives
\(\mc K_X(\lambda)\in\mc U_{\mc V}\). Its graph coordinate is
\[
\kappa_{(\mc V,\mc V^\#)}\bigl(\mc K_X(\lambda)\bigr)
=M_{\mc V,\mc V^\#}^X(\lambda).
\]
The resolvent construction of \(\Gamma_{\mc V,\mc V^\#}^X\)
shows that this coordinate depends holomorphically on \(\lambda\)
in \(\rho(\Delta_{X,\mc V})\). Indeed, one may choose a fixed
linear lift of \(\mc V^\#\) into the maximal domain and use the
holomorphic resolvent in the displayed construction of the formal
solution. Since the resolvent sets cover \(\Sigma_X\), this proves
that \(\mc K_X:\Sigma_X\to\operatorname{Gr}_m(V_S)\) is
holomorphic. This map is the Weyl curve of \(X\).
The Weyl functions associated with different Lagrangian pairs
are therefore local graph representations of this same intrinsic
family of boundary-data subspaces.

We record the corresponding change-of-basis convention for later
matrix calculations. Ordered bases are regarded as row vectors and
coordinates as column vectors. For a linear map \(T:E\to F\),
\([T]_\beta^\gamma\) denotes its matrix with domain basis
\(\beta\) and codomain basis \(\gamma\); thus
\([Tu]_\gamma=[T]_\beta^\gamma[u]_\beta\).
Choose bases \((\beta,\beta^\#)\) and
\((\gamma,\gamma^\#)\) adapted to Lagrangian decompositions
\(V_S=\mc V\oplus\mc V^\#=\mc W\oplus\mc W^\#\), and write
\[
    \bigl[\id_{V_S}\bigr]
_{(\beta,\beta^\#)}^{(\gamma,\gamma^\#)}
=\begin{bmatrix}A&B\\C&D\end{bmatrix}.
\]
In particular, \((\beta,\beta^\#)\) equals
\((\gamma,\gamma^\#)\begin{bmatrix}A&B\\C&D\end{bmatrix}\).
Let \(S_\beta\) and \(S_\gamma\) be the matrices of the Weyl
functions for these two choices, at a common resolvent point.
A graph vector with old coordinates
\(\binom{S_\beta h}{h}\) has new coordinates
\(\binom{(AS_\beta+B)h}{(CS_\beta+D)h}\).
Transversality to \(\mc W\) makes \(CS_\beta+D\) invertible,
so the change of graph coordinates is
\begin{equation}\label{eq:weyl-chart-transition}
S_\gamma=(AS_\beta+B)(CS_\beta+D)^{-1}.
\end{equation}
This formula accounts for changes of the singular complement as
well as changes of basis within a fixed decomposition.

The Green form relates the boundary-data subspaces at conjugate
spectral values.
For a subspace \(E\subset V_S\), write
\(E^{\perp_{\omega_S}}:=\{\xi\in V_S:
\omega_S(\xi,\eta)=0\text{ for every }\eta\in E\}\).

\begin{prop}\label{prop:boundary-data-green-pairing}
For every \(\lambda\in\Sigma_X\),
\(\mc K_X(\lambda)^{\perp_{\omega_S}}
=\mc K_X(\overline\lambda)\).
In particular, \(\mc K_X(\lambda)\) is Lagrangian when
\(\lambda\in\Sigma_X\cap\mb R\).
\end{prop}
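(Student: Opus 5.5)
The plan is to prove the inclusion \(\mc K_X(\overline\lambda)\subset\mc K_X(\lambda)^{\perp_{\omega_S}}\) by a Green-formula computation, and then upgrade it to equality by counting dimensions.

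\emph{Inclusion.} Take \(\xi\in\mc K_X(\lambda)\) and \(\eta\in\mc K_X(\overline\lambda)\). Write \(\xi=\pi_Su\) and \(\eta=\pi_Sv\), where \(u\in\ker(\Delta_X-\lambda)\) and \(v\in\ker(\Delta_X-\overline\lambda)\). The identity \(\mk q_X=\omega_S(\pi_S\cdot,\pi_S\cdot)\) together with sesquilinearity (linear in the first slot, conjugate-linear in the second) gives
\[
\omega_S(\xi,\eta)=\mk q_X(u,v)
=\langle\lambda u,v\rangle_{L^2(X')}-\langle u,\overline\lambda v\rangle_{L^2(X')}
=(\lambda-\lambda)\langle u,v\rangle_{L^2(X')}=0 .
\]
Hence \(\mc K_X(\overline\lambda)\subset\mc K_X(\lambda)^{\perp_{\omega_S}}\).

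\emph{Equality.} Since \(\omega_S\) is nondegenerate on \(V_S\), we have \(\dim_{\mb C}E^{\perp_{\omega_S}}=2m-\dim_{\mb C}E\) for every subspace \(E\); here conjugate-linearity in the second slot does not affect the count. Proposition~\ref{prop:boundary-data-weyl-graph} gives \(\dim_{\mb C}\mc K_X(\lambda)=m\), so the right-hand side of the inclusion has dimension \(m\). For the left-hand side we need \(\overline\lambda\in\Sigma_X\). If \(\lambda\in\rho(\Delta_{X,\mc V})\), then \(\overline\lambda\in\rho(\Delta_{X,\mc V})\) as well, because \(\Delta_{X,\mc V}\) is self-adjoint and its spectrum is real. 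Applying Proposition~\ref{prop:boundary-data-weyl-graph} at \(\overline\lambda\) then gives \(\dim_{\mb C}\mc K_X(\overline\lambda)=m\). An inclusion between spaces of equal finite dimension is an equality.

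\emph{Real case.} For \(\lambda\in\Sigma_X\cap\mb R\), the equality reads \(\mc K_X(\lambda)^{\perp_{\omega_S}}=\mc K_X(\lambda)\), so \(\mc K_X(\lambda)\) is isotropic of dimension \(m=\tfrac12\dim_{\mb C}V_S\), i.e. Lagrangian.

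The only point requiring care is the symmetry of \(\Sigma_X\) under conjugation, which I would check explicitly as above. Beyond that, one should make sure the kernel elements \(u,v\) lie in \(\mc D_{\max}(\Delta_X)\); this holds by definition, since the formal solutions were taken in the maximal domain.
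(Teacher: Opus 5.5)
Your proposal is correct and follows the paper's proof essentially verbatim. The Green identity gives \(\omega_S\)-orthogonality of \(\mc K_X(\lambda)\) and \(\mc K_X(\overline\lambda)\), and conjugation invariance of \(\Sigma_X\) (which you verify explicitly via self-adjointness) gives both spaces dimension \(m\), so nondegeneracy upgrades the inclusion to equality.

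One cosmetic point: your displayed computation puts \(\xi\in\mc K_X(\lambda)\) in the first slot, so it literally shows \(\mc K_X(\lambda)\subset\mc K_X(\overline\lambda)^{\perp_{\omega_S}}\). The inclusion you actually state follows from the skew-Hermitian symmetry \(\omega_S(\eta,\xi)=-\overline{\omega_S(\xi,\eta)}\), or from running the same computation with \(\lambda\) and \(\overline\lambda\) interchanged.
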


\begin{proof}
For \(u\in\ker(\Delta_X-\lambda)\) and
\(v\in\ker(\Delta_X-\mu)\), the Green identity gives
\[
\omega_S(\pi_Su,\pi_Sv)
=(\lambda-\overline\mu)\langle u,v\rangle_{L^2(X')}.
\]
Taking \(\mu=\overline\lambda\) shows that the two boundary-data
subspaces are orthogonal. Both have dimension \(m\), since
\(\Sigma_X\) is invariant under conjugation. Nondegeneracy of
\(\omega_S\) then gives the stated equality. For real
\(\lambda\), the space equals its own symplectic orthogonal and
is therefore Lagrangian.
\end{proof}

In the following sections, the \(S\)-matrix is the matrix
representation of the Weyl function in chosen boundary bases.
It therefore records the same boundary-data subspace in explicit
coordinates. Its dependence on the Lagrangian pair, including the
choice of the complementary subspace, will be relevant when we
compare local conic coordinates.

\section{Isomorphisms of Conic Riemannian Surfaces}\label{sec:conic-isomorphisms}
We work here with oriented smooth surfaces and conic Riemannian
metrics. This viewpoint is needed in
Section~\ref{sec:resolvent-variation}: the smooth trivializations
preserve orientation and the conic sets, but need not preserve the
original complex structures. Equipping the source with the
pullback metric makes each trivialization an isometry. We establish
the resulting transport of Laplacians, domains, Green forms,
Friedrichs extensions, and boundary symplectic spaces.

A compact conic Riemannian surface is a compact connected oriented
smooth surface \(X\) without boundary, equipped with a smooth
symmetric covariant two-tensor \(ds_X^2\) and a finite subset
\(S\subset X\), such that \(ds_X^2\) is a Riemannian metric on
\(X':=X\setminus S\). For each \(P\in S\), we require positively
oriented smooth local coordinates \((x,y)\) centered at \(P\),
an integer \(n_P\geq2\), and a smooth positive function \(\rho_P\)
such that, writing \(z=x+\sqrt{-1}\,y\),
\[
ds_X^2=\rho_P(z)|z|^{2n_P-2}|dz|^2
\]
near \(P\), where \(|dz|^2=dx^2+dy^2\).
The operators and boundary data are defined as in the preceding
section, starting from the Laplace--Beltrami operator on
\(C_c^\infty(X')\subset L^2(X',dA_X)\).
They depend only on the Riemannian structure of \(X'\); their
asymptotic descriptions use the conic coordinates above.

Let \((X,S,ds_X^2)\) and \((Y,T,ds_Y^2)\) be conic Riemannian
surfaces. We say that \(X\) and \(Y\) are isomorphic if there exists
an orientation-preserving diffeomorphism \(\Theta:X\to Y\)
satisfying \(ds_X^2=\Theta^*ds_Y^2\). Such a map \(\Theta\) is called
an isomorphism. Since \(ds_X^2\) and \(ds_Y^2\) vanish precisely on
\(S\) and \(T\), respectively, this identity also implies
\(\Theta(S)=T\). In what follows, we fix an isomorphism \(\Theta:X\to Y\).

The isomorphism \(\Theta\) restricts to a diffeomorphism
\(X'\to Y'\), where \(X'=X\setminus S\) and \(Y'=Y\setminus T\).
Thus pullback defines a \(\mb C\)-algebra isomorphism
\(\Theta^*:C_c^\infty(Y')\to C_c^\infty(X')\), given by
\(\Theta^*u=u\circ\Theta\). Since \(ds_X^2=\Theta^*ds_Y^2\), we have
\(\Theta^*dA_Y=dA_X\). The change-of-variables formula therefore
gives \(\int_{X'}\Theta^*u\,dA_X=\int_{Y'}u\,dA_Y\) for every
\(u\in C_c^\infty(Y')\). Applying this identity to \(|u|^2\)
yields \(\|\Theta^*u\|_{L^2(X')}=\|u\|_{L^2(Y')}\).
Since the spaces of compactly supported smooth functions are
dense in the respective \(L^2\)-spaces, \(\Theta^*\) extends
uniquely to a unitary isomorphism
\(\Theta^*:L^2(Y',dA_Y)\to L^2(X',dA_X)\), with inverse
\((\Theta^{-1})^*\).

Denote by
\(\Delta_{X,c}:C_c^\infty(X')\to C_c^\infty(X')\) and
\(\Delta_{Y,c}:C_c^\infty(Y')\to C_c^\infty(Y')\)
the Laplace--Beltrami operators associated with \(ds_X^2\) and
\(ds_Y^2\), respectively. Since \(\Theta:X'\to Y'\) is an
isometry, the naturality of the Laplace--Beltrami operator gives
\(\Delta_{X,c}(\Theta^*u)=\Theta^*(\Delta_{Y,c}u)\) for every
\(u\in C_c^\infty(Y')\).

\begin{thm}\label{thm:conic-isomorphism}
Let \(\Theta:X\to Y\) be an isomorphism of compact conic
Riemannian surfaces. Its unitary pullback restricts to unitary
isomorphisms between the maximal domains, and between the minimal
domains, equipped with their graph norms. Moreover,
\[
\Delta_X\Theta^*u=\Theta^*\Delta_Yu,
\qquad u\in\mc D_{\max}(\Delta_Y).
\]
The induced map \(\widetilde\Theta^*[u]=[\Theta^*u]\) on the
quotients is a symplectic isomorphism
\(\widetilde\Theta^*:(V_T,\omega_T)\to(V_S,\omega_S)\), satisfying
\(\widetilde\Theta^*\circ\pi_T=\pi_S\circ\Theta^*\).
\end{thm}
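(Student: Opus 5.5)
The proof transports everything by duality with the test functions, then passes to quotients.

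\emph{Step 1: the distributional Laplacian.} Let \(u\in\mc D_{\max}(\Delta_Y)\) and \(\phi\in C_c^\infty(X')\). Write \(\phi=\Theta^*\psi\) with \(\psi=(\Theta^{-1})^*\phi\in C_c^\infty(Y')\). Unitarity of \(\Theta^*\) and the naturality identity \(\Delta_{X,c}\Theta^*=\Theta^*\Delta_{Y,c}\) give
\[
\langle\Theta^*u,\Delta_{X,c}\phi\rangle_{L^2(X')}
=\langle\Theta^*u,\Theta^*\Delta_{Y,c}\psi\rangle_{L^2(X')}
=\langle u,\Delta_{Y,c}\psi\rangle_{L^2(Y')}
=\langle\Delta_Yu,\psi\rangle_{L^2(Y')}
=\langle\Theta^*\Delta_Yu,\phi\rangle_{L^2(X')}.
\]
Hence \(\Theta^*u\in\mc D_{\max}(\Delta_X)\) and \(\Delta_X\Theta^*u=\Theta^*\Delta_Yu\). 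Applying the same argument to the isomorphism \(\Theta^{-1}:Y\to X\) gives the reverse inclusion, so \(\Theta^*\) maps \(\mc D_{\max}(\Delta_Y)\) onto \(\mc D_{\max}(\Delta_X)\). Since \(\Theta^*\) is unitary on \(L^2\) and intertwines the Laplacians, it preserves \(\|u\|_{L^2}\) and \(\|\Delta u\|_{L^2}\). It is therefore a unitary isomorphism for the graph norms.

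\emph{Step 2: minimal domains and the Green form.} A graph-norm isometry is a homeomorphism of the maximal domains. It maps \(C_c^\infty(Y')\) onto \(C_c^\infty(X')\), so it maps the graph closure of \(C_c^\infty(Y')\) onto that of \(C_c^\infty(X')\). Thus it restricts to a unitary isomorphism \(\mc D_{\min}(\Delta_Y)\to\mc D_{\min}(\Delta_X)\). Unitarity together with the intertwining relation of Step 1 gives
\[
\mk q_X(\Theta^*u,\Theta^*v)=\mk q_Y(u,v),\qquad u,v\in\mc D_{\max}(\Delta_Y).
\]

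\emph{Step 3: the quotient map.} Because \(\Theta^*\) maps \(\mc D_{\min}(\Delta_Y)\) onto \(\mc D_{\min}(\Delta_X)\), it induces a linear isomorphism of the quotients \(\mc D_{\max}/\mc D_{\min}\). Transport this through the isomorphisms induced by \(\pi_T\) and \(\pi_S\) (recall \(\ker\pi_S=\mc D_{\min}(\Delta_X)\) and \(\pi_S\) is surjective). This defines \(\widetilde\Theta^*:V_T\to V_S\) by \(\widetilde\Theta^*(\pi_Tu)=\pi_S(\Theta^*u)\). The map is well defined and bijective, and the relation \(\widetilde\Theta^*\circ\pi_T=\pi_S\circ\Theta^*\) holds by construction.

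\emph{Step 4: symplecticity.} Combining the identity \(\mk q=\omega\circ(\pi\times\pi)\) on both surfaces with the invariance from Step 2 gives
\[
\omega_S(\widetilde\Theta^*\pi_Tu,\widetilde\Theta^*\pi_Tv)
=\omega_S(\pi_S\Theta^*u,\pi_S\Theta^*v)
=\mk q_X(\Theta^*u,\Theta^*v)
=\mk q_Y(u,v)
=\omega_T(\pi_Tu,\pi_Tv).
\]
Since \(\pi_T\) is surjective, this holds for all pairs in \(V_T\), so \(\widetilde\Theta^*\) is symplectic.

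\emph{Main obstacle.} The only subtle point is the identification of the quotient with \(V_S\), since the asymptotic bases \(f_{P,j}\), \(f_{P,j}^\#\) depend on the chosen conic coordinates, and \(\Theta\) need not carry the conic coordinates on \(Y\) to those on \(X\). The plan avoids this by defining \(\widetilde\Theta^*\) intrinsically on \(\mc D_{\max}/\mc D_{\min}\). It then uses only that \(\pi_S\) induces an isomorphism from the quotient onto \(V_S\) intertwining \(\mk q_X\) with \(\omega_S\), which was established above for any admissible choice of conic coordinates. Consequently \(\widetilde\Theta^*\) need not map the distinguished mode basis at \(\Theta(P)\) to that at \(P\); only the symplectic structure is preserved.
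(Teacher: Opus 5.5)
Your proposal is correct and follows essentially the same route as the paper's proof. It argues by duality against test functions using naturality and unitarity, obtains surjectivity from \(\Theta^{-1}\), transports the graph closure of \(C_c^\infty\) for the minimal domains, and passes the invariance \(\mk q_X(\Theta^*u,\Theta^*v)=\mk q_Y(u,v)\) to the quotient. Your explicit Step~4 via \(\mk q=\omega\circ(\pi\times\pi)\) and your remark on coordinate-dependent mode bases only make explicit what the paper leaves implicit; the paper treats the latter right after the theorem by transporting the conic coordinates.
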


\begin{proof}
Let \(u\in\mc D_{\max}(\Delta_Y)\) and
\(v=\Delta_Yu\). For \(\psi\in C_c^\infty(X')\), set
\(\eta=(\Theta^{-1})^*\psi\). The intertwining identity on
test functions and unitarity give
\[
\langle\Theta^*u,\Delta_{X,c}\psi\rangle_{L^2(X')}
=\langle u,\Delta_{Y,c}\eta\rangle_{L^2(Y')}
=\langle v,\eta\rangle_{L^2(Y')}
=\langle\Theta^*v,\psi\rangle_{L^2(X')}.
\]
Thus \(\Theta^*u\in\mc D_{\max}(\Delta_X)\) and
\(\Delta_X\Theta^*u=\Theta^*v\). Applying the same argument to
\(\Theta^{-1}\) proves surjectivity. Unitarity then implies
\(\|\Theta^*u\|_{\Delta_X}=\|u\|_{\Delta_Y}\).
Since pullback maps \(C_c^\infty(Y')\) onto \(C_c^\infty(X')\),
it also maps their graph-norm closures onto each other. Hence
\(\Theta^*\mc D_{\min}(\Delta_Y)=\mc D_{\min}(\Delta_X)\),
and the induced quotient map is well defined and invertible.

For \(u,v\in\mc D_{\max}(\Delta_Y)\), the same two identities
give \(\mk q_X(\Theta^*u,\Theta^*v)=\mk q_Y(u,v)\).
Passing to the quotient proves preservation of the boundary form.
\end{proof}

These maps fit into the commutative diagram with exact rows
\[
\begin{tikzcd}[column sep=large, row sep=large]
0\arrow[r]&\mc D_{\min}(\Delta_Y)\arrow[r,hook]\arrow[d,"\Theta^*"']
&\mc D_{\max}(\Delta_Y)\arrow[r,"\pi_T"]\arrow[d,"\Theta^*"']
&V_T\arrow[r]\arrow[d,"\widetilde\Theta^*"']&0\\
0\arrow[r]&\mc D_{\min}(\Delta_X)\arrow[r,hook]
&\mc D_{\max}(\Delta_X)\arrow[r,"\pi_S"]&V_S\arrow[r]&0.
\end{tikzcd}
\]
For composable isomorphisms \(\Theta_1\) and \(\Theta_2\),
\((\Theta_2\circ\Theta_1)^*=\Theta_1^*\circ\Theta_2^*\),
and the same rule holds for the induced quotient maps.
Pullback by the identity is the identity. Thus the maximal and
minimal graph domains, and the boundary symplectic spaces, define
contravariant functors on the category of compact conic Riemannian surfaces with
isomorphisms as morphisms. Their values are respectively Hilbert
spaces with unitary isomorphisms and boundary symplectic spaces
with symplectic isomorphisms.

We next describe this boundary isomorphism in local conic
coordinates. For each \(Q\in T\), choose a coordinate
neighborhood \(U_Q\), containing no other point of \(T\), with
a positively oriented smooth coordinate
\(w_Q=x_Q+\sqrt{-1}\,y_Q\) centered at \(Q\), such that
\[
ds_Y^2=\rho_Q(w_Q)|w_Q|^{2n_Q-2}|dw_Q|^2,
\]
where \(\rho_Q\) is smooth and positive. Write
\(w_Q=r_Qe^{\sqrt{-1}\theta_Q}\) on
\(U_Q':=U_Q\setminus\{Q\}\).

Let \(J_n:=\{j\in\mb Z:|j|\leq n-1\}\), ordered as
\(0,1,-1,\ldots,n-1,-(n-1)\). Recall that the normalized
critical asymptotic terms at \(Q\) are
\(f_{Q,0}=1/\sqrt{2\pi}\) and
\(f_{Q,0}^{\#}=-(\log r_Q)/\sqrt{2\pi}\), together with
\[
f_{Q,j}
=\frac{r_Q^{|j|}e^{\sqrt{-1}j\theta_Q}}{\sqrt{4\pi|j|}},
\qquad
f_{Q,j}^{\#}
=\frac{r_Q^{-|j|}e^{\sqrt{-1}j\theta_Q}}{\sqrt{4\pi|j|}},
\qquad 1\leq |j|\leq n_Q-1.
\]
Thus \(V_Q=\operatorname{span}_{\mb C}
\{f_{Q,j},f_{Q,j}^{\#}:j\in J_{n_Q}\}\).
As above, these terms represent boundary classes after
multiplication by a smooth cutoff equal to \(1\) near \(Q\).

For \(P\in S\), set \(Q:=\Theta(P)\),
\(U_P:=\Theta^{-1}(U_Q)\), and
\(z_P:=w_Q\circ\Theta\). Since \(\Theta\) preserves orientation,
\((U_P,z_P)\) is a positively oriented smooth coordinate chart
centered at \(P\). The metric identity gives
\(ds_X^2=\Theta^*ds_Y^2
=\rho_Q(z_P)|z_P|^{2n_Q-2}|dz_P|^2\).
In particular, \(n_P=n_Q\). The normalized asymptotic basis
\(\{\widehat f_{P,j},\widehat f_{P,j}^{\#}:j\in J_{n_P}\}\)
determined by \(z_P\) satisfies
\(\widehat f_{P,j}=\Theta^*f_{Q,j}\) and
\(\widehat f_{P,j}^{\#}=\Theta^*f_{Q,j}^{\#}\).
Here pullback of a local asymptotic term means composition
with \(\Theta\).

Fix \(Q\in T\), and let \(P:=\Theta^{-1}(Q)\).
For \(v\in\mc D_{\max}(\Delta_Y)\), the local asymptotic
decomposition recalled above gives \(v=f+R\) near \(Q\),
where \(f=\pi_Qv\in V_Q\). If \(\chi\) is a smooth cutoff
supported in a sufficiently small coordinate disc about \(Q\)
and equal to \(1\) near \(Q\), then \(\chi R\), extended by
zero, belongs to \(\mc D_{\min}(\Delta_Y)\).
Pulling back gives \(\Theta^*v=\Theta^*f+\Theta^*R\) near
\(P\), and Theorem~\ref{thm:conic-isomorphism} implies
\((\Theta^*\chi)\Theta^*R=\Theta^*(\chi R)
\in\mc D_{\min}(\Delta_X)\).
Since \(\Theta^*f\in V_P\), the induced boundary map sends
the summand \(V_Q\) onto \(V_P\). Denote its restriction by
\(\widetilde\Theta_P^*:V_{\Theta(P)}\to V_P\).
In the chosen local representatives,
\(\widetilde\Theta_P^*f=\Theta^*f\).

With the source summands indexed through the bijection
\(\Theta:S\to T\), we therefore have
\[
\widetilde\Theta^*
=\bigoplus_{P\in S}\widetilde\Theta_P^*,
\qquad
\widetilde\Theta_P^*\circ\pi_{\Theta(P)}
=\pi_P\circ\Theta^*.
\]
Each \(\widetilde\Theta_P^*\) is a symplectic isomorphism,
with inverse induced by \(\Theta^{-1}\).

Since \(\widetilde\Theta^*\) is a symplectic isomorphism,
it maps the Lagrangian subspaces of \(V_T\) bijectively onto
those of \(V_S\).

This correspondence also transports the associated self-adjoint
extensions.

\begin{lem}
Let \(\mc V\in\operatorname{LGr}(V_T,\omega_T)\).
Then \(\Delta_{Y,\mc V}\) and
\(\Delta_{X,\widetilde\Theta^*\mc V}\) are unitarily
equivalent. For every \(\lambda\in\mb C\), pullback
restricts to a linear isomorphism
\(\Theta^*:\ker(\Delta_{Y,\mc V}-\lambda)
\to\ker(\Delta_{X,\widetilde\Theta^*\mc V}-\lambda)\).
In particular, the two operators have the same spectrum
and resolvent set, and their eigenvalues have the same
multiplicities.
\end{lem}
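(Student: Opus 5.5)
The plan is to show that the unitary pullback \(\Theta^*:L^2(Y')\to L^2(X')\) of Theorem~\ref{thm:conic-isomorphism} maps the operator domain \(\mc D_{Y,\mc V}\) onto \(\mc D_{X,\widetilde\Theta^*\mc V}\) and intertwines the two operators. Unitary equivalence then follows, and the remaining assertions are formal consequences of it.

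We first check that \(\widetilde\Theta^*\mc V\) is a legitimate boundary condition. Since \(\widetilde\Theta^*\) is a symplectic isomorphism, \(\widetilde\Theta^*\mc V\) is Lagrangian in \((V_S,\omega_S)\). Hence \(\Delta_{X,\widetilde\Theta^*\mc V}\) is one of the self-adjoint extensions of \(\Delta_{X,\min}\) in the parametrization recalled from \cite[Theorem~3.2]{LiouWeyl2026}.

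Next we identify the domains. Let \(u\in\mc D_{\max}(\Delta_Y)\). Theorem~\ref{thm:conic-isomorphism} gives \(\Theta^*u\in\mc D_{\max}(\Delta_X)\) and \(\pi_S(\Theta^*u)=\widetilde\Theta^*\pi_Tu\). Because \(\widetilde\Theta^*\) is injective,
\[
\pi_Tu\in\mc V \iff \pi_S(\Theta^*u)\in\widetilde\Theta^*\mc V .
\]
Hence \(\Theta^*\mc D_{Y,\mc V}\subset\mc D_{X,\widetilde\Theta^*\mc V}\). For the reverse inclusion, take \(w\in\mc D_{X,\widetilde\Theta^*\mc V}\) and set \(u=(\Theta^{-1})^*w\), which lies in \(\mc D_{\max}(\Delta_Y)\) by the same theorem applied to \(\Theta^{-1}\). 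The equivalence above then gives \(\pi_Tu\in\mc V\), so \(\Theta^*\) maps \(\mc D_{Y,\mc V}\) bijectively onto \(\mc D_{X,\widetilde\Theta^*\mc V}\).

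The intertwining identity \(\Delta_X\Theta^*u=\Theta^*\Delta_Yu\) holds on the whole maximal domain, so in particular
\[
\Delta_{X,\widetilde\Theta^*\mc V}=\Theta^*\,\Delta_{Y,\mc V}\,(\Theta^*)^{-1}
\]
as operators with the stated domains. This is the unitary equivalence.

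The consequences follow from this conjugation. For any \(\lambda\in\mb C\), the operator \(\Theta^*\) conjugates \(\Delta_{Y,\mc V}-\lambda\) to \(\Delta_{X,\widetilde\Theta^*\mc V}-\lambda\). It therefore maps the kernel of the first bijectively onto the kernel of the second, which gives equality of eigenvalue multiplicities. It also conjugates bounded inverses: if \((\lambda-\Delta_{Y,\mc V})^{-1}\) exists, then \(\Theta^*(\lambda-\Delta_{Y,\mc V})^{-1}(\Theta^*)^{-1}\) is a bounded inverse of \(\lambda-\Delta_{X,\widetilde\Theta^*\mc V}\), and conversely. Hence the resolvent sets coincide, and so do the spectra.

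No step here is a genuine obstacle. The only point requiring care is the domain identification, and it rests entirely on the naturality relation \(\widetilde\Theta^*\circ\pi_T=\pi_S\circ\Theta^*\) together with the invertibility of \(\widetilde\Theta^*\), both already established in Theorem~\ref{thm:conic-isomorphism}.
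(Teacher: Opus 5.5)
Your proposal is correct and follows essentially the same route as the paper: the naturality relation \(\pi_S\Theta^*=\widetilde\Theta^*\pi_T\), applied to both \(\Theta\) and \(\Theta^{-1}\), identifies \(\Theta^*\mc D_{Y,\mc V}\) with \(\mc D_{X,\widetilde\Theta^*\mc V}\). The intertwining of the maximal Laplacians then gives the conjugation identity, from which the kernel and spectral assertions follow; you simply write out the bijectivity and resolvent steps that the paper leaves implicit.
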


\begin{proof}
The trace identity
\(\pi_S\Theta^*=\widetilde\Theta^*\pi_T\), applied also to
\(\Theta^{-1}\), gives
\(\Theta^*\mc D_{Y,\mc V}=\mc D_{X,\widetilde\Theta^*\mc V}\).
Hence Theorem~\ref{thm:conic-isomorphism} yields
\(\Delta_{X,\widetilde\Theta^*\mc V}
=\Theta^*\Delta_{Y,\mc V}(\Theta^{-1})^*\) as an equality of
unbounded operators, including their domains.
Unitary equivalence gives the eigenspace and spectral assertions.
\end{proof}

The intertwining identity for the maximal Laplacians also gives
\(\Theta^*\ker(\Delta_Y-\lambda)\allowbreak
=\ker(\Delta_X-\lambda)\).
Since boundary trace commutes with pullback, it follows that
\(\widetilde\Theta^*\mc K_Y(\lambda)=\mc K_X(\lambda)\) for
\(\lambda\in\mb C\).
Thus the boundary-data subspaces are transported by the boundary
symplectic isomorphism. We now express this relation in terms of
Weyl functions and their associated \(S\)-matrices.

\begin{prop}\label{prop:weyl-function-functoriality}
Let \((\mc V,\mc V^\#)\) be a Lagrangian pair in
\((V_T,\omega_T)\). Then
\((\widetilde\Theta^*\mc V,\widetilde\Theta^*\mc V^\#)\)
is a Lagrangian pair in \((V_S,\omega_S)\).
For every \(\lambda\in\rho(\Delta_{Y,\mc V})
=\rho(\Delta_{X,\widetilde\Theta^*\mc V})\),
\[
M^X_{\widetilde\Theta^*\mc V,\widetilde\Theta^*\mc V^\#}(\lambda)
=
\bigl(\widetilde\Theta^*|_{\mc V}\bigr)
\circ M^Y_{\mc V,\mc V^\#}(\lambda)
\circ\bigl(\widetilde\Theta^*|_{\mc V^\#}\bigr)^{-1}.
\]
\end{prop}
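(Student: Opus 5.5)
The plan is to reduce everything to the transport statements already established: Theorem~\ref{thm:conic-isomorphism}, the trace identity \(\widetilde\Theta^*\circ\pi_T=\pi_S\circ\Theta^*\), and the lemma on unitary equivalence of \(\Delta_{Y,\mc V}\) and \(\Delta_{X,\widetilde\Theta^*\mc V}\).

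\emph{The Lagrangian pair.} Since \(\widetilde\Theta^*\) is a linear isomorphism, it carries the direct sum decomposition \(V_T=\mc V\oplus\mc V^\#\) to \(V_S=\widetilde\Theta^*\mc V\oplus\widetilde\Theta^*\mc V^\#\). Since it is symplectic, each image subspace is Lagrangian. Hence the image is a Lagrangian pair. The equality of resolvent sets is the unitary-equivalence lemma. Because \(\widetilde\Theta^*\) respects the decompositions, the projections intertwine:
\[
P_{\widetilde\Theta^*\mc V}\circ\widetilde\Theta^*=\widetilde\Theta^*\circ P_{\mc V},
\qquad
P_{\widetilde\Theta^*\mc V^\#}\circ\widetilde\Theta^*=\widetilde\Theta^*\circ P_{\mc V^\#}.
\]
Combined with the trace identity, this gives \(\widetilde\pi_{\widetilde\Theta^*\mc V}\circ\Theta^*=\widetilde\Theta^*\circ\widetilde\pi_{\mc V}\), and likewise for the \(\#\)-components.

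\emph{Transport of formal solutions.} Fix \(\lambda\) in the common resolvent set and \(h^\#\in\mc V^\#\). Let \(u_\lambda=\Gamma^Y_{\mc V,\mc V^\#}(\lambda)h^\#\). By Theorem~\ref{thm:conic-isomorphism}, \(\Theta^*u_\lambda\in\mc D_{\max}(\Delta_X)\) and \((\Delta_X-\lambda)\Theta^*u_\lambda=\Theta^*(\Delta_Y-\lambda)u_\lambda=0\), so \(\Theta^*u_\lambda\) is a formal solution on \(X\). Its \(\widetilde\Theta^*\mc V^\#\)-component is \(\widetilde\Theta^*\widetilde\pi_{\mc V^\#}u_\lambda=\widetilde\Theta^*h^\#\). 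Uniqueness of the formal solution with prescribed \(\#\)-data, which holds since \(\lambda\in\rho(\Delta_{X,\widetilde\Theta^*\mc V})\), then gives
\[
\Theta^*\Gamma^Y_{\mc V,\mc V^\#}(\lambda)h^\#
=\Gamma^X_{\widetilde\Theta^*\mc V,\widetilde\Theta^*\mc V^\#}(\lambda)\,\widetilde\Theta^*h^\#.
\]

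\emph{The Weyl function.} Apply \(\widetilde\pi_{\widetilde\Theta^*\mc V}\) to both sides. The left side becomes \(\widetilde\Theta^*M^Y_{\mc V,\mc V^\#}(\lambda)h^\#\), and the right side becomes \(M^X_{\widetilde\Theta^*\mc V,\widetilde\Theta^*\mc V^\#}(\lambda)\,\widetilde\Theta^*h^\#\). Substituting \(h^\#=(\widetilde\Theta^*|_{\mc V^\#})^{-1}k^\#\) gives the claimed formula.

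Alternatively, one can start from \(\widetilde\Theta^*\mc K_Y(\lambda)=\mc K_X(\lambda)\) and use the graph description of Proposition~\ref{prop:boundary-data-weyl-graph}: a linear isomorphism respecting both decompositions conjugates graph coordinates.

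No step is a real obstacle. The only point needing care is checking that the projections intertwine, which is exactly where the image pair is needed as the decomposition on the \(X\) side.
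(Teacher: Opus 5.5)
Your proposal is correct and follows essentially the same route as the paper: transport the Weyl solution by \(\Theta^*\), identify it with \(\Gamma^X_{\widetilde\Theta^*\mc V,\widetilde\Theta^*\mc V^\#}(\lambda)\widetilde\Theta^*h^\#\) by uniqueness of the formal solution with prescribed singular data, and compare the \(\widetilde\Theta^*\mc V\)-components of the boundary traces. Your explicit check that the projections intertwine under \(\widetilde\Theta^*\) is the one detail the paper leaves implicit.
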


\begin{proof}
A symplectic isomorphism preserves Lagrangian subspaces and
direct sums. Thus, setting \(\mc W:=\widetilde\Theta^*\mc V\)
and \(\mc W^\#:=\widetilde\Theta^*\mc V^\#\), we obtain
a Lagrangian pair \((\mc W,\mc W^\#)\) in \((V_S,\omega_S)\).

Fix \(\lambda\in\rho(\Delta_{Y,\mc V})
=\rho(\Delta_{X,\mc W})\) and \(h^\#\in\mc V^\#\).
Let \(v_\lambda:=\Gamma^Y_{\mc V,\mc V^\#}(\lambda)h^\#\).
The intertwining identity implies that
\(u_\lambda:=\Theta^*v_\lambda\) belongs to
\(\ker(\Delta_X-\lambda)\). Moreover,
\(\pi_Su_\lambda=\widetilde\Theta^*(\pi_Tv_\lambda)\), where
\(\pi_Tv_\lambda=M^Y_{\mc V,\mc V^\#}(\lambda)h^\#+h^\#\).
Hence the \(\mc W^\#\)-component of \(\pi_Su_\lambda\) is
\(k^\#:=\widetilde\Theta^*h^\#\).
By uniqueness of the formal solution with this prescribed
component, \(u_\lambda=\Gamma^X_{\mc W,\mc W^\#}(\lambda)k^\#\).
Comparing the \(\mc W\)-components of its boundary data gives
\[
M^X_{\mc W,\mc W^\#}(\lambda)k^\#
=\widetilde\Theta^*\bigl(
M^Y_{\mc V,\mc V^\#}(\lambda)h^\#\bigr).
\]
Since \(\widetilde\Theta^*|_{\mc V^\#}:\mc V^\#\to\mc W^\#\)
is a bijection, this identity proves the stated formula.
\end{proof}

For each \(Q\in T\), let
\(\mc F_{Y,Q}:=\operatorname{span}_{\mb C}
\{f_{Q,j}:j\in J_{n_Q}\}\) and
\(\mc F_{Y,Q}^{\#}:=\operatorname{span}_{\mb C}
\{f_{Q,j}^{\#}:j\in J_{n_Q}\}\).
Set \(\mc F_Y:=\bigoplus_{Q\in T}\mc F_{Y,Q}\)
and \(\mc F_Y^{\#}:=\bigoplus_{Q\in T}\mc F_{Y,Q}^{\#}\).
Then \(\mc F_Y\) is the Friedrichs subspace and
\(V_T=\mc F_Y\oplus\mc F_Y^{\#}\).
The singular complement \(\mc F_Y^{\#}\) is determined
by the chosen conic coordinates.

For \(\lambda\in\rho(\Delta_{Y,\mc F_Y})\), the Weyl
function \(M^Y_{\mc F_Y,\mc F_Y^{\#}}(\lambda):
\mc F_Y^{\#}\to\mc F_Y\) has blocks indexed by pairs
of conic points. Let
\(\iota_{Q'}:\mc F_{Y,Q'}^{\#}\hookrightarrow\mc F_Y^{\#}\)
be the canonical inclusion, and let
\(\operatorname{pr}_Q:\mc F_Y\to\mc F_{Y,Q}\)
be the canonical projection. Define
\[
M_{Q,Q'}^Y(\lambda)
:=\operatorname{pr}_Q\circ
M^Y_{\mc F_Y,\mc F_Y^{\#}}(\lambda)\circ\iota_{Q'}:
\mc F_{Y,Q'}^{\#}\longrightarrow\mc F_{Y,Q}.
\]
Thus the \(Q\)-component of
\(M^Y_{\mc F_Y,\mc F_Y^{\#}}(\lambda)(h_{Q'}^{\#})_{Q'\in T}\)
is \(\sum_{Q'\in T}M_{Q,Q'}^Y(\lambda)h_{Q'}^{\#}\).

Let \(\gamma_Q:=(f_{Q,j})_{j\in J_{n_Q}}\) and
\(\gamma_Q^{\#}:=(f_{Q,j}^{\#})_{j\in J_{n_Q}}\)
be the ordered bases of \(\mc F_{Y,Q}\) and
\(\mc F_{Y,Q}^{\#}\), respectively. For \(Q,Q'\in T\),
define the block \(S\)-matrix by
\[
S_{Q,Q'}^{Y,(\gamma_Q,\gamma_{Q'}^{\#})}(\lambda)
:=\bigl[M_{Q,Q'}^Y(\lambda)\bigr]_{\gamma_{Q'}^{\#}}^{\gamma_Q}
=\left(
S_{Q,Q',\ell p}^{Y,(\gamma_Q,\gamma_{Q'}^{\#})}(\lambda)
\right)_{\ell\in J_{n_Q},\,p\in J_{n_{Q'}}}.
\]
Equivalently, for \(p\in J_{n_{Q'}}\),
\[
M_{Q,Q'}^Y(\lambda)f_{Q',p}^{\#}
=\sum_{\ell\in J_{n_Q}}
S_{Q,Q',\ell p}^{Y,(\gamma_Q,\gamma_{Q'}^{\#})}(\lambda)
f_{Q,\ell}.
\]
Since \(|J_n|=2n-1\), this block is a
\((2n_Q-1)\times(2n_{Q'}-1)\) matrix.

Choose an ordering of \(T\), and let \(\gamma_Y\) and
\(\gamma_Y^{\#}\) be the corresponding concatenated
ordered bases of \(\mc F_Y\) and \(\mc F_Y^{\#}\).
The full \(S\)-matrix is
\[
S^{Y,(\gamma_Y,\gamma_Y^{\#})}(\lambda)
:=\bigl[M^Y_{\mc F_Y,\mc F_Y^{\#}}(\lambda)
\bigr]_{\gamma_Y^{\#}}^{\gamma_Y}
=\left(
S_{Q,Q'}^{Y,(\gamma_Q,\gamma_{Q'}^{\#})}(\lambda)
\right)_{Q,Q'\in T}.
\]

For each \(P\in S\), let
\(\mc F_{X,P}:=\operatorname{span}_{\mb C}
\{\widehat f_{P,j}:j\in J_{n_P}\}\) and
\(\mc F_{X,P}^{\#}:=\operatorname{span}_{\mb C}
\{\widehat f_{P,j}^{\#}:j\in J_{n_P}\}\).
Set \(\mc F_X:=\bigoplus_{P\in S}\mc F_{X,P}\)
and \(\mc F_X^{\#}:=\bigoplus_{P\in S}\mc F_{X,P}^{\#}\).
Define the ordered bases
\(\widehat\gamma_P:=(\widehat f_{P,j})_{j\in J_{n_P}}\)
and
\(\widehat\gamma_P^{\#}:=(\widehat f_{P,j}^{\#})_{j\in J_{n_P}}\).
We use the analogous notation for the blocks and matrices
on \(X\) with respect to these bases.

Because the conic coordinates on \(X\) were transported
from \(Y\), the local boundary map satisfies
\(\widetilde\Theta_P^*f_{\Theta(P),j}=\widehat f_{P,j}\)
and
\(\widetilde\Theta_P^*f_{\Theta(P),j}^{\#}
=\widehat f_{P,j}^{\#}\).
It therefore maps \(\mc F_{Y,\Theta(P)}\) onto
\(\mc F_{X,P}\) and \(\mc F_{Y,\Theta(P)}^{\#}\)
onto \(\mc F_{X,P}^{\#}\). Taking direct sums gives
\[
\widetilde\Theta^*\mc F_Y=\mc F_X,
\qquad
\widetilde\Theta^*\mc F_Y^{\#}=\mc F_X^{\#}.
\]
Thus the Friedrichs subspaces are preserved, as are the
singular complements defined by these transported coordinates.
In particular, the Friedrichs extensions satisfy
\(\Delta_{X,\mc F_X}
=\Theta^*\Delta_{Y,\mc F_Y}(\Theta^{-1})^*\).

\begin{cor}\label{cor:s-matrix-functoriality}
Let \(P,P'\in S\), and set \(Q=\Theta(P)\) and
\(Q'=\Theta(P')\). With the ordered bases defined above,
for every \(\lambda\in\rho(\Delta_{Y,\mc F_Y})
=\rho(\Delta_{X,\mc F_X})\),
\[
S_{P,P'}^{X,(\widehat\gamma_P,\widehat\gamma_{P'}^{\#})}(\lambda)
=S_{Q,Q'}^{Y,(\gamma_Q,\gamma_{Q'}^{\#})}(\lambda).
\]
\end{cor}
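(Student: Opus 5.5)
The corollary follows from Proposition~\ref{prop:weyl-function-functoriality}, applied to the Friedrichs Lagrangian pair \((\mc V,\mc V^\#)=(\mc F_Y,\mc F_Y^{\#})\), followed by a computation of matrices in the transported bases. First we check that this pair is admissible. We showed above that \(\widetilde\Theta^*\mc F_Y=\mc F_X\) and \(\widetilde\Theta^*\mc F_Y^{\#}=\mc F_X^{\#}\), and that the Friedrichs extensions are unitarily equivalent. Hence \(\rho(\Delta_{Y,\mc F_Y})=\rho(\Delta_{X,\mc F_X})\), and for every \(\lambda\) in this set the proposition gives
\[
M^X_{\mc F_X,\mc F_X^{\#}}(\lambda)
=\bigl(\widetilde\Theta^*|_{\mc F_Y}\bigr)\circ
M^Y_{\mc F_Y,\mc F_Y^{\#}}(\lambda)\circ
\bigl(\widetilde\Theta^*|_{\mc F_Y^{\#}}\bigr)^{-1}.
\]

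Next we pass to blocks. The map \(\widetilde\Theta^*=\bigoplus_P\widetilde\Theta_P^*\) is block diagonal with respect to the indexing bijection \(\Theta:S\to T\). For \(P'\in S\) and \(Q'=\Theta(P')\), its inverse therefore sends the summand \(\mc F_{X,P'}^{\#}\) onto \(\mc F_{Y,Q'}^{\#}\), so \(\bigl(\widetilde\Theta^*\bigr)^{-1}\circ\iota_{P'}=\iota_{Q'}\circ(\widetilde\Theta_{P'}^*)^{-1}\). Likewise \(\operatorname{pr}_P\circ\widetilde\Theta^*=\widetilde\Theta_P^*\circ\operatorname{pr}_Q\) on \(\mc F_Y\). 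Composing gives
\[
M^X_{P,P'}(\lambda)=\widetilde\Theta_P^*\circ M^Y_{Q,Q'}(\lambda)\circ(\widetilde\Theta_{P'}^*)^{-1}.
\]

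Finally we read off matrices. We have \(\widetilde\Theta_{P'}^*f^{\#}_{Q',p}=\widehat f^{\#}_{P',p}\) and \(\widetilde\Theta_P^*f_{Q,\ell}=\widehat f_{P,\ell}\), with the same index sets because \(n_P=n_Q\) and \(n_{P'}=n_{Q'}\). We evaluate the block identity on \(\widehat f^{\#}_{P',p}\) and expand \(M^Y_{Q,Q'}(\lambda)f^{\#}_{Q',p}=\sum_\ell S^Y_{Q,Q',\ell p}f_{Q,\ell}\). This yields
\[
M^X_{P,P'}(\lambda)\widehat f^{\#}_{P',p}=\sum_\ell S^Y_{Q,Q',\ell p}(\lambda)\widehat f_{P,\ell},
\]
which is exactly the asserted equality of coefficient matrices.

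No step is a genuine obstacle. The only care needed is the bookkeeping of summand indexing, namely that \(\widetilde\Theta^*\) preserves both the Friedrichs subspaces and the coordinate-dependent singular complements. That fact was established before the corollary, using coordinates on \(X\) transported from \(Y\). For general coordinates on \(X\), one would instead need the chart-transition formula \eqref{eq:weyl-chart-transition}, but that is not required here.
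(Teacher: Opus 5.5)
Your proposal is correct and follows essentially the same route as the paper. The paper's proof also applies Proposition~\ref{prop:weyl-function-functoriality} to the Friedrichs pair and restricts the resulting intertwining identity to the local summands, using that $\widetilde\Theta^*$ is block diagonal. It then evaluates on $f^{\#}_{Q',p}$ and reads off the coefficients in the transported bases, as you do. The only cosmetic difference is that the paper writes the block identity as $M^X_{P,P'}(\lambda)\circ\widetilde\Theta^*_{P'}=\widetilde\Theta^*_P\circ M^Y_{Q,Q'}(\lambda)$ rather than with the inverse.
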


\begin{proof}
Proposition~\ref{prop:weyl-function-functoriality} gives
\(M^X_{\mc F_X,\mc F_X^{\#}}(\lambda)
\circ\widetilde\Theta^*
=\widetilde\Theta^*\circ
M^Y_{\mc F_Y,\mc F_Y^{\#}}(\lambda)\)
on \(\mc F_Y^{\#}\).
Since \(\widetilde\Theta^*\) respects the local summands,
the corresponding blocks satisfy
\[
M_{P,P'}^X(\lambda)\circ\widetilde\Theta_{P'}^*
=\widetilde\Theta_P^*\circ M_{Q,Q'}^Y(\lambda)
\]
on \(\mc F_{Y,Q'}^{\#}\).
For \(p\in J_{n_{Q'}}=J_{n_{P'}}\), applying this identity
to \(f_{Q',p}^{\#}\) gives
\[
\begin{aligned}
M_{P,P'}^X(\lambda)\widehat f_{P',p}^{\#}
&=\widetilde\Theta_P^*\bigl(
M_{Q,Q'}^Y(\lambda)f_{Q',p}^{\#}\bigr)
\\
&=\sum_{\ell\in J_{n_Q}}
S_{Q,Q',\ell p}^{Y,(\gamma_Q,\gamma_{Q'}^{\#})}(\lambda)
\widehat f_{P,\ell}.
\end{aligned}
\]
Since \(n_P=n_Q\) and \(\widehat\gamma_P\) is an
ordered basis of \(\mc F_{X,P}\), comparison with the
definition of the block matrix on \(X\) proves the result.
\end{proof}

The operators act between different spaces, but their matrices
coincide in transported bases. Transporting the ordering of the
conic points also gives equality of the full \(S\)-matrices.
The Friedrichs extension is intrinsic; the matrix identities use
the transported conic coordinates and ordered bases.

\section{The Davies--Gaffney Estimate}\label{sec:davies-gaffney}
In the subsequent analysis, we need off-diagonal estimates for the
heat semigroup and the resolvent between subsets separated by a
positive distance. We first recall the standard pairing form of the
Davies--Gaffney estimate and then derive the localized operator-norm
estimate used below. The pairing estimate follows from
\cite[Theorem~3.3]{CoulhonSikora2008}; for the Dirichlet-form
formulation, see \cite[Theorem~2.8]{HinoRamirez2003}.

The point of this estimate in the present paper is to separate local
conic geometry from global geometry.  A Weyl solution whose singular
boundary data are prescribed at one cone point is obtained from a
localized resolvent.  After the cutoffs have been chosen with disjoint
supports, the Davies--Gaffney estimate shows that the contribution
propagating across the intervening positive distance is rapidly
decreasing on the negative spectral axis.  Consequently, the
high-energy behavior of the corresponding diagonal Weyl block is
determined, up to a rapidly decreasing error, by any model that agrees
with the original metric near that cone point.  This is the mechanism
used in Section~\ref{sec:local-model-comparison} to compare the global
surface with the spherical model.  We write \(O(r^{-\infty})\) for a
quantity that is \(O(r^{-N})\) for every \(N>0\); the notation
\(O(|\lambda|^{-\infty})\) has the analogous meaning as
\(\lambda\to-\infty\).

For Banach spaces \(E\) and \(F\), let \(\mathcal L(E,F)\)
denote the Banach space of bounded linear operators from \(E\) to
\(F\), equipped with the operator norm. We write
\(\mathcal L(E):=\mathcal L(E,E)\). For \(g\in L^\infty(X')\), let
\(M_g\in\mathcal L(L^2(X'))\) be the multiplication operator defined by
\((M_gu)(x):=g(x)u(x)\) for \(u\in L^2(X')\). Its operator norm is
\(\|M_g\|_{\mathcal L(L^2(X'))}=\|g\|_{L^\infty(X')}\). For a measurable
subset \(E\subset X\), we use the same notation \(\chi_E\) for the
restriction to \(X'\) of its characteristic function.

\begin{prop}[Davies--Gaffney estimate]\label{prop:davies-gaffney-pairing}
Let \(L:=\Delta_{X,\mc F_X}\), regarded as a nonnegative self-adjoint
operator on \(L^2(X')\), and let \(d\) denote the metric on \(X\)
obtained by completing the Riemannian distance induced by \(ds_X^2\)
on \(X'\). Let \(U_1,U_2\subset X'\) be open, put
\(r:=d(U_1,U_2)\), and let \(f_i\in L^2(X')\) vanish almost
everywhere outside \(U_i\). Then, for every \(t>0\),
\[
\left|\left\langle e^{-tL}f_1,f_2\right\rangle_{L^2(X')}\right|
\leq
\exp\left(-\frac{r^2}{4t}\right)
\|f_1\|_{L^2(X')}\|f_2\|_{L^2(X')}.
\]
\end{prop}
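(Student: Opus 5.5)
The plan is to derive the estimate from the Davies--Gaffney theorem for strongly local regular Dirichlet forms, as in \cite[Theorem~2.8]{HinoRamirez2003} and \cite[Theorem~3.3]{CoulhonSikora2008}, applied to the Friedrichs form of \(L\). The key structural fact is that \(L\) is the operator associated with the closure \(\mathcal E\) of the Dirichlet form
\[
\mathcal E(u,v)=\int_{X'}\langle du,d\bar v\rangle\,dA_X,\qquad u,v\in C_c^\infty(X'),
\]
in \(L^2(X')\). We must then check two things. First, \(\mathcal E\) is a strongly local regular Dirichlet form. Second, its intrinsic (energy) distance dominates the completed Riemannian distance \(d\).

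Regularity and the Markov property are standard for the closure of the Dirichlet integral on \(C_c^\infty(X')\). Normal contractions preserve the form domain (the minimal one, \(H^1_0\)-type), and \(C_c^\infty(X')\) is a core that is dense in \(C_c(X')\) in the sup norm. Strong locality follows because \(d\Gamma(u,u)=|du|^2\,dA_X\).

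For the distance comparison, fix \(x_0\in X'\) and take \(\psi=\min(d(\cdot,x_0),R)\) for a large constant \(R\). This function is Lipschitz with \(|d\psi|\le1\) almost everywhere on \(X'\). Its energy density, locally on \(X'\), is bounded by \(dA_X\), so \(\psi\) is admissible for the intrinsic metric. Hence the intrinsic distance between \(U_1\) and \(U_2\) is at least \(d(U_1,U_2)\).

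The one point to check is that admissibility only requires \(\psi\) to lie locally in the form domain, not globally in \(\mathcal D(\mathcal E)\). Near the cone points this is harmless: the metric completion adds only finitely many points, and Lipschitz functions are locally in \(H^1\) on \(X'\).

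With these facts in hand, the proof is short. Choose Lipschitz weights \(\phi\) with \(|d\phi|\le1\), for example \(\phi=\alpha\,d(\cdot,U_1)\) truncated. The Gaffney argument differentiates \(\|e^{\phi}e^{-tL}f_1\|^2\) in \(t\). Since \(e^{\pm\phi}\) are bounded Lipschitz multipliers that preserve the form domain, this gives the bound \(e^{\alpha^2 t}\|f_1\|^2\).

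The main step is to estimate \(2\Re\mathcal E(u,e^{2\phi}u)\ge-2\|d\phi\|_\infty^2\|e^\phi u\|^2\) for \(u\) in the form domain. This is a pointwise computation done on \(C_c^\infty(X')\) and extended by closure.

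Pairing with \(f_2\) then gives
\[
|\langle e^{-tL}f_1,f_2\rangle|\le e^{\alpha^2t-\alpha r}\|f_1\|\|f_2\|,
\]
since \(\phi=0\) on \(U_1\) and \(\phi\ge\alpha r\) on \(U_2\). Optimizing with \(\alpha=r/(2t)\) yields the factor \(\exp(-r^2/4t)\).

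The main obstacle is justifying that multiplication by \(e^{\pm\phi}\) preserves the Friedrichs form domain despite the conic points. I would handle this by approximating \(\phi\) uniformly by smooth functions that are constant near each cone point, and using the closure property.
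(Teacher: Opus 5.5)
Your proposal is correct and follows essentially the paper's route. The paper identifies \(L\) as the operator of the closed Dirichlet energy on \(C_c^\infty(X')\), a strongly local regular Dirichlet form whose intrinsic distance is the Riemannian distance of \(X'\) (the restriction of \(d\)), and then invokes \cite[Theorem~3.3]{CoulhonSikora2008} with \(M=X'\); your comparison of the intrinsic distance with \(d\) and your exponential-weight argument simply unpack that citation. Two small points: the cone-point ``obstacle'' is not a real one, since the form domain is the energy closure of \(C_c^\infty(X')\) and multiplication by \(e^{\pm\phi}\) (bounded, with bounded differential) preserves it by a direct density argument, whereas merely uniform approximation of \(\phi\) would not control \(d\phi\); and the Gaffney step should read \(\|e^{\phi}e^{-tL}f_1\|\le e^{\alpha^2t}\|f_1\|\) for the norm (the squared norm carries \(e^{2\alpha^2t}\)), which is the bound your final display actually uses.
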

\begin{proof}
The smooth locus \(X'\) is a Riemannian manifold, not necessarily
complete, and \(L\) is the Friedrichs extension of its nonnegative
Laplace--Beltrami operator initially defined on \(C_c^\infty(X')\).
Its closed quadratic form is the strongly local regular Dirichlet
form obtained by closing the Riemannian Dirichlet energy, and the
associated intrinsic distance is the Riemannian distance on \(X'\).
Thus metric completeness is not required for the Friedrichs
realization considered here.
Therefore \cite[Theorem~3.3]{CoulhonSikora2008}, with \(M=X'\) and
zero potential, gives precisely the pairing estimate
\cite[(3.2)]{CoulhonSikora2008}. The distance in that result is the
intrinsic Riemannian distance on \(X'\), which is the restriction of
the completion metric \(d\).
\end{proof}

\begin{cor}\label{cor:davies-gaffney-cutoffs}
For closed subsets \(E,F\subset X\), set
\(d(E,F):=\inf\{d(x,y):x\in E,\ y\in F\}\).
If \(d(E,F)>0\), then, for every \(t>0\),
\[
\left\|M_{\chi_F}e^{-tL}M_{\chi_E}\right\|_{\mathcal L(L^2(X'))}
\leq \exp\left(-\frac{d(E,F)^2}{4t}\right).
\]
\end{cor}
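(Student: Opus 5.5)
The corollary follows from the pairing form of Proposition~\ref{prop:davies-gaffney-pairing} by duality, applied after shrinking \(E\) and \(F\) to open subsets of \(X'\) with separation arbitrarily close to \(d(E,F)\).

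Fix \(t>0\) and \(u,v\in L^2(X')\). We have
\[
\langle M_{\chi_F}e^{-tL}M_{\chi_E}u,v\rangle_{L^2(X')}
=\langle e^{-tL}(\chi_Eu),\chi_Fv\rangle_{L^2(X')} .
\]
The proposition is stated for open sets \(U_1,U_2\subset X'\), whereas \(E,F\) are closed subsets of \(X\) that may contain cone points. Since \(S\) is finite, \(E\cap X'\) and \(E\cap S\) together give \(E\) up to a null set, and \(\chi_E u\) is already supported, up to null sets, in \(E\cap X'\). The same holds for \(F\).

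Fix \(0<\varepsilon<d(E,F)/2\) and set
\[
U_1:=\{x\in X': d(x,E)<\varepsilon\},\qquad
U_2:=\{x\in X': d(x,F)<\varepsilon\}.
\]
These sets are open in \(X'\), because \(d\) restricted to \(X'\) is the Riemannian distance, so \(x\mapsto d(x,E)\) is continuous on \(X'\). The function \(\chi_Eu\) vanishes almost everywhere outside \(U_1\), and \(\chi_Fv\) vanishes almost everywhere outside \(U_2\). By the triangle inequality,
\[
d(U_1,U_2)\geq d(E,F)-2\varepsilon>0 .
\]

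Applying Proposition~\ref{prop:davies-gaffney-pairing} with \(f_1=\chi_Eu\) and \(f_2=\chi_Fv\) gives
\[
|\langle M_{\chi_F}e^{-tL}M_{\chi_E}u,v\rangle|
\leq \exp\!\Bigl(-\frac{(d(E,F)-2\varepsilon)^2}{4t}\Bigr)\|\chi_Eu\|\,\|\chi_Fv\| .
\]
The right-hand side is at most \(\exp\bigl(-(d(E,F)-2\varepsilon)^2/4t\bigr)\|u\|\,\|v\|\).

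Taking the supremum over unit vectors \(u\) and \(v\) bounds the operator norm by this exponential. Letting \(\varepsilon\to0\) yields the stated estimate, since the left-hand side does not depend on \(\varepsilon\).

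The only delicate point is the passage from closed sets in \(X\) to open sets in \(X'\). It works because cone points form a null set and \(d\) is the completion of the intrinsic distance, so \(d(E,F)\) is computed by the same metric that appears in the proposition. Everything else is the standard duality characterization of the operator norm.
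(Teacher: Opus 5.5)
Your proposal is correct and follows essentially the same route as the paper. Both pass to the open \(\varepsilon\)-neighborhoods of \(E\) and \(F\) in \(X'\), apply the pairing estimate with separation at least \(d(E,F)-2\varepsilon\), take the supremum over unit vectors, and let \(\varepsilon\to0\). The paper also states explicitly that one may assume \(E\) and \(F\) nonempty, a trivial case your argument leaves implicit.
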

\begin{proof}
We may assume that \(E\) and \(F\) are nonempty. Put
\(\delta:=d(E,F)>0\). For \(0<\varepsilon<\delta/2\), define
\(E_\varepsilon:=\{x\in X':d(x,E)<\varepsilon\}\) and
\(F_\varepsilon:=\{x\in X':d(x,F)<\varepsilon\}\).
These sets are open in \(X'\), and the triangle inequality gives
\(d(E_\varepsilon,F_\varepsilon)\geq\delta-2\varepsilon\).

For \(u,v\in L^2(X')\), the functions \(M_{\chi_E}u\) and
\(M_{\chi_F}v\) vanish outside \(E_\varepsilon\) and
\(F_\varepsilon\), respectively. Applying
Proposition~\ref{prop:davies-gaffney-pairing},
we obtain
\[
\left|
\left\langle
e^{-tL}M_{\chi_E}u,M_{\chi_F}v
\right\rangle_{L^2(X')}
\right|
\leq
\exp\left(-\frac{(\delta-2\varepsilon)^2}{4t}\right)
\|u\|_{L^2(X')}\|v\|_{L^2(X')}.
\]
Letting \(\varepsilon\downarrow0\) and taking the supremum over
unit vectors \(u,v\) proves the assertion.
\end{proof}

The heat-semigroup estimate also controls spectral derivatives
of the localized resolvent.

\begin{prop}\label{prop:resolvent-power-decay}
Let \(\chi_1,\chi_2\in C^\infty(X)\) have disjoint supports,
and let \(q\geq0\) be an integer. For every \(M>0\),
\[
\left\|M_{\chi_1}\frac{d^q}{dr^q}R_{\mc F_X}(-r)
M_{\chi_2}\right\|_{\mathcal L(L^2(X'))}
=O(r^{-M})\qquad(r\to\infty).
\]
\end{prop}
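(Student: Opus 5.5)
The idea is to write the resolvent derivative as a Laplace transform of the heat semigroup and to insert the Davies--Gaffney bound of Corollary~\ref{cor:davies-gaffney-cutoffs}.

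\textbf{Step 1: Laplace representation.} Since \(L=\Delta_{X,\mc F_X}\geq0\) is self-adjoint, for \(r>0\) we have
\[
R_{\mc F_X}(-r)=(-r-L)^{-1}=-\int_0^\infty e^{-rt}e^{-tL}\,dt .
\]
The integral converges in operator norm because \(\|e^{-tL}\|\leq1\). Differentiating under the integral sign is justified by the factor \(t^qe^{-rt}\), and gives
\[
\frac{d^q}{dr^q}R_{\mc F_X}(-r)=(-1)^{q+1}\int_0^\infty t^qe^{-rt}e^{-tL}\,dt .
\]

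\textbf{Step 2: insert the cutoffs.} Let \(E=\supp\chi_2\) and \(F=\supp\chi_1\). These are disjoint closed subsets of the compact space \(X\). The completed distance \(d\) induces the manifold topology of \(X\), including near the cone points, where the distance to the vertex is comparable to \(|z|^{n_P}\). Hence \(\delta:=d(E,F)>0\).

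Multiplication operators commute with the Bochner integral, and \(M_{\chi_i}=M_{\chi_i}M_{\chi_{\supp\chi_i}}\). Combining these facts with Corollary~\ref{cor:davies-gaffney-cutoffs} yields
\[
\Bigl\|M_{\chi_1}\tfrac{d^q}{dr^q}R_{\mc F_X}(-r)M_{\chi_2}\Bigr\|
\leq\|\chi_1\|_\infty\|\chi_2\|_\infty
\int_0^\infty t^q\exp\Bigl(-rt-\frac{\delta^2}{4t}\Bigr)dt .
\]

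\textbf{Step 3: estimate the scalar integral.} By AM--GM, \(rt+\delta^2/(4t)\geq\delta\sqrt r\). Split the exponent as half plus half: one half is bounded below by \(\tfrac12\delta\sqrt r\), and the other half is kept as \(e^{-rt/2}\). This gives the bound
\[
e^{-\delta\sqrt r/2}\int_0^\infty t^qe^{-rt/2}\,dt
=e^{-\delta\sqrt r/2}\,q!\,(2/r)^{q+1}.
\]
The right-hand side is \(O(r^{-M})\) for every \(M\), which proves the claim.

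\textbf{Main obstacle.} The only delicate point is checking that \(\delta>0\) in the metric \(d\) used by the Davies--Gaffney estimate. Disjointness of the supports in the smooth topology of \(X\) must translate into a positive distance in the completion of the incomplete metric on \(X'\). This follows because \(d\) is continuous on \(X\) and positive off the diagonal, by the local model \(\rho|z|^{2n-2}|dz|^2\). Continuity of \(d\) with respect to the topology of \(X\), together with compactness, then gives \(\delta>0\).
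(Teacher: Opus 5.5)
Your proof is correct and follows the paper's argument essentially verbatim: the Laplace representation \(\frac{d^q}{dr^q}R_{\mc F_X}(-r)=(-1)^{q+1}\int_0^\infty s^qe^{-rs}e^{-sL}\,ds\), then Corollary~\ref{cor:davies-gaffney-cutoffs}, then an AM--GM split of the exponent. The paper's split \(rs+\delta^2/(4s)\geq rs/2+\delta\sqrt{r/2}\) differs from yours (\(\delta\sqrt r/2\)) only in an irrelevant constant; your explicit check that the supports are at positive distance in the completed conic metric covers a point the paper takes for granted, and your justification via continuity of \(d\) and compactness is sound.
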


\begin{proof}
Put \(L=\Delta_{X,\mc F_X}\) and let \(\delta>0\) be the
distance between the two supports. The heat-semigroup representation
gives
\[
\frac{d^q}{dr^q}R_{\mc F_X}(-r)
=q!\bigl(R_{\mc F_X}(-r)\bigr)^{q+1}
=(-1)^{q+1}\int_0^\infty s^q e^{-rs}e^{-sL}\,ds.
\]
Here \(\bigl(R_{\mc F_X}(-r)\bigr)^{q+1}\) denotes the
\((q+1)\)-st operator power of the resolvent evaluated at the
spectral parameter \(-r\).
After inserting the cutoff multipliers,
Corollary~\ref{cor:davies-gaffney-cutoffs}
and \(rs+\delta^2/(4s)\geq rs/2+\delta\sqrt{r/2}\) imply
\[
\left\|M_{\chi_1}\frac{d^q}{dr^q}R_{\mc F_X}(-r)
M_{\chi_2}\right\|
\leq\|\chi_1\|_\infty\|\chi_2\|_\infty
q!\left(\frac2r\right)^{q+1}e^{-\delta\sqrt{r/2}}.
\]
This proves the assertion. In particular, the constants can be
chosen uniformly when the cutoff norms are bounded and the distance
between their supports has a fixed positive lower bound.
\end{proof}

\begin{lem}\label{lem:weyl-solution-decay}
Let \(K\Subset X'\), fix a conic point \(P\in S\) and
\(h^\#\in\mc F_{X,P}^\#\), and set
\(u_{-r}:=\Gamma_{\mc F_X,\mc F_X^\#}^X(-r)h^\#\) for \(r>0\).
For every integer \(q\geq0\) and every \(M>0\),
\[
\|\partial_r^q u_{-r}\|_{L^2(K)}=O(r^{-M})
\qquad(r\to\infty).
\]
\end{lem}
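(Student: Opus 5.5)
We construct the Weyl solution with a lift supported near \(P\) and apply Proposition~\ref{prop:resolvent-power-decay} to the resolvent correction. Choose a smooth cutoff \(\chi\in C^\infty(X)\) supported in the coordinate disc \(U_P\), equal to \(1\) near \(P\), and with \(\operatorname{supp}\chi\cap K=\emptyset\). This is possible because \(K\Subset X'\) has positive distance from \(S\), so shrinking the disc around \(P\) suffices. Write \(h^\#=\sum_j a_j\widehat f_{P,j}^\#\) and set \(u:=\chi h^\#\in\mc D_{\max}(\Delta_X)\), independent of \(r\). Its boundary data are \(h^\#\), so \(\widetilde\pi_{\mc F_X^\#}u=h^\#\). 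By the construction preceding the definition of the Weyl function,
\[
u_{-r}=u+R_{\mc F_X}(-r)(\Delta_X+r)u .
\]
On \(K\) the first term vanishes. Thus \(u_{-r}|_K=M_{\chi_K}R_{\mc F_X}(-r)(\Delta_X u+ru)\), where \(\chi_K\) is the indicator of \(K\).

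Both \(\Delta_Xu\) and \(u\) are supported in \(\operatorname{supp}\chi\), and they lie in \(L^2(X')\). The asymptotic terms \(f^\#\) are harmonic for the flat model. By Lemma~\ref{lem:conformal-domain-comparison}, \(\Delta_X(\chi h^\#)\) is \(h^{-1}\) times the flat Laplacian of \(\chi h^\#\). That quantity vanishes where \(\chi\equiv1\), so \(\Delta_Xu\) is in fact supported in the annulus where \(\nabla\chi\neq0\), which is smooth and compactly contained in \(X'\).

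Now pick \(\widetilde\chi\in C^\infty(X)\) with \(\widetilde\chi=1\) on \(\operatorname{supp}\chi\), and \(\chi_1\in C^\infty(X)\) with \(\chi_1=1\) on \(K\), such that the supports of \(\chi_1\) and \(\widetilde\chi\) are disjoint. Then
\[
\partial_r^q u_{-r}\big|_K=\chi_K\,\partial_r^q\Bigl(M_{\chi_1}R_{\mc F_X}(-r)M_{\widetilde\chi}\bigl(\Delta_Xu+ru\bigr)\Bigr).
\]
Apply Leibniz in \(r\). The only \(r\)-dependence besides the resolvent is the linear factor \(r\), so the expression equals
\[
\partial_r^q\bigl(M_{\chi_1}R(-r)M_{\widetilde\chi}\bigr)(\Delta_Xu+ru)+q\,\partial_r^{q-1}\bigl(M_{\chi_1}R(-r)M_{\widetilde\chi}\bigr)u,
\]
with the second term absent when \(q=0\). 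By Proposition~\ref{prop:resolvent-power-decay}, each operator-norm factor is \(O(r^{-M-1})\) for every \(M\). The vectors have \(L^2\) norms \(O(r)\) and \(O(1)\), respectively. This gives \(\|\partial_r^q u_{-r}\|_{L^2(K)}=O(r^{-M})\).

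The main point requiring care is the legitimacy of differentiating in \(r\) under the construction. This is fine because \(u\) is \(r\)-independent and \(r\mapsto R_{\mc F_X}(-r)\) is norm-analytic on \((0,\infty)\), which lies in the resolvent set of the nonnegative Friedrichs operator. The other point is the choice of cutoffs with disjoint supports, which only uses \(d(K,S)>0\) in the completed metric.
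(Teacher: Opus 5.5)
Your proposal is correct and follows the paper's own proof essentially step for step. Both arguments use a cutoff lift of \(h^\#\) supported near \(P\) away from \(K\), the identity \(M_{\chi_1}u_{-r}=M_{\chi_1}R_{\mc F_X}(-r)M_{\chi_2}(\Delta_X+r)H\) with disjointly supported cutoffs, the two-term Leibniz expansion in \(r\), and Proposition~\ref{prop:resolvent-power-decay} with decay orders \(M+1\) and \(M\). Your side remark that \(\Delta_X u\) is supported in the annulus where \(\nabla\chi\neq0\) is true but not needed.
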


\begin{proof}
Choose a cutoff representative \(H\in\mc D_{\max}(\Delta_X)\)
of \(h^\#\), supported in a small neighborhood of \(P\)
disjoint from \(K\). Let \(\chi_1\) equal one on \(K\), and
choose \(\chi_2\) equal to one on \(\supp H\), with their
supports disjoint. The construction of the Weyl solution gives
\[
M_{\chi_1}u_{-r}
=M_{\chi_1}R_{\mc F_X}(-r)M_{\chi_2}(\Delta_X+r)H.
\]
For \(q\geq1\), differentiation yields
\[
\begin{aligned}
M_{\chi_1}\partial_r^q u_{-r}
={}&M_{\chi_1}(\partial_r^qR_{\mc F_X}(-r))
M_{\chi_2}(\Delta_X+r)H\\
&+qM_{\chi_1}(\partial_r^{q-1}R_{\mc F_X}(-r))M_{\chi_2}H.
\end{aligned}
\]
Apply Proposition~\ref{prop:resolvent-power-decay} with decay
order \(M+1\) to the first term and with decay order \(M\) to the
second. Since
\(
\|(\Delta_X+r)H\|_{L^2}\leq(1+r)\|H\|_{\Delta_X},
\)
the first term is \(O(r^{-M-1})(1+r)=O(r^{-M})\), whereas the
second is \(O(r^{-M})\). The same argument applied to the
undifferentiated identity proves the case \(q=0\).
\end{proof}


\section{The Spherical Conic Model}\label{sec:spherical-model}
The purpose of this section is to compute the model Weyl matrix in
the two boundary bases used later. We first recall the spherical
covering model and its Weyl function. We then compare the spherical
and distinguished conic coordinates and determine the resulting
change of Weyl matrix.

We use the explicit conic model and its spectral data developed in
\cite{LiouModel2026}. The special-function identities underlying these
formulas are recorded in \cite[Chapters~5 and~14]{DLMF}.

Let \(n\geq2\) be an integer. On the source \(\mb P^1\), let
\(\xi\) and \(\eta=\xi^{-1}\) denote the standard affine
coordinates. Consider the map
\(\Phi_n\colon\mb P^1\to\mb P^1\) defined by
\(\Phi_n(\xi)=\xi^n\). If
\[
    ds_{\mathrm{rd}}^2
    :=
    \frac{4|dx|^2}{(1+|x|^2)^2}
\]
denotes the round metric on the target \(\mb P^1\), then
\[
    ds_n^2
    :=
    \Phi_n^*ds_{\mathrm{rd}}^2
    =
    \frac{4n^2|\xi|^{2n-2}}
         {(1+|\xi|^{2n})^2}|d\xi|^2.
\]
In the coordinate \(\eta=\xi^{-1}\) near infinity, the metric has
the same form:
\[
    ds_n^2
    =
    \frac{4n^2|\eta|^{2n-2}}
         {(1+|\eta|^{2n})^2}|d\eta|^2.
\]
Thus,
\(\widehat X_n=(\mb P^1,\{0,\infty\},ds_n^2)\) is a compact conic
Riemann surface.

Let \(V_{\widehat X_n,0}\) and \(V_{\widehat X_n,\infty}\) be the
critical asymptotic spaces of \(\widehat X_n\) at \(0\) and
\(\infty\), respectively, and set
\(V_{\widehat X_n,\{0,\infty\}}
:=V_{\widehat X_n,0}\oplus V_{\widehat X_n,\infty}\).
For \(j\in J_n\), write \(g_j\) and \(g_j^\#\) for the
normalized regular and singular terms of
Section~\ref{sec:conic-analysis}, viewed as functions of a local
coordinate; in particular,
\(g_0=1/\sqrt{2\pi}\) and
\(g_0^\#(\xi)=-\log|\xi|/\sqrt{2\pi}\).
Using the coordinate \(\xi\) at \(0\), define
\(g_{0,j}:=g_j(\xi)\) and \(g_{0,j}^\#:=g_j^\#(\xi)\).
Similarly, using \(\eta\) at \(\infty\), define
\(g_{\infty,j}:=g_j(\eta)\) and
\(g_{\infty,j}^\#:=g_j^\#(\eta)\).

Then \(V_{\widehat X_n,0}\) and \(V_{\widehat X_n,\infty}\) have
the ordered bases \((\beta_0,\beta_0^\#)\) and
\((\beta_\infty,\beta_\infty^\#)\), respectively, where
\(\beta_0=(g_{0,j})_j\), \(\beta_0^\#=(g_{0,j}^\#)_j\),
\(\beta_\infty=(g_{\infty,j})_j\), and
\(\beta_\infty^\#=(g_{\infty,j}^\#)_j\).

Let \(\mc F_{\widehat X_n}\) be the Friedrichs subspace of
\(V_{\widehat X_n,\{0,\infty\}}\), and let
\(\mc F_{\widehat X_n}^\#\) be the Lagrangian complement spanned
by \(\beta_0^\#\) and \(\beta_\infty^\#\). This complement
is determined by the chosen coordinates \(\xi\) and \(\eta\). For
\(\lambda\in\rho(\Delta_{\widehat X_n,\mc F_{\widehat X_n}})\),
the Weyl function
\[
    M_{\mc F_{\widehat X_n},\mc F_{\widehat X_n}^\#}^{\widehat X_n}
    (\lambda)
    \colon
    \mc F_{\widehat X_n}^\#
    \longrightarrow
    \mc F_{\widehat X_n}
\]
has the block decomposition
\[
    M_{\mc F_{\widehat X_n},\mc F_{\widehat X_n}^\#}^{\widehat X_n}
    (\lambda)
    =
    \begin{bmatrix}
        M_{00}^{\widehat X_n}(\lambda)
        &
        M_{0\infty}^{\widehat X_n}(\lambda)
        \\
        M_{\infty0}^{\widehat X_n}(\lambda)
        &
        M_{\infty\infty}^{\widehat X_n}(\lambda)
    \end{bmatrix}
\]
with respect to the decompositions
\[
    \mc F_{\widehat X_n}^\#
    =
    \mc F_{\widehat X_n,0}^\#
    \oplus
    \mc F_{\widehat X_n,\infty}^\#
\]
and
\[
    \mc F_{\widehat X_n}
    =
    \mc F_{\widehat X_n,0}
    \oplus
    \mc F_{\widehat X_n,\infty}.
\]
We abbreviate \(M_{00}^{\widehat X_n}(\lambda)\) and
\(M_{\infty\infty}^{\widehat X_n}(\lambda)\) as
\(M_0^{\widehat X_n}(\lambda)\) and
\(M_\infty^{\widehat X_n}(\lambda)\), respectively.

Let
\(J_n=\{j\in\mb Z:|j|\leq n-1\}\), ordered as
\(0,1,-1,\ldots,n-1,-(n-1)\). Define the local \(S\)-matrix of
\(\widehat X_n\) at \(0\), with respect to the domain basis
\(\beta_0^\#\) and the codomain basis \(\beta_0\), by
\[
    S_0^{\widehat X_n,(\beta_0,\beta_0^\#)}(\lambda)
    :=
    \bigl[M_0^{\widehat X_n}(\lambda)\bigr]_{\beta_0^\#}^{\beta_0}
    =
    \left[
        S_{0,\ell j}^{\widehat X_n,(\beta_0,\beta_0^\#)}(\lambda)
    \right]_{\ell,j\in J_n}.
\]
Equivalently,
\[
    M_0^{\widehat X_n}(\lambda)g_{0,j}^\#
    =
    \sum_{\ell\in J_n}
    S_{0,\ell j}^{\widehat X_n,(\beta_0,\beta_0^\#)}(\lambda)
    g_{0,\ell}.
\]

Choose \(\nu\in\mb C\) such that
\(\lambda=\nu(\nu+1)\), and set \(\alpha_j:=|j|/n\). By the
explicit computation for the conic model on \(\mb P^1\)
\cite{LiouModel2026},
\[
    S_{0,\ell j}^{\widehat X_n,(\beta_0,\beta_0^\#)}(\lambda)
    =
    \delta_{\ell j}
    s_j^{\widehat X_n,(\beta_0,\beta_0^\#)}(\lambda),
\]
where
\[
    s_j^{\widehat X_n,(\beta_0,\beta_0^\#)}(\lambda)
    =
    \begin{cases}
    \displaystyle
    -\frac{1}{n}
    \left(
        \gamma_E+\psi(\nu+1)
        +\frac{\pi}{2}\cot(\pi\nu)
    \right),
    & j=0,\\[1.2em]
    \displaystyle
    -\frac{\Gamma(1-\alpha_j)}
           {\Gamma(1+\alpha_j)}
     \frac{\Gamma(\nu+\alpha_j+1)}
          {\Gamma(\nu-\alpha_j+1)}
     \frac{\sin(\pi\nu)}
          {\sin\bigl(\pi(\nu-\alpha_j)\bigr)},
    & 0<|j|\leq n-1.
    \end{cases}
\]
Here \(\gamma_E\) is Euler's constant, \(\Gamma\) is the gamma
function, and \(\psi=\Gamma'/\Gamma\) is the digamma function.
The second expression is understood by meromorphic continuation at
its removable singularities.

The preceding formulas are written in the coordinate naturally
centered over the target value \(0\).  In the Hurwitz problem, however,
the corresponding branch value is \(z_k\).  Passing between these two
coordinates changes not only the normalization of the critical modes
but also the singular Lagrangian complement used to represent the Weyl
curve as a graph.  The resulting transformation law is therefore
affine rather than purely multiplicative.  Its off-diagonal term is
the source of the \(z_k\)- and \(\overline{z_k}\)-dependent entries that
eventually produce the ramification correction in the determinant
formula.

We next introduce another local coordinate at
\(0\in\widehat X_n\). Fix \(z_k\in\mb C\subset\mb P^1\), and
consider the M\"obius transformation
\(T_{z_k}(x):=(x-z_k)/(1+\overline{z_k}x)\), whose inverse is
\(T_{z_k}^{-1}(x)=(x+z_k)/(1-\overline{z_k}x)\).
Both \(T_{z_k}\) and its inverse are round-metric isometries.

Define \(\Phi_{n,k}:=T_{z_k}^{-1}\circ\Phi_n\). Then
\[
    \Phi_{n,k}(\xi)
    =
    \frac{\xi^n+z_k}
         {1-\overline{z_k}\xi^n},
    \qquad
    \Phi_{n,k}(0)=z_k,
\]
with \(\Phi_{n,k}^*ds_{\mathrm{rd}}^2=ds_n^2\).
Set \(A_k:=1+|z_k|^2\). A coordinate satisfying
\(\Phi_{n,k}=z_k+\zeta^n\) is
\[
    \zeta
    =
    A_k^{1/n}\xi
    \bigl(1-\overline{z_k}\xi^n\bigr)^{-1/n},
\]
where the local holomorphic branch is chosen so that
\(\zeta(0)=0\) and \(\zeta'(0)=A_k^{1/n}\). Its inverse is
\[
    \xi
    =
    \zeta
    \bigl(A_k+\overline{z_k}\zeta^n\bigr)^{-1/n}.
\]

The two expressions for the same metric are
\[
    \frac{4n^2|\xi|^{2n-2}}
         {(1+|\xi|^{2n})^2}|d\xi|^2
    =
    \frac{4n^2|\zeta|^{2n-2}}
         {\bigl(1+|z_k+\zeta^n|^2\bigr)^2}|d\zeta|^2.
\]
Define
\[
    h_{0,j}(\xi)
    :=
    g_j\bigl(\zeta(\xi)\bigr),
    \qquad
    h_{0,j}^\#(\xi)
    :=
    g_j^\#\bigl(\zeta(\xi)\bigr).
\]
Set \(\gamma_0:=(h_{0,j})_j\) and
\(\gamma_0^\#:=(h_{0,j}^\#)_j\). Then
\((\gamma_0,\gamma_0^\#)\) is the ordered basis induced by the
coordinate \(\zeta\), expressed in terms of the standard coordinate
\(\xi\), whereas \((\beta_0,\beta_0^\#)\) is defined directly by
\(\xi\).

\begin{lem}\label{lem:model-coordinate-basis-change}
The ordered bases \((\beta_0,\beta_0^\#)\) and
\((\gamma_0,\gamma_0^\#)\) of \(V_{\widehat X_n,0}\) are related
as follows. For the zero mode,
\[
    \begin{pmatrix}
        g_{0,0}\\
        g_{0,0}^\#
    \end{pmatrix}
    =
    \begin{pmatrix}
        1 & 0\\
        \dfrac{\log A_k}{n} & 1
    \end{pmatrix}
    \begin{pmatrix}
        h_{0,0}\\
        h_{0,0}^\#
    \end{pmatrix}.
\]
For \(1\leq j\leq n-1\),
\[
    \begin{pmatrix}
        g_{0,j}\\
        g_{0,-j}
    \end{pmatrix}
    =
    A_k^{-j/n}
    \begin{pmatrix}
        h_{0,j}\\
        h_{0,-j}
    \end{pmatrix},
\]
and
\[
    \begin{pmatrix}
        g_{0,j}^\#\\
        g_{0,-j}^\#
    \end{pmatrix}
    =
    A_k^{j/n}
    \begin{pmatrix}
        h_{0,j}^\#\\
        h_{0,-j}^\#
    \end{pmatrix}
    +
    \frac{\sqrt{j(n-j)}}{n}A_k^{j/n-1}
    \begin{pmatrix}
        z_k h_{0,j-n}\\
        \overline{z_k}h_{0,n-j}
    \end{pmatrix}.
\]
\end{lem}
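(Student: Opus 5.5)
The plan is to expand each $\xi$-based asymptotic term in powers of $\zeta$ and $\overline\zeta$ using the inverse coordinate change
\[
\xi=\zeta\bigl(A_k+\overline{z_k}\zeta^n\bigr)^{-1/n}
=A_k^{-1/n}\zeta\Bigl(1+\tfrac{\overline{z_k}}{A_k}\zeta^n\Bigr)^{-1/n}.
\]
We then read off which terms are critical, meaning they are multiples of $h_{0,\ell}$ or $h_{0,\ell}^\#$. Every remaining term should represent the zero class in $V_{\widehat X_n,0}$.

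\textbf{Remainders.} The first step is a lemma stating when a remainder $R$ defines the zero class. Let $R$ be smooth on a punctured disc, with $\Delta R\in L^2$ near $0$. Assume $R=O(|\zeta|^{n})$, or more generally $R=O(|\zeta|^{n-j+\varepsilon})$ when it is paired against modes of index $|j|$, with the corresponding bounds on $|\nabla R|$. Then $\chi R$ has zero image under $\pi_0$, so $\chi R\in\mc D_{\min}$.

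To prove this, I would use the coefficient formula from Section~\ref{sec:conic-analysis}. The coefficients of $\pi_0(\chi R)$ are $\mk q(\chi R,\chi g^\#_\ell)$ and $-\mk q(\chi R,\chi g_\ell)$. By Lemma~\ref{lem:conformal-domain-comparison} we may replace the metric by the flat conic one. Green's formula then turns each coefficient into the limit of a boundary integral over $|\zeta|=\epsilon$, of the form $\int(\partial_r R\,\overline{g}-R\,\partial_r\overline g)\,\epsilon\,d\theta$. With $g$ equal to $r^{\pm|\ell|}e^{i\ell\theta}$ or $\log r$, and $|\ell|\le n-1$, this limit vanishes under the stated decay.

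The terms that appear below have one of two forms:
\begin{itemize}
\item $\zeta^{a}\overline\zeta^{b}$ with $|a-b|\ge n$;
\item terms whose order $a+b$ exceeds the critical threshold for the matching angular frequency.
\end{itemize}
Both kinds pair to zero. For instance, $\overline\zeta^{-j}\cdot|\zeta|^{2n}$ has order $2n-j>n-j$. The same argument shows that the classes do not depend on whether one expands in $\xi$ or in $\zeta$, since both are centered coordinates at the same cone point.

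\textbf{Zero mode and regular modes.} We have $g_{0,0}=h_{0,0}$ trivially, since both are constants. Taking logarithms,
\[
\log|\xi|=\log|\zeta|-\tfrac1n\log A_k-\tfrac1n\Re\bigl(\overline{z_k}\zeta^n/A_k\bigr)+O(|\zeta|^{2n}).
\]
The last two terms are noncritical, so $g_{0,0}^\#=h_{0,0}^\#+\tfrac{\log A_k}{n}h_{0,0}$.

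For $1\le j\le n-1$, the binomial series gives $\xi^j=A_k^{-j/n}\zeta^j\bigl(1+O(|\zeta|^n)\bigr)$. The correction terms have frequency $j+n$, so they are noncritical. Since $g_{0,j}=\xi^j/\sqrt{4\pi j}$, this yields $g_{0,j}=A_k^{-j/n}h_{0,j}$. Conjugating gives $g_{0,-j}=A_k^{-j/n}h_{0,-j}$.

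\textbf{Singular modes.} Note that $r^{-j}e^{ij\theta}=\overline\xi^{-j}$. Expanding,
\[
\overline\xi^{-j}=A_k^{j/n}\overline\zeta^{-j}\Bigl(1+\tfrac{j}{n}\tfrac{z_k}{A_k}\overline\zeta^{\,n}+O(|\zeta|^{2n})\Bigr).
\]
This produces three contributions:
\begin{itemize}
\item the leading term $A_k^{j/n}\overline\zeta^{-j}$, which gives $A_k^{j/n}h_{0,j}^\#$;
\item the first correction $\tfrac jn z_kA_k^{j/n-1}\overline\zeta^{\,n-j}$, which has frequency $j-n\in J_n$ and is therefore a regular critical mode;
\item the remainder of order $2n-j$, which is noncritical by the remainder lemma.
\end{itemize}
Converting the first correction with the normalizations $\sqrt{4\pi j}$ and $\sqrt{4\pi(n-j)}$ gives the coefficient
\[
\frac jn\sqrt{\frac{n-j}{j}}=\frac{\sqrt{j(n-j)}}{n}
\]
in front of $z_kA_k^{j/n-1}h_{0,j-n}$. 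Complex conjugation gives the $-j$ row, with $\overline{z_k}$ and $h_{0,n-j}$.

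\textbf{Main obstacle.} I expect the hardest step to be justifying rigorously that the remainders lie in $\mc D_{\min}$. This needs uniform bounds on the derivatives of the convergent binomial series near $0$, and the check that the Green boundary integrals vanish for every critical test mode, including the logarithmic one. The algebra itself is routine bookkeeping.
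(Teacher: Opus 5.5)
Your proposal is correct and follows the paper's proof. Like the paper, you expand $\xi=\zeta(A_k+\overline{z_k}\zeta^n)^{-1/n}$ at $\zeta=0$ and keep only the critical terms, and your Green boundary-integral argument supplies the justification, left implicit in the paper, that the discarded remainders lie in $\mc D_{\min}$.

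Your finer threshold $O(|\zeta|^{n-j+\varepsilon})$ is miscalibrated, since pairing against the singular mode of index $|\ell|$ requires order $>|\ell|$. This is harmless: every remainder here is harmonic near $0$ and vanishes to order at least $n$, so your basic criterion $R=O(|\zeta|^n)$, $\nabla R=O(|\zeta|^{n-1})$ already applies, and those bounds follow immediately from analyticity rather than being a real obstacle.
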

\begin{proof}
We compute the critical asymptotic terms one by one. All identities
below are understood in \(V_{\widehat X_n,0}\), that is, modulo
noncritical asymptotic terms.

Recall that
\[
    \xi
    =
    \zeta
    \bigl(A_k+\overline{z_k}\zeta^n\bigr)^{-1/n}.
\]
For the zero mode, \(g_{0,0}=h_{0,0}\). Moreover,
\[
    \log|\xi|
    =
    \log|\zeta|
    -
    \frac{1}{n}\log A_k
    -
    \frac{1}{n}
    \log\left|
        1+\frac{\overline{z_k}}{A_k}\zeta^n
    \right|.
\]
The last term has no critical asymptotic contribution. It follows
that
\[
    g_{0,0}^\#
    =
    h_{0,0}^\#
    +
    \frac{\log A_k}{n}h_{0,0}.
\]

For \(1\leq j\leq n-1\), expansion of the coordinate transformation
at \(\zeta=0\) gives
\[
    g_{0,j}=A_k^{-j/n}h_{0,j},
    \qquad
    g_{0,-j}=A_k^{-j/n}h_{0,-j}.
\]
Similarly, retaining only the critical asymptotic terms yields
\[
    g_{0,j}^\#
    =
    A_k^{j/n}h_{0,j}^\#
    +
    \frac{\sqrt{j(n-j)}}{n}
    A_k^{j/n-1}z_kh_{0,j-n},
\]
and
\[
    g_{0,-j}^\#
    =
    A_k^{j/n}h_{0,-j}^\#
    +
    \frac{\sqrt{j(n-j)}}{n}
    A_k^{j/n-1}\overline{z_k}h_{0,n-j}.
\]
This proves the change-of-basis formulas.
\end{proof}

\begin{lem}
Regard the ordered bases as row vectors indexed by \(J_n\). Set
\(\mathsf D_k:=\operatorname{diag}(A_k^{-|j|/n})_{j\in J_n}\), and define
\[
    \mathsf C_k
    :=
    \frac{\log A_k}{n}E_{00}
    +
    \sum_{j=1}^{n-1}
    \frac{\sqrt{j(n-j)}}{nA_k}
    \left(
        z_kE_{j-n,j}
        +
        \overline{z_k}E_{n-j,-j}
    \right),
\]
where \(E_{\ell j}\) denotes the matrix unit indexed by
\(\ell,j\in J_n\). Then
\[
    \bigl[\id_{V_{\widehat X_n,0}}\bigr]
    _{(\beta_0,\beta_0^\#)}^{(\gamma_0,\gamma_0^\#)}
    =
    \begin{bmatrix}
        \mathsf D_k & \mathsf C_k\mathsf D_k^{-1}\\
        0   & \mathsf D_k^{-1}
    \end{bmatrix}.
\]
\end{lem}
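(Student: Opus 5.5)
This lemma is the matrix form of Lemma~\ref{lem:model-coordinate-basis-change}, read through the convention of Section~\ref{sec:conic-analysis}. There, \([\id]_{(\beta,\beta^\#)}^{(\gamma,\gamma^\#)}\) is the matrix whose columns are the \((\gamma_0,\gamma_0^\#)\)-coordinates of the successive vectors of \((\beta_0,\beta_0^\#)\). Equivalently, \((\beta_0,\beta_0^\#)=(\gamma_0,\gamma_0^\#)\begin{bmatrix}A&B\\C&D\end{bmatrix}\). The plan is to read off the four blocks column by column from the preceding lemma.

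\textbf{Regular columns.} The regular basis vectors \(g_{0,j}\) involve only \(h_{0,j}\), with coefficient \(1\) when \(j=0\) and \(A_k^{-|j|/n}\) otherwise. Hence the lower-left block is \(C=0\) and the upper-left block is \(A=\mathsf D_k\).

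\textbf{Singular columns.} The column of \(g_{0,j}^\#\) has \(\gamma_0^\#\)-part \(A_k^{|j|/n}\) in the \(h_{0,j}^\#\) slot; for \(j=0\) the coefficient is \(1\). This gives \(D=\mathsf D_k^{-1}\). Its \(\gamma_0\)-part is nonzero only in the following cases.
\begin{itemize}
\item For \(j=0\), the entry \((\log A_k)/n\) sits in row \(0\).
\item For \(1\le j\le n-1\), the entry \(\frac{\sqrt{j(n-j)}}{n}A_k^{j/n-1}z_k\) sits in row \(j-n\).
\item For \(-j\), the conjugate entry \(\frac{\sqrt{j(n-j)}}{n}A_k^{j/n-1}\overline{z_k}\) sits in row \(n-j\).
\end{itemize}
Note that \(j-n\) and \(n-j\) lie in \(J_n\) precisely because \(1\le j\le n-1\).

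\textbf{Matching the upper-right block.} It remains to check that this block equals \(\mathsf C_k\mathsf D_k^{-1}\). Right multiplication by \(\mathsf D_k^{-1}=\operatorname{diag}(A_k^{|j|/n})\) scales column \(j\) of \(\mathsf C_k\) by \(A_k^{|j|/n}\).
\begin{itemize}
\item Column \(0\) keeps the entry \((\log A_k)/n\).
\item Column \(j\) acquires \(\frac{\sqrt{j(n-j)}}{nA_k}z_k\,A_k^{j/n}=\frac{\sqrt{j(n-j)}}{n}A_k^{j/n-1}z_k\) in row \(j-n\), since the matrix unit \(E_{j-n,j}\) places it in row \(j-n\), column \(j\).
\item Column \(-j\) acquires the analogous entry with \(\overline{z_k}\) in row \(n-j\), via \(E_{n-j,-j}\).
\end{itemize}
These agree exactly with the singular columns computed above.

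The only real subtlety is bookkeeping. One must keep straight that columns of the change-of-basis matrix record coordinates of old basis vectors in the new basis. One must also be careful with the indexing of the matrix units \(E_{\ell j}\), whose first index is the row and second the column.
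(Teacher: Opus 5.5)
Your proposal is correct and follows the paper's route. The paper's proof likewise just collects the coefficients from the componentwise formulas of Lemma~\ref{lem:model-coordinate-basis-change}; you have written out the column-by-column bookkeeping it leaves implicit, including the convention that columns record \((\gamma_0,\gamma_0^\#)\)-coordinates and the column rescaling by \(\mathsf D_k^{-1}\).
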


\begin{proof}
This follows immediately by collecting the coefficients in the
preceding componentwise change-of-basis formulas.
\end{proof}

Write \(S_0^{\widehat X_n,(\gamma_0,\gamma_0^\#)}\) for the local
Weyl matrix obtained by using \(\operatorname{span}_{\mb C}\gamma_0^\#\)
as the singular complement at \(0\) and \(\gamma_0\) as the regular
basis, retaining the coordinate \(\eta\) at infinity.

\begin{lem}
The local \(S\)-matrices of \(\widehat X_n\) at \(0\), associated
with these two choices of regular and singular bases, satisfy
\[
    S_0^{\widehat X_n,(\gamma_0,\gamma_0^\#)}(\lambda)
    =
    \mathsf D_k
    S_0^{\widehat X_n,(\beta_0,\beta_0^\#)}(\lambda)
    \mathsf D_k
    +
    \mathsf C_k.
\]
\end{lem}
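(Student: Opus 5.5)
The plan is to apply the chart-transition formula \eqref{eq:weyl-chart-transition} to the intrinsic Weyl curve of \(\widehat X_n\), localized at the conic point \(0\). By Proposition~\ref{prop:boundary-data-weyl-graph}, both local \(S\)-matrices are graph coordinates of the same boundary-data subspace \(\mc K_{\widehat X_n}(\lambda)\). The only difference is that one is written relative to \(\mc F\oplus\operatorname{span}\beta_0^\#\oplus\operatorname{span}\beta_\infty^\#\) and the other relative to \(\mc F\oplus\operatorname{span}\gamma_0^\#\oplus\operatorname{span}\beta_\infty^\#\). The Friedrichs subspace \(\mc F_{\widehat X_n}\) is unchanged, because \(\operatorname{span}\beta_0=\operatorname{span}\gamma_0\) by Lemma~\ref{lem:model-coordinate-basis-change}. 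The data at \(\infty\) are also unchanged.

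First I write the full change-of-basis matrix on \(V_{\widehat X_n,\{0,\infty\}}\), ordered as (regular at \(0\), regular at \(\infty\); singular at \(0\), singular at \(\infty\)). By the preceding lemma it has the block form
\[
\begin{bmatrix}A&B\\0&D\end{bmatrix},\qquad
A=\operatorname{diag}(\mathsf D_k,I),\quad
B=\operatorname{diag}(\mathsf C_k\mathsf D_k^{-1},0),\quad
D=\operatorname{diag}(\mathsf D_k^{-1},I).
\]
Here the lower-left block \(C\) vanishes because regular modes remain regular. Formula \eqref{eq:weyl-chart-transition} then gives
\[
S_\gamma=(AS_\beta+B)D^{-1}=AS_\beta D^{-1}+BD^{-1}
\]
for the full Weyl matrix. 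In particular, no inversion of a \(\lambda\)-dependent matrix is needed, and the formula holds on the whole resolvent set of the Friedrichs extension.

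Next I extract the \((0,0)\) block. Since \(A\) and \(D^{-1}\) are block diagonal, this block equals
\[
\mathsf D_k S_0^{\widehat X_n,(\beta_0,\beta_0^\#)}(\lambda)\,(\mathsf D_k^{-1})^{-1}+\mathsf C_k\mathsf D_k^{-1}(\mathsf D_k^{-1})^{-1}
=\mathsf D_k S_0^{\widehat X_n,(\beta_0,\beta_0^\#)}(\lambda)\,\mathsf D_k+\mathsf C_k,
\]
which is the claimed identity. The off-diagonal blocks transform in the same way but are not needed.

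The main point requiring care is bookkeeping rather than analysis. I must confirm that the row-vector convention \((\beta,\beta^\#)=(\gamma,\gamma^\#)\bigl[\begin{smallmatrix}A&B\\C&D\end{smallmatrix}\bigr]\) agrees with how the preceding lemma was assembled from Lemma~\ref{lem:model-coordinate-basis-change}. Concretely, column \(j\) of the upper-right block must record the \(\gamma_0\)-coefficients of \(g^\#_{0,j}\). For example, \(g_{0,0}^\#\) has \(h_{0,0}\)-coefficient \(\log A_k/n\), and \(\mathsf C_k\mathsf D_k^{-1}\) indeed has \((0,0)\) entry \(\log A_k/n\). For \(j\ge1\), the entry in row \(j-n\) is \(\sqrt{j(n-j)}\,n^{-1}A_k^{j/n-1}z_k\), which matches \(\mathsf C_k\) times \(A_k^{j/n}\).

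I will also check that the matrix of the coordinate-independent map \(\widetilde\pi\) is computed with the same column-coordinate convention, so that no transposes appear. Finally, I note that \(\mathsf D_k\) is invertible and real diagonal, so every step is legitimate.
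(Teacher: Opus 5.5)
Your proposal is correct and follows the paper's proof: the paper likewise applies the chart-transition formula \eqref{eq:weyl-chart-transition} to the block matrix \(\bigl[\begin{smallmatrix}\mathsf D_k&\mathsf C_k\mathsf D_k^{-1}\\0&\mathsf D_k^{-1}\end{smallmatrix}\bigr]\) with the bases at infinity unchanged. It then uses the vanishing lower-left block to write the local block as \((\mathsf D_kS_\beta+\mathsf C_k\mathsf D_k^{-1})(\mathsf D_k^{-1})^{-1}\) and simplifies. Your explicit two-point block-diagonal bookkeeping and your entrywise check of \(\mathsf C_k\mathsf D_k^{-1}\) against Lemma~\ref{lem:model-coordinate-basis-change} spell out what the paper leaves implicit.
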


\begin{proof}
Apply \eqref{eq:weyl-chart-transition} using the preceding block
matrix, with the bases at infinity unchanged. Since its lower-left
block is zero, the local block at \(0\) transforms as
\[
\begin{aligned}
S_0^{\widehat X_n,(\gamma_0,\gamma_0^\#)}(\lambda)
&=
\left(
    \mathsf D_k
    S_0^{\widehat X_n,(\beta_0,\beta_0^\#)}(\lambda)
    +
    \mathsf C_k\mathsf D_k^{-1}
\right)
\left(\mathsf D_k^{-1}\right)^{-1}  \\
&=
\mathsf D_k
S_0^{\widehat X_n,(\beta_0,\beta_0^\#)}(\lambda)
\mathsf D_k
+
\mathsf C_k.
\end{aligned}
\]
\end{proof}

\begin{cor}
The only nonzero entries of
\(S_0^{\widehat X_n,(\gamma_0,\gamma_0^\#)}(\lambda)\) are the
following. Its zero-mode entry is
\[
    S_{0,00}^{\widehat X_n,(\gamma_0,\gamma_0^\#)}(\lambda)
    =
    -\frac{1}{n}
    \left(
        \gamma_E+\psi(\nu+1)
        +\frac{\pi}{2}\cot(\pi\nu)
        -\log A_k
    \right).
\]
For \(0<|j|\leq n-1\), its remaining diagonal entries are
\[
    S_{0,jj}^{\widehat X_n,(\gamma_0,\gamma_0^\#)}(\lambda)
    =
    A_k^{-2|j|/n}
    s_j^{\widehat X_n,(\beta_0,\beta_0^\#)}(\lambda).
\]
Moreover, for \(1\leq j\leq n-1\), the nonzero off-diagonal
entries are
\[
    S_{0,j-n,j}^{\widehat X_n,(\gamma_0,\gamma_0^\#)}(\lambda)
    =
    \frac{\sqrt{j(n-j)}}{nA_k}z_k,
    \qquad
    S_{0,n-j,-j}^{\widehat X_n,(\gamma_0,\gamma_0^\#)}(\lambda)
    =
    \frac{\sqrt{j(n-j)}}{nA_k}\overline{z_k}.
\]
All other entries vanish.
\end{cor}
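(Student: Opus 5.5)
This corollary follows by reading off entries from the transformation law of the preceding lemma,
\[
S_0^{\widehat X_n,(\gamma_0,\gamma_0^\#)}(\lambda)
=\mathsf D_k S_0^{\widehat X_n,(\beta_0,\beta_0^\#)}(\lambda)\mathsf D_k+\mathsf C_k,
\]
combined with the diagonal structure \(S_{0,\ell j}^{\widehat X_n,(\beta_0,\beta_0^\#)}=\delta_{\ell j}s_j^{\widehat X_n,(\beta_0,\beta_0^\#)}\) of the model \(S\)-matrix quoted from \cite{LiouModel2026}. Since \(\mathsf D_k=\operatorname{diag}(A_k^{-|j|/n})\) is diagonal, the product \(\mathsf D_kS_0^{\widehat X_n,(\beta_0,\beta_0^\#)}\mathsf D_k\) is diagonal. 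Its \((j,j)\) entry is
\[
A_k^{-2|j|/n}s_j^{\widehat X_n,(\beta_0,\beta_0^\#)}(\lambda).
\]
The corollary will therefore follow from computing \(\mathsf C_k\) entrywise and checking which of its entries fall on the diagonal.

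\emph{Zero mode.} The \((0,0)\) entry of \(\mathsf D_k\) equals \(1\), so the diagonal part contributes \(s_0^{\widehat X_n,(\beta_0,\beta_0^\#)}(\lambda)\). The matrix \(\mathsf C_k\) contributes \(\tfrac{\log A_k}{n}\) through the term \(E_{00}\). Adding these gives
\[
-\frac1n\Bigl(\gamma_E+\psi(\nu+1)+\frac{\pi}{2}\cot(\pi\nu)\Bigr)+\frac{\log A_k}{n},
\]
which is the stated expression \(-\tfrac1n\bigl(\cdots-\log A_k\bigr)\).

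\emph{Remaining entries.} The other matrix units in \(\mathsf C_k\) are \(E_{j-n,j}\) and \(E_{n-j,-j}\) for \(1\le j\le n-1\). One checks that none of them lies on the diagonal. For \(E_{j-n,j}\), the row index \(j-n\) is negative while the column index \(j\) is positive, so \(j-n\neq j\). For \(E_{n-j,-j}\), the row index \(n-j\) is positive while the column index \(-j\) is negative. Both index pairs do lie in \(J_n\), since \(1\le|j-n|=n-j\le n-1\).

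Each position occurs exactly once as \(j\) ranges over \(1,\dots,n-1\). In addition, the positions \((j-n,j)\) and \((n-j',-j')\) never coincide, because in the first the column index is positive and in the second it is negative. Hence the \(\mathsf C_k\) entries do not add up with one another. They are
\[
\frac{\sqrt{j(n-j)}}{nA_k}z_k \text{ at } (j-n,j),
\qquad
\frac{\sqrt{j(n-j)}}{nA_k}\overline{z_k} \text{ at } (n-j,-j).
\]
For \(0<|j|\le n-1\), the diagonal entries therefore receive no contribution from \(\mathsf C_k\), and all other entries vanish.

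The argument is pure bookkeeping. The one point that needs care is confirming that the off-diagonal positions neither hit the diagonal nor collide with each other, which the sign argument above settles.
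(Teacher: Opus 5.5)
Your proposal is correct and matches the paper's proof: both read off the entries from
\(S_0^{\widehat X_n,(\gamma_0,\gamma_0^\#)}=\mathsf D_kS_0^{\widehat X_n,(\beta_0,\beta_0^\#)}\mathsf D_k+\mathsf C_k\), using that \(\mathsf D_k\) and the model matrix are both diagonal. Your check that the positions \((j-n,j)\) and \((n-j,-j)\) avoid the diagonal and never coincide with each other is bookkeeping that the paper leaves implicit.
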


\begin{proof}
By the preceding change-of-basis formula,
\[
    S_0^{\widehat X_n,(\gamma_0,\gamma_0^\#)}(\lambda)
    =
    \mathsf D_k
    S_0^{\widehat X_n,(\beta_0,\beta_0^\#)}(\lambda)
    \mathsf D_k
    +
    \mathsf C_k.
\]
Since
\(S_0^{\widehat X_n,(\beta_0,\beta_0^\#)}(\lambda)\) is diagonal,
the term
\(\mathsf D_kS_0^{\widehat X_n,(\beta_0,\beta_0^\#)}(\lambda)\mathsf D_k\)
has diagonal entries
\(A_k^{-2|j|/n}
s_j^{\widehat X_n,(\beta_0,\beta_0^\#)}(\lambda)\).
The remaining entries are precisely those of \(\mathsf C_k\), which gives
the stated formulas.
\end{proof}

\section{Local Model Comparison of Weyl Solutions}\label{sec:local-model-comparison}

Inspired by the scalar local-model comparison used by Kalvin and
Kokotov \cite{KalvinKokotov2019} in the case of simple ramification,
we develop in this section
a matrix-valued extension for higher-order ramification. More
precisely, we compare restrictions of global Weyl solutions on \(X\)
with pullbacks of the corresponding global solutions on the model
surface. This comparison identifies the difference of their critical
asymptotic data with the difference of the corresponding diagonal Weyl
blocks and, consequently, of the associated \(S\)-matrices.

Let \(P_k\) be a ramification point of
\(\varphi\colon X\to\mb P^1\), with ramification index \(n_k\).
If \(\varphi(P_k)\neq\infty\), we use the affine chart
\((U_0,x)\) on \(\mb P^1\) and set
\(z_k=x\bigl(\varphi(P_k)\bigr)\).
If \(\varphi(P_k)=\infty\), we instead use the affine chart
\((U_\infty,y)\), where \(y(\infty)=0\).
After shrinking a neighborhood of \(P_k\), if necessary, there exists
a local holomorphic coordinate \(z\) on a neighborhood \(U_k\) of
\(P_k\), centered at \(P_k\), such that
\[
    x\circ\varphi=z_k+z^{n_k}
\]
when \(\varphi(P_k)\neq\infty\), whereas
\(y\circ\varphi=z^{n_k}\) when \(\varphi(P_k)=\infty\).
It suffices to consider the case \(\varphi(P_k)\neq\infty\), since the
case \(\varphi(P_k)=\infty\) is treated analogously using the affine
coordinate \(y\). For simplicity, write \(n=n_k\). Then, on \(U_k\),
the pullback of the round metric is
\[
    g
    =
    \varphi^*ds_{\mathrm{rd}}^2
    =
    \frac{4n^2|z|^{2n-2}}
         {\bigl(1+|z_k+z^n|^2\bigr)^2}
    \,|dz|^2.
\]
We denote by \(V_{P_k}\) the critical asymptotic space at \(P_k\).
Let \(\gamma_k=(f_{k,j})_{j\in J_n}\) and
\(\gamma_k^\#=(f_{k,j}^\#)_{j\in J_n}\), where
\(f_{k,j}=f_{P_k,j}(z)\) and
\(f_{k,j}^\#=f_{P_k,j}^\#(z)\).
Then \((\gamma_k,\gamma_k^\#)\) is the ordered basis of \(V_{P_k}\)
obtained by concatenating the two ordered families
\(\gamma_k\) and \(\gamma_k^\#\).

Consider the Möbius transformation
\(T\colon\mb P^1\to\mb P^1\) given in the affine coordinate \(x\) by
\(
    T(x)=\frac{x-z_k}{1+\overline{z_k}x}.
\)
Since \(T\) is a round-metric isometry taking \(z_k\) to \(0\),
the map \(\widehat\varphi:=T\circ\varphi\) induces the same metric
and has ramification index \(n\) at \(P_k\).
Choose a centered coordinate \(w\) on a smaller neighborhood
\(\widetilde U_k\) such that \(x\circ\widehat\varphi=w^n\). Then
\[
    g
    =
    \widehat\varphi^*ds_{\mathrm{rd}}^2
    =
    \frac{4n^2|w|^{2n-2}}
         {\bigl(1+|w|^{2n}\bigr)^2}
    \,|dw|^2.
\]
The critical asymptotic space \(V_{P_k}\) has another ordered basis
\(\bigl(\beta_k,\beta_k^\#\bigr)\), where
\(\beta_k=(\widehat f_{k,j})_{j\in J_n}\), with
\(\widehat f_{k,j}=f_{P_k,j}(w)\), and
\(\beta_k^\#=(\widehat f_{k,j}^\#)_{j\in J_n}\), with
\(\widehat f_{k,j}^\#=f_{P_k,j}^\#(w)\).

Set \(A_k=1+|z_k|^2\), and choose the branch satisfying
\(
    \lim_{z\to 0}\frac{w}{z}=A_k^{-1/n}.
\)
If \(z_k\neq 0\), then, for
\(
    |z|<\left(\frac{A_k}{|z_k|}\right)^{1/n},
\)
we have
\[
\begin{aligned}
    w
    &=
    z\bigl(A_k+\overline{z_k}z^n\bigr)^{-1/n} \\
    &=
    A_k^{-1/n}z
    \left(1+\frac{\overline{z_k}}{A_k}z^n\right)^{-1/n} \\
    &=
    A_k^{-1/n}z
    \sum_{\ell\geq 0}
    \binom{-1/n}{\ell}
    \left(\frac{\overline{z_k}}{A_k}\right)^\ell
    z^{n\ell}.
\end{aligned}
\]
When \(z_k=0\), we have \(A_k=1\) and \(w=z\).
\begin{lem}
The ordered bases
\((\beta_k,\beta_k^\#)\) and
\((\gamma_k,\gamma_k^\#)\) of \(V_{P_k}\) are related as follows.
For the zero mode,
\[
    \begin{pmatrix}
        \widehat f_{k,0}\\
        \widehat f_{k,0}^\#
    \end{pmatrix}
    =
    \begin{pmatrix}
        1 & 0\\
        \dfrac{\log A_k}{n} & 1
    \end{pmatrix}
    \begin{pmatrix}
        f_{k,0}\\
        f_{k,0}^\#
    \end{pmatrix}.
\]
For \(1\leq j\leq n-1\),
\[
    \begin{pmatrix}
        \widehat f_{k,j}\\
        \widehat f_{k,-j}
    \end{pmatrix}
    =
    A_k^{-j/n}
    \begin{pmatrix}
        f_{k,j}\\
        f_{k,-j}
    \end{pmatrix},
\]
and
\[
    \begin{pmatrix}
        \widehat f_{k,j}^\#\\
        \widehat f_{k,-j}^\#
    \end{pmatrix}
    =
    A_k^{j/n}
    \begin{pmatrix}
        f_{k,j}^\#\\
        f_{k,-j}^\#
    \end{pmatrix}
    +
    \frac{\sqrt{j(n-j)}}{n}A_k^{j/n-1}
    \begin{pmatrix}
        z_k f_{k,j-n}\\
        \overline{z_k}f_{k,n-j}
    \end{pmatrix}.
\]
\end{lem}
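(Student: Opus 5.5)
The plan is to reduce everything to the explicit coordinate change \(w=z(A_k+\overline{z_k}z^n)^{-1/n}\) and expand each normalized critical term \(f_{P_k,j}(w)\), \(f_{P_k,j}^\#(w)\) in powers of \(z\) and \(\overline z\). We then discard every term that is not critical, namely every term lying in \(\mc D_{\min}\) after cutoff. This is the same computation as in Lemma~\ref{lem:model-coordinate-basis-change}, with \(\xi\) replaced by \(w\) and \(\zeta\) by \(z\). Indeed, \(x\circ\widehat\varphi=w^n\) and \(x\circ\varphi=z_k+z^n\) are related by \(T\) exactly as \(\Phi_n\) and \(\Phi_{n,k}=T^{-1}\circ\Phi_n\) are in the model. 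One could therefore argue by transport, but a direct expansion is cleaner. As a preliminary, recall the criterion for discarding terms: a local term \(r^{a}e^{im\theta}\) with \(|m|\ge n\), or more generally anything in the local Friedrichs-type class decaying faster than the critical powers, belongs to \(\mc D_{\min}\) after cutoff. This is the standard fact behind the surjectivity of \(\pi_S\) with \(\ker\pi_S=\mc D_{\min}\).

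For the zero mode, \(\widehat f_{k,0}=f_{k,0}\) is immediate. For the singular zero mode, we take logarithms:
\[
\log|w|=\log|z|-\tfrac1n\log A_k-\tfrac1n\log\bigl|1+\tfrac{\overline{z_k}}{A_k}z^n\bigr|.
\]
The last term is a real-analytic function vanishing at \(0\) whose expansion involves only \(z^{n\ell}\) and \(\overline z^{n\ell}\) with \(\ell\ge1\), hence only angular modes \(|m|\ge n\) or modes of order \(r^{n}\) or higher. It is therefore noncritical. Dividing by \(-\sqrt{2\pi}\) gives \(\widehat f_{k,0}^\#=f_{k,0}^\#+\frac{\log A_k}{n}f_{k,0}\).

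For \(1\le j\le n-1\), write \(f_{k,j}(w)\) as a multiple of \(w^j\) and \(f_{k,-j}(w)\) as a multiple of \(\overline w^{\,j}\), and \(f_{k,j}^\#(w)\) as a multiple of \(\overline w^{\,-j}\) and \(f_{k,-j}^\#(w)\) as a multiple of \(w^{-j}\), using \(r^{\pm j}e^{ij\theta}\). We have
\[
w^j=A_k^{-j/n}z^j\bigl(1+\tfrac{\overline{z_k}}{A_k}z^n\bigr)^{-j/n}.
\]
Every correction term carries \(z^{j+n\ell}\) with \(\ell\ge1\), which is noncritical, so \(\widehat f_{k,\pm j}=A_k^{-j/n}f_{k,\pm j}\). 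For the singular terms, \(\overline w^{\,-j}\) (for \(f^\#_{k,j}\)) equals
\[
A_k^{j/n}\overline z^{\,-j}\bigl(1+\tfrac{z_k}{A_k}\overline z^{\,n}\bigr)^{j/n}
=A_k^{j/n}\overline z^{\,-j}+\tfrac{j}{n}A_k^{j/n-1}z_k\,\overline z^{\,n-j}+\cdots.
\]
The first-order term is critical: \(\overline z^{\,n-j}=r^{n-j}e^{-i(n-j)\theta}\) is a regular critical mode, namely the one indexed by \(j-n\in J_n\). The higher terms \(\overline z^{\,n\ell-j}\), \(\ell\ge2\), are noncritical. It remains to convert normalizations. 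Since \(f^\#_{k,j}=\overline z^{\,-j}/\sqrt{4\pi j}\) and \(f_{k,j-n}=\overline z^{\,n-j}/\sqrt{4\pi(n-j)}\), the coefficient becomes
\[
\tfrac jn\sqrt{\tfrac{n-j}{j}}=\tfrac{\sqrt{j(n-j)}}{n}.
\]
This yields the stated formula. The formula for \(\widehat f^\#_{k,-j}\) follows by complex conjugation, which swaps \(z_k\leftrightarrow\overline{z_k}\) and \(j-n\leftrightarrow n-j\).

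The main obstacle is the careful bookkeeping of which terms are critical. One must check the branch and sign conventions in \(f_{P,j}^\#\), that is, whether \(r^{-|j|}e^{ij\theta}\) corresponds to \(\overline z^{\,-j}\) or \(z^{-j}\). One must also justify that all discarded terms, including the mixed expansions of the \(\log\) term, lie in \(\mc D_{\min}\). That last point follows from the local asymptotic decomposition recalled in Section~\ref{sec:conic-isomorphisms}, since a remainder \(R\) with \(\chi R\in\mc D_{\min}\) is exactly what we drop.
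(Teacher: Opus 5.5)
Your proposal is correct and is essentially the paper's argument. The paper proves the lemma by substituting \((w,z,\widehat f_{k,j},f_{k,j})\) for \((\xi,\zeta,g_{0,j},h_{0,j})\) in Lemma~\ref{lem:model-coordinate-basis-change}, and your direct expansion of \(w=z(A_k+\overline{z_k}z^n)^{-1/n}\) is exactly that model computation written out; your identifications \(f^\#_{k,j}\propto\overline z^{\,-j}\) and \(f_{k,j-n}\propto\overline z^{\,n-j}\) are the right ones. The one point to tighten is your discarding criterion: every dropped term is a harmonic monomial \(z^m\) or \(\overline z^{\,m}\) with \(m\geq n\), and such a term lies in \(\mc D_{\min}\) after cutoff because, by orthogonality of angular Fourier modes, its Green pairings with all critical modes (angular frequency \(|j|\leq n-1\)) vanish; no appeal to a vague faster-decay class is needed.
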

\begin{proof}
The coordinate change \(w=z(A_k+\overline{z_k}z^n)^{-1/n}\)
has the same form as the model change from \(\zeta\) to \(\xi\) in
Lemma~\ref{lem:model-coordinate-basis-change}. Substituting
\((w,z,\widehat f_{k,j},f_{k,j})\) for
\((\xi,\zeta,g_{0,j},h_{0,j})\) in that lemma gives the stated
identities in \(V_{P_k}\).
\end{proof}
For the comparison below, choose the singular complement at
\(P_k\) to be \(\mc F_{X,P_k}^{\#}:=\operatorname{span}_{\mb C}\beta_k^{\#}\).
The regular space is
\(\mc F_{X,P_k}=\operatorname{span}_{\mb C}\beta_k
=\operatorname{span}_{\mb C}\gamma_k\).
Fix conic coordinates at the other conic points, and let
\(\mc F_X^{\#}\) be the direct sum of the resulting singular spaces.
We abbreviate
\(M^X:=M_{\mc F_X,\mc F_X^{\#}}^X\).
The notation \(S_k^{X,(\gamma_k,\gamma_k^{\#})}\), used below,
instead refers to the singular complement spanned by
\(\gamma_k^{\#}\) at \(P_k\), with the other choices fixed.
Thus the transformation between these two matrices includes a change
of singular complement, in addition to a change of bases.
Set \(\widetilde U_k':=\widetilde U_k\setminus\{P_k\}\).

For \(\lambda\in\rho(\Delta_{X,\mc F_X})\) and
\(h^\#\in\mc F_{X,P_k}^\#\), set
\(u_\lambda:=\Gamma_{\mc F_X,\mc F_X^\#}^X(\lambda)h^\#\).
Its restriction
\(u_\lambda^{\widetilde U_k'}:=u_\lambda|_{\widetilde U_k'}\)
solves \((\Delta_{\widetilde U_k'}-\lambda)u=0\).
Define
\[
    \mc K_{\widetilde U_k}^{X}(\lambda)
    :=
    \left\{
        u_\lambda^{\widetilde U_k'}
        :
        u_\lambda
        =
        \Gamma_{\mc F_X,\mc F_X^{\#}}^{X}(\lambda)h^{\#},
        \quad
        h^{\#}\in\mc F_{X,P_k}^{\#}
    \right\}.
\]

We use the same notation \(\pi_{P_k}\) for the asymptotic map induced
on germs at \(P_k\). Since the critical asymptotic data at \(P_k\)
depend only on the restriction to \(\widetilde U_k'\), for
\(u_\lambda^{\widetilde U_k'}\in
\mc K_{\widetilde U_k}^{X}(\lambda)\) corresponding to
\(h^{\#}\in\mc F_{X,P_k}^{\#}\), we have
\[
    \pi_{P_k}u_\lambda^{\widetilde U_k'}
    =
    \pi_{P_k}u_\lambda
    =
    M_k^{X}(\lambda)h^{\#}+h^{\#},
\]
where
\(M_k^{X}(\lambda):=M_{P_k,P_k}^{X}(\lambda)\) denotes the
\((P_k,P_k)\)-diagonal block of the global Weyl map \(M^X(\lambda)\).
As \(h^{\#}\) ranges over \(\mc F_{X,P_k}^{\#}\), it follows that
\[
    \pi_{P_k}\bigl(\mc K_{\widetilde U_k}^{X}(\lambda)\bigr)
    =
    \operatorname{graph}_{(
        \mc F_{X,P_k},\mc F_{X,P_k}^{\#})}
    \bigl(M_k^{X}(\lambda)\bigr).
\]
Define the \((P_k,P_k)\)-diagonal block of the global \(S\)-matrix,
with respect to the ordered bases \(\beta_k^\#\) and \(\beta_k\), by
\[
    S_k^{X,(\beta_k,\beta_k^\#)}(\lambda)
    :=
    \bigl[M_k^X(\lambda)\bigr]_{\beta_k^\#}^{\beta_k}.
\]
For each basis element
\(\widehat f_{k,j}^\#\in\mc F_{X,P_k}^\#\), regarded naturally as
an element of \(\mc F_X^\#\), let
\[
    u_{k,j}(\lambda)
    :=
    \Gamma_{\mc F_X,\mc F_X^\#}^{X}(\lambda)
    \widehat f_{k,j}^\#,
\]
and set
\(u_{k,j}^{\widetilde U_k'}(\lambda)
:=u_{k,j}(\lambda)|_{\widetilde U_k'}\).
Writing
\[
    S_k^{X,(\beta_k,\beta_k^\#)}(\lambda)
    =
    \left(
        S_{k,\ell j}^{X,(\beta_k,\beta_k^\#)}(\lambda)
    \right)_{\ell,j},
\]
we have
\[
    \pi_{P_k}u_{k,j}^{\widetilde U_k'}(\lambda)
    =
    \sum_{\ell}
    S_{k,\ell j}^{X,(\beta_k,\beta_k^\#)}(\lambda)
    \widehat f_{k,\ell}
    +
    \widehat f_{k,j}^\#.
\]

Write \(D_a(r):=\{\zeta\in\mb C:|\zeta-a|<r\}\) for a planar disc.
Let \(\epsilon>0\) be sufficiently small, and set
\(\widehat U_0:=\{Q\in\widehat X_n:|\xi(Q)|<\epsilon\}\).
Then \(\xi\colon\widehat U_0\to D_0(\epsilon)\) is a coordinate
chart centered at \(0\).
After shrinking the coordinate neighborhood of \(P_k\), if
necessary, set
\(\widetilde U_k:=\{P\in X:|w(P)|<\epsilon\}\).
Then \(w\colon\widetilde U_k\to D_0(\epsilon)\) is a coordinate
chart centered at \(P_k\).
Define
\(J\colon\widetilde U_k\to\widehat U_0\) by
\(J:=(\xi|_{\widehat U_0})^{-1}\circ w\).
Then \(J\) is biholomorphic and satisfies \(\xi\circ J=w\).
Moreover,
\[
    \begin{aligned}
    J^*\bigl(ds_n^2|_{\widehat U_0}\bigr)
    &=
    J^*\left(
        \frac{4n^2|\xi|^{2n-2}}
             {(1+|\xi|^{2n})^2}|d\xi|^2
    \right) \\
    &=
    \frac{4n^2|w|^{2n-2}}
         {(1+|w|^{2n})^2}|dw|^2
    =
    g|_{\widetilde U_k}.
    \end{aligned}
\]
Thus, \(J\) is a biholomorphic isometry, and its pullback induces
a linear symplectic isomorphism
\(J_0^*\colon V_{\widehat X_n,0}\to V_{P_k}\). It restricts to
linear isomorphisms
\[
    J_0^*\colon
    \mc F_{\widehat X_n,0}\longrightarrow\mc F_{X,P_k},
    \qquad
    J_0^*\colon
    \mc F_{\widehat X_n,0}^{\#}
    \longrightarrow
    \mc F_{X,P_k}^{\#}.
\]
Since \(w=\xi\circ J=J^*\xi\), we have
\(J_0^*g_{0,j}=\widehat f_{k,j}\) and
\(J_0^*g_{0,j}^{\#}=\widehat f_{k,j}^{\#}\). Hence,
\(J_0^*\beta_0=\beta_k\) and \(J_0^*\beta_0^{\#}=\beta_k^{\#}\).
The same invertible transformation relates the model bases
\((\beta_0,\beta_0^{\#})\), \((\gamma_0,\gamma_0^{\#})\) and
the corresponding bases at \(P_k\). Consequently,
\(J_0^*\gamma_0=\gamma_k\) and
\(J_0^*\gamma_0^{\#}=\gamma_k^{\#}\).
Set \(\widehat U_0':=\widehat U_0\setminus\{0\}\).

For \(\lambda\in\rho(\Delta_{\widehat X_n,\mc F_{\widehat X_n}})\)
and \(h_0^\#\in\mc F_{\widehat X_n,0}^\#\), set
\[
    v_\lambda
    =
    \Gamma_{\mc F_{\widehat X_n},
    \mc F_{\widehat X_n}^{\#}}^{\widehat X_n}(\lambda)
    h_0^{\#}.
\]

Let
\(v_\lambda^{\widehat U_0'}
:=v_\lambda|_{\widehat U_0'}\), and set
\(\widetilde v_\lambda:=J^*v_\lambda^{\widehat U_0'}\). Define
\[
    \mc K_{\widetilde U_k}^{\mathrm{mod}}(\lambda)
    :=
    \left\{
        J^*\left(
            \left.
            \Gamma_{\mc F_{\widehat X_n},
            \mc F_{\widehat X_n}^{\#}}^{\widehat X_n}(\lambda)
            h_0^{\#}
            \right|_{\widehat U_0'}
        \right)
        :
        h_0^{\#}\in\mc F_{\widehat X_n,0}^{\#}
    \right\}.
\]

The local isometry \(J\) intertwines the Laplacians, so every
element of \(\mc K_{\widetilde U_k}^{\mathrm{mod}}(\lambda)\)
solves \((\Delta_{\widetilde U_k'}-\lambda)u=0\).

The critical asymptotic data of
\(\widetilde v_\lambda\) at \(P_k\) satisfy
\[
    \pi_{P_k}\widetilde v_\lambda
    =
    J_0^*\pi_0v_\lambda
    =
    J_0^*\left(
        M_0^{\widehat X_n}(\lambda)h_0^{\#}+h_0^{\#}
    \right).
\]
Writing \(h^\#=J_0^*h_0^\#\), which ranges over
\(\mc F_{X,P_k}^\#\), identifies the projected model solution
space with the graph of
\[
    M_{P_k}^{\widetilde U_k}(\lambda)
    :=
    J_0^*M_0^{\widehat X_n}(\lambda)(J_0^*)^{-1}
    \colon
    \mc F_{X,P_k}^{\#}
    \longrightarrow
    \mc F_{X,P_k}.
\]
Equivalently,
\[
    \pi_{P_k}\bigl(
        \mc K_{\widetilde U_k}^{\mathrm{mod}}(\lambda)
    \bigr)
    =
    \operatorname{graph}_{(
        \mc F_{X,P_k},\mc F_{X,P_k}^{\#})}
    \bigl(M_{P_k}^{\widetilde U_k}(\lambda)\bigr).
\]
We call
\(M_{P_k}^{\widetilde U_k}(\lambda)\)
the model-induced local Weyl map at \(P_k\).

Define the corresponding local \(S\)-matrix, with respect to the
ordered basis \(\beta_k^\#\) of \(\mc F_{X,P_k}^\#\) and the
ordered basis \(\beta_k\) of \(\mc F_{X,P_k}\), by
\[
    S_{P_k}^{\widetilde U_k,(\beta_k,\beta_k^\#)}(\lambda)
    :=
    \bigl[M_{P_k}^{\widetilde U_k}(\lambda)\bigr]
        _{\beta_k^\#}^{\beta_k}.
\]
Since \(J_0^*\beta_0=\beta_k\) and
\(J_0^*\beta_0^\#=\beta_k^\#\), the operators
\(M_{P_k}^{\widetilde U_k}(\lambda)\) and
\(M_0^{\widehat X_n}(\lambda)\) have the same matrix representation
with respect to the corresponding ordered bases. Therefore,
\[
    S_{P_k}^{\widetilde U_k,(\beta_k,\beta_k^\#)}(\lambda)
    =
    \bigl[M_0^{\widehat X_n}(\lambda)\bigr]
        _{\beta_0^\#}^{\beta_0}
    =
    S_0^{\widehat X_n,(\beta_0,\beta_0^\#)}(\lambda).
\]

This identity isolates the local part of the comparison.  The global
and model Weyl solutions have the same singular critical data at
\(P_k\), so their difference has only regular critical data there.
After multiplication by a cutoff, Green's identity expresses this
regular component through a commutator supported in a fixed annulus
away from the cone point.  The Davies--Gaffney estimate then makes that
annular contribution rapidly decreasing as \(\lambda\to-\infty\).
Thus the explicit model determines the global diagonal Weyl block to
all algebraic orders; the argument below implements precisely these
three steps.

Let
\(\lambda\in
\rho(\Delta_{X,\mc F_X})
\cap
\rho(\Delta_{\widehat X_n,\mc F_{\widehat X_n}})\),
and fix \(h^\#\in\mc F_{X,P_k}^\#\). The corresponding global
Weyl solution on \(X'\) is
\(u_\lambda
:=\Gamma_{\mc F_X,\mc F_X^\#}^X(\lambda)h^\#\), which
satisfies \((\Delta_X-\lambda)u_\lambda=0\).
Set \(h_0^\#:=(J_0^*)^{-1}h^\#\). The model Weyl solution on
\(\widehat X_n\setminus\{0,\infty\}\) with singular boundary
component \(h_0^\#\) at \(0\) is
\(v_\lambda
:=\Gamma_{\mc F_{\widehat X_n},
\mc F_{\widehat X_n}^\#}^{\widehat X_n}(\lambda)h_0^\#\), which
satisfies \((\Delta_{\widehat X_n}-\lambda)v_\lambda=0\).
Restricting the model solution to \(\widehat U_0'\) and transporting
it by the local isometry \(J\), set
\(
\widetilde v_\lambda:=J^*(v_\lambda|_{\widehat U_0'}).
\)
This is a solution on \(\widetilde U_k'\). Since
\(J_0^*h_0^\#=h^\#\), it has the same singular boundary component
at \(P_k\) as \(u_\lambda|_{\widetilde U_k'}\).
Consequently, the two functions solve
\((\Delta_{\widetilde U_k'}-\lambda)u=0\), and we may define
\(w_\lambda
:=u_\lambda^{\widetilde U_k'}-\widetilde v_\lambda\).
Then
\((\Delta_{\widetilde U_k'}-\lambda)w_\lambda=0\), and the
\(\mc F_{X,P_k}^{\#}\)-component of \(\pi_{P_k}w_\lambda\) vanishes.
Indeed,
\[
\begin{aligned}
    \pi_{P_k}w_\lambda
    &=
    \pi_{P_k}u_\lambda^{\widetilde U_k'}
    -
    \pi_{P_k}\widetilde v_\lambda \\
    &=
    \left(M_k^X(\lambda)h^{\#}+h^{\#}\right)
    -
    \left(
        M_{P_k}^{\widetilde U_k}(\lambda)h^{\#}+h^{\#}
    \right) \\
    &=
    \left(
        M_k^X(\lambda)
        -
        M_{P_k}^{\widetilde U_k}(\lambda)
    \right)h^{\#}
    \in\mc F_{X,P_k}.
\end{aligned}
\]
Thus, the difference between the diagonal Weyl block on \(X\) and
the model-induced local Weyl map, evaluated at \(h^{\#}\), is
precisely the regular component of the critical asymptotic data of
\(w_\lambda\).

For \(h^\#=\widehat f_{k,j}^\#=J_0^*g_{0,j}^\#\), the solution
on \(X\) is \(u_{k,j}(\lambda)\). Set
\[
    v_{0,j}(\lambda)
    :=
    \Gamma_{\mc F_{\widehat X_n},
    \mc F_{\widehat X_n}^{\#}}^{\widehat X_n}(\lambda)
    g_{0,j}^{\#},
    \qquad
    \widetilde v_{k,j}(\lambda)
    :=
    J^*\bigl(v_{0,j}(\lambda)|_{\widehat U_0'}\bigr).
\]
Define
\(w_{k,j}(\lambda)
:=
u_{k,j}^{\widetilde U_k'}(\lambda)
-
\widetilde v_{k,j}(\lambda)\).
Then
\[
    \pi_{P_k}w_{k,j}(\lambda)
    =
    \sum_{\ell}
    \left(
        S_{k,\ell j}^{X,(\beta_k,\beta_k^\#)}(\lambda)
        -
        S_{0,\ell j}^{\widehat X_n,(\beta_0,\beta_0^\#)}(\lambda)
    \right)
    \widehat f_{k,\ell}.
\]
Consequently, comparing the diagonal \(S\)-matrix block on \(X\)
with the model \(S\)-matrix reduces to estimating the critical
asymptotic data of \(w_{k,j}(\lambda)\).

Let
\(\chi\in C_c^\infty(\widetilde U_k)\) be a cutoff function such that
\(\chi=1\) near \(P_k\). Then
\(\widetilde w_{k,j}(\lambda):=\chi w_{k,j}(\lambda)\), initially
defined on \(\widetilde U_k'\), vanishes near
\(\partial\widetilde U_k\) because
\(\operatorname{supp}\chi\Subset\widetilde U_k\). It therefore
extends by zero across \(X\setminus\widetilde U_k\) to an element of
\(\mc D_{\max}(\Delta_X)\). Indeed, on \(\widetilde U_k'\),
\((\Delta_X-\lambda)(\chi w_{k,j}(\lambda))
=[\Delta_X,\chi]w_{k,j}(\lambda)\).
The right-hand side is smooth and supported in a fixed annulus
contained in the smooth locus, and hence belongs to \(L^2(X')\).
The extension vanishes near \(\partial\widetilde U_k\), so the
displayed distributional identity continues to hold after extension
by zero. Since \(\chi w_{k,j}(\lambda)\in L^2(X')\), this proves the
claimed maximal-domain membership. Since \(\chi=1\) near \(P_k\), the
extended function has the same critical asymptotic data at \(P_k\) as
\(w_{k,j}(\lambda)\), and it has no critical asymptotic data at the
other conic points. Thus
\(\widetilde\pi_{\mc F_X^{\#}}
\widetilde w_{k,j}(\lambda)=0\), and consequently
\(\widetilde w_{k,j}(\lambda)\in\mc D_{X,\mc F_X}\).

The boundary pairing therefore reduces to the contribution at \(P_k\):
\begin{align*}
    \mk q_X\bigl(
        \widetilde w_{k,j}(\lambda),
        u_{k,s}(\lambda)
    \bigr)
    &=
    \omega_{P_k}\bigl(
        \pi_{P_k}\widetilde w_{k,j}(\lambda),
        \pi_{P_k}u_{k,s}(\lambda)
    \bigr) \\
    &=
    \sum_{\ell}
    \left(
        S_{k,\ell j}^{X,(\beta_k,\beta_k^\#)}(\lambda)
        -
        S_{0,\ell j}^{\widehat X_n,(\beta_0,\beta_0^\#)}(\lambda)
    \right)
    \omega_{P_k}\bigl(
        \widehat f_{k,\ell},
        \widehat f_{k,s}^\#
    \bigr) \\
    &=
    S_{k,sj}^{X,(\beta_k,\beta_k^\#)}(\lambda)
    -
    S_{0,sj}^{\widehat X_n,(\beta_0,\beta_0^\#)}(\lambda).
\end{align*}

Assume that \(\lambda<0\). On the other hand, by the definition of
the Green form and since
\((\Delta_X-\lambda)u_{k,s}(\lambda)=0\), we have
\begin{align*}
    \mk q_X\bigl(
        \widetilde w_{k,j}(\lambda),
        u_{k,s}(\lambda)
    \bigr)
    &=
    \left\langle
        \Delta_X\widetilde w_{k,j}(\lambda),
        u_{k,s}(\lambda)
    \right\rangle_{L^2(X')}
    -
    \left\langle
        \widetilde w_{k,j}(\lambda),
        \Delta_Xu_{k,s}(\lambda)
    \right\rangle_{L^2(X')}
    \\
    &=
    \left\langle
        (\Delta_X-\lambda)\widetilde w_{k,j}(\lambda),
        u_{k,s}(\lambda)
    \right\rangle_{L^2(X')}
    \\
    &=
    \left\langle
        [\Delta_X,\chi]w_{k,j}(\lambda),
        u_{k,s}(\lambda)
    \right\rangle_{L^2(X')}.
\end{align*}

It follows that
\[
    S_{k,sj}^{X,(\beta_k,\beta_k^\#)}(\lambda)
    -
    S_{0,sj}^{\widehat X_n,(\beta_0,\beta_0^\#)}(\lambda)
    =
    \left\langle
        [\Delta_X,\chi]w_{k,j}(\lambda),
        u_{k,s}(\lambda)
    \right\rangle_{L^2(X')}.
\]

We now estimate the comparison pairing on a fixed annulus away
from the conic point, using the conic coordinate \(w=\xi\circ J\)
introduced above. Choose \(0<r_1<r_2<\epsilon\), and take
the cutoff above so that \(\chi=1\) on
\(\{P\in\widetilde U_k:|w(P)|\leq r_1\}\) and \(\chi=0\) on
\(\{P\in\widetilde U_k:|w(P)|\geq r_2\}\).
Set
\(A:=\{P\in\widetilde U_k:r_1\leq|w(P)|\leq r_2\}\),
and choose an open annulus \(A_1\) with smooth boundary such that
\(A\Subset A_1\Subset\widetilde U_k'\).
The cutoff and both annuli are fixed independently of \(\lambda\).
All norms below use the metric \(g\), which is smooth and
nondegenerate on a neighborhood of \(\overline{A_1}\).
For the closed annulus \(A\), Sobolev norms are taken over its
interior.

\begin{lem}\label{lem:annular-commutator-estimate}
There is a constant \(C>0\), depending only on \(g\), \(\chi\),
\(A\), and \(A_1\), such that, for every \(u\in H^2(A_1)\),
\[
\|[\Delta_X,\chi]u\|_{L^2(A)}
\leq C\bigl(
\|\Delta_Xu\|_{L^2(A_1)}+\|u\|_{L^2(A_1)}
\bigr).
\]
The commutator is supported in \(A\).

Suppose, in addition, that \(u\) satisfies
\((\Delta_X-\lambda)u=0\) on \(A_1\). Then the preceding estimate
implies
\(\|[\Delta_X,\chi]u\|_{L^2(A)}
\leq C(1+|\lambda|)\|u\|_{L^2(A_1)}\),
where \(C\) is independent of \(\lambda\).
\end{lem}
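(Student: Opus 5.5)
The plan is to write out the commutator explicitly and then apply interior elliptic regularity on the annulus. On the smooth locus, with \(\Delta_X\) the Laplace--Beltrami operator of \(g\),
\[
[\Delta_X,\chi]u=(\Delta_X\chi)\,u\pm2\langle\nabla\chi,\nabla u\rangle_g ,
\]
where the sign depends on the sign convention for \(\Delta_X\). Since \(\chi\) is constant (equal to \(1\) or \(0\)) outside \(A\), both \(\Delta_X\chi\) and \(\nabla\chi\) vanish off \(A\). This proves the support claim. It also gives the pointwise bound
\[
|[\Delta_X,\chi]u|\le C_0\bigl(|u|+|\nabla u|_g\bigr)\quad\text{on }A,
\]
with \(C_0\) depending only on \(\chi\) and \(g\), because \(g\) is smooth and nondegenerate near \(\overline{A_1}\). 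Consequently
\[
\|[\Delta_X,\chi]u\|_{L^2(A)}\le C_0\|u\|_{H^1(A)}.
\]

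The main step is an interior estimate of the form
\[
\|u\|_{H^1(A)}\le C\bigl(\|\Delta_Xu\|_{L^2(A_1)}+\|u\|_{L^2(A_1)}\bigr),
\]
valid for \(u\in H^2(A_1)\). I would obtain it from the standard interior \(H^2\) estimate for the uniformly elliptic operator \(\Delta_X\) with smooth coefficients on \(A_1\). That estimate applies because \(A\Subset A_1\) and \(\overline{A_1}\) lies in the smooth locus. It gives \(\|u\|_{H^2(A)}\le C(\|\Delta_Xu\|_{L^2(A_1)}+\|u\|_{L^2(A_1)})\), and the \(H^1\) bound follows.

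Alternatively, a Caccioppoli-type argument suffices. Take \(\eta\in C_c^\infty(A_1)\) with \(\eta=1\) on \(A\). Integrating \(\eta^2\bar u\,\Delta_Xu\) by parts, with no boundary terms since \(\eta\) is compactly supported in \(A_1\), gives
\[
\int\eta^2|\nabla u|^2\le \|\Delta_Xu\|_{L^2(A_1)}\|u\|_{L^2(A_1)}+2\int\eta|\nabla\eta||u||\nabla u| .
\]
Absorbing the last term by Young's inequality yields \(\|\nabla u\|_{L^2(A)}^2\le C(\|\Delta_Xu\|^2+\|u\|^2)\), with all norms on \(A_1\). Only this \(H^1\) level is needed, so the constant depends only on \(g\), \(\chi\), \(A\) and \(A_1\).

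For the second assertion, substitute \(\Delta_Xu=\lambda u\) on \(A_1\). This gives \(\|\Delta_Xu\|_{L^2(A_1)}=|\lambda|\|u\|_{L^2(A_1)}\), and hence the bound \(C(1+|\lambda|)\|u\|_{L^2(A_1)}\) with the same \(\lambda\)-independent constant.

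No step is a real obstacle. The only care needed is to keep the constant \(\lambda\)-independent, which holds because the elliptic estimate is applied to \(\Delta_X\) itself and \(\lambda\) enters only through the substitution for \(\Delta_Xu\). One should also note that the solutions in question are smooth on \(A_1\) by elliptic regularity, so they indeed lie in \(H^2(A_1)\).
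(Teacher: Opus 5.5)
Your proposal is correct and follows the paper's proof: the paper also writes \([\Delta_X,\chi]u=(\Delta_X\chi)u-2\langle\nabla\chi,\nabla u\rangle_g\), bounds the commutator by \(\|u\|_{H^1(A)}\), applies an interior elliptic estimate on the fixed annuli, and then substitutes \(\Delta_Xu=\lambda u\). The only difference is that the paper uses the \(H^1\) interior estimate with \(\|\Delta_Xu\|_{H^{-1}(A_1)}\) together with the embedding \(L^2(A_1)\hookrightarrow H^{-1}(A_1)\); your Caccioppoli argument is essentially a direct proof of that estimate, and your \(H^2\) interior estimate is a stronger version of it.
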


\begin{proof}
The product rule gives
\([\Delta_X,\chi]u=(\Delta_X\chi)u
-2\langle\nabla\chi,\nabla u\rangle_g\),
where the pointwise metric pairing is extended complex bilinearly.
Both \(\Delta_X\chi\) and \(\nabla\chi\) are supported in \(A\)
and are bounded there. Thus
\(\|[\Delta_X,\chi]u\|_{L^2(A)}
\leq C_\chi\|u\|_{H^1(A)}\),
where \(C_\chi\) depends only on the fixed cutoff and metric.

To control the \(H^1\)-norm, apply the interior elliptic estimate
on the fixed annuli:
\[
\|u\|_{H^1(A)}
\leq C\bigl(
\|\Delta_Xu\|_{H^{-1}(A_1)}+\|u\|_{L^2(A_1)}
\bigr).
\]
Since
\(L^2(A_1)\hookrightarrow H^{-1}(A_1)=(H_0^1(A_1))^*\)
continuously, the right-hand side is bounded by
\(C(\|\Delta_Xu\|_{L^2(A_1)}+\|u\|_{L^2(A_1)})\).
Combining the two bounds and enlarging \(C\) proves the first
estimate.

Now assume, in addition, that
\((\Delta_X-\lambda)u=0\) on \(A_1\). Then
\(\Delta_Xu=\lambda u\) in \(L^2(A_1)\), and hence
\(\|\Delta_Xu\|_{L^2(A_1)}
=|\lambda|\|u\|_{L^2(A_1)}\).
Applying the first estimate gives
\[
\|[\Delta_X,\chi]u\|_{L^2(A)}
\leq C(1+|\lambda|)\|u\|_{L^2(A_1)},
\]
which proves the second assertion.
\end{proof}

We now combine this estimate with the local rapid decay of the
Weyl solutions.

\begin{thm}\label{thm:S-local-comparison}
For all \(s,j\in J_n\),
\[
S_{k,sj}^{X,(\beta_k,\beta_k^\#)}(\lambda)
-
S_{0,sj}^{\widehat X_n,(\beta_0,\beta_0^\#)}(\lambda)
=
O(|\lambda|^{-\infty})
\]
as \(\lambda\to-\infty\).
\end{thm}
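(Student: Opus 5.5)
We start from the identity derived just before Lemma~\ref{lem:annular-commutator-estimate},
\[
S_{k,sj}^{X,(\beta_k,\beta_k^\#)}(\lambda)
-S_{0,sj}^{\widehat X_n,(\beta_0,\beta_0^\#)}(\lambda)
=\left\langle[\Delta_X,\chi]w_{k,j}(\lambda),u_{k,s}(\lambda)\right\rangle_{L^2(X')},
\]
valid for \(\lambda<0\). Write \(\lambda=-r\). Since the commutator is supported in the fixed annulus \(A\), Cauchy--Schwarz bounds the right-hand side by
\[
\|[\Delta_X,\chi]w_{k,j}(-r)\|_{L^2(A)}\,\|u_{k,s}(-r)\|_{L^2(A)} .
\]
We then apply the second assertion of Lemma~\ref{lem:annular-commutator-estimate} to \(u=w_{k,j}(-r)\). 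This is legitimate because \(w_{k,j}\) solves \((\Delta_X+r)w=0\) on \(\widetilde U_k'\supset\overline{A_1}\) and is smooth there by interior elliptic regularity, so it lies in \(H^2(A_1)\). The resulting bound is
\[
\bigl|S^X_{k,sj}-S^{\widehat X_n}_{0,sj}\bigr|
\le C(1+r)\bigl(\|u_{k,j}(-r)\|_{L^2(A_1)}+\|\widetilde v_{k,j}(-r)\|_{L^2(A_1)}\bigr)\|u_{k,s}(-r)\|_{L^2(A)} .
\]
It therefore suffices to show that each of the three \(L^2\) norms on the fixed compact annuli is \(O(r^{-\infty})\).

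For the global Weyl solutions this is exactly Lemma~\ref{lem:weyl-solution-decay} with \(q=0\). Take \(K=\overline{A_1}\Subset X'\), which is compact and avoids all conic points. This gives \(\|u_{k,j}(-r)\|_{L^2(A_1)}=O(r^{-M})\) and likewise for \(u_{k,s}\). One point needs checking here. Lemma~\ref{lem:weyl-solution-decay} is stated for \(h^\#\in\mc F^\#_{X,P}\) with \(\mc F_X^\#\) a direct sum of coordinate singular spaces. In our setting the singular complement at \(P_k\) is \(\operatorname{span}\beta_k^\#\), defined by the conic coordinate \(w\). That lemma only uses a cutoff representative of \(h^\#\) supported near \(P_k\), so it applies verbatim.

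For the model term, \(\widetilde v_{k,j}=J^*(v_{0,j}|_{\widehat U_0'})\) and \(J\) is an isometry. Hence \(\|\widetilde v_{k,j}(-r)\|_{L^2(A_1)}=\|v_{0,j}(-r)\|_{L^2(J(A_1))}\), where \(J(A_1)\) is a fixed compact subset of \(\widehat X_n\setminus\{0,\infty\}\). Applying Lemma~\ref{lem:weyl-solution-decay} to the compact conic surface \(\widehat X_n\), with \(P=0\) and \(h^\#=g_{0,j}^\#\), gives decay \(O(r^{-M})\). This rests on the Davies--Gaffney machinery (Proposition~\ref{prop:resolvent-power-decay}) for the Friedrichs Laplacian of \(\widehat X_n\), which holds for any compact conic surface.

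Multiplying out, the right-hand side is \(O((1+r)r^{-2M})\) for every \(M\), which is \(O(r^{-\infty})\). This proves the theorem. Note that \(\lambda\to-\infty\) lies in both resolvent sets, since both Friedrichs Laplacians are nonnegative, so the identity holds for all large \(r\).

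The main obstacle is to make sure that the annulus estimate really is uniform in \(\lambda\), and that the Green-form identity is justified. The uniformity is handled by the explicit factor \((1+|\lambda|)\) in Lemma~\ref{lem:annular-commutator-estimate}, which is absorbed by the rapid decay. The Green-form identity needs \(\widetilde w_{k,j}\in\mc D_{X,\mc F_X}\) and \(u_{k,s}\in\ker(\Delta_X-\lambda)\); both were established above.
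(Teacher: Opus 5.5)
Your proposal is correct and follows essentially the same route as the paper. Both arguments use the comparison pairing, Cauchy--Schwarz on the annulus $A$, the $(1+|\lambda|)$ bound from Lemma~\ref{lem:annular-commutator-estimate} applied to $w_{k,j}$, and rapid decay of $u_{k,j}$, $u_{k,s}$ and the transported model solution via Lemma~\ref{lem:weyl-solution-decay} on $\overline{A_1}$ and on $J(A_1)\subset\widehat X_n$; your extra checks (that the lemma applies with the singular complement spanned by $\beta_k^\#$ and on the model surface) are exactly what the paper uses implicitly.
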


\begin{proof}
Lemma~\ref{lem:weyl-solution-decay} applies on
\(\overline{A_1}\) and on its image under the isometry \(J\).
Since
\(w_{k,j}(\lambda)
=u_{k,j}(\lambda)|_{\widetilde U_k'}
-\widetilde v_{k,j}(\lambda)\),
the triangle inequality and invariance of the \(L^2\)-norm under
\(J\) give
\[
\begin{aligned}
\|w_{k,j}(\lambda)\|_{L^2(A_1)}
&\leq
\|u_{k,j}(\lambda)\|_{L^2(A_1)}
+
\|v_{0,j}(\lambda)\|_{L^2(J(A_1))}\\
&=O(|\lambda|^{-\infty}).
\end{aligned}
\]
The same lemma gives
\(\|u_{k,s}(\lambda)\|_{L^2(A)}
=O(|\lambda|^{-\infty})\).

By the preceding comparison identity, the difference of the
two \(S\)-matrix entries is
\(\langle[\Delta_X,\chi]w_{k,j}(\lambda),
u_{k,s}(\lambda)\rangle_{L^2(X')}\).
The commutator is supported in \(A\), so this pairing can be taken
over \(A\). Since \(w_{k,j}(\lambda)\) is smooth on
\(\widetilde U_k'\) and
\(\overline{A_1}\Subset\widetilde U_k'\), its restriction to
\(A_1\) belongs to \(H^2(A_1)\). Moreover,
\((\Delta_X-\lambda)w_{k,j}(\lambda)=0\) there.
The Cauchy--Schwarz inequality and
Lemma~\ref{lem:annular-commutator-estimate}, applied with
\(u=w_{k,j}(\lambda)|_{A_1}\), therefore yield
\[
\begin{aligned}
\left|
\left\langle
[\Delta_X,\chi]w_{k,j}(\lambda),u_{k,s}(\lambda)
\right\rangle_{L^2(X')}
\right|
&\leq
\|[\Delta_X,\chi]w_{k,j}(\lambda)\|_{L^2(A)}
\|u_{k,s}(\lambda)\|_{L^2(A)}\\
&\leq
C(1+|\lambda|)
\|w_{k,j}(\lambda)\|_{L^2(A_1)}
\|u_{k,s}(\lambda)\|_{L^2(A)}.
\end{aligned}
\]
Both \(L^2\)-norms decay faster than every negative power of
\(|\lambda|\). Multiplication by \(1+|\lambda|\) preserves this
property, so the right-hand side is
\(O(|\lambda|^{-\infty})\). This proves the assertion.
\end{proof}

\begin{cor}
For all \(s,j\in J_n\),
\[
    S_{k,sj}^{X,(\gamma_k,\gamma_k^\#)}(\lambda)
    -
    S_{0,sj}^{\widehat X_n,(\gamma_0,\gamma_0^\#)}(\lambda)
    =
    O(|\lambda|^{-\infty})
\]
as \(\lambda\to-\infty\).
\end{cor}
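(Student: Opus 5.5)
The plan is to transport Theorem~\ref{thm:S-local-comparison} from the bases \((\beta_k,\beta_k^\#)\) to \((\gamma_k,\gamma_k^\#)\) on both sides by one and the same affine transformation. On the global side, the change from \(S_k^{X,(\beta_k,\beta_k^\#)}\) to \(S_k^{X,(\gamma_k,\gamma_k^\#)}\) comes from the basis relation established in the lemma preceding this section. That lemma has exactly the form of Lemma~\ref{lem:model-coordinate-basis-change}. Hence, as in the model case, the change-of-basis matrix at \(P_k\) is
\[
\begin{bmatrix}\mathsf D_k&\mathsf C_k\mathsf D_k^{-1}\\0&\mathsf D_k^{-1}\end{bmatrix},
\]
with the same \(\mathsf D_k,\mathsf C_k\). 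At all other conic points, the bases are left unchanged.

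The first point to check is that \eqref{eq:weyl-chart-transition} may be applied blockwise to the diagonal block at \(P_k\). The full transition matrix is block diagonal across conic points, with the identity away from \(P_k\). At \(P_k\) its lower-left block \(C\) is zero, so \(CS+D\) is block diagonal with entries \(\mathsf D_k^{-1}\) at \(P_k\) and the identity elsewhere. Therefore \((AS+B)(CS+D)^{-1}\) has \((P_k,P_k)\)-block \(\mathsf D_kS_k^{X,(\beta_k,\beta_k^\#)}\mathsf D_k+\mathsf C_k\); the off-diagonal blocks of \(S\) do not enter. This yields
\[
S_k^{X,(\gamma_k,\gamma_k^\#)}(\lambda)=\mathsf D_kS_k^{X,(\beta_k,\beta_k^\#)}(\lambda)\mathsf D_k+\mathsf C_k.
\]

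The model side has the same shape. The lemma following Lemma~\ref{lem:model-coordinate-basis-change} gives
\[
S_0^{\widehat X_n,(\gamma_0,\gamma_0^\#)}=\mathsf D_kS_0^{\widehat X_n,(\beta_0,\beta_0^\#)}\mathsf D_k+\mathsf C_k.
\]
Subtracting the two identities cancels \(\mathsf C_k\) and leaves
\[
\mathsf D_k\bigl(S_k^{X,(\beta_k,\beta_k^\#)}-S_0^{\widehat X_n,(\beta_0,\beta_0^\#)}\bigr)\mathsf D_k.
\]
The matrix \(\mathsf D_k\) is diagonal and independent of \(\lambda\). Each entry of the difference is therefore a fixed multiple \(A_k^{-(|s|+|j|)/n}\) of the corresponding entry in Theorem~\ref{thm:S-local-comparison}, and so it is \(O(|\lambda|^{-\infty})\).

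The main point requiring care is identifying the transition matrices exactly. One must confirm that the coefficients \(\log A_k/n\), \(A_k^{\pm j/n}\) and \(\sqrt{j(n-j)}A_k^{j/n-1}z_k/n\) coincide on \(X\) and on the model. This follows from \(J_0^*\beta_0=\beta_k\) and \(J_0^*\gamma_0=\gamma_k\), since \(J_0^*\) is linear and carries one basis relation to the other.

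One must also confirm that both matrices are evaluated at a common \(\lambda\). For \(\lambda\) sufficiently negative, \(\lambda\) lies in \(\rho(\Delta_{X,\mc F_X})\cap\rho(\Delta_{\widehat X_n,\mc F_{\widehat X_n}})\), because both Friedrichs operators are nonnegative. The new singular complement \(\operatorname{span}\gamma_k^\#\) is transverse to \(\mc F_{X,P_k}\), and the graph coordinate is well defined there because \(\mathsf D_k^{-1}\) is invertible.
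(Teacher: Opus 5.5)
Your proposal is correct and follows the paper's proof. Both Weyl matrices transform under the same affine map \(S\mapsto\mathsf D_kS\mathsf D_k+\mathsf C_k\), so \(\mathsf C_k\) cancels and the difference is \(\mathsf D_k\) times the \((\beta,\beta^\#)\)-difference times \(\mathsf D_k\), which is \(O(|\lambda|^{-\infty})\) by Theorem~\ref{thm:S-local-comparison}; your check that \eqref{eq:weyl-chart-transition} acts blockwise on the \((P_k,P_k)\)-block makes explicit a step the paper leaves implicit, since the transition matrix is block diagonal across conic points with vanishing lower-left block.
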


\begin{proof}
The same change-of-basis matrices \(\mathsf D_k\) and \(\mathsf C_k\) apply on
\(V_{P_k}\) and \(V_{\widehat X_n,0}\). Hence, by the corresponding
affine transformations of the Weyl matrices,
\[
    S_k^{X,(\gamma_k,\gamma_k^\#)}(\lambda)
    =
    \mathsf D_k
    S_k^{X,(\beta_k,\beta_k^\#)}(\lambda)
    \mathsf D_k
    +
    \mathsf C_k
\]
and
\[
    S_0^{\widehat X_n,(\gamma_0,\gamma_0^\#)}(\lambda)
    =
    \mathsf D_k
    S_0^{\widehat X_n,(\beta_0,\beta_0^\#)}(\lambda)
    \mathsf D_k
    +
    \mathsf C_k.
\]
Therefore,
\[
\begin{aligned}
&S_k^{X,(\gamma_k,\gamma_k^\#)}(\lambda)
-
S_0^{\widehat X_n,(\gamma_0,\gamma_0^\#)}(\lambda)\\
&\qquad=
\mathsf D_k
\left(
    S_k^{X,(\beta_k,\beta_k^\#)}(\lambda)
    -
    S_0^{\widehat X_n,(\beta_0,\beta_0^\#)}(\lambda)
\right)
\mathsf D_k.
\end{aligned}
\]
Theorem~\ref{thm:S-local-comparison} and the fact that \(\mathsf D_k\) is independent of
\(\lambda\) imply the assertion.
\end{proof}

\begin{cor}\label{cor:S-high-energy-derivatives}
For every \(s,j\in J_n\), every integer \(q\geq0\), and every
\(N>0\),
\[
\frac{d^q}{dr^q}
\left(
S_{k,sj}^{X,(\gamma_k,\gamma_k^\#)}(-r)
-
S_{0,sj}^{\widehat X_n,(\gamma_0,\gamma_0^\#)}(-r)
\right)
=
O(r^{-N})
\]
as \(r\to+\infty\). In particular, for the off-diagonal entries used
in the determinant variation,
\[
\frac{d^q}{dr^q}
\left(
S_{k,n-p,-p}^{X,(\gamma_k,\gamma_k^\#)}(-r)
-
S_{k,p}^{(\infty)}
\right)
=
O(r^{-N}),
\qquad q\geq0,
\]
where
\(S_{k,p}^{(\infty)}
=\sqrt{p(n-p)}\,\overline{z_k}/(nA_k)\).
Only the cases \(q=0,1,2\) will be used below.
\end{cor}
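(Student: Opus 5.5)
The plan is to return to the comparison identity behind Theorem~\ref{thm:S-local-comparison} and differentiate it in \(r\), rather than trying to differentiate an \(O(r^{-\infty})\) bound directly. The corollary then reduces to bounding \(r\)-derivatives of Weyl solutions on fixed annuli, which Lemma~\ref{lem:weyl-solution-decay} already provides.

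First, reduce to the \((\beta_k,\beta_k^\#)\) bases. By the previous corollary, the difference in the \((\gamma_k,\gamma_k^\#)\) bases equals \(\mathsf D_k(\cdot)\mathsf D_k\) applied to the difference in the \((\beta_k,\beta_k^\#)\) bases. Since \(\mathsf D_k\) is independent of \(r\), it suffices to bound \(\partial_r^q\) of
\[
S_{k,sj}^{X,(\beta_k,\beta_k^\#)}(-r)-S_{0,sj}^{\widehat X_n,(\beta_0,\beta_0^\#)}(-r)
=\left\langle[\Delta_X,\chi]w_{k,j}(-r),u_{k,s}(-r)\right\rangle_{L^2(A)},
\]
where \(\lambda=-r\) with \(r\) large. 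Both Friedrichs operators are nonnegative, so \(-r\) lies in both resolvent sets.

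Second, differentiate the right-hand side. The cutoff \(\chi\) and the annuli \(A\subset A_1\) are fixed, so the Leibniz rule gives a finite sum of terms of the form
\[
\left\langle[\Delta_X,\chi]\partial_r^aw_{k,j}(-r),\partial_r^bu_{k,s}(-r)\right\rangle,\qquad a+b=q.
\]
The inner product is sesquilinear, but \(r\) is real, so differentiating the conjugated slot causes no trouble.

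Third, bound each term. Differentiating \((\Delta_X+r)u=0\) on \(A_1\) gives \((\Delta_X+r)\partial_r^au=-a\,\partial_r^{a-1}u\). The first estimate of Lemma~\ref{lem:annular-commutator-estimate} then yields
\[
\|[\Delta_X,\chi]\partial_r^aw\|_{L^2(A)}\le C\bigl((1+r)\|\partial_r^aw\|_{L^2(A_1)}+a\|\partial_r^{a-1}w\|_{L^2(A_1)}\bigr).
\]
Lemma~\ref{lem:weyl-solution-decay} makes \(\|\partial_r^au_{k,j}\|_{L^2(\overline{A_1})}\) and \(\|\partial_r^bu_{k,s}\|_{L^2(A)}\) both \(O(r^{-\infty})\). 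Applied on the model \(\widehat X_n\), the same lemma makes \(\|\partial_r^av_{0,j}\|_{L^2(J(A_1))}\) rapidly decreasing. Since \(J\) is an isometry independent of \(r\), the bound transfers to \(\partial_r^a\widetilde v_{k,j}\). By Cauchy--Schwarz, each term is \(O(r^{-N})\) for every \(N\).

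For the second assertion, I would use the explicit model corollary in the \((\gamma_0,\gamma_0^\#)\) bases. The entry \(S_{0,n-p,-p}^{\widehat X_n,(\gamma_0,\gamma_0^\#)}\) is off-diagonal for \(1\le p\le n-1\). Indeed, \(n-p\neq-p\), and with \(j=p\) it is exactly the entry of \(\mathsf C_k\) in position \((n-j,-j)\), so it equals the constant \(S_{k,p}^{(\infty)}=\sqrt{p(n-p)}\,\overline{z_k}/(nA_k)\). Its \(r\)-derivatives of positive order therefore vanish, and the first assertion gives the claim for all \(q\).

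The main obstacle is the differentiability input. One must check that \(r\mapsto u_{k,j}(-r)\) is smooth into \(L^2\), with derivatives given by \(\partial_r^q\) of the resolvent formula. This follows from the resolvent representation in the proof of Lemma~\ref{lem:weyl-solution-decay}, and interior elliptic regularity lets these derivatives be computed on \(A_1\). One must also check that the model solution enjoys the same decay; this holds because Lemma~\ref{lem:weyl-solution-decay} is stated for an arbitrary compact conic surface.
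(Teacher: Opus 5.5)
Your proposal is correct and follows essentially the same route as the paper. In both, you differentiate the comparison pairing $\langle[\Delta_X,\chi]w_r,u_r\rangle_{L^2(A)}$ by the Leibniz rule and control $\Delta_X\partial_r^a w_r=-r\partial_r^aw_r-a\partial_r^{a-1}w_r$ through the first annular commutator estimate together with Lemma~\ref{lem:weyl-solution-decay} on both surfaces. You then pass to the $(\gamma_k,\gamma_k^\#)$ bases via the $r$-independent $\mathsf D_k$, and read off the constant off-diagonal model entry from $\mathsf C_k$.

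The differences are cosmetic. You do the basis reduction first rather than last. For differentiability, you argue smoothness into $L^2$ and upgrade it using the eigen-equation and interior elliptic regularity; the paper instead appeals to holomorphy of the resolvent into the graph domain, and the two justifications are equivalent.
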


\begin{proof}
Write \(u_r=u_{k,s}(-r)\) and \(w_r=w_{k,j}(-r)\).
In the \((\beta_k,\beta_k^\#)\)-basis, set
\[
F_{sj}(r)
:=
S_{k,sj}^{X,(\beta_k,\beta_k^\#)}(-r)
-S_{0,sj}^{\widehat X_n,(\beta_0,\beta_0^\#)}(-r).
\]
Since \([\Delta_X,\chi]w_r\) is supported in \(A\), the comparison
identity gives
\[
F_{sj}(r)
=\left\langle[\Delta_X,\chi]w_r,u_r\right\rangle_{L^2(A)}.
\]

We first justify differentiation of this pairing. For a fixed
cutoff representative \(H\) of the prescribed singular data, the
Weyl solution is \(H+R_{\mc F}(-r)(\Delta_X+r)H\).
The resolvent is holomorphic with values in the Friedrichs graph
domain, so this expression is smooth in \(r>0\) with values in
the maximal graph domain. Interior elliptic estimates make
restriction to \(A_1\) continuous from that graph domain to
\(H^2(A_1)\); the same holds on the model annulus. Since the
local isometry \(J\) is fixed, \(w_r\) is therefore smooth with
values in \(H^2(A_1)\). The commutator is continuous from
\(H^2(A_1)\) to \(L^2(A)\), so the pairing may be differentiated
to every order by the Leibniz rule.

By Lemma~\ref{lem:weyl-solution-decay}, the Weyl solutions on both
surfaces and all their \(r\)-derivatives are rapidly
decreasing in \(L^2\) on the fixed comparison annuli. The local
isometry \(J\) is independent of \(r\), so the same conclusion holds
for \(\partial_r^q w_r\) on \(A_1\), for every integer \(q\geq0\).
In particular, for every integer \(q\geq0\) and every \(M>0\),
\[
\|\partial_r^q w_r\|_{L^2(A_1)}=O(r^{-M}),
\qquad
\|\partial_r^q u_r\|_{L^2(A)}=O(r^{-M}).
\]

We next control the commutator terms. Since
\((\Delta_X+r)w_r=0\) on \(A_1\), differentiation with respect to
\(r\) gives
\[
(\Delta_X+r)\partial_r^q w_r
=-q\,\partial_r^{q-1}w_r,
\qquad q\geq1.
\]
Together with \(\Delta_Xw_r=-rw_r\), this yields, for every
\(q\geq0\),
\[
\Delta_X\partial_r^q w_r
=-r\partial_r^q w_r-q\partial_r^{q-1}w_r,
\]
where the second term is omitted when \(q=0\). Given any \(N>0\),
apply the preceding rapid-decay bounds with exponent \(N+1\).
The additional factor \(r\) is then absorbed, and hence
\(\|\Delta_X\partial_r^q w_r\|_{L^2(A_1)}=O(r^{-N})\).
The first estimate in
Lemma~\ref{lem:annular-commutator-estimate} now shows that
\[
\|[\Delta_X,\chi]\partial_r^q w_r\|_{L^2(A)}
=O(r^{-N}),
\qquad q\geq0.
\]

Because the commutator is independent of \(r\), differentiation of
the comparison pairing gives
\[
\begin{aligned}
F_{sj}'(r)
&=\left\langle[\Delta_X,\chi]\partial_rw_r,u_r\right\rangle_{L^2(A)}
+\left\langle[\Delta_X,\chi]w_r,\partial_ru_r\right\rangle_{L^2(A)},\\
F_{sj}''(r)
&=\left\langle[\Delta_X,\chi]\partial_r^2w_r,u_r\right\rangle_{L^2(A)}
+2\left\langle[\Delta_X,\chi]\partial_rw_r,
\partial_ru_r\right\rangle_{L^2(A)}\\
&\quad
+\left\langle[\Delta_X,\chi]w_r,
\partial_r^2u_r\right\rangle_{L^2(A)}.
\end{aligned}
\]
The same pairing formula gives \(F_{sj}(r)\) itself. More generally,
repeated differentiation and the fact that \([\Delta_X,\chi]\) is
independent of \(r\) give, inductively,
\[
F_{sj}^{(q)}(r)
=\sum_{a=0}^q\binom qa
\left\langle
[\Delta_X,\chi]\partial_r^a w_r,
\partial_r^{q-a}u_r
\right\rangle_{L^2(A)}.
\]
For each \(0\leq a\leq q\), both factors on the right are
rapidly decreasing in their respective \(L^2\)-norms. Thus
Cauchy--Schwarz gives
\(F_{sj}^{(q)}(r)=O(r^{-N})\) for every \(q\geq0\) and every
\(N>0\).

Finally, the affine change-of-basis formulas show that the difference
of the two Weyl matrices in the \((\gamma,\gamma^\#)\)-bases is
\(\mathsf D_k\) times their difference in the \((\beta,\beta^\#)\)-bases
times \(\mathsf D_k\). Since \(\mathsf D_k\) is independent of \(r\), the same
estimates hold entrywise in the \((\gamma_k,\gamma_k^\#)\)-basis.
For \(1\leq p\leq n-1\), the explicit spherical-model formula gives
\(S_{0,n-p,-p}^{\widehat X_n,(\gamma_0,\gamma_0^\#)}(-r)
=S_{k,p}^{(\infty)}\), independently of \(r\). Taking
\((s,j)=(n-p,-p)\) proves the final assertion.
\end{proof}

\begin{cor}
As \(\lambda\to-\infty\), the entries of
\(S_k^{X,(\gamma_k,\gamma_k^\#)}(\lambda)\) have the following
asymptotic behavior. The zero-mode entry satisfies
\[\displaystyle
S_{k,00}^{X,(\gamma_k,\gamma_k^\#)}(\lambda)
=
-\frac{1}{n}
\left(
\gamma_E+\psi(\nu+1)
+\frac{\pi}{2}\cot(\pi\nu)
-\log A_k
\right)
+
O(|\lambda|^{-\infty}).
\]
For \(0<|j|\leq n-1\),
\[\displaystyle
S_{k,jj}^{X,(\gamma_k,\gamma_k^\#)}(\lambda)
=
A_k^{-2|j|/n}
s_j^{\widehat X_n,(\beta_0,\beta_0^\#)}(\lambda)
+
O(|\lambda|^{-\infty})
.\]
Moreover, for \(1\leq j\leq n-1\),
\[\displaystyle
S_{k,j-n,j}^{X,(\gamma_k,\gamma_k^\#)}(\lambda)
=
\frac{\sqrt{j(n-j)}}{nA_k}z_k
+
O(|\lambda|^{-\infty})
,\]
while
\[\displaystyle
S_{k,n-j,-j}^{X,(\gamma_k,\gamma_k^\#)}(\lambda)
=
\frac{\sqrt{j(n-j)}}{nA_k}\overline{z_k}
+
O(|\lambda|^{-\infty})
.\]

All remaining entries satisfy
\[\displaystyle
S_{k,\ell j}^{X,(\gamma_k,\gamma_k^\#)}(\lambda)
=
O(|\lambda|^{-\infty})
.\]
\end{cor}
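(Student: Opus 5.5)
The plan is to combine the preceding corollary, which compares the global block \(S_k^{X,(\gamma_k,\gamma_k^\#)}(\lambda)\) with the model block \(S_0^{\widehat X_n,(\gamma_0,\gamma_0^\#)}(\lambda)\), with the explicit description of that model block.

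First, I would invoke the corollary preceding Corollary~\ref{cor:S-high-energy-derivatives}. It states that, for all \(s,j\in J_n\),
\[
S_{k,sj}^{X,(\gamma_k,\gamma_k^\#)}(\lambda)
-S_{0,sj}^{\widehat X_n,(\gamma_0,\gamma_0^\#)}(\lambda)
=O(|\lambda|^{-\infty})
\qquad(\lambda\to-\infty).
\]
This bound is uniform in the finitely many index pairs. It therefore reduces every assertion to reading off the model entry \(S_{0,sj}^{\widehat X_n,(\gamma_0,\gamma_0^\#)}(\lambda)\).

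Second, I would substitute the corollary of Section~\ref{sec:spherical-model} that lists the nonzero entries of \(S_0^{\widehat X_n,(\gamma_0,\gamma_0^\#)}(\lambda)\). Its zero-mode entry is the digamma expression shifted by \(-\log A_k\). Its diagonal entries for \(0<|j|\leq n-1\) are \(A_k^{-2|j|/n}s_j^{\widehat X_n,(\beta_0,\beta_0^\#)}(\lambda)\). Its off-diagonal entries at the positions \((j-n,j)\) and \((n-j,-j)\) are \(\sqrt{j(n-j)}\,z_k/(nA_k)\) and \(\sqrt{j(n-j)}\,\overline{z_k}/(nA_k)\), respectively. All other entries vanish identically. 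Adding the \(O(|\lambda|^{-\infty})\) error to each of these cases gives the four displayed asymptotics. For the remaining entries, the model entry is zero, so the global entry is itself \(O(|\lambda|^{-\infty})\).

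Two bookkeeping points need checking. The first is the meaning of \(\nu\) along \(\lambda\to-\infty\): one fixes a branch with \(\lambda=\nu(\nu+1)\), and the model formulas of \cite{LiouModel2026} hold for that branch on the negative axis. The second is that the index sets \(\{(j-n,j)\}\) and \(\{(n-j,-j)\}\) for \(1\leq j\leq n-1\) are disjoint from the diagonal. This holds because \(j-n\neq j\) and \(n-j\neq -j\), so no entry receives two contributions.

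There is no genuine obstacle: the analytic content is entirely contained in Theorem~\ref{thm:S-local-comparison} and the affine transformation law by \(\mathsf D_k\) and \(\mathsf C_k\). The only point requiring care is that the \(O(|\lambda|^{-\infty})\) remainder is not altered by the conjugation by the \(\lambda\)-independent matrix \(\mathsf D_k\). This was already checked in the proof of the preceding corollary.
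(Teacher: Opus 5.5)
Your proposal is correct and is the paper's own argument: the paper likewise gets the corollary by combining the \((\gamma_k,\gamma_k^\#)\)-basis comparison \(S_{k,\ell j}^{X,(\gamma_k,\gamma_k^\#)}(\lambda)-S_{0,\ell j}^{\widehat X_n,(\gamma_0,\gamma_0^\#)}(\lambda)=O(|\lambda|^{-\infty})\) with the explicit entries of \(S_0^{\widehat X_n,(\gamma_0,\gamma_0^\#)}(\lambda)=\mathsf D_k S_0^{\widehat X_n,(\beta_0,\beta_0^\#)}(\lambda)\mathsf D_k+\mathsf C_k\). Your extra bookkeeping is accurate and only makes explicit what the paper leaves implicit: there are finitely many index pairs, and the positions \((j-n,j)\) and \((n-j,-j)\) lie off the diagonal.
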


\begin{proof}
This follows from
\(\displaystyle
S_{k,\ell j}^{X,(\gamma_k,\gamma_k^\#)}(\lambda)
-
S_{0,\ell j}^{\widehat X_n,(\gamma_0,\gamma_0^\#)}(\lambda)
=
O(|\lambda|^{-\infty})
\)
and the explicit formulas for the entries of
\(S_0^{\widehat X_n,(\gamma_0,\gamma_0^\#)}(\lambda)\).
\end{proof}

The off-diagonal limits in the preceding corollary also explain the
coefficient attached to a ramification point in the final determinant
formula.  If \(P_k\) has ramification index \(n\), then for
\(1\leq p\leq n-1\),
\[
S_{k,n-p,-p}^{X,(\gamma_k,\gamma_k^\#)}(-\infty)
:=
\lim_{r\to\infty}
S_{k,n-p,-p}^{X,(\gamma_k,\gamma_k^\#)}(-r)
=
\frac{\sqrt{p(n-p)}}{n(1+|z_k|^2)}\,\overline{z_k}.
\]
The weights occurring in the variational trace formula give
\[
\sum_{p=1}^{n-1}\frac{\sqrt{p(n-p)}}{n}
S_{k,n-p,-p}^{X,(\gamma_k,\gamma_k^\#)}(-\infty)
=
\frac16\left(n-\frac1n\right)
\frac{\overline{z_k}}{1+|z_k|^2}.
\]
Since
\(\partial_{z_k}\log\rho(z_k,\overline{z_k})
=-2\overline{z_k}/(1+|z_k|^2)\), this weighted sum is exactly what
produces the exponent
\(\frac1{12}(n-n^{-1})\) of the round conformal factor.  Hence that
exponent is not an external correction: it is already encoded in the
affine coordinate-change term of the local \(S\)-matrix.

\section{Hurwitz Coordinates and the Gluing Construction}\label{sec:hurwitz-spaces}
In order to study the variation of the conic metric
\(g_\varphi:=\varphi^*ds_{\mathrm{rd}}^2\) and of the associated spectral
quantities, we need a natural parameter space for deformations of the
branched covering \(\varphi:X\to\mb P^1\) with fixed Hurwitz data.
This parameter space is the Hurwitz space
\(H_{g,d,N}^{\mb P^1}\), whose points are equivalence classes
\([\varphi]\) of branched coverings.
In this section, we review the classical Hurwitz-space construction of
Fulton \cite{Fulton1969} in the present setting with fixed ramification
data; for Hurwitz spaces with arbitrary ramification indices, see also
\cite{KokotovKorotkin2004}.
The basic feature of the construction is
that, after equipping \(H_{g,d,N}^{\mb P^1}\) with the Hurwitz
topology, the branch-value map
\(\operatorname{br}:H_{g,d,N}^{\mb P^1}\to C_N(\mb P^1)\)
is a topological covering map onto the configuration space of \(N\)
distinct branch values. We therefore first describe the complex
manifold structure and local holomorphic coordinates on
\(C_N(\mb P^1)\). These coordinates lift through
\(\operatorname{br}\) to local holomorphic coordinates on
\(H_{g,d,N}^{\mb P^1}\), making the branch-value map locally
biholomorphic.
After establishing these Hurwitz coordinates, we fix a point
\([\varphi]\in H_{g,d,N}^{\mb P^1}\) and give an explicit geometric
realization of deformation in one coordinate direction. If \(Q_k\)
is a branch value and \(z_k\) is its corresponding local coordinate,
we vary \(z_k\) to \(z_k+t\) while keeping the remaining Hurwitz
coordinates fixed. We then construct a representative
\(\varphi_t:X_t\to\mb P^1\) of the corresponding point of
\(H_{g,d,N}^{\mb P^1}\). This gives a family of conic metrics
\(g_t:=\varphi_t^*ds_{\mathrm{rd}}^2\), whose spectral variation will be
studied in the subsequent sections.

For a positive integer \(N\), let
\(F_N(\mb P^1)\) be the space of ordered \(N\)-tuples of distinct
points, and let \(C_N(\mb P^1)=F_N(\mb P^1)/S_N\) be the space
of unordered configurations. The permutation action is free and
holomorphic; write \(\pi_N\) for the quotient map.
For pairwise disjoint coordinate discs \(D_1,\ldots,D_N\), set
\[
\mathcal N(D_1,\ldots,D_N)
:=\{A\in C_N(\mb P^1):\#(A\cap D_i)=1\text{ for every }i\}.
\]
The restriction of \(\pi_N\) to \(D_1\times\cdots\times D_N\)
is a homeomorphism onto this neighborhood. If \(\psi_i\) is a
holomorphic coordinate on \(D_i\), its inverse followed by
\(\psi_1\times\cdots\times\psi_N\) gives a chart on
\(C_N(\mb P^1)\). The transitions are locally permutations
followed by holomorphic coordinate changes. The finite quotient
is Hausdorff and second countable, so these charts make
\(C_N(\mb P^1)\) a complex manifold of dimension \(N\).

Let $\varphi:X\to\mb P^{1}$ and $\psi:Y\to\mb P^{1}$ be meromorphic
functions. We say that $\varphi$ and $\psi$ are equivalent if there
exists a biholomorphism $\Theta:X\to Y$ such that the following diagram
commutes:
\[
\begin{tikzcd}
X \arrow[r, "\Theta", "\sim"'] \arrow[dr, "\varphi"']
&
Y \arrow[d, "\psi"]
\\
&
\mb P^{1}.
\end{tikzcd}
\]
Equivalently, $\psi\circ\Theta=\varphi$. Suppose that $\varphi$ and $\psi$ are equivalent through a
biholomorphism $\Theta:X\to Y$. Then $\Theta$ maps the ramification points of
$\varphi$ bijectively onto those of $\psi$, and
$e_P(\varphi)=e_{\Theta(P)}(\psi)$ for every $P\in X$. In particular,
$\varphi$ and $\psi$ have the same set of branch values in
$\mb P^{1}$.

For each $A\in C_N(\mb P^{1})$, let
$H_{g,d}^{\mb P^{1}}(A)$ denote the set of all equivalence classes
$[\varphi]$, where $\varphi:X\to\mb P^{1}$ is a degree-$d$
meromorphic function on a compact connected Riemann surface $X$ of
genus $g$ whose set of branch values is exactly $A$. This definition
is well-defined because equivalence preserves the genus and degree of
the covering, as well as its branch values and ramification indices.
Let
\[
H_{g,d,N}^{\mb P^{1}}
=
\coprod_{A\in C_N(\mb P^{1})}
H_{g,d}^{\mb P^{1}}(A).
\]
Thus, $H_{g,d,N}^{\mb P^{1}}$ is the moduli space of equivalence
classes of degree-$d$ meromorphic functions
$\varphi:X\to\mb P^{1}$, where $X$ is a compact connected Riemann
surface of genus $g$ and $\varphi$ has exactly $N$ distinct branch
values. The branch-value map
\[
\operatorname{br}:
H_{g,d,N}^{\mb P^1}
\longrightarrow
C_N(\mb P^1),
\qquad
[\varphi]\longmapsto\operatorname{Br}(\varphi),
\]
is well-defined. Notice that, for every $A\in C_N(\mb P^1)$,
\(
\operatorname{br}^{-1}(A)
=
H_{g,d}^{\mb P^1}(A).
\)
For a degree-$d$ meromorphic function
\(\varphi:X\to\mb P^1\) and a point \(Q\in\mb P^1\), write
\(\varphi^{-1}(Q)=\{P_1,\ldots,P_s\}\). The ramification profile of
\(\varphi\) over \(Q\) is the partition of \(d\) defined by
\[
\mu_\varphi(Q)
=
\bigl(e_{P_1}(\varphi),\ldots,e_{P_s}(\varphi)\bigr)\vdash d,
\]
where the ramification indices are arranged in nonincreasing order.
The following standard transport construction goes back to Fulton
\cite{Fulton1969}; its use on a stratum with fixed arbitrary
ramification data is consistent with the Hurwitz-space framework of
\cite{KokotovKorotkin2004}.

\begin{prop}
Let \(A=\{Q_1,\ldots,Q_N\}\in C_N(\mb P^1)\), and let
\(D_1,\ldots,D_N\) be pairwise disjoint open discs such that
\(Q_i\in D_i\) for every \(i\). For every
\(A'\in\mathcal N(D_1,\ldots,D_N)\), let \(Q_i'\) denote the unique
point of \(A'\cap D_i\). Then there is a natural bijection
\[
\tau_{A,A'}:
H_{g,d}^{\mb P^1}(A)
\longrightarrow
H_{g,d}^{\mb P^1}(A').
\]
More precisely, if
\(\tau_{A,A'}([\varphi])=[\varphi_{A'}]\), then
\(\mu_{\varphi_{A'}}(Q_i')=\mu_\varphi(Q_i)\) for every
\(1\leq i\leq N\).
\end{prop}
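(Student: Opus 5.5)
We construct \(\tau_{A,A'}\) by the classical cut-and-reglue (Fulton) transport: we keep the covering unchanged outside the discs and only rebuild it over each \(D_i\). Fix a representative \(\varphi:X\to\mb P^1\) of a class in \(H_{g,d}^{\mb P^1}(A)\). Set \(D:=\bigcup_iD_i\) and \(X^\circ:=\varphi^{-1}(\mb P^1\setminus D)\). Over the annular neighborhood of \(\partial D_i\), \(\varphi\) is an unramified \(d\)-sheeted covering. Since \(D_i\) is a disc containing exactly one branch value, each connected component of \(\varphi^{-1}(\overline{D_i})\) is a closed disc on which \(\varphi\) is conjugate to \(\zeta\mapsto Q_i+\zeta^{n}\) in a suitable coordinate (with \(y\) in place of \(x\) if \(Q_i=\infty\)). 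Here \(n\) is the ramification index of the unique preimage of \(Q_i\) in that component. Thus the components of \(\varphi^{-1}(D_i)\) are in bijection with the points \(P_{i1},\dots,P_{is_i}\), and their boundary circles are connected \(n_{ij}\)-fold coverings of \(\partial D_i\).

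\emph{Construction.} Choose an orientation-preserving diffeomorphism \(\sigma_i:\overline{D_i}\to\overline{D_i}\) that is the identity near \(\partial D_i\) and sends \(Q_i\) to \(Q_i'\). For each component \(W_{ij}\), replace \(\varphi|_{W_{ij}}\) by the branched cover \(\zeta\mapsto Q_i'+\zeta^{n_{ij}}\) on a new disc. Glue it to \(X^\circ\) along \(\partial W_{ij}\) via the covering isomorphism of the boundary circles; this isomorphism exists and is unique up to deck transformations, because both are connected \(n_{ij}\)-fold covers of \(\partial D_i\). Equivalently, take the topological covering \(\sigma\circ\varphi\), with \(\sigma\) extended by the identity, and pull back the complex structure of \(\mb P^1\). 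This yields a compact Riemann surface \(X_{A'}\) and a holomorphic map \(\varphi_{A'}\) of degree \(d\) whose branch values are exactly \(A'\). The map \(\sigma\circ\varphi\) is a branched covering homeomorphic to \(\varphi\), so \(X_{A'}\) is connected and has genus \(g\); Riemann--Hurwitz confirms this, since the ramification data are unchanged. By construction \(\mu_{\varphi_{A'}}(Q_i')=\mu_\varphi(Q_i)\).

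\emph{Well-definedness and bijectivity.} The resulting class must not depend on the representative, on \(\sigma_i\), or on the gluing choices. If \(\Theta:X\to Y\) is an equivalence, it restricts to an equivalence of the pieces over \(\mb P^1\setminus D\). Being compatible with the boundary coverings, it extends across the new discs by the corresponding \(\zeta\)-rotations. A different \(\sigma_i\) (isotopic rel boundary, since \(\overline{D_i}\) minus the two points is handled by Alexander's trick) changes the covering only by a fiber-preserving homeomorphism. This homeomorphism becomes a biholomorphism once complex structures are pulled back from \(\mb P^1\). The deck-transformation ambiguity is absorbed by a rotation \(\zeta\mapsto e^{2\pi i m/n}\zeta\) of the inserted disc.

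Bijectivity follows from functoriality. Applying the same construction with the roles of \(A\) and \(A'\) reversed, using \(\sigma_i^{-1}\), gives \(\tau_{A',A}\), and the composites are identities on classes. Naturality amounts to \(\tau_{A',A''}\circ\tau_{A,A'}=\tau_{A,A''}\), which holds because the discs are fixed. I expect the main obstacle to be the rigorous independence of the choice of \(\sigma_i\). The key input is that the mapping class group of a disc rel boundary with one marked point moving along a path is trivial up to the deck ambiguity, so the Riemann existence theorem gives a biholomorphism.
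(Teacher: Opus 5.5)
Your proposal is correct and follows essentially the same route as the paper. You keep the covering fixed over \(\mb P^1\setminus\bigcup_i D_i\), replace each component of \(\varphi^{-1}(D_i)\) by a degree-\(n_{ij}\) disc cover branched at \(Q_i'\), glue along the boundary with uniqueness coming from the boundary covering (the local monodromy) up to deck rotations, and get the inverse by exchanging \(A\) and \(A'\). Your \(\sigma\circ\varphi\) reformulation with pulled-back complex structure is a convenient way to make the gluing holomorphic. The independence of \(\sigma_i\) that you flag as the main obstacle follows directly from the lifting criterion, with no mapping-class argument needed: \(\sigma_i'^{-1}\sigma_i\) fixes \(\partial D_i\) pointwise and fixes \(Q_i\), so it acts trivially on \(\pi_1\) of the punctured disc and therefore lifts through \(\varphi\) to a homeomorphism of \(X\) that is the identity off the discs.
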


\begin{proof}
The construction is the standard transport of a branched covering
under a deformation of its branch values. Keep the covering fixed over
\(\mb P^1\setminus\bigcup_{k=1}^N D_k\).
For each \(k\), the connected components of
\(\varphi^{-1}(D_k)\) are discs \(U_{k1},\ldots,U_{ks_k}\), each
containing a unique point \(P_{kj}\in\varphi^{-1}(Q_k)\). The
restriction of \(\varphi\) to \(U_{kj}\) is a degree-\(n_{kj}\)
branched covering of \(D_k\), where
\(n_{kj}=e_{P_{kj}}(\varphi)\).
Given \(A'=\{Q_1',\ldots,Q_N'\}\), replace each \(U_{kj}\) by a disc
carrying a degree-\(n_{kj}\) branched covering of \(D_k\) with branch
value \(Q_k'\), and glue it to the unchanged exterior covering along
the corresponding boundary component. Such a local replacement is
unique up to equivalence once the boundary covering is fixed.
Indeed, after deleting the branch value, the local covering is
classified by its monodromy around the puncture. A connected
degree-\(n_{kj}\) local piece has monodromy an \(n_{kj}\)-cycle,
and the prescribed boundary covering fixes this monodromy together
with the attaching data. The
resulting covering \(\varphi_{A'}:X_{A'}\to\mb P^1\) has branch values
\(A'\) and satisfies
\(\mu_{\varphi_{A'}}(Q_k')=\mu_\varphi(Q_k)\) for
\(1\leq k\leq N\).
Thus the construction determines a well-defined map
\(\tau_{A,A'}([\varphi]):=[\varphi_{A'}]\).
Interchanging \(A\) and \(A'\) gives the inverse construction, so
\(\tau_{A,A'}\) is a bijection.
\end{proof}

For \([\varphi]\in H_{g,d}^{\mb P^1}(A)\), define
\[
\mathcal U([\varphi];D_1,\ldots,D_N)
:=\{\tau_{A,A'}([\varphi]):A'\in\mathcal N(D_1,\ldots,D_N)\}.
\]
Transport within these discs satisfies
\(\tau_{A',A''}\circ\tau_{A,A'}=\tau_{A,A''}\), and is compatible
with restriction to smaller discs. Consequently, the sets
\(\mathcal U([\varphi];D_1,\ldots,D_N)\) form a basis for the
Hurwitz topology, and
\[
\operatorname{br}^{-1}\mathcal N(D_1,\ldots,D_N)
=\coprod_{[\varphi]\in H_{g,d}^{\mb P^1}(A)}
\mathcal U([\varphi];D_1,\ldots,D_N).
\]
On each summand, \(\operatorname{br}\) is a homeomorphism with
inverse \(s_{[\varphi];D_1,\ldots,D_N}(A'):=\tau_{A,A'}([\varphi])\).
Thus the branch-value map is a covering map. Its fibers are finite:
the monodromy of a degree-\(d\) covering is determined by finitely
many permutations in \(S_d\), up to simultaneous conjugation.
The Hurwitz space is therefore Hausdorff and second countable.

Lift the configuration-space charts by setting
\[
\Phi_{[\varphi];D_1,\ldots,D_N}
:=(\psi_1\times\cdots\times\psi_N)
\circ(\pi_N|_{D_1\times\cdots\times D_N})^{-1}
\circ\operatorname{br}|_{\mathcal U([\varphi];D_1,\ldots,D_N)}.
\]
Their biholomorphic transitions define the unique complex structure
for which \(\operatorname{br}\) is locally biholomorphic.
In particular, \(H_{g,d,N}^{\mb P^1}\) has complex dimension \(N\),
and the ramification profiles remain fixed on each such chart.

Let \([\varphi]\in H_{g,d,N}^{\mb P^1}\), and write
\(\operatorname{br}([\varphi])=\{Q_1,\ldots,Q_N\}\).
Choose pairwise disjoint coordinate discs
\(D_1,\ldots,D_N\subset\mb P^1\) such that \(Q_i\in D_i\) for every
\(1\leq i\leq N\). Let \((U_0,x)\) and \((U_\infty,y)\) be the
standard affine coordinate charts on \(\mb P^1\). After shrinking the
discs if necessary, assume that \(D_i\subset U_0\) when
\(Q_i\neq\infty\) and \(D_i\subset U_\infty\) when \(Q_i=\infty\).
For each \(1\leq i\leq N\), define
\[
\xi_i:=
\begin{cases}
x|_{D_i}, & Q_i\neq\infty,\\[4pt]
y|_{D_i}, & Q_i=\infty,
\end{cases}
\qquad
z_i:=\xi_i(Q_i).
\]
Shrink \(D_i\) so that \(\xi_i(D_i)=D_{z_i}(\epsilon_i)\)
for some \(\epsilon_i>0\). From now on, take \(\psi_i=\xi_i\)
in the definition of \(\Phi_{[\varphi];D_1,\ldots,D_N}\).
Lifting these branch-value coordinates gives the local Hurwitz
coordinates \((z_1,\ldots,z_N)\) near \([\varphi]\). In particular, \(z_i=0\) whenever
\(Q_i=\infty\). Fix \(1\leq k\leq N\). Write
\(\varphi^{-1}(Q_k)=\{P_{k1},\ldots,P_{ks_k}\}\), and let
\(n_{kj}=e_{P_{kj}}(\varphi)\) be the ramification index of \(\varphi\)
at \(P_{kj}\). Thus, \(n_{kj}=1\) if \(P_{kj}\) is unramified, and
\(\sum_{j=1}^{s_k}n_{kj}=d\).
After shrinking \(D_k\) around \(Q_k\), if necessary, assume that
\[
\varphi^{-1}(D_k)
=
\bigsqcup_{j=1}^{s_k} U_{kj},
\]
where, for each \(j\), \(U_{kj}\) is a coordinate disc centered at
\(P_{kj}\), and the discs \(U_{k1},\ldots,U_{ks_k}\) are pairwise
disjoint. The restriction
\(\varphi|_{U_{kj}}:U_{kj}\to D_k\) has degree \(n_{kj}\), with
\(P_{kj}\) as its unique ramification point when \(n_{kj}>1\).
Define
\(w_k:=\xi_k-z_k:D_k\to D_0(\epsilon_k)\), so that \(w_k(Q_k)=0\).
On \(\varphi^{-1}(D_k)\), set
\(d_k\varphi:=d(\xi_k\circ\varphi)=d(w_k\circ\varphi)\).
For each \(j\), choose a centered holomorphic coordinate
\(z_{kj}:U_{kj}\to D_0(\epsilon_k^{1/n_{kj}})\) satisfying
\(w_k\circ\varphi=z_{kj}^{\,n_{kj}}\).
Thus the local representation
\(\widetilde\varphi_{kj}:=w_k\circ\varphi\circ z_{kj}^{-1}\)
is \(\widetilde\varphi_{kj}(\zeta)=\zeta^{n_{kj}}\).

Let \(t\in D_0(\epsilon_k)\). For each \(1\leq j\leq s_k\), define
\[
\widetilde{\varphi}_{t;kj}:\mb C\longrightarrow\mb C,
\qquad
\widetilde{\varphi}_{t;kj}(\zeta)
=
t+\zeta^{n_{kj}},
\]
and set
\[
\Omega_{kj}(t)
:=
\widetilde{\varphi}_{t;kj}^{-1}\bigl(D_0(\epsilon_k)\bigr)
=
\left\{
\zeta\in\mb C:
|t+\zeta^{n_{kj}}|<\epsilon_k
\right\}.
\]
Let \(z_{kj,t}:\Omega_{kj}(t)\to\mb C\) denote the standard
coordinate, given by \(z_{kj,t}(\zeta)=\zeta\), and define
\[
\varphi_{t;kj}
:=
w_k^{-1}\circ\widetilde{\varphi}_{t;kj}
:
\Omega_{kj}(t)\longrightarrow D_k.
\]
Then \(w_k\circ\varphi_{t;kj}=t+z_{kj,t}^{\,n_{kj}}\).

At \(t=0\), we have
\(\Omega_{kj}(0)=D_0(\epsilon_k^{1/n_{kj}})\).
The biholomorphism \(z_{kj}:U_{kj}\to\Omega_{kj}(0)\) identifies
the original local covering with \(\varphi_{0;kj}\), since
\(\varphi_{0;kj}\circ z_{kj}=\varphi\) on \(U_{kj}\).
This observation suggests constructing the global deformation by
replacing each \(U_{kj}\) with \(\Omega_{kj}(t)\), carrying the
local map \(\varphi_{t;kj}\), while keeping the covering unchanged
away from these neighborhoods. To realize this replacement
holomorphically, we remove only a smaller closed disc from each
\(U_{kj}\), leaving an open annulus along which the old and new
pieces can be identified.


We now turn from the abstract Hurwitz coordinates to an explicit
representative of a one-coordinate deformation. For the gluing
construction, assume henceforth that \(|t|<\epsilon_k/4\).
Each \(\Omega_{kj}(t)\) contains the origin and is star-shaped
with respect to it; in particular, it is simply connected.
Define the fixed exterior part
\[
X_k^o
:=
X\setminus
\bigcup_{j=1}^{s_k}
\left\{
P\in U_{kj}:
|z_{kj}(P)|^{n_{kj}}\leq\frac34\epsilon_k
\right\},
\]
which is open in \(X\), and let
\[
A_{kj}
:=
X_k^o\cap U_{kj}
=
\left\{
P\in U_{kj}:
\frac34\epsilon_k
<
|z_{kj}(P)|^{n_{kj}}
<
\epsilon_k
\right\}.
\]
The choice of the inner boundary will ensure compatibility
with the cutoff used in the smooth trivializations constructed
in the next section.

The gluing maps must identify points having the same image
in \(D_k\), so that the local covering maps descend to a
well-defined map on the adjunction space.
We therefore seek holomorphic embeddings
\(\mathscr G_{kj,t}:A_{kj}\to\Omega_{kj}(t)\)
for which the following diagram commutes:
\[
\begin{CD}
A_{kj} @>{\mathscr G_{kj,t}}>> \Omega_{kj}(t) \\
@V{\varphi}VV @VV{\varphi_{t;kj}}V \\
D_k @= D_k.
\end{CD}
\]
In local coordinates, this condition becomes
\(t+(z_{kj,t}\circ\mathscr G_{kj,t})^{n_{kj}}
=z_{kj}^{\,n_{kj}}\).

For \(P\in A_{kj}\), we have
\(\lvert t/z_{kj}(P)^{n_{kj}}\rvert<1/3\).
Using the holomorphic branch of
\(u\mapsto(1-u)^{1/n_{kj}}\) on \(D_0(1)\)
whose value at \(0\) is \(1\), define
\[
\mathscr G_{kj,t}(P)
:=
z_{kj}(P)
\left(
1-\frac{t}{z_{kj}(P)^{n_{kj}}}
\right)^{1/n_{kj}}.
\]
Here the right-hand side is regarded as a point of the planar
piece \(\Omega_{kj}(t)\).
This map is biholomorphic onto its image, which is the
open annulus
\[
\mathscr G_{kj,t}(A_{kj})
=
\left\{
\zeta\in\Omega_{kj}(t):
\frac34\epsilon_k
<
|t+\zeta^{n_{kj}}|
<
\epsilon_k
\right\}.
\]
Indeed, on this annulus,
\(|\zeta|^{n_{kj}}
\geq|t+\zeta^{n_{kj}}|-|t|>\epsilon_k/2\),
and the inverse is given by
\[
\mathscr G_{kj,t}^{-1}(\zeta)
=
z_{kj}^{-1}
\left(
\zeta
\left(
1+\frac{t}{\zeta^{n_{kj}}}
\right)^{1/n_{kj}}
\right),
\]
where the root is again normalized to have value \(1\) at \(1\).
By construction,
\(\varphi_{t;kj}\circ\mathscr G_{kj,t}=\varphi\)
on \(A_{kj}\).

To form the adjunction space, set
\[
Y_t:=\bigsqcup_{j=1}^{s_k}\Omega_{kj}(t),
\qquad
\mathcal A_t
:=
\bigsqcup_{j=1}^{s_k}\mathscr G_{kj,t}(A_{kj})
\subset Y_t.
\]
Define the attaching map \(f_t:\mathcal A_t\to X_k^o\) by
\(f_t(\mathscr G_{kj,t}(P)):=P\) for
\(P\in A_{kj}\) and \(1\leq j\leq s_k\).
The required adjunction space is
\[
X_t
:=
X_k^o\cup_{f_t}Y_t
=
\left(
X_k^o
\sqcup
\bigsqcup_{j=1}^{s_k}\Omega_{kj}(t)
\right)\big/\sim_t,
\]
equipped with the quotient topology, where \(\sim_t\)
is generated by \(q\sim_t f_t(q)\) for \(q\in\mathcal A_t\).
Equivalently, the identifications are
\(P\sim_t\mathscr G_{kj,t}(P)\) for \(P\in A_{kj}\).

\definecolor{oldblue}{RGB}{214,231,248}
\definecolor{newred}{RGB}{249,226,226}
\definecolor{newgreen}{RGB}{225,241,211}
\definecolor{newpurple}{RGB}{232,225,245}

\begin{figure}[htbp]
\centering

\resizebox{\linewidth}{!}{%
\begin{tikzpicture}[
    scale=0.90,
    >=Latex,
    every node/.style={font=\small},
    surface/.style={
        draw=black,
        line width=0.9pt,
        fill=oldblue!45
    },
    localpiece/.style={
        draw=black,
        line width=0.8pt
    },
    annulus/.style={
        draw=black,
        dashed,
        line width=0.7pt
    }
]


\begin{scope}[shift={(0,0)}]

\draw[surface]
plot[smooth cycle,tension=0.8] coordinates {
    (-4.0, 3.0)
    (-2.8, 3.8)
    (-1.5, 3.3)
    (-0.6, 3.5)
    (-0.2, 2.4)
    (-0.8, 1.4)
    (-0.3, 0.4)
    (-0.9,-0.7)
    (-0.4,-1.8)
    (-1.2,-3.0)
    (-2.8,-3.4)
    (-4.0,-2.5)
    (-4.3,-0.7)
    (-4.2, 1.4)
};

\node at (-2.8,0.2) {\Large \(X_k^o\)};


\draw[line width=0.8pt]
    (-3.5,1.6)
    .. controls (-3.0,1.2) and (-2.5,1.2) .. (-2.0,1.6)
    .. controls (-2.5,1.9) and (-3.0,1.9) .. cycle;

\draw[line width=0.8pt]
    (-3.5,-2.0)
    .. controls (-3.0,-2.4) and (-2.5,-2.4) .. (-2.0,-2.0)
    .. controls (-2.5,-1.7) and (-3.0,-1.7) .. cycle;

\node at (-2.7,-0.7) {\(\vdots\)};


\foreach \y/\lab in {
    2.25/{A_{k1}},
    0.25/{A_{k2}},
    -2.15/{A_{ks_k}}
} {
    \draw[fill=oldblue!70,line width=0.7pt]
        (-0.45,\y) ellipse (0.36 and 0.58);
    \draw[fill=white,line width=0.7pt]
        (-0.45,\y) ellipse (0.17 and 0.35);
    \draw[annulus]
        (-0.45,\y) ellipse (0.27 and 0.47);
    \node[left] at (-0.85,\y) {\(\lab\)};
}

\node at (-2.3,-4.1) {\textit{fixed exterior part}};

\end{scope}


\begin{scope}[shift={(4.0,0)}]

\node at (0.7,4.15) {
    \(Y_t\)
};


\draw[localpiece,fill=newred]
plot[smooth cycle,tension=0.8] coordinates {
    (0.1,2.2)
    (0.8,2.9)
    (1.9,2.7)
    (2.3,2.0)
    (1.9,1.2)
    (0.9,1.0)
    (0.2,1.5)
};

\draw[annulus]
    (0.55,1.95) ellipse (0.28 and 0.48);
\draw[fill=newred,line width=0.7pt]
    (0.52,1.95) ellipse (0.13 and 0.30);

\node at (1.45,1.95) {\(\Omega_{k1}(t)\)};


\draw[localpiece,fill=newgreen]
plot[smooth cycle,tension=0.8] coordinates {
    (0.1,0.3)
    (0.8,1.0)
    (1.9,0.8)
    (2.3,0.1)
    (1.9,-0.7)
    (0.9,-0.9)
    (0.2,-0.4)
};

\draw[annulus]
    (0.55,0.05) ellipse (0.28 and 0.48);
\draw[fill=newgreen,line width=0.7pt]
    (0.52,0.05) ellipse (0.13 and 0.30);

\node at (1.45,0.05) {\(\Omega_{k2}(t)\)};

\node at (1.2,-1.35) {\(\vdots\)};


\draw[localpiece,fill=newpurple]
plot[smooth cycle,tension=0.8] coordinates {
    (0.1,-2.1)
    (0.8,-1.4)
    (1.9,-1.6)
    (2.3,-2.3)
    (1.9,-3.1)
    (0.9,-3.3)
    (0.2,-2.8)
};

\draw[annulus]
    (0.55,-2.35) ellipse (0.28 and 0.48);
\draw[fill=newpurple,line width=0.7pt]
    (0.52,-2.35) ellipse (0.13 and 0.30);

\node at (1.45,-2.35) {\(\Omega_{ks_k}(t)\)};

\node at (1.1,-4.1) {\textit{new local pieces}};

\end{scope}


\draw[->,line width=0.8pt]
    (4.45,2.0) -- (0.2,2.25);
\draw[->,line width=0.8pt]
    (4.45,0.05) -- (0.2,0.25);
\draw[->,line width=0.8pt]
    (4.45,-2.35) -- (0.2,-2.15);

\node at (2.2,2.65) {\(f_t\)};

\draw[->,line width=1pt]
    (6.7,0) -- (9.7,0);
\node at (8.2,0.55) {quotient};


\begin{scope}[shift={(12.2,0)}]


\draw[surface]
plot[smooth cycle,tension=0.8] coordinates {
    (-2.1, 3.0)
    (-1.2, 3.8)
    (0.1, 3.4)
    (0.6, 2.4)
    (0.2, 1.4)
    (0.7, 0.4)
    (0.2,-0.7)
    (0.7,-1.8)
    (0.0,-3.0)
    (-1.3,-3.4)
    (-2.2,-2.4)
    (-2.4,-0.6)
    (-2.3, 1.4)
};


\draw[localpiece,fill=newred]
plot[smooth cycle,tension=0.8] coordinates {
    (0.25,2.25)
    (0.9,2.95)
    (2.0,2.8)
    (2.5,2.1)
    (2.1,1.35)
    (1.1,1.25)
    (0.45,1.65)
};


\draw[localpiece,fill=newgreen]
plot[smooth cycle,tension=0.8] coordinates {
    (0.25,0.25)
    (0.9,0.95)
    (2.0,0.8)
    (2.5,0.1)
    (2.1,-0.65)
    (1.1,-0.75)
    (0.45,-0.35)
};


\draw[localpiece,fill=newpurple]
plot[smooth cycle,tension=0.8] coordinates {
    (0.25,-2.05)
    (0.9,-1.35)
    (2.0,-1.5)
    (2.5,-2.2)
    (2.1,-2.95)
    (1.1,-3.05)
    (0.45,-2.65)
};


\foreach \y in {2.0,0.0,-2.3} {
    \draw[annulus]
        (0.42,\y) ellipse (0.23 and 0.46);
}


\draw[line width=0.8pt]
    (-1.8,1.6)
    .. controls (-1.3,1.2) and (-0.8,1.2) .. (-0.3,1.6)
    .. controls (-0.8,1.9) and (-1.3,1.9) .. cycle;

\draw[line width=0.8pt]
    (-1.8,-2.0)
    .. controls (-1.3,-2.4) and (-0.8,-2.4) .. (-0.3,-2.0)
    .. controls (-0.8,-1.7) and (-1.3,-1.7) .. cycle;

\node at (-1.0,0.1) {\Large \(X_t\)};
\node at (-1.0,-0.75) {\(\vdots\)};

\node at (0.0,-4.1) {\textit{adjunction space}};
\end{scope}

\end{tikzpicture}%
}

\caption{Adjunction construction of the deformed surface
\(X_t=X_k^o\cup_{f_t}Y_t\).
The dashed curves indicate the open annuli identified by
\(f_t=\mathscr G_{kj,t}^{-1}\) on each
\(\mathscr G_{kj,t}(A_{kj})\subset\mathcal A_t\).
The quotient identifies \(P\in A_{kj}\) with
\(\mathscr G_{kj,t}(P)\in\Omega_{kj}(t)\).
The three local pieces shown schematically represent the first,
second, and last components.}
\label{fig:adjunction-Xt}
\end{figure}

Since the attaching maps are biholomorphisms between open
annuli, the quotient map
\(q_t:X_k^o\sqcup Y_t\to X_t\) is open.
We next verify that the equivalence relation is closed.

Fix \(j\), write \(n=n_{kj}\), and suppose that
\(P_\nu\in A_{kj}\),
\(P_\nu\to P\in X_k^o\), and
\(\mathscr G_{kj,t}(P_\nu)\to\zeta\in\Omega_{kj}(t)\).
Choose \(\rho\) such that
\(|t+\zeta^n|<\rho<\epsilon_k\).
For all sufficiently large \(\nu\), we have
\[
|z_{kj}(P_\nu)|^n
=
\left|t+\mathscr G_{kj,t}(P_\nu)^n\right|
<
\rho.
\]
Thus \(P_\nu\) eventually lies in the compact subset
\(z_{kj}^{-1}(\overline{D_0(\rho^{1/n})})\) of \(U_{kj}\),
so \(P\in U_{kj}\).
Since \(P\in X_k^o\), it follows that
\(P\in X_k^o\cap U_{kj}=A_{kj}\).
Continuity then gives
\(\zeta=\mathscr G_{kj,t}(P)\).
Hence the graph of \(\mathscr G_{kj,t}\) is closed in
\(X_k^o\times\Omega_{kj}(t)\).

The annuli \(A_{kj}\) are pairwise disjoint, and each
\(\mathscr G_{kj,t}\) is injective.
Consequently, the equivalence relation is the union of the
diagonal, the finitely many graphs of the gluing maps,
and their transposes.
It is therefore closed.
Since \(q_t\) is open, \(X_t\) is Hausdorff.
The biholomorphic identifications then endow \(X_t\) with
a complex structure.
Topologically, the construction replaces finitely many
coordinate discs by discs, so \(X_t\) is a compact connected
Riemann surface of genus \(g\).

Identifying the pieces with their images in \(X_t\), define
\[
\varphi_t:X_t\longrightarrow\mb P^1,
\qquad
\varphi_t
=
\begin{cases}
\varphi,
& \text{on }X_k^o,\\
\varphi_{t;kj},
& \text{on }\Omega_{kj}(t),
\quad 1\leq j\leq s_k.
\end{cases}
\]
The compatibility condition
\(\varphi_{t;kj}\circ\mathscr G_{kj,t}=\varphi\)
shows that \(\varphi_t\) is well defined and holomorphic.
Since the covering is unchanged outside \(D_k\),
its degree is \(d\).

For \(1\leq j\leq s_k\), set
\(D_{kj}(t):=
D_0((\epsilon_k/2)^{1/n_{kj}})
\subset\Omega_{kj}(t)\).
Also define \(Q_k(t):=w_k^{-1}(t)\) and
\(D_k(t):=w_k^{-1}(D_t(\epsilon_k/2))\).
These inclusions are valid because
\(|t|+\epsilon_k/2<3\epsilon_k/4<\epsilon_k\).
The local representation of \(\varphi_t\) is given by
\[
\begin{CD}
D_{kj}(t) @>{\varphi_t}>> D_k(t) \\
@V{z_{kj,t}}VV @VV{w_k}V \\
D_0\bigl((\epsilon_k/2)^{1/n_{kj}}\bigr)
@>{\widetilde{\varphi}_{t;kj}}>>
D_t(\epsilon_k/2),
\end{CD}
\]
where
\(\widetilde{\varphi}_{t;kj}(\zeta)=t+\zeta^{n_{kj}}\).
Let \(P_{kj}(t)\) be the point corresponding to
\(z_{kj,t}=0\).
Then
\[
\varphi_t(P_{kj}(t))=Q_k(t),
\qquad
e_{P_{kj}(t)}(\varphi_t)=n_{kj}.
\]
Thus the ramification profile over \(Q_k(t)\) is
\((n_{k1},\ldots,n_{ks_k})\), while all other branch values
and ramification data remain unchanged.
Hence
\[
\operatorname{Br}(\varphi_t)
=
A(t)
:=
\{Q_1,\ldots,Q_{k-1},Q_k(t),Q_{k+1},\ldots,Q_N\},
\]
and \([\varphi_t]\in H_{g,d,N}^{\mb P^1}\).

At \(t=0\), we have
\(\mathscr G_{kj,0}=z_{kj}\) on \(A_{kj}\).
The identity on \(X_k^o\), together with \(z_{kj}^{-1}\)
on \(\Omega_{kj}(0)\), therefore identifies
\((X_0,\varphi_0)\) biholomorphically with \((X,\varphi)\).
We henceforth use this identification and write
\(X_0=X\) and \(\varphi_0=\varphi\).

Let \(A:=\operatorname{Br}(\varphi)\).
By construction,
\([\varphi_t]=\tau_{A,A(t)}([\varphi])\).
In particular, \([\varphi_t]\) lies in the Hurwitz coordinate
neighborhood
\(\mathcal U([\varphi];D_1,\ldots,D_N)\).
If \(\gamma(t):=[\varphi_t]\), then
\[
\Phi_{[\varphi];D_1,\ldots,D_N}(\gamma(t))
=
(z_1,\ldots,z_{k-1},z_k+t,z_{k+1},\ldots,z_N).
\]
Thus \(\gamma\) is holomorphic,
\(\gamma(0)=[\varphi]\), and
\(\gamma'(0)
=\left.\frac{\partial}{\partial z_k}\right|_{[\varphi]}\).
The construction therefore realizes the \(k\)-th Hurwitz
coordinate deformation explicitly.

More generally, write
\(\mf z:=
\Phi_{[\varphi];D_1,\ldots,D_N}([\varphi])\).
Let \(B_{\mf z}(\delta)\) denote the open Euclidean ball of radius
\(\delta\) centered at \(\mf z\). Choose \(\delta>0\) sufficiently
small that this ball is contained in the image of the
Hurwitz chart and
\(\delta<\frac14\min_{1\leq i\leq N}\epsilon_i\).
For every \(\mf t\in\mb C^N\) with
\(\mf z+\mf t\in B_{\mf z}(\delta)\), there is a unique
Hurwitz point \([\varphi_{\mf t}]\) satisfying
\[
\Phi_{[\varphi];D_1,\ldots,D_N}([\varphi_{\mf t}])
=
\mf z+\mf t.
\]
Applying the construction simultaneously over the pairwise
disjoint discs \(D_1,\ldots,D_N\) gives a representative
\(\varphi_{\mf t}:X_{\mf t}\to\mb P^1\).

For variation in the \(k\)-th coordinate alone, we retain the
notation \(\varphi_t:X_t\to\mb P^1\) used above.
All complex coordinate derivatives are Wirtinger derivatives;
in particular, \(\partial_t|_{t=0}=\partial_{z_k}|_{[\varphi]}\).

\section{Hurwitz Deformations and Smooth Trivializations}\label{sec:smooth-trivializations}

The gluing construction produces varying Riemann surfaces. For the
operator-theoretic arguments, we now identify them with a fixed
underlying smooth surface. The use of cutoff diffeomorphisms for this
purpose is modeled on the moving-cone constructions in
\cite{HillairetKalvinKokotov2018,KalvinKokotov2019}. In the present
setting, however, one moving branch value may have several
ramification points above it, with possibly different indices. We
therefore construct a local interpolation map for each such point,
verify its compatibility with the corresponding gluing map, and
then combine the local maps into a smooth family of
orientation-preserving diffeomorphisms. The estimates ensuring global
invertibility are supplied in Appendix~\ref{app:local-interpolation}.

Choose \(\delta>0\) sufficiently small that the family is defined
for \(|t|<\delta\) and \(\delta<\epsilon_k/4\).
Choose a smooth function \(\beta:[0,\infty)\to[0,1]\) such that
\(\beta(s)=0\) for \(0\leq s\leq\epsilon_k/2\) and
\(\beta(s)=1\) for \(s\geq3\epsilon_k/4\).
For \(1\leq j\leq s_k\), write \(n=n_{kj}\).
For \(|t|<\delta\), define \(H_{kj,t}:\mb C\to\mb C\) by
\[
H_{kj,t}(\zeta)
:=
\begin{cases}
\displaystyle
\zeta
\left(
1-\beta(|\zeta|^n)\frac{t}{\zeta^n}
\right)^{1/n},
& \zeta\neq0,\\[8pt]
0,
& \zeta=0.
\end{cases}
\]
Whenever \(\beta(|\zeta|^n)\neq0\), we have
\(|\zeta|^n>\epsilon_k/2\), and therefore
\(\left|\beta(|\zeta|^n)t/\zeta^n\right|
\leq2|t|/\epsilon_k<1/2\).
Thus the expression inside the root lies in \(D_1(1/2)\).
We use the holomorphic branch of the \(n\)-th root on this disc
whose value at \(1\) is \(1\).

Since \(\beta(|\zeta|^n)\) vanishes near the origin,
\(H_{kj,t}(\zeta)=\zeta\) there. Consequently, \(H_{kj,t}\) is
smooth on \(\mb C\) and depends smoothly on
\((\Re t,\Im t,\Re\zeta,\Im\zeta)\).
It remains to verify that these local interpolation maps are
globally invertible and map the planar gluing domains onto one
another. The derivative estimates and global arguments are
collected in Appendix~\ref{app:local-interpolation}. In particular,
after shrinking \(\delta\) if necessary,
Proposition~\ref{prop:H-global-diffeomorphism} shows that each
\(H_{kj,t}\) is an orientation-preserving smooth diffeomorphism of
\(\mb C\), while
Proposition~\ref{prop:H-domain-diffeomorphism} gives
\(H_{kj,t}(\Omega_{kj}(0))=\Omega_{kj}(t)\).

Let \(U_{kj}(t)\subset X_t\) denote the image of the planar
piece \(\Omega_{kj}(t)\) under the quotient map.
We use the same symbol
\(z_{kj,t}:U_{kj}(t)\to\Omega_{kj}(t)\) for the induced
holomorphic coordinate, centered at \(P_{kj}(t)\).
At \(t=0\), we have
\(U_{kj}(0)=U_{kj}\), \(P_{kj}(0)=P_{kj}\), and
\(z_{kj,0}=z_{kj}\).
In particular,
\(z_{kj}(U_{kj})=\Omega_{kj}(0)
=D_0(\epsilon_k^{1/n_{kj}})\).

Define
\[
\widehat H_{kj,t}
:=
z_{kj,t}^{-1}\circ H_{kj,t}\circ z_{kj}
:
U_{kj}\longrightarrow U_{kj}(t).
\]
Then \(\widehat H_{kj,t}\) is an orientation-preserving smooth
diffeomorphism, and
\(z_{kj,t}\circ\widehat H_{kj,t}
=H_{kj,t}\circ z_{kj}\).

If \(P\in U_{kj}\) satisfies
\(|z_{kj}(P)|^{n_{kj}}\leq\epsilon_k/2\), then
\(\beta(|z_{kj}(P)|^{n_{kj}})=0\).
Consequently,
\(z_{kj,t}(\widehat H_{kj,t}(P))=z_{kj}(P)\).
In particular,
\(\widehat H_{kj,t}(P_{kj})=P_{kj}(t)\).

We now verify compatibility with the gluing maps used to
construct \(X_t\).
Recall that
\(A_{kj}=X_k^o\cap U_{kj}
=\{P\in U_{kj}:3\epsilon_k/4
<|z_{kj}(P)|^{n_{kj}}<\epsilon_k\}\).
For \(P\in A_{kj}\), we therefore have
\(\beta(|z_{kj}(P)|^{n_{kj}})=1\), and hence
\[
H_{kj,t}(z_{kj}(P))
=
z_{kj}(P)
\left(
1-\frac{t}{z_{kj}(P)^{n_{kj}}}
\right)^{1/n_{kj}}
=
\mathscr G_{kj,t}(P).
\]
Here \(\mathscr G_{kj,t}(P)\) is regarded as a point of
the planar piece \(\Omega_{kj}(t)\).
The branches of the root agree by their normalization at \(1\).
Since \(\mathscr G_{kj,0}=z_{kj}\) on \(A_{kj}\), we obtain
the compatibility relation
\[
H_{kj,t}\circ\mathscr G_{kj,0}
=
\mathscr G_{kj,t}
\quad\text{on }A_{kj}.
\]

\begin{thm}
Under the quotient descriptions of \(X\) and \(X_t\), define
\(\Theta_t:X\to X_t\) by
\(\Theta_t([P]):=[P]\) for \(P\in X_k^o\), and by
\(\Theta_t([\zeta]):=[H_{kj,t}(\zeta)]\) for
\(\zeta\in\Omega_{kj}(0)\).
Then, for every \(|t|<\delta\), the map \(\Theta_t\) is a
well-defined orientation-preserving smooth diffeomorphism.
Moreover, \(\Theta_0=\id_X\), and the family depends smoothly
on the real coordinates \(\Re t\) and \(\Im t\).
\end{thm}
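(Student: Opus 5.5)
The proof is a gluing argument on the adjunction descriptions \(X=X_0=X_k^o\cup_{f_0}Y_0\) and \(X_t=X_k^o\cup_{f_t}Y_t\). We define \(\widetilde\Theta_t:X_k^o\sqcup Y_0\to X_k^o\sqcup Y_t\) by the identity on \(X_k^o\) and by \(H_{kj,t}\) on each \(\Omega_{kj}(0)\). By Proposition~\ref{prop:H-domain-diffeomorphism}, \(H_{kj,t}\) maps \(\Omega_{kj}(0)\) onto \(\Omega_{kj}(t)\), so this map makes sense.

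\emph{Descent to the quotients.} The generating relations are \(P\sim_0\mathscr G_{kj,0}(P)\) for \(P\in A_{kj}\). They are sent to \(P\) and \(H_{kj,t}(\mathscr G_{kj,0}(P))\). By the compatibility relation \(H_{kj,t}\circ\mathscr G_{kj,0}=\mathscr G_{kj,t}\) on \(A_{kj}\), the latter equals \(\mathscr G_{kj,t}(P)\), which is \(\sim_t\)-related to \(P\). Hence \(\widetilde\Theta_t\) respects the equivalence relations and descends to a continuous map \(\Theta_t:X\to X_t\).

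\emph{Diffeomorphism property.} The inverse is built the same way, from the identity on \(X_k^o\) and \(H_{kj,t}^{-1}\) on \(\Omega_{kj}(t)\). Proposition~\ref{prop:H-global-diffeomorphism} makes \(H_{kj,t}\) a diffeomorphism of \(\mb C\). Applying \(H_{kj,t}^{-1}\) to the compatibility relation gives \(H_{kj,t}^{-1}\circ\mathscr G_{kj,t}=\mathscr G_{kj,0}\), so the inverse also descends, and the two descended maps are mutually inverse. Smoothness is local. On the open image of \(X_k^o\), the map \(\Theta_t\) is the identity in the charts of \(X_k^o\). On the open image of \(\Omega_{kj}(0)\), it reads \(\zeta\mapsto H_{kj,t}(\zeta)\) in the charts \(z_{kj}\) and \(z_{kj,t}\). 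These open sets cover \(X\), and on their overlaps the two descriptions agree by the compatibility just used. Since these charts are holomorphic charts of the quotient complex structures, \(\Theta_t\) is smooth, and the same argument applies to \(\Theta_t^{-1}\). Orientation is preserved because the identity is orientation-preserving and \(H_{kj,t}\) is orientation-preserving by Proposition~\ref{prop:H-global-diffeomorphism}, while the charts are positively oriented.

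\emph{The value at \(t=0\) and smooth dependence on \(t\).} At \(t=0\), \(\beta\)-weighting gives \(H_{kj,0}=\id\). Under the identification \(X_0=X\) fixed earlier (the identity on \(X_k^o\) and \(z_{kj}^{-1}\) on \(\Omega_{kj}(0)\)), this yields \(\Theta_0=\id_X\). For smooth dependence we need a common frame, since the targets \(X_t\) vary. We read ``depends smoothly'' in the sense used later: in the fixed charts, namely the charts of \(X_k^o\) away from the discs and the planar coordinates \(z_{kj,t}\) on the pieces, the local expressions of \(\Theta_t\) depend jointly smoothly on \((\Re t,\Im t,\text{point})\). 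This holds because the local expressions are the identity and \(H_{kj,t}(\zeta)\), and the latter is jointly smooth by its explicit formula.

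\emph{Main obstacle.} The main difficulty is global invertibility of \(H_{kj,t}\) and the exact equality \(H_{kj,t}(\Omega_{kj}(0))=\Omega_{kj}(t)\). Both are delegated to the Appendix. What remains is careful bookkeeping of overlaps, to confirm that the map \(\Theta_t\) descended from \(\widetilde\Theta_t\) is a bijection between the quotient spaces and not merely a bijection on the disjoint unions.
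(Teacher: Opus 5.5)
Your proposal is correct and follows essentially the same route as the paper. You lift the map to the disjoint unions using the identity and \(H_{kj,t}\), descend it via \(H_{kj,t}\circ\mathscr G_{kj,0}=\mathscr G_{kj,t}\), build the inverse from \(H_{kj,t}^{-1}\), and check smoothness and orientation in the charts \(z_{kj}\) and \(z_{kj,t}\). The one difference is in the smooth \(t\)-dependence: the paper gives \(\coprod_{|t|<\delta}\{t\}\times X_t\) a smooth structure through the parameter-dependent gluing and shows that \(\boldsymbol\Theta(t,P)=(t,\Theta_t(P))\) is smooth, with a jointly smooth inverse from the inverse function theorem, whereas your chart-wise reading covers the content used later but leaves the joint smoothness of \(\Theta_t^{-1}\) implicit.
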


\begin{proof}
We verify, in order, that the local formulas descend through the
gluing relations, define a smooth diffeomorphism, preserve
orientation, and depend smoothly on \(t\).
Recall that \(Y_t=\bigsqcup_{j=1}^{s_k}\Omega_{kj}(t)\),
and let \(q_t:X_k^o\sqcup Y_t\to X_t\) be the quotient map.
At \(t=0\), the quotient map is
\(q_0:X_k^o\sqcup Y_0\to X\).
The gluing relations are generated by
\(P\sim_t\mathscr G_{kj,t}(P)\) for \(P\in A_{kj}\).

Define
\(\mathcal T_t:X_k^o\sqcup Y_0\to X_k^o\sqcup Y_t\)
by \(\mathcal T_t(P):=P\) on \(X_k^o\) and
\(\mathcal T_t(\zeta):=H_{kj,t}(\zeta)\) on
\(\Omega_{kj}(0)\).
For \(P\in A_{kj}\), the compatibility relation gives
\[
\mathcal T_t(\mathscr G_{kj,0}(P))
=
H_{kj,t}(\mathscr G_{kj,0}(P))
=
\mathscr G_{kj,t}(P).
\]
Since \(P\sim_t\mathscr G_{kj,t}(P)\), the map
\(\mathcal T_t\) preserves the gluing relations.
Thus \(\mathcal T_t\) descends to the continuous map
\(\Theta_t:X\to X_t\) in the statement, satisfying
\(\Theta_t\circ q_0=q_t\circ\mathcal T_t\).

We verify smoothness directly from its local-coordinate definition.
Let \((V,x)\) be a smooth chart of \(X\) with
\(V\subset X_k^o\). The exterior identification determines a
corresponding chart \((\Theta_t(V),x_t)\) of \(X_t\), and in these
charts the coordinate representation of \(\Theta_t\) is
\(x_t\circ\Theta_t\circ x^{-1}=\id_{x(V)}\). On the other hand,
for each local piece \(U_{kj}\), the source and target charts are
\(z_{kj}\) and \(z_{kj,t}\), and
\[
z_{kj,t}\circ\Theta_t\circ z_{kj}^{-1}
=H_{kj,t}
\quad\text{on }\Omega_{kj}(0).
\]
The first coordinate representation is the identity and the second
is smooth. These charts cover \(X\), so the local-coordinate
criterion for smoothness shows that \(\Theta_t:X\to X_t\) is smooth.

To construct the inverse, take the identity on \(X_k^o\)
and \(H_{kj,t}^{-1}\) on each \(\Omega_{kj}(t)\).
The compatibility relation implies
\(H_{kj,t}^{-1}\circ\mathscr G_{kj,t}=\mathscr G_{kj,0}\) on
\(A_{kj}\).
Thus these maps also preserve the gluing relations and
descend to a map \(\Xi_t:X_t\to X\). In the corresponding exterior
charts its coordinate representation is the identity, while in the
local charts it is
\(z_{kj}\circ\Xi_t\circ z_{kj,t}^{-1}=H_{kj,t}^{-1}\).
It is therefore smooth by the same local-coordinate criterion.
Both compositions are the identity on every piece, so
\(\Xi_t=\Theta_t^{-1}\) and \(\Theta_t\) is a diffeomorphism.

The exterior identification preserves orientation, as do
the local diffeomorphisms \(H_{kj,t}\).
Consequently, \(\Theta_t\) preserves orientation.

It remains to formulate smooth dependence on \(t\) in the same
local-coordinate manner. Consider the total map
\[
\boldsymbol\Theta:
D_0(\delta)\times X
\longrightarrow
\coprod_{|t|<\delta}\{t\}\times X_t,
\qquad
\boldsymbol\Theta(t,P):=(t,\Theta_t(P)).
\]
Give the target the quotient topology obtained by gluing
\(D_0(\delta)\times X_k^o\) and the open sets
\[
\left\{
(t,\zeta)\in D_0(\delta)\times\mb C:
|t+\zeta^{n_{kj}}|<\epsilon_k
\right\}.
\]
The parameter-dependent gluing maps
\((t,P)\mapsto(t,\mathscr G_{kj,t}(P))\) are smooth.
The maps \((t,\zeta)\mapsto(t,H_{kj,t}(\zeta))\) have invertible
real differentials, with identity parameter block and invertible
surface block. The inverse function theorem and the bijectivity
of each \(H_{kj,t}\) give jointly smooth local inverses
\((t,\eta)\mapsto(t,H_{kj,t}^{-1}(\eta))\). These inverses
respect the gluing relations, so they descend, together with the
exterior identity, to a continuous inverse of
\(\boldsymbol\Theta\). The quotient topology therefore makes
\(\boldsymbol\Theta\) a homeomorphism. In particular, the total
space is Hausdorff and second countable, and the induced family
charts define a smooth manifold structure.
In the corresponding source and target charts, the coordinate
representations of \(\boldsymbol\Theta\) are respectively
\((t,x)\mapsto(t,x)\) and
\((t,\zeta)\mapsto(t,H_{kj,t}(\zeta))\). Both are smooth with
respect to the real coordinates
\((\Re t,\Im t)\) and the real coordinates on the surface.
The local-coordinate definition of smoothness therefore shows that
\(\boldsymbol\Theta\) is smooth; equivalently, the family
\(\Theta_t\) depends smoothly on \(t\).
The local inverse formulas above also show that
\(\Theta_t^{-1}\) depends smoothly on \(t\).

Finally, \(H_{kj,0}=\id_{\mb C}\), so
\(\mathcal T_0=\id_{X_k^o\sqcup Y_0}\).
It follows that \(\Theta_0=\id_X\).
\end{proof}

More generally, consider the family
\(\varphi_{\mf t}:X_{\mf t}\to\mb P^1\),
\(|\mf t|<\delta\), constructed in the preceding section.
Apply the same construction over each of the pairwise
disjoint discs \(D_1,\ldots,D_N\), using a cutoff adapted
to the corresponding \(\epsilon_i\).
Since only finitely many local pieces are involved,
we may shrink \(\delta\) so that all the required estimates
hold simultaneously.
The resulting local maps then glue to a smooth family of
orientation-preserving diffeomorphisms
\(\Theta_{\mf t}:X\to X_{\mf t}\), with
\(\Theta_{\mf 0}=\id_X\).
Here smoothness is understood with respect to the real
and imaginary parts of all components of \(\mf t\).

\section{Rauch Variational Formulas for Arbitrary Ramification}\label{sec:rauch-formulas}

This section develops the complex-analytic side of the determinant
variation. Starting from normalized differentials and canonical
bidifferentials, we derive their variations in a Hurwitz coordinate,
obtain the corresponding period-matrix formula, and then pass to the
Bergman and Schiffer tau-functions. These formulas will identify the
zero-energy term arising from the spectral calculation.

The variational approach originates in the work of Rauch
\cite{Rauch1959}. Residue formulas on Hurwitz spaces and the associated
tau-functions, including arbitrary ramification indices, are developed
in \cite{KokotovKorotkin2004}. We use their tau-function construction
and variational formulas, and record them below in the Hurwitz
coordinates fixed above, with the residue contributions grouped when
several ramification points lie over the same branch value. For
background on the canonical
bidifferential and projective connections, see \cite{Fay1992}.

Let \(X\) be the compact Riemann surface fixed above, with its
complex orientation, and choose a base point \(P_*\) away from the
ramification points. A homological marking is a symplectic basis
\(\{\alpha_i,\beta_i\}_{i=1}^g\) of \(H_1(X,\mb Z)\), with
\((\alpha_i,\alpha_j)=(\beta_i,\beta_j)=0\) and
\((\alpha_i,\beta_j)=\delta_{ij}\). We choose a canonical system
of based loops representing this basis and shrink the deformation
neighborhoods to avoid these loops. When a fundamental polygon is
used below, its boundary word is
\(\alpha_1\beta_1\alpha_1^{-1}\beta_1^{-1}\cdots
\alpha_g\beta_g\alpha_g^{-1}\beta_g^{-1}\).

The marking determines a unique basis
\(\{\omega_1,\ldots,\omega_g\}\) of \(H^0(X,K_X)\) satisfying the
normalization conditions
\(\int_{\alpha_j}\omega_i=\delta_{ij}\) for \(1\leq i,j\leq g\).
The period matrix of \(X\) with respect to the fixed homological
marking is the \(g\times g\) complex matrix
\(B=(B_{ij})_{i,j=1}^g\), where
\(B_{ij}=\int_{\beta_j}\omega_i\).
For \(g\geq1\), the Riemann bilinear relations give
\(B\in\mathfrak H_g:=\{Z\in M_g(\mb C):Z=Z^T,\ \operatorname{Im}Z>0\}\),
the Siegel upper half-space.
When \(g=0\), the homological marking and normalized basis are empty,
and \(B\) is the empty matrix. Throughout the paper, we take
\(\mathfrak H_0\) to be a point, set \(\det\operatorname{Im}B=1\),
and interpret all sums indexed by \(1\leq i,j\leq g\) as empty.

Let \(\pi_1,\pi_2:X\times X\to X\) be the projections and let
\(\Delta=\{(P,P):P\in X\}\) be the diagonal. We write
\[
(K_X\boxtimes K_X)(2\Delta)
=
\pi_1^*K_X\otimes_{\mc O_{X\times X}}
\pi_2^*K_X\otimes_{\mc O_{X\times X}}
\mc O_{X\times X}(2\Delta).
\]
The canonical meromorphic bidifferential associated with the fixed
homological marking is the unique section
\(W\in H^0(X\times X,(K_X\boxtimes K_X)(2\Delta))\)
satisfying the following properties:
\begin{enumerate}
\item
\(W\) is symmetric, that is, \(W(P,Q)=W(Q,P)\).

\item
For every \(P\in X\),
\(\int_{\alpha_j}W(P,\cdot)=0\) for \(1\le j\le g\).
By symmetry, equivalently,
\(\int_{\alpha_j}W(\cdot,P)=0\).

\item
In a local coordinate \(z\) near \(P\), as \(Q\to P\),
\[
W(P,Q)
=
\left(
\frac{1}{(z(P)-z(Q))^2}
+
\mathcal H(z(P),z(Q))
\right)
dz(P)\otimes dz(Q),
\]
where \(\mathcal H\) is holomorphic near the diagonal and
\(\mathcal H(z,z)=S_B(z)/6\).
Here \(S_B\) is the Bergman projective connection associated with
the fixed homological marking.
\end{enumerate}
Periods of meromorphic differentials are taken over representatives
avoiding their poles. Since the poles considered here have zero residue,
these periods depend only on the homology classes.
Moreover, by the Riemann bilinear relations,
\(\int_{\beta_i}W(P,\cdot)
=
2\pi\sqrt{-1}\,\omega_i(P)\).
Equivalently,
\(\omega_i(P)
=
(2\pi\sqrt{-1})^{-1}
\int_{\beta_i}W(P,\cdot)\).
The Schiffer bidifferential
\(\mc S\in H^0(X\times X,(K_X\boxtimes K_X)(2\Delta))\)
is defined by
\[
\mc S(P,Q)
=
W(P,Q)
-
\pi
\sum_{i,j=1}^{g}
\bigl((\operatorname{Im}B)^{-1}\bigr)_{ij}
\omega_i(P)\otimes\omega_j(Q).
\]
The bidifferential \(\mc S\) is independent of the choice of
homological marking; see \cite[Section~4]{HillairetKalvinKokotov2018}.
In a local coordinate \(z\) near \(P\), the Schiffer bidifferential
has the expansion
\[
\mc S(P,Q)
=
\left(
\frac{1}{(z(P)-z(Q))^2}
+
H_{\mc S}(z(P),z(Q))
\right)
dz(P)\otimes dz(Q),
\]
where \(H_{\mc S}\) is holomorphic near the diagonal and
\(H_{\mc S}(z,z)=S_{\mathrm{Sch}}(z)/6\).
Here \(S_{\mathrm{Sch}}\) is the Schiffer projective connection.
Writing \(\omega_i(z)/dz\) for the coefficient of \(\omega_i\)
in the coordinate \(z\), we obtain
\[
S_{\mathrm{Sch}}(z)
=
S_B(z)
-
6\pi
\sum_{i,j=1}^{g}
\bigl((\operatorname{Im}B)^{-1}\bigr)_{ij}
\frac{\omega_i(z)\omega_j(z)}{(dz)^2}.
\]
Let \(\varphi:X\to\mb P^1\) be a meromorphic function.
Write \(\operatorname{Br}(\varphi)=\{Q_1,\ldots,Q_N\}\). Fix
\(1\leq k\leq N\), and write
\(\varphi^{-1}(Q_k)=\{P_{kj}:1\leq j\leq s_k\}\).
Let \(\varphi_t:X_t\to\mb P^1\), with \(\varphi_0=\varphi\), be the
deformation of \(\varphi\) corresponding to the branch value \(Q_k\)
constructed above, and let
\(\Theta_t:X\to X_t\), \(|t|<\delta\), be the corresponding smooth
family of orientation-preserving diffeomorphisms, with
\(\Theta_0=\id_X\).
Transport the fixed homological marking of \(X\) by setting
\(\alpha_i(t)=(\Theta_t)_*\alpha_i\) and
\(\beta_i(t)=(\Theta_t)_*\beta_i\).
These cycles form a homological marking of \(X_t\).
Let \(\{\omega_i^{X_t}\}_{i=1}^g\) be the unique basis of
\(H^0(X_t,K_{X_t})\) satisfying
\(\int_{\alpha_j(t)}\omega_i^{X_t}=\delta_{ij}\) for
\(1\leq i,j\leq g\). At \(t=0\), this is the fixed normalized
basis \(\{\omega_i\}_{i=1}^g\).
We justify the smooth dependence needed to differentiate these
forms. Use \(\Theta_t\) to transport the complex structures to
the fixed smooth surface \(X\), and choose a smooth family of
positive Riemannian metrics on all of \(X\) compatible with them.
For these auxiliary metrics, the Hodge Laplacians on real one-forms
are a smooth elliptic family, and their kernels have constant
dimension \(2g\). The spectral projections onto these kernels
depend smoothly on \(t\): locally they are given by a resolvent
integral around zero, with the positive spectrum separated from
the contour. Interior elliptic regularity on the compact surface
gives this dependence in every Sobolev norm.

Take the harmonic representatives \(a_i(t)\) of the fixed real
cohomology classes \([\operatorname{Re}\omega_i]\). They are
obtained by applying these projections to fixed closed
representatives, and hence depend smoothly on \(t\). If
\(*_t\) is the Hodge star of the auxiliary metric, then
\(a_i(t)+\sqrt{-1}\,*_ta_i(t)\) is a holomorphic one-form for
the transported complex structure. At \(t=0\), these forms are
\(\omega_i\), so they remain a basis for small \(t\). Their
\(\alpha\)-period matrix is smooth and equals the identity at
zero. Inverting this matrix gives the normalized basis, proving
that \(\Theta_t^*\omega_i^{X_t}\) depends smoothly on the real
coordinates of the family. The same argument applies to the
simultaneous Hurwitz deformation. All complex coordinate
derivatives below are understood in the Wirtinger sense.

Define the period matrix
\(B(t)=(B_{ij}(t))_{1\leq i,j\leq g}\) by
\(B_{ij}(t)=\int_{\beta_j(t)}\omega_i^{X_t}\). 
We now compute the variation
\(\left. \frac{\partial}{\partial t}B_{ij}(t) \right|_{t=0}\).

Set \(\widetilde\omega_i(t):=\Theta_t^*\omega_i^{X_t}\). Since
\(\beta_j(t)=(\Theta_t)_*\beta_j\), we have
\(B_{ij}(t)=\int_{\beta_j(t)}\omega_i^{X_t}
=\int_{\beta_j}\widetilde\omega_i(t)\).
Here \(\beta_j\) is a fixed smooth curve contained in the fixed
exterior, and \(\widetilde\omega_i(t)\) depends smoothly on the real
parameters. Differentiation may therefore be passed through the
integral over this fixed curve. Hence
\[
\left.
\frac{\partial}{\partial t}B_{ij}(t)
\right|_{t=0}
=
\int_{\beta_j}
\left.
\frac{\partial}{\partial t}\widetilde\omega_i(t)
\right|_{t=0}.
\]
To compute the variation, we introduce some notation following the
traditional convention. If \(\omega\) is a one-form on a coordinate
chart \((U,z)\) and \(\omega=f(z)\,dz\), we write
\(\omega(z)/dz:=f(z)\).
Let \(\omega_1,\omega_2,\omega_3\) be meromorphic one-forms on \(X\),
with \(\omega_3\) not identically zero. If
\(\omega_1=a(z)\,dz\), \(\omega_2=b(z)\,dz\), and
\(\omega_3=h(z)\,dz\) in a local coordinate \(z\), we write
\[
\frac{\omega_1\omega_2}{\omega_3}
:=
\frac{a(z)b(z)}{h(z)}\,dz.
\]
This expression is independent of the choice of local coordinate and
defines a meromorphic one-form on \(X\).
Similarly, let \(\mathcal K\) be a bidifferential on \(X\times X\), and let
\((U,z)\) and \((V,w)\) be coordinate charts on \(X\). If
\(\mathcal K=g(z,w)\,dz\otimes dw\) on \(U\times V\), we write
\[
\frac{\mathcal K(z,w)}{dw}:=g(z,w)\,dz,
\qquad
\frac{\mathcal K(z,w)}{dz}:=g(z,w)\,dw.
\]
Thus, division by the differential in one variable leaves a one-form
in the other variable.

Choose a local chart \((U_{kj}(t),z_{kj,t})\) centered at
\(P_{kj}(t)\), as before. By the choice of the target coordinate
\(\xi_k\), we have
\(\xi_k\circ\varphi_t=z_k+t+z_{kj,t}^{n_{kj}}\) on
\(U_{kj}(t)\).

Let \(W_t\) be the canonical meromorphic bidifferential on \(X_t\)
associated with the transported homological marking. Then, for every
\(P\in U_{kj}(t)\),
\[
\omega_i^{X_t}(P)
=
\frac{1}{2\pi\sqrt{-1}}
\int_{\beta_i(t)}W_t(P,\cdot).
\]
For each \(t\), write
\(\omega_i^{X_t}
=
f_{i,kj,t}(z_{kj,t})\,dz_{kj,t}\)
on \(U_{kj}(t)\). Thus,
\[
f_{i,kj,t}(z_{kj,t})
=
\frac{\omega_i^{X_t}(z_{kj,t})}{dz_{kj,t}}
=
\frac{1}{2\pi\sqrt{-1}}
\frac{1}{dz_{kj,t}}
\int_{\beta_i(t)}W_t(z_{kj,t},\cdot).
\]
Recall that, on \(\Omega_{kj}(0)\), the map \(\Theta_t\) is represented
in the local coordinates \(z_{kj}\) and \(z_{kj,t}\) by \(H_{kj,t}\);
that is,
\(
z_{kj,t}\circ\Theta_t\circ z_{kj}^{-1}=H_{kj,t}.
\)
Consequently,
\[
\widetilde\omega_i(t)
=
\Theta_t^*\omega_i^{X_t}
=
f_{i,kj,t}\bigl(H_{kj,t}(z_{kj})\bigr)\,
dH_{kj,t}(z_{kj})
\]
on \(U_{kj}\). Since \(H_{kj,t}\) is, in general, not
holomorphic with respect to \(z_{kj}\), we may write
\[
\widetilde\omega_i(t)
=
f_{i,kj,t}\bigl(H_{kj,t}(z_{kj})\bigr)
\left(
\frac{\partial H_{kj,t}}{\partial z_{kj}}\,dz_{kj}
+
\frac{\partial H_{kj,t}}{\partial\overline z_{kj}}\,
d\overline z_{kj}
\right).
\]
We now compute the variation locally. Fix \(1\leq \ell\leq s_k\), and
write
\(z=z_{k\ell}\), \(n=n_{k\ell}\), \(f_t=f_{i,k\ell,t}\), and \(H_t=H_{k\ell,t}\).
For \(z\neq0\), the definition of \(H_t\) gives
\(
H_t(z)
=
z\left(
1-\beta(|z|^n)\frac{t}{z^n}
\right)^{1/n}.
\)
At \(z=0\), we have \(H_t(0)=0\).
Set
\(v_{k\ell}(z) := \left. \frac{\partial H_t(z)}{\partial t} \right|_{t=0}\).
Then, for \(z\neq0\),
\(v_{k\ell}(z) = -\frac{\beta(|z|^n)}{nz^{n-1}}\),
and we set \(v_{k\ell}(0)=0\). Since
\(\beta(|z|^n)=0\) for \(|z|^n\leq\epsilon_k/2\),
\(v_{k\ell}\) is smooth on \(\Omega_{k\ell}(0)\).

Since \(H_0=\id\), we have
\(\partial H_0/\partial z=1\),
\(\partial H_0/\partial\bar z=0\), and
\(f_0(z)\,dz=\omega_i\). Differentiating with respect to \(t\) at
\(t=0\), we obtain
\begin{align*}
\left.
\frac{\partial}{\partial t}\widetilde\omega_i(t)
\right|_{t=0}
={}&
\left.
\frac{\partial f_t}{\partial t}
\right|_{t=0}dz
+
\frac{\partial f_0}{\partial z}\,
v_{k\ell}(z)\,dz \\
&+
f_0(z)\frac{\partial v_{k\ell}}{\partial z}\,dz
+
f_0(z)\frac{\partial v_{k\ell}}{\partial\bar z}\,d\bar z.
\end{align*}
Since \(f_0\) is holomorphic, the variation, regarded as a one-form on
\(X\), has the local expression
\[
\left.
\frac{\partial}{\partial t}\widetilde\omega_i(t)
\right|_{t=0}
=
\left.
\frac{\partial f_t}{\partial t}
\right|_{t=0}dz
+
d\bigl(f_0v_{k\ell}\bigr).
\]
Since \(f_0(z)=\omega_i(z)/dz\), the variation can be written as
\[
\left.
\frac{\partial}{\partial t}\widetilde\omega_i(t)
\right|_{t=0}
=
\left.
\frac{\partial f_t}{\partial t}
\right|_{t=0}dz
+
d\left(
v_{k\ell}(z)\frac{\omega_i(z)}{dz}
\right).
\]
Set \(
\eta_i
:=
\left.
\frac{\partial}{\partial t}\widetilde\omega_i(t)
\right|_{t=0}.
\)
Since
\(\int_{\alpha_j}\Theta_t^*\omega_i^{X_t}=\delta_{ij}\), we have
\[
\int_{\alpha_j}\eta_i
=
\left.
\frac{\partial}{\partial t}
\int_{\alpha_j}\Theta_t^*\omega_i^{X_t}
\right|_{t=0}
=
0.
\]
Since
\(d\widetilde{\omega}_i(t)
=
\Theta_t^*d\omega_i^{X_t}=0\),
we have \(d\eta_i=0\). Applying the Riemann bilinear relation,
\[
\int_X\eta_i\wedge\omega_j
=
\sum_{m=1}^g
\left(
\int_{\alpha_m}\eta_i\int_{\beta_m}\omega_j
-
\int_{\beta_m}\eta_i\int_{\alpha_m}\omega_j
\right),
\]
and using
\(\int_{\alpha_m}\eta_i=0\) and
\(\int_{\alpha_m}\omega_j=\delta_{jm}\), we obtain
\(\int_X\eta_i\wedge\omega_j = -\int_{\beta_j}\eta_i\).
On each \(U_{k\ell}\), using the coordinate \(z=z_{k\ell}\), we have
\[
\eta_i\wedge\omega_j
=
-\frac{\partial v_{k\ell}}{\partial\bar z}\,
\frac{\omega_i(z)\omega_j(z)}{(dz)^2}\,
dz\wedge d\bar z.
\]
Moreover, since \(\omega_j\) is of type \((1,0)\), only the
\((0,1)\)-part of \(\eta_i\) contributes to
\(\eta_i\wedge\omega_j\).
Since \(\Theta_t\) is the identity on the fixed exterior \(X_k^o\),
this part is supported in the union of the \(U_{k\ell}\).
In the following integrals over \(\Omega_{k\ell}(0)\), the forms are
expressed in the coordinate \(z=z_{k\ell}\). Since
\[
\left.
\frac{\partial}{\partial t}B_{ij}(t)
\right|_{t=0}
=
\int_{\beta_j}\eta_i
=
-\int_X\eta_i\wedge\omega_j,
\]
we obtain
\[
\left.
\frac{\partial}{\partial t}B_{ij}(t)
\right|_{t=0}
=
\sum_{\ell=1}^{s_k}
\int_{\Omega_{k\ell}(0)}
\frac{\partial v_{k\ell}}{\partial\bar z}\,
\frac{\omega_i(z)\omega_j(z)}{(dz)^2}\,
dz\wedge d\bar z.
\]
Since \(\beta(|z_{k\ell}|^{n_{k\ell}})=1\) for
\(|z_{k\ell}|^{n_{k\ell}}\geq 3\epsilon_k/4\), we have
\(\partial_{\overline z_{k\ell}}v_{k\ell}=0\) on this region. Thus,
\[
\int_{\Omega_{k\ell}(0)}
\frac{\partial v_{k\ell}}{\partial\overline z_{k\ell}}\,
\frac{\omega_i(z_{k\ell})\omega_j(z_{k\ell})}
{(dz_{k\ell})^2}\,
dz_{k\ell}\wedge d\overline z_{k\ell}
=
\int_{|z_{k\ell}|^{n_{k\ell}}\leq 3\epsilon_k/4}
\frac{\partial v_{k\ell}}{\partial\overline z_{k\ell}}\,
\frac{\omega_i(z_{k\ell})\omega_j(z_{k\ell})}
{(dz_{k\ell})^2}\,
dz_{k\ell}\wedge d\overline z_{k\ell}.
\]
Since \(\omega_i\) and \(\omega_j\) are holomorphic,
\[
d\left(
v_{k\ell}
\frac{\omega_i(z_{k\ell})\omega_j(z_{k\ell})}
{(dz_{k\ell})^2}\,dz_{k\ell}
\right)
=
-
\frac{\partial v_{k\ell}}{\partial\overline z_{k\ell}}\,
\frac{\omega_i(z_{k\ell})\omega_j(z_{k\ell})}
{(dz_{k\ell})^2}\,
dz_{k\ell}\wedge d\overline z_{k\ell}.
\]
Hence, by Stokes' theorem,
\[
\int_{|z_{k\ell}|^{n_{k\ell}}\leq 3\epsilon_k/4}
\frac{\partial v_{k\ell}}{\partial\overline z_{k\ell}}\,
\frac{\omega_i(z_{k\ell})\omega_j(z_{k\ell})}
{(dz_{k\ell})^2}\,
dz_{k\ell}\wedge d\overline z_{k\ell}
=
-
\int_{|z_{k\ell}|^{n_{k\ell}}=3\epsilon_k/4}
v_{k\ell}
\frac{\omega_i(z_{k\ell})\omega_j(z_{k\ell})}
{(dz_{k\ell})^2}\,dz_{k\ell}.
\]
On
\(|z_{k\ell}|^{n_{k\ell}}=3\epsilon_k/4\), we have
\(v_{k\ell} = -\frac{1}{n_{k\ell}z_{k\ell}^{n_{k\ell}-1}}\).
Therefore,
\[
\begin{aligned}
&\int_{|z_{k\ell}|^{n_{k\ell}}\leq 3\epsilon_k/4}
\frac{\partial v_{k\ell}}{\partial\overline z_{k\ell}}\,
\frac{\omega_i(z_{k\ell})\omega_j(z_{k\ell})}
{(dz_{k\ell})^2}\,
dz_{k\ell}\wedge d\overline z_{k\ell}
\\
&\qquad=
\int_{|z_{k\ell}|^{n_{k\ell}}=3\epsilon_k/4}
\frac{1}{n_{k\ell}z_{k\ell}^{n_{k\ell}-1}}
\frac{\omega_i(z_{k\ell})\omega_j(z_{k\ell})}
{(dz_{k\ell})^2}\,dz_{k\ell}.
\end{aligned}
\]
Recall that, near \(P_{k\ell}\), the local coordinate \(z_{k\ell}\)
is chosen so that
\[
w_k\circ\varphi
=
z_{k\ell}^{n_{k\ell}},
\qquad
z_{k\ell}(P_{k\ell})=0.
\]
Hence,
\(d_k\varphi=d(w_k\circ\varphi)
=
n_{k\ell}z_{k\ell}^{n_{k\ell}-1}dz_{k\ell}\)
in this local coordinate. Therefore,
\[
\frac{1}{n_{k\ell}z_{k\ell}^{n_{k\ell}-1}}
\frac{\omega_i(z_{k\ell})\omega_j(z_{k\ell})}
{(dz_{k\ell})^2}\,dz_{k\ell}
=
\frac{\omega_i\omega_j}{d_k\varphi}.
\]
Consequently, since the curve
\(|z_{k\ell}|^{n_{k\ell}}=3\epsilon_k/4\) is positively oriented
around \(P_{k\ell}\),
\[
\int_{|z_{k\ell}|^{n_{k\ell}}=3\epsilon_k/4}
\frac{1}{n_{k\ell}z_{k\ell}^{n_{k\ell}-1}}
\frac{\omega_i(z_{k\ell})\omega_j(z_{k\ell})}
{(dz_{k\ell})^2}\,dz_{k\ell}
=
2\pi\sqrt{-1}\,
\operatorname{Res}_{P_{k\ell}}
\frac{\omega_i\omega_j}{d_k\varphi}.
\]
Thus, we obtain the following variation formula.

\begin{thm}
Let \(\varphi:X\to\mb P^1\) be a meromorphic function.
Write \(\operatorname{Br}(\varphi)=\{Q_1,\ldots,Q_N\}\). Fix
\(1\leq k\leq N\), and write
\[
\varphi^{-1}(Q_k)
=
\{P_{k1},\ldots,P_{ks_k}\},
\qquad
n_{k\ell}=e_{P_{k\ell}}(\varphi).
\]
Let \(\varphi_t:X_t\to\mb P^1\), with \(\varphi_0=\varphi\), be the
deformation corresponding to the branch value \(Q_k\) constructed
above, and let \(\Theta_t:X\to X_t\) be the corresponding smooth
family of orientation-preserving diffeomorphisms with
\(\Theta_0=\id_X\).
Fix a homological marking
\(\{\alpha_1,\ldots,\alpha_g,\beta_1,\ldots,\beta_g\}\) of \(X\), and
define
\(\alpha_m(t)=(\Theta_t)_*\alpha_m\) and
\(\beta_m(t)=(\Theta_t)_*\beta_m\).
Let \(\{\omega_i^{X_t}\}_{i=1}^g\) be the normalized basis of
\(H^0(X_t,K_{X_t})\) satisfying
\(\int_{\alpha_m(t)}\omega_i^{X_t}=\delta_{im}\), and define
\(B(t)=(B_{ij}(t))\) by
\(B_{ij}(t)=\int_{\beta_j(t)}\omega_i^{X_t}\).
At the base point, use the normalized differentials \(\omega_i\).
Then, for \(1\leq i,j\leq g\),
\[
\left.
\frac{\partial}{\partial t}B_{ij}(t)
\right|_{t=0}
=
2\pi\sqrt{-1}
\sum_{\substack{1\leq\ell\leq s_k\\ n_{k\ell}>1}}
\operatorname{Res}_{P_{k\ell}}
\frac{\omega_i\omega_j}{d_k\varphi}.
\]
\end{thm}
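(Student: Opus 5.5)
The derivation immediately preceding the statement already contains essentially all the steps, so the plan is to organize them into a proof and check the few points where care is needed.

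\textbf{Step 1: reduce to a fixed curve.} Write \(\widetilde\omega_i(t)=\Theta_t^*\omega_i^{X_t}\). Since the transported cycles satisfy \(\beta_j(t)=(\Theta_t)_*\beta_j\), we get \(B_{ij}(t)=\int_{\beta_j}\widetilde\omega_i(t)\) over a fixed curve lying in \(X_k^o\). The smooth dependence of \(\widetilde\omega_i(t)\) on \((\Re t,\Im t)\) was justified above through the Hodge-theoretic construction. Hence the Wirtinger derivative passes under the integral, giving \(\partial_tB_{ij}(0)=\int_{\beta_j}\eta_i\) with \(\eta_i:=\partial_t\widetilde\omega_i|_{t=0}\).

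\textbf{Step 2: record the properties of \(\eta_i\).} The form \(\eta_i\) is a smooth closed one-form on \(X\), because it is the derivative of closed forms. Its \(\alpha\)-periods vanish, because the normalization is fixed. Its \((0,1)\)-part is supported in \(\bigcup_\ell U_{k\ell}\), since \(\Theta_t\) is the identity on \(X_k^o\) and holomorphic there. On each \(U_{k\ell}\) we use the local formula \(\eta_i=\partial_tf_t|_0\,dz+d(v_{k\ell}f_0)\) with \(v_{k\ell}=-\beta(|z|^n)/(nz^{n-1})\). The key point is that the only non-holomorphic contribution is \(f_0\,\partial_{\bar z}v_{k\ell}\,d\bar z\).

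\textbf{Step 3: apply the Riemann bilinear relations.} Cut \(X\) along the marking to a fundamental polygon and apply the relations to the closed form \(\eta_i\) and the holomorphic form \(\omega_j\). This gives \(\int_{\beta_j}\eta_i=-\int_X\eta_i\wedge\omega_j\). The integrand is a sum of local terms \(-\partial_{\bar z}v_{k\ell}\,(\omega_i\omega_j/dz^2)\,dz\wedge d\bar z\).

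\textbf{Step 4: localize by Stokes and identify residues.} Here \(\partial_{\bar z}v_{k\ell}\) is supported in \(\epsilon_k/2\le|z|^n\le 3\epsilon_k/4\). Stokes' theorem on the disc \(|z|^n\le3\epsilon_k/4\) converts each local term into a contour integral of \(v_{k\ell}\,\omega_i\omega_j/dz\). On that circle \(v_{k\ell}=-1/(nz^{n-1})\), and \(d_k\varphi=nz^{n-1}dz\). So the integrand is the meromorphic form \(\omega_i\omega_j/d_k\varphi\), which is holomorphic on \(U_{k\ell}\setminus\{P_{k\ell}\}\). The contour can therefore be shrunk to give \(2\pi\sqrt{-1}\operatorname{Res}_{P_{k\ell}}\).

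Finally, I would remark that for unramified \(P_{k\ell}\) (\(n_{k\ell}=1\)) the form is holomorphic at \(P_{k\ell}\), since \(d_k\varphi\neq0\) there. Its residue is therefore zero, which justifies restricting the sum to \(n_{k\ell}>1\).

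\textbf{Expected obstacle.} The main subtle point is the sign and orientation bookkeeping. Three conventions have to be tracked consistently:
\begin{itemize}
\item the bilinear relation with boundary word \(\alpha\beta\alpha^{-1}\beta^{-1}\);
\item \(d(v\,g\,dz)=-\partial_{\bar z}v\,g\,dz\wedge d\bar z\);
\item the positive orientation of the circle as the boundary of the disc.
\end{itemize}
Two minus signs cancel: the one from the bilinear relation and the one from \(v_{k\ell}=-1/(nz^{n-1})\). I would double-check the result against the classical Rauch formula in the simple-ramification case \(n=2\).

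A second point to verify is that the exterior part contributes no \((0,1)\) component. This needs \(\Theta_t=\id\) on \(X_k^o\) to be holomorphic in the exterior charts, which holds because the gluing maps are biholomorphic.
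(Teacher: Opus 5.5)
Your proposal is correct and follows the paper's argument essentially step for step. You differentiate \(B_{ij}(t)=\int_{\beta_j}\Theta_t^*\omega_i^{X_t}\) under the fixed integral, use the bilinear relation to obtain \(-\int_X\eta_i\wedge\omega_j\), isolate the \((0,1)\)-part \(f_0\,\partial_{\bar z}v_{k\ell}\,d\bar z\), and apply Stokes on \(|z_{k\ell}|^{n_{k\ell}}\le 3\epsilon_k/4\), where \(v_{k\ell}=-1/(n_{k\ell}z_{k\ell}^{n_{k\ell}-1})\). Your remark that unramified points have zero residue makes explicit the restriction to \(n_{k\ell}>1\), which the paper leaves implicit.
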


In general, let
\([\varphi]\in H_{g,d,N}^{\mb P^1}\), and choose a representative
\(\varphi:X\to\mb P^1\). Write
\(\mf z=\Phi_{[\varphi];D_1,\ldots,D_N}([\varphi])\), and fix a
homological marking
\(\{\alpha_1,\ldots,\alpha_g,\beta_1,\ldots,\beta_g\}\) of \(X\).
Choose \(\delta>0\) sufficiently small that \(B_{\mf z}(\delta)\)
is contained in the image of the Hurwitz chart and all the local
constructions above are defined. Set
\[
\mc B([\varphi],\delta)
:=
\Phi_{[\varphi];D_1,\ldots,D_N}^{-1}
\bigl(B_{\mf z}(\delta)\bigr),
\]
and write
\(\Phi:=\Phi_{[\varphi];D_1,\ldots,D_N}
|_{\mc B([\varphi],\delta)}\).

For \(\mf t\in\mb C^N\) with
\(\mf z+\mf t\in B_{\mf z}(\delta)\), the gluing construction above
provides a representative
\(\varphi_{\mf t}:X_{\mf t}\to\mb P^1\) of the unique Hurwitz point
\([\varphi_{\mf t}]
=\Phi_{[\varphi];D_1,\ldots,D_N}^{-1}(\mf z+\mf t)\),
together with a smooth family of orientation-preserving
diffeomorphisms
\(\Theta_{\mf t}:X\to X_{\mf t}\), with
\(\Theta_{\mf 0}=\id_X\).
The fixed homological marking of \(X\) is transported to \(X_{\mf t}\)
by setting
\(\alpha_m(\mf t)=(\Theta_{\mf t})_*\alpha_m\) and
\(\beta_m(\mf t)=(\Theta_{\mf t})_*\beta_m\).

Let \(\{\omega_i^{X_{\mf t}}\}_{i=1}^g\) be the normalized basis of
\(H^0(X_{\mf t},K_{X_{\mf t}})\) satisfying
\(\int_{\alpha_m(\mf t)}\omega_i^{X_{\mf t}}=\delta_{im}\), and let
\(B(\mf t)=(B_{ij}(\mf t))\) be the corresponding period matrix,
where \(B_{ij}(\mf t)
=\int_{\beta_j(\mf t)}\omega_i^{X_{\mf t}}\).
Thus the family of period matrices defines the local period map
\(B:B_{\mf z}(\delta)\to\mathfrak H_g\),
\(\mf z+\mf t\mapsto B(\mf t)\).
Equivalently, through the local coordinate
\(\Phi_{[\varphi];D_1,\ldots,D_N}\), we regard \(B\) as a locally
defined \(\mathfrak H_g\)-valued function on
\(\mc B([\varphi],\delta)\), with the genus-zero conventions stated above.

\begin{thm}
Let \([\varphi]\in H_{g,d,N}^{\mb P^1}\), and choose a
representative \(\varphi:X\to\mb P^1\). Fix a homological marking
\(\{\alpha_1,\ldots,\alpha_g,\beta_1,\ldots,\beta_g\}\) of \(X\).
Choose \(\delta>0\) as above, and let
\(\mc B([\varphi],\delta)\) be the corresponding coordinate
neighborhood of \([\varphi]\).
For each \([\varphi_{\mf t}]\in\mc B([\varphi],\delta)\), choose the
representative \(\varphi_{\mf t}:X_{\mf t}\to\mb P^1\) provided by the
gluing construction above, and transport the fixed homological marking
of \(X\) to \(X_{\mf t}\) by the corresponding diffeomorphism
\(\Theta_{\mf t}:X\to X_{\mf t}\). Let \(B(\mf t)\) be the period
matrix of \(X_{\mf t}\) with respect to this transported marking.
Then the assignment
\([\varphi_{\mf t}]\mapsto B(\mf t)\) defines a holomorphic local
function
\(B:\mc B([\varphi],\delta)\to\mathfrak H_g\).
Equivalently, if
\(\mf z=\Phi_{[\varphi];D_1,\ldots,D_N}([\varphi])\), then in the
local coordinates \(\Phi_{[\varphi];D_1,\ldots,D_N}\), this function
is represented by \(\mf z+\mf t\mapsto B(\mf t)\).

Moreover, if
\(\varphi^{-1}(Q_k)=\{P_{k1},\ldots,P_{ks_k}\}\) and
\(n_{k\ell}=e_{P_{k\ell}}(\varphi)\), then, for
\(1\leq i,j\leq g\) and \(1\leq k\leq N\),
\[
\left.
\frac{\partial B_{ij}}{\partial z_k}
\right|_{[\varphi]}
=
2\pi\sqrt{-1}
\sum_{\substack{1\leq\ell\leq s_k\\ n_{k\ell}>1}}
\operatorname{Res}_{P_{k\ell}}
\frac{\omega_i\omega_j}{d_k\varphi},
\]
where \(\{\omega_1,\ldots,\omega_g\}\) is the normalized basis of
\(H^0(X,K_X)\) associated with the fixed homological marking.
\end{thm}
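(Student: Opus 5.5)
The statement has two parts: a derivative formula at a general base point of the chart, and holomorphy of the local period map. I will obtain the derivative formula first and then deduce holomorphy from it.

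\textbf{Derivative at a general point of the ball.} Fix \(\mf t_0\) with \(\mf z+\mf t_0\in B_{\mf z}(\delta)\). The first task is to reduce the computation of \(\partial B/\partial z_k\) at \(\mf t_0\) to the one-coordinate theorem just proved, applied with base point \([\varphi_{\mf t_0}]\). Consider the path \(s\mapsto \mf t_0+s e_k\). I would check that the simultaneous gluing over the disjoint discs \(D_1,\ldots,D_N\) is compatible with iterated gluing. That is, the representative \(X_{\mf t_0+se_k}\) should be obtained from \(X_{\mf t_0}\) by the single-disc construction over \(D_k\), and \(\Theta_{\mf t_0+se_k}\) should equal the single-disc diffeomorphism \(\Theta_s\) composed with \(\Theta_{\mf t_0}\), at least up to an isotopy supported in the annuli \(A_{kj}\). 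This compatibility follows because the maps \(H_{kj,t}\) act only in the disc \(D_k\) and are explicit in the coordinates \(z_{kj,t}\). Two facts are essential here:
\begin{itemize}
\item the transported marking \((\Theta_{\mf t})_*\alpha_m\) agrees with the marking transported from \(X_{\mf t_0}\), since the cycles avoid the deformation discs;
\item the normalized differentials and period matrix therefore agree.
\end{itemize}
With these in hand, the preceding theorem applied at \([\varphi_{\mf t_0}]\) gives the residue formula for \(\partial B_{ij}/\partial z_k\) at every point of the ball. The role of \(d_k\varphi\) is played by \(d(\xi_k\circ\varphi_{\mf t_0})\), which is the same coordinate expression.

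\textbf{Holomorphy.} Smooth dependence of \(B(\mf t)\) on the real coordinates of \(\mf t\) follows from the Hodge-theoretic smoothness argument given earlier, applied to the simultaneous family. It then remains to show \(\partial B_{ij}/\partial\overline{z_k}=0\). I would repeat the Rauch computation with \(\partial_{\bar t}\) in place of \(\partial_t\). Since \(H_t(z)=z(1-\beta t/z^n)^{1/n}\) depends holomorphically on \(t\), one has \(\partial_{\bar t}H_t|_{t=0}=0\). Hence the variation \(\bar\eta_i:=\partial_{\bar t}\widetilde\omega_i|_{t=0}\) reduces to \(\partial_{\bar t}f_t|_{t=0}\,dz\), with no \(v\)-terms. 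This form is closed and of type \((1,0)\), and its \(\alpha\)-periods vanish, so \(\int_X\bar\eta_i\wedge\omega_j=0\). The Riemann bilinear relation then gives \(\int_{\beta_j}\bar\eta_i=0\), that is, \(\partial_{\bar z_k}B_{ij}=0\).

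At a general point I would again translate to base point \(\mf t_0\) as above. Wirtinger derivatives commute with this translation, because the Hurwitz chart is affine in \(\mf t\). By Hartogs, or simply because \(B\) is \(C^1\) in the real coordinates and satisfies the Cauchy--Riemann equations in each variable, \(B\) is holomorphic. Its values lie in \(\mathfrak H_g\) by the Riemann bilinear relations.

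\textbf{Main obstacle.} The step I expect to be delicate is the base-point change: the simultaneous family restricted to the \(k\)-th line must coincide with the one-coordinate family built at \(\mf t_0\). The two constructions use different reference coordinates \(z_{kj}\) versus \(z_{kj,t_0}\), and different cutoffs \(\beta\) normalized by \(\epsilon_k\) around a moved centre \(t_0\). I would handle this by observing that the period matrix depends only on the complex structure and the homotopy class of the transported marking. Any two such diffeomorphisms differ by maps isotopic to the identity rel the exterior where the cycles live, so \(B\) is unaffected. This leaves the shrinking of \(\delta\) so that \(|t_0|+|s|<\epsilon_k/4\) as the only remaining condition.
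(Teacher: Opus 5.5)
Your proposal is correct and is essentially the paper's argument: holomorphy follows because \(t\mapsto H_{kj,t}(z)\) is holomorphic, so the \(\partial_{\bar t}\)-variation of \(\Theta_t^*\omega_i^{X_t}\) is a closed \((1,0)\)-form with vanishing \(\alpha\)-periods. The paper concludes from this that the variation is a holomorphic differential and hence zero, whereas you use the bilinear relation to show its \(\beta\)-periods vanish, which amounts to the same thing; in both cases the residue formula at \([\varphi]\) is the preceding one-coordinate theorem. Your explicit base-point change, matching the simultaneous family along \(\mf t_0+se_k\) with the one-coordinate family based at \([\varphi_{\mf t_0}]\) via identifications that are the identity on the exterior carrying the cycles, spells out what the paper compresses into ``since the base point and \(k\) were arbitrary.''
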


\begin{proof}
The normalized differentials, and hence the period matrix, depend
smoothly on the real coordinates. To prove holomorphicity, fix a base
point of the Hurwitz chart and consider the deformation in the
\(z_k\)-direction based there. Write
\(\eta_i^{\bar t}:=
\left.\partial_{\bar t}(\Theta_t^*\omega_i^{X_t})\right|_{t=0}\).
For fixed \(z\), the map \(t\mapsto H_{kj,t}(z)\) is holomorphic.
Consequently, on \(U_{kj}\),
\[
\eta_i^{\bar t}
=
\left.\frac{\partial f_{i,kj,t}}{\partial\bar t}\right|_{t=0}
(z_{kj})\,dz_{kj}.
\]
This is holomorphic in \(z_{kj}\). On the fixed exterior piece,
\(\eta_i^{\bar t}\) is also holomorphic. Thus it is a holomorphic
one-form on \(X\). Differentiating the normalization conditions
shows that all its \(\alpha\)-periods vanish, so
\(\eta_i^{\bar t}=0\). Hence
\(\left.\partial_{\bar t}B_{ij}(t)\right|_{t=0}=0\).
Since the base point and \(k\) were arbitrary, the period map is
holomorphic. The stated derivative formula follows from the
preceding theorem.
\end{proof}

\begin{cor}
Under the same assumptions, one has
\[
\frac{\partial}{\partial z_k}\log\det\operatorname{Im}B
=
\pi
\sum_{\substack{1\leq\ell\leq s_k\\ n_{k\ell}>1}}
\operatorname{Res}_{P_{k\ell}}
\frac{
\displaystyle
\sum_{i,j=1}^g
\bigl((\operatorname{Im}B)^{-1}\bigr)_{ij}
\omega_i\omega_j
}
{d_k\varphi}.
\]
\end{cor}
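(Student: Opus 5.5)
The plan is to differentiate \(\log\det\operatorname{Im}B\) by the chain rule and then insert the period-matrix variation formula of the preceding theorem. The genus-zero case is trivial: both sides vanish under the stated conventions, with the right-hand sum read as empty. For \(g\geq1\), write \(Y:=\operatorname{Im}B=(B-\overline B)/(2\sqrt{-1})\). The preceding theorem says \(B\) is holomorphic on the Hurwitz coordinate ball, so \(\overline B\) is antiholomorphic and the Wirtinger derivative gives \(\partial_{z_k}\overline B=0\). Hence
\[
\frac{\partial Y}{\partial z_k}=\frac{1}{2\sqrt{-1}}\frac{\partial B}{\partial z_k}.
\]

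Since \(Y\) is positive definite and depends smoothly on the real coordinates, Jacobi's formula applies:
\[
\partial_{z_k}\log\det Y=\operatorname{tr}\bigl(Y^{-1}\partial_{z_k}Y\bigr)
=\frac{1}{2\sqrt{-1}}\sum_{i,j=1}^g (Y^{-1})_{ji}\,\partial_{z_k}B_{ij}.
\]
Because \(Y^{-1}\) is symmetric, we may replace \((Y^{-1})_{ji}\) by \((Y^{-1})_{ij}\). Substituting \(\partial_{z_k}B_{ij}=2\pi\sqrt{-1}\sum_{\ell}\operatorname{Res}_{P_{k\ell}}\omega_i\omega_j/d_k\varphi\), the factor \(2\pi\sqrt{-1}/(2\sqrt{-1})\) equals \(\pi\).

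Next, the finite sums over \(i,j\) and the residues can be interchanged. Residues are linear, and the entries \((Y^{-1})_{ij}\) are constants at the point \([\varphi]\). So the combination collapses to the residue at each \(P_{k\ell}\) of the single meromorphic one-form \(\sum_{i,j}(Y^{-1})_{ij}\omega_i\omega_j/d_k\varphi\). This is exactly the asserted formula.

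There is no real obstacle. The one point to check is that the derivative of \(\overline B\) in \(z_k\) vanishes, and this follows from the holomorphicity established in the preceding theorem. The formula is pointwise in the base point of the coordinate ball, since that theorem holds at an arbitrary base point.
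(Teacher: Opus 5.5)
Your proposal is correct and follows essentially the same route as the paper: use holomorphicity of \(B\) to get \(\partial_{z_k}\operatorname{Im}B=(2\sqrt{-1})^{-1}\partial_{z_k}B\), apply Jacobi's formula with the symmetry of \((\operatorname{Im}B)^{-1}\), and substitute the period-matrix variation formula. Your remarks on the genus-zero convention and on linearity of residues are harmless additions.
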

\begin{proof}
Let \(C=\operatorname{Im}B\). Then
\(C=(B-\overline B)/(2\sqrt{-1})\). Since \(B\) is holomorphic,
\(\frac{\partial C}{\partial z_k} = \frac{1}{2\sqrt{-1}} \frac{\partial B}{\partial z_k}\).
By Jacobi's formula,
\(\partial_{z_k}\log\det C
=
\Tr(C^{-1}\partial_{z_k}C)\).
Using the variation formula above, we obtain
\[
\frac{\partial C}{\partial z_k}
=
\pi
\sum_{\substack{1\leq\ell\leq s_k\\ n_{k\ell}>1}}
\left[
\operatorname{Res}_{P_{k\ell}}
\frac{\omega_i\omega_j}{d_k\varphi}
\right]_{i,j=1}^g.
\]
Writing \(C^{-1}=[C^{ij}]\), and using the symmetry of \(C^{-1}\),
we obtain
\[
\Tr\left(C^{-1}\frac{\partial C}{\partial z_k}\right)
=
\pi
\sum_{\substack{1\leq\ell\leq s_k\\ n_{k\ell}>1}}
\operatorname{Res}_{P_{k\ell}}
\frac{\displaystyle\sum_{i,j=1}^g
C^{ij}\omega_i\omega_j}{d_k\varphi}.
\]
This proves the result.
\end{proof}

\begin{lem}
Let \(\eta\) be a meromorphic one-form on \(X\) with vanishing
\(\alpha\)-periods,
\(\int_{\alpha_j}\eta=0\) for \(1\leq j\leq g\),
and assume that
\(\operatorname{Res}_{Q_0}\eta=0\) for every
\(Q_0\in\operatorname{Pole}(\eta)\).
Assume that \(P_*\notin\operatorname{Pole}(\eta)\), and choose cycle
representatives avoiding the poles. Then, for \(P\notin\operatorname{Pole}(\eta)\),
\[
\eta(P)
=
-
\sum_{Q_0\in\operatorname{Pole}(\eta)}
\operatorname{Res}_{Q=Q_0}
\left[
\left(\int_{P_*}^{Q}\eta\right)W(P,Q)
\right],
\]
where the residues are taken with respect to the \(Q\)-variable.
Near each pole, the integral denotes a local branch of a primitive.
Such a primitive is meromorphic because the residue of \(\eta\)
vanishes. Different branches differ by a constant, which does not
affect the residue since \(W(P,\cdot)\) is holomorphic at that pole
for \(P\notin\operatorname{Pole}(\eta)\).
\end{lem}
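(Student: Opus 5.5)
The identity comes from a single residue computation. We build an auxiliary meromorphic function of \(Q\) from a primitive of \(\eta\) and \(W(P,\cdot)\), and we cut the surface along the marking. Fix \(P\notin\operatorname{Pole}(\eta)\) and \(P\neq P_*\). Because every residue of \(\eta\) vanishes, the function \(F(Q):=\int_{P_*}^Q\eta\) is single-valued and meromorphic on the fundamental polygon \(\mathcal P\) obtained by cutting \(X\) along the based loops \(\alpha_i,\beta_i\). Its poles are those of \(\eta\), and it has no logarithmic terms. The one-form \(F(Q)W(P,Q)\) in \(Q\) is then meromorphic on \(\mathcal P\). Its poles are the poles of \(\eta\), together with the double pole of \(W(P,\cdot)\) at \(Q=P\). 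We may choose the loop representatives to avoid all of these points.

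The first step is to apply the residue theorem on \(\mathcal P\):
\[
\frac{1}{2\pi\sqrt{-1}}\oint_{\partial\mathcal P}F(Q)W(P,Q)
=\operatorname{Res}_{Q=P}F(Q)W(P,Q)
+\sum_{Q_0\in\operatorname{Pole}(\eta)}\operatorname{Res}_{Q=Q_0}F(Q)W(P,Q).
\]
From the local expansion \(W(P,Q)=\bigl((z(P)-z(Q))^{-2}+\text{holomorphic}\bigr)dz(P)\,dz(Q)\), the residue at \(Q=P\) equals \(F'(z(P))\,dz(P)=\eta(P)\). The branch remark in the statement shows that the residues at the \(Q_0\) are independent of the branch chosen for the primitive.

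The second step is to show that the boundary integral vanishes; this is the only place where the normalizations enter. The sides \(\alpha_i\) and \(\alpha_i^{-1}\) of \(\partial\mathcal P\) are identified, and across them \(F\) jumps by \(\pm\int_{\beta_i}\eta\). The sides \(\beta_i\) and \(\beta_i^{-1}\) are identified as well, and across them \(F\) jumps by \(\pm\int_{\alpha_i}\eta=0\). The form \(W(P,\cdot)\) is single-valued on \(X\). The boundary integral therefore reduces to the standard combination
\[
\sum_i\Bigl(\pm\int_{\beta_i}\eta\int_{\alpha_i}W(P,\cdot)\mp\int_{\alpha_i}\eta\int_{\beta_i}W(P,\cdot)\Bigr).
\]
The first product vanishes because \(W\) has vanishing \(\alpha\)-periods, and the second because \(\eta\) has vanishing \(\alpha\)-periods. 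The individual signs therefore never matter. Combining the two steps gives \(0=\eta(P)+\sum_{Q_0}\operatorname{Res}_{Q_0}[F\,W(P,\cdot)]\), which is the stated formula.

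The main obstacle is bookkeeping rather than analysis. We must make sure that \(F\) is single-valued on \(\mathcal P\): this holds because \(\mathcal P\) is simply connected and the residues vanish, so the base point \(P_*\) must sit at a vertex of \(\mathcal P\) or lie inside \(\mathcal P\). We must also make sure that the periods appearing in the boundary reduction are independent of the chosen representatives, which is again a consequence of the vanishing residues. When \(P=P_*\), or when \(P\) lies on a cut, we obtain the formula by moving the representatives slightly and then using continuity in \(P\). In genus zero the boundary term is absent and the argument is immediate.
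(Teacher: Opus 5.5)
Your proof is correct, but it takes a different route from the paper's.

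The paper never cuts \(X\). It defines \(\widetilde\eta(P):=-\sum_{Q_0}\operatorname{Res}_{Q=Q_0}[F(Q)W(P,Q)]\) as a meromorphic differential in \(P\). It then checks, by expanding \((z-w)^{-2}\) in powers of \(w\), that \(\widetilde\eta\) has the same principal part as \(\eta\) at every pole. It concludes because \(\widetilde\eta-\eta\) is a holomorphic differential whose \(\alpha\)-periods vanish: those of \(\widetilde\eta\) by the \(\alpha\)-normalization of \(W\), those of \(\eta\) by hypothesis.

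You instead apply the residue theorem to \(F(Q)W(P,Q)\) on the fundamental polygon. There \(\eta(P)\) appears directly as the diagonal residue at \(Q=P\), and the two normalizations enter symmetrically through the two products in the bilinear boundary term.

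What each route buys:
\begin{itemize}
\item The paper's argument is purely local, apart from the standard fact that a holomorphic differential with vanishing \(\alpha\)-periods is zero. It therefore needs no polygon bookkeeping and no special care when \(P\) lies on a cut or equals \(P_*\).
\item Your argument is a single self-contained computation that avoids the principal-part expansion. It also exhibits the lemma as an instance of the Riemann bilinear relations. If you drop the hypothesis \(\int_{\alpha_j}\eta=0\), the same computation produces the extra term \(\sum_i(\int_{\alpha_i}\eta)\,\omega_i(P)\) through \(\int_{\beta_i}W(P,\cdot)=2\pi\sqrt{-1}\,\omega_i(P)\).
\end{itemize}

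Your treatment of the exceptional points \(P\) is fine, and you do not need continuity. The right-hand side is independent of \(P_*\) and of the cycle representatives, so for each \(P\) you may simply choose a canonical system of loops avoiding \(P\). The period \(\int_{\alpha_j}W(P,\cdot)\) is unchanged by this choice because \(W(P,\cdot)\) has zero residue at \(Q=P\). This last fact is the specific reason behind your remark about representative-independence of the periods.
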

\begin{proof}
Define
\[
\widetilde\eta(P)
=
-
\sum_{Q_0\in\operatorname{Pole}(\eta)}
\Res_{Q=Q_0}
\left[
F(Q)W(P,Q)
\right],
\qquad
F(Q)=\int_{P_*}^{Q}\eta.
\]
We prove the identity in two steps. First, we compare the principal
parts of \(\widetilde\eta\) and \(\eta\) at every pole. Their
difference is then holomorphic, and the normalization of \(W\) shows
that all of its \(\alpha\)-periods vanish.

Let \(Q_0\) be a pole of \(\eta\), and choose a local chart
\((U_0,w)\) centered at \(Q_0\) so that \(Q_0\) is the only pole of
\(\eta\) on \(U_0\). For \(P\in U_0'=U_0\setminus\{Q_0\}\), write
\(z=w(P)\). By the definition of \(\widetilde\eta\),
\[
\widetilde\eta(z)
=
-\Res_{w=0}F(w)W(z,w)
+
\text{a holomorphic differential in \(z\)}.
\]
Indeed, for every \(Q_1\in\operatorname{Pole}(\eta)\setminus\{Q_0\}\),
the differential
\(-\Res_{Q=Q_1}F(Q)W(P,Q)\) is holomorphic in \(P\) on \(U_0\).

Write
\(\eta=(\sum_{k=-m}^{\infty}a_kw^k)dw\) on
\(U_0'=U_0\setminus\{Q_0\}\). Since
\(\Res_{Q_0}\eta=0\), we have \(a_{-1}=0\), and hence
\[
F(w)
=
C+
\sum_{\substack{k=-m\\k\neq-1}}^{\infty}
\frac{a_k}{k+1}w^{k+1}
\]
on \(U_0'\).
Fix \(P\in U_0'\) and write \(z=w(P)\). Since \(z\neq0\), the
meromorphic one-form
\[
\frac{F(w)W(z,w)}{dz}
=
\left(
\frac{F(w)}{(z-w)^2}
+
F(w)\mathcal H(z,w)
\right)dw
\]
in the \(w\)-variable has possible poles only at \(w=0\) and
\(w=z\). The possible pole at \(w=0\) comes from \(F(w)\), while the
one at \(w=z\) comes from the diagonal singularity of \(W(z,w)\).

Because \(F\) has a finite principal part and \(\mathcal H\) is
jointly holomorphic, the residue of
\(F(w)\mathcal H(z,w)\,dw\) at \(w=0\) is holomorphic in
\(z\). Thus only the diagonal singularity of
\(W\) contributes to the principal part. Expanding
\((z-w)^{-2}\) in powers of \(w\), for \(0<|w|<|z|\), gives
\[
\Res_{w=0}\frac{F(w)}{(z-w)^2}\,dw
=
-\sum_{k=-m}^{-2}a_kz^k.
\]
It follows that \(-\Res_{w=0}F(w)W(z,w)\), and hence
\(\widetilde\eta\), has the same principal part at \(Q_0\) as
\(\eta\). Since \(Q_0\) was arbitrary,
\(\widetilde\eta-\eta\) is holomorphic on \(X\).

It remains to use the normalization. By the vanishing
\(\alpha\)-periods of \(W\),
\[
\int_{\alpha_i}\widetilde\eta
=
-
\sum_{Q_0\in\operatorname{Pole}(\eta)}
\Res_{Q=Q_0}
\left[
F(Q)\int_{\alpha_i}W(\cdot,Q)
\right]
=
0.
\]
Thus \(\widetilde\eta-\eta\) is holomorphic with vanishing
\(\alpha\)-periods, so it is zero.
\end{proof}

Let \(W_t\) be the canonical meromorphic bidifferential on \(X_t\)
associated with the homological marking
\(\{\alpha_i(t),\beta_i(t)\}_{i=1}^g\).
Choose \(P_*\) outside the fiber \(\varphi^{-1}(Q_k)\), and choose
representatives of \(\alpha_i,\beta_i\), based at \(P_*\), disjoint
from this entire fiber. Shrink the deformation neighborhoods so that
their closures are disjoint from these representatives.
Thus the cycles are contained in \(X_k^o\), and, under the
canonical identification of \(X_k^o\) with its image in
\(X_t\),
\(\alpha_i(t)=\alpha_i\) and \(\beta_i(t)=\beta_i\).

Choose \(Q\in X_k^o\), and choose a local chart
\((U,u)\) centered at \(Q\) such that
\(U\Subset X_k^o\) and \(u(Q)=0\).
By the construction of \(\Theta_t\),
\(\Theta_t(Q)=Q\) for all \(|t|<\delta\).
On \(U_{kj}(t)\times U\), write
\[
W_t
=
g_{kj,t}(z_{kj,t},u)\,
dz_{kj,t}\otimes du.
\]
The coefficient \(g_{kj,t}\) is meromorphic on this product,
with possible poles only along the diagonal.
Since \(Q\in X_k^o\), it lies outside the inner coordinate disc
\(|z_{kj,t}|^{n_{kj}}\leq\epsilon_k/2\).
Indeed, on this disc,
\(|t+z_{kj,t}^{n_{kj}}|<3\epsilon_k/4\), whereas the overlap
with \(X_k^o\) satisfies \(|t+z_{kj,t}^{n_{kj}}|>3\epsilon_k/4\).
Thus \(g_{kj,t}(z,0)\) is holomorphic for
\(|z|^{n_{kj}}<\epsilon_k/2\), and in particular at \(z=0\).

Define
\(\vartheta_t := \left. \frac{W_t(\cdot,u)}{du} \right|_{u=0}\).
Then \(\vartheta_t\) is a meromorphic differential on \(X_t\), with a unique
double pole at \(Q\) and zero residue. Moreover, on \(U_{kj}(t)\),
\(\vartheta_t = g_{kj,t}(z_{kj,t},0)\,dz_{kj,t}\).

On \(U_{kj}(t)\), we have
\(\xi_k\circ\varphi_t=z_k+t+z_{kj,t}^{n_{kj}}\), whereas on
\(U_{kj}\), we have \(\xi_k\circ\varphi=z_k+z_{kj}^{n_{kj}}\).
For \(P\in A_{kj}\), the points \(P\) and
\(\mathscr G_{kj,t}(P)\) represent the same point of \(X_t\).
Under the fixed exterior identification, the coordinate of this point is
\(z_{kj,t}(q_t(P))=\mathscr G_{kj,t}(P)\). The gluing relation gives
\(t+\mathscr G_{kj,t}(P)^{n_{kj}}=z_{kj}(P)^{n_{kj}}\).
In the following calculation, we write
\(z_{kj,t}(P):=z_{kj,t}(q_t(P))\), with \(P\) held fixed in
the exterior piece. Thus
\(z_{kj,t}^{n_{kj}}=z_{kj}^{n_{kj}}-t\) on \(A_{kj}\).

Differentiating with respect to \(t\), we obtain
\(n_{kj}z_{kj,t}^{\,n_{kj}-1}
\frac{\partial z_{kj,t}}{\partial t}=-1\).
Since \(z_{kj,t}=z_{kj}\) at \(t=0\), it follows that, on \(A_{kj}\),
\[
\left.
\frac{\partial z_{kj,t}}{\partial t}
\right|_{t=0}
=
-\frac{1}{n_{kj}z_{kj}^{\,n_{kj}-1}}.
\]
Thus, on \(A_{kj}\setminus\{Q\}\),
\[
\left.
\frac{\partial}{\partial t}\vartheta_t
\right|_{t=0}
=
\left.
\frac{\partial g_{kj,t}}{\partial t}
\right|_{t=0}(z_{kj},0)\,dz_{kj}
-
d\left(
\frac{g_{kj,0}(z_{kj},0)}
{n_{kj}z_{kj}^{\,n_{kj}-1}}
\right).
\]
On \(X_k^o\setminus\{Q\}\), define
\(\eta := \left. \frac{\partial}{\partial t}\vartheta_t \right|_{t=0}\).
Near \(Q\), using the fixed coordinate \(u\), we have
\[
\vartheta_t
=
\left(
\frac{1}{u^2}+h_t(u)
\right)du,
\]
where \(h_t\) is holomorphic. Since the singular term \(du/u^2\) is
independent of \(t\), \(\eta\) extends holomorphically across \(Q\).

The preceding calculation on the overlaps gives a meromorphic
extension of \(\eta\) to each deformation neighborhood.
If \(Q\in U_{kj}\), this extension agrees near \(Q\) with the
holomorphic extension just obtained from the fixed exterior coordinate.
Thus \(\eta\) extends to a meromorphic differential on \(X\). Its possible poles are contained in the points
\(P_{kj}\) with \(n_{kj}>1\), and all its residues vanish.
Furthermore, by the normalization of \(W_t\) and the choice of the
deformation neighborhoods, all \(\alpha\)-periods of \(\eta\) vanish.

We now apply the preceding residue formula. Setting
\(F(R):=\int_{P_*}^{R}\eta\), we obtain
\[
\eta(P)
=
-
\sum_{\substack{1\leq j\leq s_k\\ n_{kj}>1}}
\operatorname{Res}_{R=P_{kj}}
\bigl[F(R)W(P,R)\bigr].
\]
Since, near \(P_{kj}\),
\[
\eta
=
\left.
\frac{\partial g_{kj,t}}{\partial t}
\right|_{t=0}(z_{kj},0)\,dz_{kj}
-
d\left(
\frac{g_{kj,0}(z_{kj},0)}
{n_{kj}z_{kj}^{\,n_{kj}-1}}
\right),
\]
the function
\[
F(R)
+
\frac{g_{kj,0}(z_{kj}(R),0)}
{n_{kj}z_{kj}(R)^{\,n_{kj}-1}}
\]
is holomorphic at \(P_{kj}\). Hence
\[
\operatorname{Res}_{R=P_{kj}}
\bigl[F(R)W(P,R)\bigr]
=
-
\operatorname{Res}_{R=P_{kj}}
\left[
\frac{g_{kj,0}(z_{kj}(R),0)}
{n_{kj}z_{kj}(R)^{\,n_{kj}-1}}
W(P,R)
\right].
\]
Since, on \(U_{kj}\),
\[
\vartheta_0(R)
=
\frac{W(R,Q)}{du(Q)}
=
g_{kj,0}(z_{kj}(R),0)\,dz_{kj}(R),
\]
and
\(d_k\varphi(R) = n_{kj}z_{kj}(R)^{\,n_{kj}-1}\,dz_{kj}(R)\),
we obtain
\[
\eta(P)
=
\frac{1}{du(Q)}
\sum_{\substack{1\leq j\leq s_k\\ n_{kj}>1}}
\operatorname{Res}_{R=P_{kj}}
\frac{W(P,R)W(R,Q)}{d_k\varphi(R)}.
\]
Recalling that, for
\(P\in X_k^o\setminus\{Q\}\),
\[
\eta(P)
=
\left.
\frac{\partial}{\partial t}
\left(
\frac{W_t(P,Q)}{du(Q)}
\right)
\right|_{t=0},
\]
and that \(du(Q)\) is independent of \(t\), we obtain
\[
\left.
\frac{\partial}{\partial t}W_t(P,Q)
\right|_{t=0}
=
\sum_{\substack{1\leq j\leq s_k\\ n_{kj}>1}}
\operatorname{Res}_{R=P_{kj}}
\frac{W(P,R)W(R,Q)}{d_k\varphi(R)}.
\]
Thus the identity holds initially for
\(P,Q\in X_k^o\) with \(P\neq Q\).
Since the singular part of \(W_t(P,Q)\) along the diagonal is
independent of \(t\) in the fixed exterior coordinates, the
left-hand side extends holomorphically across \(P=Q\). The
right-hand side is also holomorphic there. Hence the identity extends
across the diagonal.

Recall that
\(\Phi:\mc B([\varphi],\delta)\to B_{\mf z}(\delta)\)
is the Hurwitz coordinate chart with
\(\Phi([\varphi])=\mf z\). For each
\(\mf z+\mf t\in B_{\mf z}(\delta)\), let
\([\varphi_{\mf t}]=\Phi^{-1}(\mf z+\mf t)\), and let
\(\varphi_{\mf t}:X_{\mf t}\to\mb P^1\) be the representative
constructed above.

Let
\(\pi:\mathscr X\to B_{\mf z}(\delta)\)
be the corresponding local holomorphic family of Riemann surfaces.
Set-theoretically,
\(\mathscr X=\coprod_{\mf z+\mf t\in B_{\mf z}(\delta)}X_{\mf t}\),
and
\(\pi(P)=\mf z+\mf t\) for \(P\in X_{\mf t}\). Thus
\(\pi^{-1}(\mf z+\mf t)=X_{\mf t}\), and in particular
\(\pi^{-1}(\mf z)=X_{\mf 0}=X\).

Consider the fiber product
\(\mathscr X^{[2]}:=\mathscr X\times_{B_{\mf z}(\delta)}\mathscr X\).
Let \(p_1,p_2:\mathscr X^{[2]}\to\mathscr X\) be the projections, and let
\(\pi^{[2]}:=\pi\circ p_1=\pi\circ p_2\) be the projection to
\(B_{\mf z}(\delta)\).
Let
\(
\Delta_\pi
:=
\{(P,P):P\in\mathscr X\}
\subset\mathscr X^{[2]}
\)
be the relative diagonal. Its intersection with the fiber
\(X_{\mf t}\times X_{\mf t}\) is the diagonal
\(\Delta_{\mf t}^{\mathrm{diag}}\subset X_{\mf t}\times X_{\mf t}\).
Denote by \(K_{\mathscr X/B_{\mf z}(\delta)}\) the relative canonical
bundle, and set
\[
\mc L
:=
p_1^*K_{\mathscr X/B_{\mf z}(\delta)}
\otimes
p_2^*K_{\mathscr X/B_{\mf z}(\delta)}
\otimes
\mathcal O(2\Delta_\pi).
\]
Its restriction to the fiber over \(\mf z+\mf t\) is
\[
\mc L|_{X_{\mf t}\times X_{\mf t}}
=
(K_{X_{\mf t}}\boxtimes K_{X_{\mf t}})(2\Delta_{\mf t}^{\mathrm{diag}}),
\]
where \(\Delta_{\mf t}^{\mathrm{diag}}\subset X_{\mf t}\times X_{\mf t}\) is the
diagonal.

The normalized canonical bidifferentials \(W_{\mf t}\) form a
holomorphic section of \(\mc L\); see \cite{Fay1992}.
We denote this family by \(W\). On the fixed exterior,
\(\partial W/\partial z_k\) means differentiation with the two
points held fixed. The preceding calculation gives the following
formula at every point of the Hurwitz coordinate ball.

\begin{thm}\label{thm:rauch-bidifferential}
For the canonical bidifferential associated with the transported
homological marking, the Hurwitz derivative satisfies
\[
\frac{\partial W}{\partial z_k}(P,Q)
=\sum_{\substack{1\leq\ell\leq s_k\\ n_{k\ell}>1}}
\operatorname{Res}_{R=P_{k\ell}}
\frac{W(P,R)W(R,Q)}{d_k\varphi(R)}.
\]
Initially, \(P,Q\in X_k^o\) are held fixed under the exterior
identification. Both sides extend holomorphically across the
diagonal there, and the right-hand side gives their meromorphic
continuation across the deformation neighborhoods in each variable.
\end{thm}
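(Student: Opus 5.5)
The plan is to assemble the identity from the preceding exterior calculation and then extend it by analytic continuation. The calculation is carried out at an arbitrary base point of the Hurwitz coordinate ball, and it is legitimate there because every point of the ball can serve as the centre of its own gluing construction.

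\textbf{Step 1: the identity off the diagonal.} Fix \(\mf z+\mf t\in B_{\mf z}(\delta)\) and regard it as a new base point. By the transport property \(\tau_{A',A''}\circ\tau_{A,A'}=\tau_{A,A''}\), the one-coordinate deformation \(t\mapsto\mf z+\mf t+te_k\) is realized by the gluing construction centred at \(\varphi_{\mf t}\). The transported marking agrees with the marking obtained from \(\Theta_{\mf t}\), because both are supported in the fixed exterior. Hence the computation just carried out, namely the derivative of \(\vartheta_t\) on \(A_{kj}\), the residue representation lemma applied to \(\eta\), and the substitution \(d_k\varphi=n_{kj}z_{kj}^{n_{kj}-1}dz_{kj}\), applies verbatim at this point. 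It gives the displayed formula for \(P,Q\in X_k^o\) with \(P\neq Q\), where the derivative is taken with the points fixed under the exterior identification.

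\textbf{Step 2: extension across the diagonal.} In the exterior coordinates the principal part \((u(P)-u(Q))^{-2}\) of \(W\) does not depend on the Hurwitz parameters. Therefore \(\partial W/\partial z_k\) is holomorphic near the diagonal. On the right-hand side, each residue at \(P_{k\ell}\) is a holomorphic function of \((P,Q)\) as long as \(P\) and \(Q\) stay away from the fibre \(\varphi^{-1}(Q_k)\). The identity established off the diagonal therefore extends across it by continuity, and in fact by the identity theorem in two complex variables.

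\textbf{Step 3: continuation into the deformation discs.} For fixed \(Q\), the right-hand side is a meromorphic differential in \(P\) on all of \(X\), with poles only at the points \(P_{k\ell}\). The same holds in \(Q\) by the symmetry of \(W\). The extension of \(\eta\) constructed earlier is exactly this meromorphic continuation, so the right-hand side furnishes the stated continuation of the left-hand side into the discs \(U_{k\ell}\). The terms with \(n_{k\ell}=1\) drop out, since at such points \(d_k\varphi\) does not vanish and the integrand is holomorphic.

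\textbf{Main obstacle.} The delicate point is Step 1. One must check that the family of bidifferentials \(W\) is a holomorphic section of \(\mc L\), so that \(\partial W/\partial z_k\) is a genuine Wirtinger derivative along the Hurwitz chart and not merely along a single curve. For this I would appeal to the holomorphicity of the family of bidifferentials cited from \cite{Fay1992}, combined with the smooth dependence of the family established in the previous section. One must also check that the exterior identification is independent of which base point is used, which follows from the compatibility of the transports with restriction to smaller discs.
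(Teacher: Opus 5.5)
Your proposal is correct and follows the paper's own argument. The identity comes from the exterior computation of $\partial_t\vartheta_t$ on the annuli and the residue representation of $\eta$, carried out at an arbitrary base point of the coordinate ball. It is then extended across the diagonal because the principal part of $W$ in the fixed exterior coordinates does not depend on the Hurwitz parameters. One small correction to your ``main obstacle'': $\partial/\partial z_k$ is by definition the Wirtinger $t$-derivative along the coordinate line through the re-based point, so no extra identification is needed. What the cited holomorphicity of the family of bidifferentials actually supplies is the differentiability in the parameters that the exterior computation uses.
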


Let
\(R_\varphi=\sum_{P\in\operatorname{Ram}(\varphi)}
(e_P(\varphi)-1)P\)
be the ramification divisor of \(\varphi\), where
\(\operatorname{Ram}(\varphi)=\{P\in X:e_P(\varphi)>1\}\).
Let \(W_{\mb P^1}\) denote the canonical bidifferential on
\(\mb P^1\), and let
\(\iota_\Delta:X\to X\times X\) be the diagonal embedding,
\(\iota_\Delta(P)=(P,P)\). We define
\[
W_\varphi
:=
\iota_\Delta^*
\left(
W-(\varphi\times\varphi)^*W_{\mb P^1}
\right).
\]
Since the principal parts of the two bidifferentials along the
diagonal coincide, their difference is regular along the diagonal
away from the ramification points of \(\varphi\), and its pullback by
\(\iota_\Delta\) defines a meromorphic quadratic differential on
\(X\). Since
\(\iota_\Delta^*(K_X\boxtimes K_X)=K_X^{\otimes2}\), we have
\[
W_\varphi\in
H^0\bigl(X,K_X^{\otimes2}(2R_\varphi)\bigr).
\]

For \(1\leq k\leq N\), define
\[
\mk B_k
:=
-\sum_{\substack{1\leq\ell\leq s_k\\ n_{k\ell}>1}}
\operatorname{Res}_{P_{k\ell}}
\frac{W_\varphi}{d_k\varphi}.
\]
Since \(W_\varphi\) is a meromorphic quadratic differential,
\(W_\varphi/d_k\varphi\) is a meromorphic one-form on \(\varphi^{-1}(D_k)\), and hence
the residues above are well defined. As \([\varphi]\) varies in
\(\mc B([\varphi],\delta)\), each \(\mk B_k\) defines a holomorphic
function on \(\mc B([\varphi],\delta)\).
\begin{lem}
Let \(1\leq k,m\leq N\) with \(k\neq m\). Then, on
\(\bigcup_{\ell=1}^{s_k}U_{k\ell}\),
\[
\frac{\partial W_\varphi(P)}{\partial z_m}
=
\sum_{\substack{1\leq r\leq s_m\\ n_{mr}>1}}
\operatorname{Res}_{R=P_{mr}}
\frac{W(P,R)^2}{d_m\varphi(R)}.
\]
\end{lem}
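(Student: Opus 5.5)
Since \(k\neq m\), the deformation in the \(z_m\)-direction is performed over the disc \(D_m\), which is disjoint from \(D_k\). I will therefore work with the one-parameter family \(\varphi_t\) over \(Q_m\). I may shrink \(D_m\) so that \(\bigcup_\ell U_{k\ell}\subset X_m^o\), the fixed exterior for this deformation. On \(\bigcup_\ell U_{k\ell}\) the trivialization \(\Theta_t\) is the identity, and the local map \(\varphi_t=\varphi\) is unchanged. Consequently, on \(U_{k\ell}\times U_{k\ell}\) the pulled-back term \((\varphi_t\times\varphi_t)^*W_{\mb P^1}\) is independent of \(t\) in the fixed exterior coordinates. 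It follows that
\[
\frac{\partial W_\varphi(P)}{\partial z_m}
=\iota_\Delta^*\Bigl(\frac{\partial W}{\partial z_m}\Bigr)(P),
\qquad P\in\bigcup_{\ell}U_{k\ell},
\]
with \(\partial W/\partial z_m\) taken with both points held fixed under the exterior identification.

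The next step is to apply Theorem~\ref{thm:rauch-bidifferential} with \(k\) replaced by \(m\). It gives
\[
\frac{\partial W}{\partial z_m}(P,Q)
=\sum_{\substack{1\leq r\leq s_m\\ n_{mr}>1}}
\operatorname{Res}_{R=P_{mr}}\frac{W(P,R)W(R,Q)}{d_m\varphi(R)}
\]
for \(P,Q\in X_m^o\), and that theorem states that both sides extend holomorphically across the diagonal. I then restrict to \(P=Q\in U_{k\ell}\). The residue is taken at points \(P_{mr}\), which lie far from \(U_{k\ell}\), so the integrand is jointly holomorphic in \((P,Q)\) near the diagonal point \((P,P)\) for \(R\) on a small circle around \(P_{mr}\). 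Setting \(Q=P\) therefore commutes with the residue, and the symmetry \(W(R,P)=W(P,R)\) turns the integrand into \(W(P,R)^2/d_m\varphi(R)\).

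The main point requiring care is the diagonal. On the left side, \(W_\varphi\) is defined as the diagonal restriction of a difference of two singular bidifferentials, and I must check that differentiating in \(z_m\) commutes with that restriction. I plan to handle this by observing that the singular parts of both \(W_t\) and \((\varphi\times\varphi)^*W_{\mb P^1}\) along the diagonal are independent of \(t\) in the fixed coordinate on \(U_{k\ell}\). For \(W_t\), this holds because the local coordinate is fixed, so the double-pole term \(dz\,dz'/(z-z')^2\) is fixed. Hence the regular part \(W_t-(\varphi\times\varphi)^*W_{\mb P^1}\) is smooth in \(t\) uniformly near the diagonal. Differentiation and restriction then commute, for instance by writing the restriction as a Cauchy integral in a neighbourhood of the diagonal. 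The ramification points \(P_{k\ell}\) themselves cause no difficulty: the identity holds on the punctured neighbourhoods, and both sides are meromorphic there, so it extends by continuation.
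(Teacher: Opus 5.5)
Your proof is correct and follows the paper's argument. The $z_m$-deformation is supported over $D_m$, which is disjoint from $D_k$, so $(\varphi_t\times\varphi_t)^*W_{\mb P^1}$ is $t$-independent on $\bigcup_\ell U_{k\ell}$ and $\partial_{z_m}W_\varphi=\iota_\Delta^*(\partial_{z_m}W)$. Theorem~\ref{thm:rauch-bidifferential}, with $k$ replaced by $m$, then restricts to $Q=P$ because the residue points $P_{mr}$ lie outside this region, and the symmetry of $W$ finishes the proof.

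Your extra care about commuting the $z_m$-derivative with diagonal restriction, and about the points $P_{k\ell}$, spells out details the paper leaves implicit. No shrinking of $D_m$ is needed: $\varphi^{-1}(D_k)\cap\varphi^{-1}(D_m)=\emptyset$ already gives $\bigcup_\ell U_{k\ell}\subset X_m^o$.
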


\begin{proof}
Let \(\varphi_t:X_t\to\mb P^1\) be the deformation in the
\(z_m\)-direction, with \(\varphi_0=\varphi\).
Since \(k\neq m\), the deformation neighborhoods associated with
\(\varphi^{-1}(Q_m)\) are disjoint from
\(\bigcup_{\ell=1}^{s_k}U_{k\ell}\).
Thus \(\varphi_t=\varphi\) on this union under the fixed exterior
identification, and
\((\varphi_t\times\varphi_t)^*W_{\mb P^1}\)
is independent of \(t\) there.
Recalling that
\(W_\varphi
=\iota_\Delta^*\bigl(
W-(\varphi\times\varphi)^*W_{\mb P^1}
\bigr)\),
we obtain
\(\partial_{z_m}W_\varphi
=\iota_\Delta^*(\partial_{z_m}W)\)
on \(\bigcup_{\ell=1}^{s_k}U_{k\ell}\).
The right-hand side of the preceding variational formula for \(W\)
is holomorphic near the diagonal in this region, since the
residue points \(P_{mr}\) lie outside it.
We may therefore restrict that formula to \(Q=P\).
Using the symmetry \(W(R,P)=W(P,R)\) then gives the claimed identity.
\end{proof}

Define the holomorphic one-form
\(\mu:=\sum_{k=1}^N\mk B_k\,dz_k\) on
\(\mc B([\varphi],\delta)\).

\begin{prop}
The one-form \(\mu\) is closed.
\end{prop}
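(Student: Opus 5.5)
To show that \(\mu\) is closed, I would verify the symmetry \(\partial_{z_m}\mk B_k=\partial_{z_k}\mk B_m\) for every pair \(k\neq m\). The diagonal case is trivial. The plan is to express each side as a double residue of \(W(P,R)^2\) against \(d_k\varphi(P)\,d_m\varphi(R)\), and then use the symmetry of \(W\).

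\textbf{Computing \(\partial_{z_m}\mk B_k\).} Fix \(k\neq m\). By definition,
\[
\mk B_k=-\sum_{\ell}\operatorname{Res}_{P_{k\ell}}\frac{W_\varphi}{d_k\varphi}.
\]
First rewrite each residue as a contour integral over a fixed small circle around \(P_{k\ell}\) inside \(U_{k\ell}\). Under the deformation in the \(z_m\)-direction, this circle, the map \(\varphi\) near it, and hence \(d_k\varphi\), stay fixed under the exterior identification, since \(U_{k\ell}\) is disjoint from the \(m\)-th deformation neighborhoods. The contour is fixed and \(W_\varphi\) depends smoothly (indeed holomorphically) on the parameters, so differentiation passes under the integral. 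Applying the preceding lemma then gives
\[
\partial_{z_m}\mk B_k
=-\sum_{\ell}\sum_{r}\operatorname{Res}_{P=P_{k\ell}}\operatorname{Res}_{R=P_{mr}}
\frac{W(P,R)^2}{d_k\varphi(P)\,d_m\varphi(R)}.
\]

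\textbf{Symmetry.} Exchanging the roles of \(k\) and \(m\), the same computation gives
\[
\partial_{z_k}\mk B_m
=-\sum_{r}\sum_{\ell}\operatorname{Res}_{R=P_{mr}}\operatorname{Res}_{P=P_{k\ell}}
\frac{W(R,P)^2}{d_m\varphi(R)\,d_k\varphi(P)}.
\]
Two facts make this equal to the previous expression. First, \(W(R,P)=W(P,R)\). Second, the points \(P_{k\ell}\) and \(P_{mr}\) are distinct and lie in disjoint neighborhoods, so the integrand is jointly meromorphic with poles only along separated divisors. The double residue is therefore an iterated integral over a product of disjoint small circles, and Fubini lets us swap the order of the residues. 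Hence \(\partial_{z_m}\mk B_k=\partial_{z_k}\mk B_m\).

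\textbf{Main obstacle.} The delicate step is the first one: justifying the differentiation of \(\mk B_k\) in the \(z_m\)-direction. This needs three things.
\begin{enumerate}
\item The residue must be taken on a contour that is genuinely fixed under the exterior identification. Since the residue is independent of the contour, one may choose it in the unchanged region \(U_{k\ell}\).
\item The pulled-back round-metric bidifferential term \((\varphi\times\varphi)^*W_{\mb P^1}\) must not vary. This holds there because \(\varphi_t=\varphi\) on that region.
\item The lemma's formula for \(\partial_{z_m}W_\varphi\) must hold on a full neighborhood of the contour. The lemma states it on all of \(\bigcup_\ell U_{k\ell}\), which suffices.
\end{enumerate}
With these in place, the remaining swap of residues is routine.
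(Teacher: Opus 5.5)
Your proposal is correct and follows the paper's proof. You reduce to \(\partial_{z_m}\mk B_k=\partial_{z_k}\mk B_m\) for \(k\neq m\), use the preceding lemma (with \(P_{k\ell}\) and \(d_k\varphi\) fixed under the \(z_m\)-deformation) to write \(\partial_{z_m}\mk B_k\) as the double residue of \(W(P,R)^2/(d_k\varphi(P)\,d_m\varphi(R))\), and conclude from \(W(P,R)=W(R,P)\) together with the commutation of residues at the disjoint points \(P_{k\ell}\) and \(P_{mr}\).

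Your fixed-contour and Fubini remarks just make explicit what the paper states in one line. As the paper notes, the computation should be done at an arbitrary point of the coordinate ball rather than only at \([\varphi]\), since closedness requires the identity throughout.
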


\begin{proof}
It suffices to show that
\(\partial_{z_m}\mk B_k=\partial_{z_k}\mk B_m\)
for \(k\neq m\).
We work at an arbitrary point of the coordinate neighborhood,
using the same notation for the corresponding covering and its
local data.

Fix \(k\neq m\). As in the preceding lemma, the points
\(P_{k\ell}\) and \(d_k\varphi\) remain fixed under the
\(z_m\)-deformation. Differentiating the defining residues of
\(\mk B_k\) therefore gives
\[
\frac{\partial\mk B_k}{\partial z_m}
=
-
\sum_{\substack{1\leq\ell\leq s_k\\ n_{k\ell}>1}}
\sum_{\substack{1\leq r\leq s_m\\ n_{mr}>1}}
\operatorname{Res}_{P=P_{k\ell}}
\operatorname{Res}_{R=P_{mr}}
\frac{W(P,R)^2}
     {d_k\varphi(P)d_m\varphi(R)}.
\]
The neighborhoods of \(P_{k\ell}\) and \(P_{mr}\) are disjoint,
so the two residue operations commute.
Together with the symmetry \(W(P,R)=W(R,P)\), this shows that
the last expression is symmetric under the interchange of
\(k\) and \(m\). Hence
\(\partial_{z_m}\mk B_k=\partial_{z_k}\mk B_m\), and thus
\(d\mu=0\).
\end{proof}

Since \(\mc B([\varphi],\delta)\) is contractible and
\(\mu=\sum_{k=1}^N\mk B_k\,dz_k\) is a closed holomorphic one-form,
there exists a holomorphic function \(\nu\) on
\(\mc B([\varphi],\delta)\) such that \(\mu=d\nu\).
This gives the local residue definition of the Bergman tau-function
used in \cite{KokotovKorotkin2004}, with the contributions over each
branch value grouped together.

\begin{defn}
The local Bergman tau-function associated with the chosen homological
marking is defined by
\(\tau_B:=e^\nu\).
It is a nonvanishing holomorphic function on
\(\mc B([\varphi],\delta)\), uniquely determined up to multiplication
by a nonzero constant, and satisfies
\[
\frac{\partial}{\partial z_k}\log\tau_B
=
\mk B_k,
\qquad
1\leq k\leq N.
\]
Equivalently,
\(d\log\tau_B=\mu\).
\end{defn}

In residues involving the Bergman projective connection, the notation
\(S_B/d_k\varphi\) at \(P_{kj}\) means
\(S_B(u)\,du^2/d_k\varphi\) in the distinguished coordinate
\(u=z_{kj}\). Two such coordinates differ by multiplication by an
\(n_{kj}\)-th root of unity. This linear change has zero Schwarzian
derivative, so the residue is independent of that choice.

\begin{lem}\label{lem:bergman-tau-residue}
For every Hurwitz coordinate \(z_k\), the preceding definition of the
local Bergman tau-function is equivalently expressed by
\[
\frac{\partial}{\partial z_k}\log\tau_B
=
-\frac{1}{6}
\sum_{\substack{1\leq j\leq s_k\\ n_{kj}>1}}
\operatorname{Res}_{P_{kj}}
\frac{S_B}{d_k\varphi}.
\]
\end{lem}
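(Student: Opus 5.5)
The plan is to compute the quadratic differential \(W_\varphi\) explicitly in the distinguished coordinate \(u=z_{kj}\) near each \(P_{kj}\) with \(n_{kj}>1\). After that, the identity reduces to a one-line residue computation. By the definition of the local Bergman tau-function, \(\partial_{z_k}\log\tau_B=\mk B_k=-\sum_j\operatorname{Res}_{P_{kj}}W_\varphi/d_k\varphi\). It therefore suffices to prove, for each such \(j\),
\[
\operatorname{Res}_{P_{kj}}\frac{W_\varphi}{d_k\varphi}
=\frac16\operatorname{Res}_{P_{kj}}\frac{S_B}{d_k\varphi}.
\]

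First, I express both bidifferentials near the diagonal in the coordinate \(u\). Property (3) of \(W\) gives
\[
W(u,v)=\bigl((u-v)^{-2}+\mathcal H(u,v)\bigr)\,du\,dv,
\qquad \mathcal H(u,u)=S_B(u)/6.
\]
On \(\mb P^1\), the canonical bidifferential is \(dx\,dy/(x-y)^2\) in the affine chart \(\xi_k\). It is invariant under translation, so it may equally be written in \(w_k=\xi_k-z_k\). Since \(w_k\circ\varphi=u^{n}\) with \(n=n_{kj}\), writing \(f(u)=u^n\) we get
\[
(\varphi\times\varphi)^*W_{\mb P^1}=\frac{f'(u)f'(v)}{(f(u)-f(v))^2}\,du\,dv .
\]
The classical expansion is
\[
\frac{f'(u)f'(v)}{(f(u)-f(v))^2}=\frac1{(u-v)^2}+\frac16\{f,u\}+O(u-v)
\]
for \(u\neq0\), where \(\{f,u\}\) is the Schwarzian derivative. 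Restricting to the diagonal away from \(P_{kj}\) then gives
\[
W_\varphi=\frac16\bigl(S_B(u)-\{f,u\}\bigr)\,du^2 .
\]
This extends meromorphically across \(u=0\), since both sides are meromorphic. Here \(\{u^n,u\}=(1-n^2)/(2u^2)\).

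Next, I divide by \(d_k\varphi=nu^{n-1}du\), which yields
\[
\frac{W_\varphi}{d_k\varphi}=\frac16\,\frac{S_B(u)\,du^2}{d_k\varphi}-\frac{1-n^2}{12n}\,u^{-n-1}\,du .
\]
The second term is a single monomial \(u^{-n-1}du\) with \(n\geq2\), so its residue at \(u=0\) vanishes. Summing over \(j\) and multiplying by \(-1\) gives the stated formula.

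The point that needs care is the Schwarzian expansion of the pulled-back \(\mb P^1\) bidifferential. It has to be done in the same coordinate \(u\) in which \(S_B\) is read off, matching the convention fixed for \(S_B/d_k\varphi\) just before the lemma. It is also the one step where the exact form \(w_k\circ\varphi=u^n\), with no correction terms, is used: it makes the Schwarzian a pure monomial and kills its residue. For a branch value at \(\infty\), the same argument applies with the chart \(y\), since the canonical bidifferential on \(\mb P^1\) has the same form \(dy\,dy'/(y-y')^2\) there. The only other thing to check is that the residue is independent of the choice of \(u\) up to an \(n\)-th root of unity, which was already noted before the lemma.
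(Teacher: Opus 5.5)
Your proposal is correct and is essentially the paper's own proof. Both compute \(W_\varphi=\frac16\bigl(S_B(u)-\{f,u\}\bigr)\,du^2\) in the distinguished coordinate \(u=z_{kj}\), and both observe that the Schwarzian term divided by \(d_k\varphi=nu^{n-1}du\) is a constant multiple of \(u^{-n-1}du\), whose residue vanishes. The only difference is cosmetic: you use translation invariance to take \(f=u^n\), whereas the paper takes \(f=z_k+u^n\); the two have the same Schwarzian.
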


\begin{proof}
Fix a ramification point \(P_{kj}\), put \(n=n_{kj}\), and let
\(u=z_{kj}\) be the distinguished coordinate. Set
\(f=\xi_k\circ\varphi=z_k+u^n\). In the coordinate \(u\), comparison of
the diagonal expansions of \(W\) and
\((\varphi\times\varphi)^*W_{\mb P^1}\) gives
\[
W_\varphi
=
\frac{1}{6}
\left(
S_B(u)-\{f,u\}
\right)du^2,
\]
where
\[
\{f,u\}
:=
\frac{f'''(u)}{f'(u)}
-\frac{3}{2}
\left(\frac{f''(u)}{f'(u)}\right)^2
\]
is the Schwarzian derivative. Since \(f=z_k+u^n\),
\(\{f,u\} = -\frac{n^2-1}{2u^2}\).
Moreover, \(d_k\varphi=n u^{n-1}du\). Hence
\[
\frac{\{f,u\}\,du^2}{d_k\varphi}
=
-\frac{n^2-1}{2n}u^{-n-1}\,du,
\]
which has zero residue at \(u=0\). Therefore
\[
\operatorname{Res}_{P_{kj}}
\frac{W_\varphi}{d_k\varphi}
=
\frac{1}{6}
\operatorname{Res}_{P_{kj}}
\frac{S_B}{d_k\varphi}.
\]
Summing over the ramification points above \(Q_k\) and using
\(\partial_{z_k}\log\tau_B=\mk B_k\) together with the definition of
\(\mk B_k\) proves the formula.
\end{proof}

Similarly, for each
\(\mf z+\mf t\in B_{\mf z}(\delta)\), let
\(\mc S_{\mf t}\) denote the Schiffer bidifferential on
\(X_{\mf t}\times X_{\mf t}\). With respect to the transported
homological marking, let \(B(\mf t)\) be the period matrix and let
\(\{\omega_i^{X_{\mf t}}\}_{i=1}^g\) be the normalized basis of
holomorphic differentials on \(X_{\mf t}\). Then
\[
\mc S_{\mf t}(P,Q)
=
W_{\mf t}(P,Q)
-
\pi
\sum_{i,j=1}^g
\bigl(\operatorname{Im}B(\mf t)\bigr)^{-1}_{ij}
\omega_i^{X_{\mf t}}(P)\omega_j^{X_{\mf t}}(Q).
\]
Thus the family \(\{\mc S_{\mf t}\}\) defines a smooth section of the
relative bidifferential bundle \(\mc L\) introduced above. We denote
this family again by \(\mc S\).

Define
\(\mc S_\varphi
:=\iota_\Delta^*\bigl(
\mc S-(\varphi\times\varphi)^*W_{\mb P^1}
\bigr)\).
As in the case of \(W_\varphi\), this defines a meromorphic
quadratic differential on \(X\). Consequently,
\(\mc S_\varphi/d_k\varphi\) is a meromorphic one-form on
\(\varphi^{-1}(D_k)\) for each \(1\leq k\leq N\).
By the definition of the Schiffer bidifferential,
\(\mc S_\varphi
=W_\varphi-\pi\sum_{i,j=1}^g
\bigl((\operatorname{Im}B)^{-1}\bigr)_{ij}\omega_i\omega_j\).
For each \(1\leq k\leq N\), define
\(\mk S_k
:=-\sum_{\substack{1\leq\ell\leq s_k\\ n_{k\ell}>1}}
\operatorname{Res}_{P_{k\ell}}
(\mc S_\varphi/d_k\varphi)\).
By linearity of residues and the variational formula for
\(\log\det\operatorname{Im}B\), we obtain
\(\mk S_k
=\mk B_k+\partial_{z_k}\log\det\operatorname{Im}B\).
Since \(\partial_{z_k}\log\tau_B=\mk B_k\), it follows that
\[
\mk S_k
=
\frac{\partial}{\partial z_k}
\log\bigl(\det\operatorname{Im}B\cdot\tau_B\bigr).
\]
\begin{defn}
The local Schiffer tau-function is defined by
\[
\tau_{\mathrm{Sch}}
:=\det\operatorname{Im}B\cdot\tau_B.
\]
It is smooth and nowhere vanishing as a function of the Hurwitz
coordinates, but need not be holomorphic. It is determined up to
the same nonzero multiplicative constant as \(\tau_B\).
\end{defn}

The preceding identity therefore gives, for each \(1\leq k\leq N\),
\[
\frac{\partial}{\partial z_k}\log\tau_{\mathrm{Sch}}
=
-\sum_{\substack{1\leq\ell\leq s_k\\ n_{k\ell}>1}}
\operatorname{Res}_{P_{k\ell}}
\frac{\mc S_\varphi}{d_k\varphi}.
\]

\section{Variation of Traces of Resolvent Powers}\label{sec:resolvent-variation}

Let \([\varphi]\in H_{g,d,N}^{\mb P^1}\), and choose a representative
\(
\varphi:X\to\mb P^1
\)
of degree \(d\), with branch values
\(\operatorname{Br}(\varphi)=\{Q_1,\ldots,Q_N\}\).
Fix \(1\le k\le N\), and let
\(\varphi_t:X_t\to\mb P^1\), \(|t|<\delta\), be the deformation
constructed in the previous sections in the \(k\)-th Hurwitz coordinate
direction, with base map \(\varphi_0=\varphi:X\to\mb P^1\).
We take \(\delta<\epsilon_k/4\), shrinking it further as needed.
All derivatives with respect to \(t\) are Wirtinger derivatives:
\(\partial_t=\tfrac12(\partial_s-\sqrt{-1}\,\partial_r)\)
for \(t=s+\sqrt{-1}\,r\). At the base point, this derivative
represents the \(k\)-th Hurwitz coordinate derivative
\(\partial_{z_k}\). Set
\(
g_t:=\varphi_t^*ds_{\mathrm{rd}}^2.
\)

Let \(S_t\) be the conic set and \(X_t':=X_t\setminus S_t\).
Write \(\Delta_{t,c}\) for the initial Laplace--Beltrami operator
on \(C_c^\infty(X_t')\subset L^2(X_t',dA_{g_t})\),
\(\Delta_t:=\Delta_{X_t}\) for its maximal realization, and
\(\mc F_t\subset V_{S_t}\) for the Friedrichs subspace.
The corresponding Friedrichs extension is \(\Delta_{t,\mc F_t}\).
The purpose of this section is to compute, for every integer \(m>1\),
the variation of
\(\Tr R_{\mc F_t}(\lambda)^m\)
and express it in terms of the local \(S\)-matrix at the ramification
points lying over \(Q_k\). The argument has four stages. Since the
operators act on varying surfaces and have a priori varying domains,
we first pull the family back to the fixed underlying smooth surface
\(X\). Second, we identify the maximal, minimal, and Friedrichs
domains. Third, we establish differentiability of the pulled-back
operators and their resolvents in the operator ideals needed below.
Finally, we express the infinitesimal variation as a boundary pairing
and identify it with derivatives of the relevant local \(S\)-matrix
entries.

Let \(\Theta_t:X\to X_t\) be the family of smooth diffeomorphisms
constructed above, with \(\Theta_0=\id_X\). Let \(S\) denote the set of conic points of
\((X,g)\), where \(g=g_0\), so that \(S=S_0\).
By construction, \(\Theta_t(S)=S_t\).
Set \(\widetilde g_t:=\Theta_t^*g_t\). Then
\((X,\widetilde g_t)\) is a conic Riemannian surface with the fixed
conic set \(S\) and the same conic orders as \((X,g)\). Set
\(X':=X\setminus S\), and denote the corresponding initial and
maximal Laplacians by \(\widetilde\Delta_{t,c}\) and
\(\widetilde\Delta_t\). At the base point, the pulled-back metric and maximal Laplacian
are \(g\) and \(\Delta_X\), respectively.

The map \(\Theta_t:(X,\widetilde g_t)\to(X_t,g_t)\) is an
isomorphism of conic Riemannian surfaces. By
Theorem~\ref{thm:conic-isomorphism}, pullback is unitary on the
corresponding \(L^2\)-spaces and identifies their maximal and
minimal graph domains. In particular,
\[
\Theta_t^*\mc D_{\max}(\Delta_t)
=\mc D_{\max}(\widetilde\Delta_t),\qquad
\Theta_t^*\mc D_{\min}(\Delta_t)
=\mc D_{\min}(\widetilde\Delta_t),
\]
and \(\widetilde\Delta_t\Theta_t^*=\Theta_t^*\Delta_t\) on
\(\mc D_{\max}(\Delta_t)\).

Using the conic coordinates transported by \(\Theta_t\), identify
the critical asymptotic space of \((X,\widetilde g_t)\) with \(V_S\),
and write \(\pi_{S,t}:\mc D_{\max}(\widetilde\Delta_t)\to V_S\)
for its asymptotic projection. The induced symplectic isomorphism
\(\widetilde\Theta_t^*:V_{S_t}\to V_S\) satisfies
\(\pi_{S,t}\Theta_t^*=\widetilde\Theta_t^*\pi_{S_t}\).
Let \(\widetilde{\mc F}_t\subset V_S\) be the Friedrichs
Lagrangian for \(\widetilde\Delta_t\). Since
\(\widetilde\Theta_t^*\mc F_t=\widetilde{\mc F}_t\), pullback
maps \(\mc D_{\mc F_t}=\pi_{S_t}^{-1}(\mc F_t)\) onto
\(\mc D_{\widetilde{\mc F}_t}
:=\pi_{S,t}^{-1}(\widetilde{\mc F}_t)\).
Thus the Friedrichs realizations are unitarily equivalent:
\[
\widetilde\Delta_{t,\widetilde{\mc F}_t}
:=\widetilde\Delta_t|_{\mc D_{\widetilde{\mc F}_t}}
=\Theta_t^*\Delta_{t,\mc F_t}(\Theta_t^*)^{-1}.
\]

For \(\lambda\) in their common resolvent set, put
\(R_{\mc F_t}(\lambda):=(\lambda-\Delta_{t,\mc F_t})^{-1}\)
and \(\widetilde R_{\widetilde{\mc F}_t,t}(\lambda)
:=(\lambda-\widetilde\Delta_{t,\widetilde{\mc F}_t})^{-1}\).
These resolvents and their powers are intertwined by \(\Theta_t^*\).
The conic heat-trace asymptotics of
\cite{Cheeger1983,Mooers1999} imply the usual two-dimensional Weyl
bound. Hence their powers of integer order \(m>1\) are trace class.
Unitary invariance of the trace gives
\[
\Tr R_{\mc F_t}(\lambda)^m
=\Tr\widetilde R_{\widetilde{\mc F}_t,t}(\lambda)^m,
\qquad m>1.
\]
We may therefore compute the variation on the fixed smooth surface.

We next study the dependence on \(t\) of the pulled-back operators.
The following metric comparison is the first step toward identifying
their maximal and minimal domains with those of \(\Delta_X\).

\begin{lem}\label{lem:metric-equivalence}
For \(\delta>0\) sufficiently small, the metrics
\(\widetilde g_t\) and \(g\) are uniformly equivalent on \(X'\).
More precisely, there exists \(C\geq1\), independent of \(t\), such that
\[
C^{-1}g\leq\widetilde g_t\leq Cg
\qquad\text{on }X',\quad |t|<\delta.
\]
\end{lem}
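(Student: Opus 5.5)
The proof is local. Since \(\Theta_t\) is the identity on the fixed exterior \(X_k^o\) and \(\varphi_t=\varphi\) there, we have \(\widetilde g_t=g\) on \(X_k^o\). It therefore suffices to compare the two metrics on each piece \(U_{kj}\). In the coordinate \(\zeta=z_{kj}\) we write
\[
g=\frac{4n^2|\zeta|^{2n-2}}{(1+|z_k+\zeta^n|^2)^2}|d\zeta|^2,\qquad n=n_{kj}.
\]
Since \(\xi_k\circ\varphi_t=z_k+t+z_{kj,t}^n\) and \(z_{kj,t}\circ\Theta_t=H_t\circ z_{kj}\), we also have
\[
\widetilde g_t=H_t^*\left(\frac{4n^2|\eta|^{2n-2}}{(1+|z_k+t+\eta^n|^2)^2}|d\eta|^2\right).
\]
We split \(\Omega_{kj}(0)\) into two regions and treat them separately.

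\emph{Inner disc} \(|\zeta|^n\le\epsilon_k/2\). Here \(H_t=\id\). The two metrics differ only through the conformal factors \((1+|z_k+\zeta^n|^2)^{-2}\) and \((1+|z_k+t+\zeta^n|^2)^{-2}\). Both factors are bounded above and below by positive constants uniformly for \(|t|<\delta\) and \(\zeta\) in a fixed compact set. Their ratio is therefore bounded, which handles the conic point and gives equivalence there with a constant independent of \(t\).

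\emph{Outer annulus} \(\epsilon_k/2\le|\zeta|^n<\epsilon_k\). This region lies in the smooth locus, where \(|\zeta|\) and \(|H_t(\zeta)|\) are bounded away from \(0\). We write
\[
\widetilde g_t=\rho_t(H_t(\zeta))\,|dH_t|^2,
\]
where \(\rho_t\) is a smooth positive density on a neighbourhood of the image, bounded above and below uniformly in \(t\). We then need two-sided bounds on \(|dH_t(v)|^2/|v|^2\), meaning on the singular values of the real differential \(DH_t\). The definition of \(H_t\) gives
\[
DH_t=I+O(|t|)
\]
uniformly on this compact annulus, since the root factor is a smooth function of \((t,\zeta)\) equal to \(1\) at \(t=0\). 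This derivative control is exactly what is established in Appendix~\ref{app:local-interpolation} for Proposition~\ref{prop:H-global-diffeomorphism}. After shrinking \(\delta\), the singular values lie in \([1/2,2]\), and also \(|H_t(\zeta)|/|\zeta|\in[1/2,2]\). Combining these with the bounds on \(\rho_t\) and on \(g\) over the annulus gives a uniform \(C\) there.

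The main obstacle is keeping the constant independent of \(t\) near the cone point. The inner-disc step handles this: the cutoff \(\beta\) vanishes for \(|\zeta|^n\le\epsilon_k/2\), so the degenerate factor \(|\zeta|^{2n-2}\) appears identically in both metrics and cancels exactly. What remains is a ratio of smooth positive functions. Finally, we take the maximum of the constants over the finitely many pieces \(U_{kj}\) and over the exterior, where the constant is \(1\). For the simultaneous deformation \(\Theta_{\mf t}\), the same argument applies over each disjoint disc \(D_i\).
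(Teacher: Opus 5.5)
Your proposal is correct and takes essentially the same route as the paper. On the fixed exterior you use \(\widetilde g_t=g\); on the inner disc, where \(H_{kj,t}=\id\), you use the bounded conformal ratio \((1+|z_k+\zeta^n|^2)^2/(1+|z_k+t+\zeta^n|^2)^2\); and on the transition region you use the appendix estimate \(\|dH_{kj,t}-I\|\le 1/2\) together with \(1/2\le|F_t|\le 3/2\). The only difference is cosmetic: the paper bounds \(|H_{kj,t}(\zeta)|^{2n-2}/|\zeta|^{2n-2}\) explicitly on all of \(U_{kj}'\) at once, whereas you split off the annulus and use that it stays away from the cone point.
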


\begin{proof}
We compare the metrics successively on the fixed exterior, on the
inner conic neighborhoods, and on the transition annuli. Only the
last region requires quantitative estimates.
On the fixed exterior \(X_k^o\), the construction gives
\(\varphi_t\circ\Theta_t=\varphi\), so \(\widetilde g_t=g\).
It remains to compare the metrics on the local neighborhoods.
Recall that
\(z_{kj,t}:U_{kj}(t)\to\Omega_{kj}(t)\) is the coordinate
centered at \(P_{kj}(t)\), with
\(U_{kj}(0)=U_{kj}\), \(P_{kj}(0)=P_{kj}\), and
\(z_{kj,0}=z_{kj}\).
Set \(U_{kj}(t)':=U_{kj}(t)\cap X_t'\) and
\(U_{kj}':=U_{kj}\cap X'\).
On \(U_{kj}\), write \(\zeta=z_{kj}\) and \(n=n_{kj}\).
Functions and tensors in the following local formulas are expressed
in these coordinates. The construction gives
\(\Theta_t|_{U_{kj}}=\widehat H_{kj,t}
=z_{kj,t}^{-1}\circ H_{kj,t}\circ z_{kj}\).

Since \(\xi_k\circ\varphi_t=z_k+t+z_{kj,t}^n\) on \(U_{kj}(t)\),
the local metric and Laplace--Beltrami differential expression are
\[
g_t
=\frac{4n^2|z_{kj,t}|^{2n-2}}
{(1+|z_k+t+z_{kj,t}^n|^2)^2}|dz_{kj,t}|^2,
\qquad
\Delta_t^{(kj)}
=-\frac{(1+|z_k+t+z_{kj,t}^n|^2)^2}
{n^2|z_{kj,t}|^{2n-2}}
\frac{\partial^2}{\partial z_{kj,t}\partial\overline z_{kj,t}}.
\]
At \(t=0\), denote the corresponding local expression of
\(\Delta_X\) by \(\Delta_X^{(kj)}\).
Using \(H_{kj,t}(\zeta)^n=\zeta^n-\beta(|\zeta|^n)t\), we obtain
\[
\widetilde g_t
=
\frac{4n^2|H_{kj,t}(\zeta)|^{2n-2}}
{\bigl(1+|z_k+\zeta^n+(1-\beta(|\zeta|^n))t|^2\bigr)^2}
\widehat H_{kj,t}^*|dz_{kj,t}|^2.
\]

On the part of \(U_{kj}\) where \(|\zeta|^n<\epsilon_k/2\),
we have \(\beta(|\zeta|^n)=0\) and
\(H_{kj,t}(\zeta)=\zeta\). Consequently,
\[
\widetilde g_t=a_tg,
\qquad
a_t(\zeta)
:=\frac{(1+|z_k+\zeta^n|^2)^2}
{(1+|z_k+t+\zeta^n|^2)^2}.
\]
The function \(a_t\) is smooth and positive, including at \(\zeta=0\).
Thus the conic coordinates and conic orders are unchanged; when
\(n=1\), the metric remains smooth at the center.
Near conic points not lying over \(Q_k\), the metric is fixed, and
we set \(a_t=1\). We use \(a_t\) on these fixed neighborhoods of
\(S\). There, the two-dimensional conformal transformation laws give
\[
\widetilde\Delta_t=a_t^{-1}\Delta_X,
\qquad
dA_{\widetilde g_t}=a_t\,dA_g.
\]
Both \(a_t\) and \(a_t^{-1}\) are uniformly bounded for
\(|t|<\delta\); more precisely,
\((1+\delta)^{-4}\leq a_t\leq(1+\delta)^4\).

The equality \(\widetilde g_t=g\) also holds in each local
neighborhood wherever
\(|\zeta|^n\geq3\epsilon_k/4\), because there \(\beta(|\zeta|^n)=1\).
Thus the metric variation is supported in the fixed compact set
\[
K_k
:=\bigcup_{j=1}^{s_k}
\{P\in U_{kj}:|z_{kj}(P)|^{n_{kj}}\leq3\epsilon_k/4\}.
\]

We now compare the metrics on all of \(U_{kj}'\), including the
transition region. Continue to write \(\zeta=z_{kj}\) and \(n=n_{kj}\).
Define the auxiliary function \(F_t\) by
\(F_t(\zeta)=1-\beta(|\zeta|^n)t/\zeta^n\) for \(\zeta\neq0\)
and \(F_t(0)=1\).
Then \(H_{kj,t}(\zeta)=\zeta F_t(\zeta)^{1/n}\).
The estimate~\eqref{eq:H-uniform-C1} and the choice of \(\delta\)
in Appendix~\ref{app:local-interpolation} give
\(\|dH_{kj,t}-I\|\leq1/2\) and
\(1/2\leq|F_t(\zeta)|\leq3/2\). Hence
\[
\frac14|d\zeta|^2
\leq\widehat H_{kj,t}^*|dz_{kj,t}|^2
\leq\frac94|d\zeta|^2,
\qquad
2^{-(2-2/n)}
\leq\frac{|H_{kj,t}(\zeta)|^{2n-2}}{|\zeta|^{2n-2}}
\leq\left(\frac32\right)^{2-2/n}.
\]
For \(n=1\), the ratio in the second inequality is identically
\(1\), including at \(\zeta=0\).
Since \(|(1-\beta(|\zeta|^n))t|<\delta\), the triangle inequality
also gives
\[
(1+\delta)^{-4}
\leq
\frac{(1+|z_k+\zeta^n|^2)^2}
{\bigl(1+|z_k+\zeta^n+(1-\beta(|\zeta|^n))t|^2\bigr)^2}
\leq(1+\delta)^4.
\]
Combining these estimates with the local metric formula, and using
\(2^{-(2-2/n)}\geq1/4\) and
\((3/2)^{2-2/n}\leq9/4\), we obtain
\[
\frac1{16}(1+\delta)^{-4}g
\leq\widetilde g_t
\leq\frac{81}{16}(1+\delta)^4g
\qquad\text{on }X'.
\]
The particular constants are not essential; the point is that both
bounds are positive and independent of \(t\). This proves the
assertion.
\end{proof}

In particular, the spaces \(L^2(X',dA_{\widetilde g_t})\) and
\(L^2(X',dA_g)\) coincide as sets, with uniformly equivalent norms.
The next step is to prove that the graph norms of \(\Delta_X\)
and \(\widetilde\Delta_t\) are uniformly equivalent. The estimates
establishing this equivalence also identify their maximal domains;
equality of the minimal domains then follows by taking the closure
of \(C_c^\infty(X')\). We obtain these estimates from the conformal
relation near \(S\) and interior elliptic regularity on the smooth
locus.

\begin{thm}\label{thm:common-laplacian-domains}
For \(\delta>0\) sufficiently small, the maximal and minimal
domains of \(\widetilde\Delta_t\) are independent of \(t\).
More precisely, for \(|t|<\delta\),
\[
\mc D_{\max}(\widetilde\Delta_t)=\mc D_{\max}(\Delta_X),
\qquad
\mc D_{\min}(\widetilde\Delta_t)=\mc D_{\min}(\Delta_X).
\]
The corresponding graph norms are uniformly equivalent.
\end{thm}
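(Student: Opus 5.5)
We will prove the theorem by splitting \(X'\) into three regions and comparing \(\widetilde\Delta_t\) with \(\Delta_X\) on each one, all with constants independent of \(t\). Fix the compact support set \(K_k\) of the metric variation from Lemma~\ref{lem:metric-equivalence}. Choose fixed neighborhoods \(N_S\) of \(S\) on which \(\widetilde g_t=a_tg\), namely the regions \(|z_{kj}|^{n_{kj}}<\epsilon_k/2\) together with fixed neighborhoods of the other conic points. The transition set \(T:=K_k\setminus\{|z_{kj}|^{n_{kj}}<\epsilon_k/3\}\) is a compact subset of the smooth locus \(X'\). On \(X\setminus K_k\) we have \(\widetilde\Delta_t=\Delta_X\) exactly. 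On \(N_S\), the conformal identity \(\widetilde\Delta_t=a_t^{-1}\Delta_X\) holds, with \((1+\delta)^{-4}\le a_t\le(1+\delta)^4\). This is the situation of Lemma~\ref{lem:conformal-domain-comparison}, now with a bound uniform in \(t\). On \(T\), both operators are second-order uniformly elliptic operators with smooth coefficients. Their coefficients depend smoothly on \(t\) through \(H_{kj,t}\), and they are bounded in \(C^\infty(\overline{T_1})\) uniformly for \(|t|<\delta\), where \(T_1\Supset T\) is a slightly larger compact annular region in \(X'\).

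First, we compare the measures. Lemma~\ref{lem:metric-equivalence} gives uniformly equivalent \(L^2\) norms. Second, let \(u\in\mc D_{\max}(\Delta_X)\). Interior elliptic regularity on \(T_1\) gives
\[
\|u\|_{H^2(T)}\le C\bigl(\|\Delta_Xu\|_{L^2(T_1)}+\|u\|_{L^2(T_1)}\bigr).
\]
Since \(\widetilde\Delta_t-\Delta_X\) is a differential operator of order two supported in \(K_k\), with coefficients bounded uniformly in \(t\), we get
\[
\|\widetilde\Delta_tu\|_{L^2(T)}\le C\|u\|_{\Delta_X}.
\]
On \(N_S\), the bound \(|\widetilde\Delta_tu|=a_t^{-1}|\Delta_Xu|\) holds pointwise. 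This gives it in the distributional sense as well, because the test-function identity is local and \(a_t\) is smooth. On the rest of \(X'\), the two operators coincide. To pass from these local statements to the claim that the distributional \(\widetilde\Delta_tu\) lies in \(L^2\), we use a partition of unity subordinate to the three regions and pair against test functions. This gives \(u\in\mc D_{\max}(\widetilde\Delta_t)\) and \(\|u\|_{\widetilde\Delta_t}\le C\|u\|_{\Delta_X}\).

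The reverse inequality is proved the same way with the roles exchanged. The only extra ingredient is elliptic regularity for \(\widetilde\Delta_t\) on \(T_1\) with constants uniform in \(t\). This holds because the uniform ellipticity constants and the \(C^k\)-bounds on the coefficients are uniform, which follows from \(\|dH_{kj,t}-I\|\le1/2\) and the smooth dependence on \(t\) recorded in Appendix~\ref{app:local-interpolation}.

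Together, these steps give equality of the maximal domains and uniform equivalence of the graph norms. The minimal domains are the graph-norm closures of the common space \(C_c^\infty(X')\). Equivalent norms have the same closure, so the minimal domains coincide, again with uniformly equivalent norms.

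The main obstacle is the uniformity of the interior elliptic estimate on the transition annulus. It requires that the coefficients of \(\widetilde\Delta_t\) in the coordinates \(z_{kj}\) stay in a bounded subset of \(C^1\), and remain uniformly elliptic, as \(t\) ranges over \(|t|<\delta\). We would verify this from the explicit formula for \(\widetilde g_t\) and the derivative bounds on \(H_{kj,t}\). The divergence form \(\widetilde\Delta_t=-|\widetilde g_t|^{-1/2}\partial_a(|\widetilde g_t|^{1/2}\widetilde g_t^{ab}\partial_b)\) makes clear that only first derivatives of the metric coefficients enter. Away from \(T\), no derivative estimates are needed at all, since there the comparison is either an identity or the bounded conformal factor \(a_t\).
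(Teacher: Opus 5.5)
Your proposal is correct and follows essentially the same route as the paper: exact agreement outside \(K_k\), the uniformly bounded conformal factor \(a_t\) near the cone points, a \(t\)-uniform interior elliptic estimate on a fixed transition annulus (justified by joint smoothness on a closed parameter disc), and equality of the minimal domains as graph-norm closures of \(C_c^\infty(X')\). The only organizational difference is that the paper first cuts \(u\) into pieces \(\chi_\alpha u\) and controls the commutators \([L,\chi_\alpha]u\), whereas you split the region of integration directly; this is slightly leaner, since \(u\in H^2_{\mathrm{loc}}(X')\) already makes \(\widetilde\Delta_tu\) an honest \(L^2_{\mathrm{loc}}\) function.
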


\begin{proof}
It suffices to prove equality of the maximal domains together
with the uniform graph-norm comparison
\begin{equation}\label{eq:common-graph-norm-comparison}
C^{-1}\|u\|_{\Delta_X}
\leq\|u\|_{\widetilde\Delta_t}
\leq C\|u\|_{\Delta_X}.
\end{equation}
Indeed, the minimal domains are the closures of the same space
\(C_c^\infty(X')\) in the respective graph norms. Once
\eqref{eq:common-graph-norm-comparison} holds on the common
maximal domain, these closures coincide.

Set \(U:=X\setminus K_k\), where \(K_k\) is the fixed compact
set defined in the proof of Lemma~\ref{lem:metric-equivalence}.
Then \(U\) is open and \(\widetilde g_t=g\) on \(U\).
Each \(U_{kj}\) contains the compact set
\(\{P\in U_{kj}:|z_{kj}(P)|^{n_{kj}}\leq3\epsilon_k/4\}\).
Thus
\[
\{U,U_{k1},\ldots,U_{ks_k}\}
\]
is a fixed finite open cover of \(X\).
In the estimates below, we use \(dA_g\) for the \(L^2\)-norms
unless otherwise specified. By Lemma~\ref{lem:metric-equivalence},
these norms are uniformly equivalent to those defined by
\(dA_{\widetilde g_t}\). Each graph norm still uses the measure
associated with its operator. For an \(L^2\)-function on \(X'\),
\(\supp v\) denotes the closure in \(X\) of its essential support
in \(X'\); in particular, it may contain conic points. We first
compare functions supported in a single member of the cover, and
then combine the estimates.

First, suppose that \(v\) belongs to either maximal domain and
\(\supp v\Subset U\). Both metrics and differential expressions
agree on \(U\), while \(v\) and both distributional Laplacians
vanish outside \(\supp v\). It follows that \(v\) belongs to
the other maximal domain as well, with \(\supp v\Subset U\), and
\(\|v\|_{\widetilde\Delta_t}=\|v\|_{\Delta_X}\).

Next, fix \(1\leq j\leq s_k\) and suppose that \(v\) belongs
to either maximal domain with support contained in a fixed compact
subset of \(U_{kj}\). Write \(\zeta=z_{kj}\) and \(n=n_{kj}\),
and let
\(V_j:=\{P\in U_{kj}:|\zeta(P)|^n<\epsilon_k/2\}\).
On \(V_j\cap X'\), the conformal relation gives
\(\widetilde\Delta_t=a_t^{-1}\Delta_X\).
For the present \(v\), with \(\supp v\Subset U_{kj}\), the
uniform bounds for \(a_t\) and \(a_t^{-1}\) give
\[
\begin{aligned}
\|\widetilde\Delta_tv\|_{L^2(V_j\cap X',dA_g)}
&\leq C\|\Delta_Xv\|_{L^2(V_j\cap X',dA_g)},\\
\|\Delta_Xv\|_{L^2(V_j\cap X',dA_g)}
&\leq C\|\widetilde\Delta_tv\|_{L^2(V_j\cap X',dA_g)}.
\end{aligned}
\]

To treat the remaining part of the support, choose
\(3\epsilon_k/4<b_j<b_j'<\epsilon_k\) so that the fixed compact
set containing \(\supp v\) lies in \(\{|\zeta|^n<b_j\}\), and set
\(A_j:=\{\epsilon_k/3<|\zeta|^n<b_j\}\) and
\(A_j':=\{\epsilon_k/4<|\zeta|^n<b_j'\}\).
Both annuli are contained in \(U_{kj}\cap X'\), with
\(A_j\Subset A_j'\), and
\(\supp v\subset V_j\cup A_j\Subset U_{kj}\).
Throughout the following local estimates, every function \(v\)
is subject to this support condition.
Use the fixed coordinate \(\zeta\) to define the Sobolev norms
on these annuli. On \(\overline{A_j'}\), the density of \(dA_g\)
relative to Euclidean area is bounded above and below by positive
constants. Shrink \(\delta\), if necessary, so that the closed parameter disc
\(\{|t|\leq\delta\}\) lies within the range where the metric family
is defined and smooth on \(X'\). Joint smoothness on the product
of this disc with \(\overline{A_j'}\) gives uniform bounds for
the coefficients of both operators and their derivatives of the
orders needed below. Together with metric equivalence, it also
gives uniform ellipticity on this fixed annulus. The constants
in the interior estimates can therefore be chosen independently
of \(t\).
Thus, let \(L=\Delta_X\) or \(L=\widetilde\Delta_t\), and take
\(v\in\mc D_{\max}(L)\) with
\(\supp v\subset V_j\cup A_j\), as in the local comparison above.
Interior elliptic regularity gives
\[
\|v\|_{H^2(A_j)}
\leq C\left(
\|Lv\|_{L^2(A_j',dA_g)}
+\|v\|_{L^2(A_j',dA_g)}\right).
\]
To relate this estimate to the graph norm, first suppose that
\(v\in\mc D_{\max}(\Delta_X)\) with
\(\supp v\subset V_j\cup A_j\). Since \(A_j'\subset X'\),
\[
\begin{aligned}
\|\Delta_Xv\|_{L^2(A_j',dA_g)}
+\|v\|_{L^2(A_j',dA_g)}
&\leq \|\Delta_Xv\|_{L^2(X',dA_g)}
+\|v\|_{L^2(X',dA_g)}\\
&\leq \sqrt{2}\left(
\|\Delta_Xv\|_{L^2(X',dA_g)}^2
+\|v\|_{L^2(X',dA_g)}^2\right)^{1/2}
\\&=\sqrt{2}\|v\|_{\Delta_X}.
\end{aligned}
\]
If instead \(v\in\mc D_{\max}(\widetilde\Delta_t)\) with
\(\supp v\subset V_j\cup A_j\),
Lemma~\ref{lem:metric-equivalence} gives
\[
\begin{aligned}
\|\widetilde\Delta_tv\|_{L^2(A_j',dA_g)}
+\|v\|_{L^2(A_j',dA_g)}
&\leq C\left(
\|\widetilde\Delta_tv\|_{L^2(X',dA_{\widetilde g_t})}
+\|v\|_{L^2(X',dA_{\widetilde g_t})}\right)\\
&\leq C\sqrt{2}\|v\|_{\widetilde\Delta_t}.
\end{aligned}
\]
Consequently, the \(H^2(A_j)\)-norm is controlled by whichever
graph norm is already known to be finite. No comparison of the
two graph norms is used in this step.

On \(A_j\), each differential expression is a sum of derivatives
of order at most two with uniformly bounded coefficients.
The area density is also bounded there. Applying these coefficient
bounds and then the corresponding interior estimate gives,
for \(\supp v\subset V_j\cup A_j\),
\[
\begin{aligned}
\|\widetilde\Delta_tv\|_{L^2(A_j,dA_g)}
&\leq C\|v\|_{H^2(A_j)}
\leq C'\|v\|_{\Delta_X},
&&\substack{v\in\mc D_{\max}(\Delta_X),\\
\supp v\subset V_j\cup A_j},\\
\|\Delta_Xv\|_{L^2(A_j,dA_g)}
&\leq C\|v\|_{H^2(A_j)}
\leq C'\|v\|_{\widetilde\Delta_t},
&&\substack{v\in\mc D_{\max}(\widetilde\Delta_t),\\
\supp v\subset V_j\cup A_j}.
\end{aligned}
\]
All constants are independent of \(t\).

Now let \(v\in\mc D_{\max}(\Delta_X)\) with
\(\supp v\subset V_j\cup A_j\), as above. The conformal
comparison on \(V_j\cap X'\) and the first annular estimate
show that \(\widetilde\Delta_tv\) has finite squared integral
over each of \(V_j\cap X'\) and \(A_j\), including the part
approaching the conic point. These sets cover
\(\supp v\cap X'\), and \(\widetilde\Delta_tv\) vanishes as a
distribution outside \(\supp v\). Thus it belongs to
\(L^2(X',dA_g)\), with
\[
\begin{aligned}
\|\widetilde\Delta_tv\|_{L^2(X',dA_g)}^2
&\leq
\|\widetilde\Delta_tv\|_{L^2(V_j\cap X',dA_g)}^2
+\|\widetilde\Delta_tv\|_{L^2(A_j,dA_g)}^2\\
&\leq C\|\Delta_Xv\|_{L^2(V_j\cap X',dA_g)}^2
+C'\|v\|_{\Delta_X}^2
\leq C''\|v\|_{\Delta_X}^2.
\end{aligned}
\]
Metric equivalence therefore yields
\(v\in\mc D_{\max}(\widetilde\Delta_t)\), still with
\(\supp v\subset V_j\cup A_j\), and
\[
\|v\|_{\widetilde\Delta_t}^2
\leq C\left(
\|v\|_{L^2(X',dA_g)}^2
+\|\widetilde\Delta_tv\|_{L^2(X',dA_g)}^2\right)
\leq C'\|v\|_{\Delta_X}^2.
\]
Conversely, starting with
\(v\in\mc D_{\max}(\widetilde\Delta_t)\) with the same support
condition \(\supp v\subset V_j\cup A_j\), use the reverse
conformal comparison and the second annular estimate. The same
support argument gives \(\Delta_Xv\in L^2(X',dA_g)\), with
\[
\begin{aligned}
\|\Delta_Xv\|_{L^2(X',dA_g)}^2
&\leq C\|\widetilde\Delta_tv\|_{L^2(V_j\cap X',dA_g)}^2
+C'\|v\|_{\widetilde\Delta_t}^2\\
&\leq C''\|v\|_{\widetilde\Delta_t}^2.
\end{aligned}
\]
Here the last step again uses metric equivalence to compare
\(dA_g\) with \(dA_{\widetilde g_t}\).
Together with the same comparison for the \(L^2\)-norm of \(v\),
this proves \(v\in\mc D_{\max}(\Delta_X)\), with
\(\supp v\subset V_j\cup A_j\), and
\(\|v\|_{\Delta_X}\leq C\|v\|_{\widetilde\Delta_t}\).
Thus the two domain inclusions and graph-norm bounds hold for
functions supported in \(U_{kj}\). The constants may depend on the
fixed compact set containing the support, but are independent of
\(v\) and \(t\).

Finally, we pass to an arbitrary function on \(X\) by localization.
Choose a smooth partition of unity
\(\{\chi_0,\chi_1,\ldots,\chi_{s_k}\}\) subordinate to this
cover, with \(\supp\chi_0\Subset U\) and
\(\supp\chi_j\Subset U_{kj}\) for \(1\leq j\leq s_k\).
Near \(P_{kj}\), only \(\chi_j\) can be nonzero, so
\(\chi_j=1\) there. Near every conic point outside the fiber
over \(Q_k\), we have \(\chi_0=1\). Thus every cutoff is
constant near every conic point. All cutoffs are independent
of \(t\).

We record the localization estimate for these cutoffs. Let
\(L\) denote either \(\Delta_X\) or \(\widetilde\Delta_t\),
and take \(u\in\mc D_{\max}(L)\). The product rule gives
\(L(\chi_\alpha u)=\chi_\alpha Lu+[L,\chi_\alpha]u\).
The first-order commutator is supported in a fixed compact
subset of \(X'\). Choose fixed open sets
\(E_\alpha\Subset F_\alpha\Subset X'\) containing that
support, and use fixed smooth charts to define their Sobolev
norms. Interior elliptic regularity gives
\[
\begin{aligned}
\|[L,\chi_\alpha]u\|_{L^2(X',dA_g)}
&\leq C_\alpha\|u\|_{H^1(E_\alpha)}\\
&\leq C_\alpha\left(
\|Lu\|_{L^2(F_\alpha,dA_g)}
+\|u\|_{L^2(F_\alpha,dA_g)}\right)
\leq C_\alpha'\|u\|_L.
\end{aligned}
\]
These estimates apply because \(u,Lu\in L^2(X',dA_g)\)
imply \(u\in H^2_{\mathrm{loc}}(X')\). Their constants are
uniform in \(t\), since the coefficients and ellipticity
constants are uniformly controlled on \(\overline{F_\alpha}\).
Together with
\(\|\chi_\alpha u\|_{L^2(X',dA_g)}
\leq\|\chi_\alpha\|_\infty\|u\|_{L^2(X',dA_g)}\),
the product rule proves
\(\chi_\alpha u\in\mc D_{\max}(L)\) and
\(\|\chi_\alpha u\|_L\leq C_\alpha''\|u\|_L\).
Summing over the finite family gives
\(\sum_{\alpha=0}^{s_k}\|\chi_\alpha u\|_L
\leq C\|u\|_L\).

To apply the local comparisons, note that
\[
\supp(\chi_0u)\subset\supp\chi_0\Subset U,
\qquad
\supp(\chi_ju)\subset\supp\chi_j\Subset U_{kj}
\quad(1\leq j\leq s_k).
\]
For each \(j\), choose the radii \(b_j,b_j'\) in the local
argument using the fixed compact set \(\supp\chi_j\).
The resulting annuli are independent of \(u\) and \(t\), and
\(\supp(\chi_ju)\subset V_j\cup A_j\). Hence each local
comparison has a constant independent of \(u\) and \(t\).
Since there are only finitely many pieces, these constants
can be replaced by a single constant.

First let \(u\in\mc D_{\max}(\Delta_X)\). The cutoff
estimates give \(\chi_\alpha u\in\mc D_{\max}(\Delta_X)\)
for every \(\alpha\). For \(\alpha=0\), substitute
\(v=\chi_0u\) into the comparison for \(\supp v\Subset U\).
It follows that \(\chi_0u\in\mc D_{\max}(\widetilde\Delta_t)\)
and \(\|\chi_0u\|_{\widetilde\Delta_t}
=\|\chi_0u\|_{\Delta_X}\).
For \(1\leq j\leq s_k\), substitute \(v=\chi_ju\) into
the forward comparison on \(U_{kj}\). Its support hypothesis
holds because \(\supp(\chi_ju)\subset V_j\cup A_j\).
Hence \(\chi_ju\in\mc D_{\max}(\widetilde\Delta_t)\), and
\[
\|\chi_ju\|_{\widetilde\Delta_t}
\leq C_j\|\chi_ju\|_{\Delta_X}
\leq C_j'\|u\|_{\Delta_X}.
\]
Here the first inequality is the local graph-norm comparison,
and the second is the cutoff estimate above for \(L=\Delta_X\).
All pieces therefore belong to
\(\mc D_{\max}(\widetilde\Delta_t)\). Since
\(u=\sum_{\alpha=0}^{s_k}\chi_\alpha u\), their sum belongs
to this domain. Taking \(C=\max\{1,C_1,\ldots,C_{s_k}\}\),
the triangle inequality and the cutoff estimates yield
\[
\begin{aligned}
\|u\|_{\widetilde\Delta_t}
&\leq\|\chi_0u\|_{\widetilde\Delta_t}
+\sum_{j=1}^{s_k}\|\chi_ju\|_{\widetilde\Delta_t}\\
&\leq\|\chi_0u\|_{\Delta_X}
+\sum_{j=1}^{s_k}C_j\|\chi_ju\|_{\Delta_X}\\
&\leq C\sum_{\alpha=0}^{s_k}\|\chi_\alpha u\|_{\Delta_X}
\leq C'\|u\|_{\Delta_X}.
\end{aligned}
\]

Conversely, let \(u\in\mc D_{\max}(\widetilde\Delta_t)\).
The cutoff estimates with \(L=\widetilde\Delta_t\) first give
\(\chi_\alpha u\in\mc D_{\max}(\widetilde\Delta_t)\).
For \(\chi_0u\), the comparison on \(U\) again gives membership
in \(\mc D_{\max}(\Delta_X)\) and equality of the two graph
norms, since \(\supp(\chi_0u)\Subset U\).
For each \(j\geq1\), apply the reverse comparison on \(U_{kj}\)
to \(v=\chi_ju\), using
\(\supp(\chi_ju)\subset V_j\cup A_j\). This gives
\(\chi_ju\in\mc D_{\max}(\Delta_X)\) and
\[
\|\chi_ju\|_{\Delta_X}
\leq D_j\|\chi_ju\|_{\widetilde\Delta_t}
\leq D_j'\|u\|_{\widetilde\Delta_t},
\]
where the second inequality uses the cutoff estimate for
\(L=\widetilde\Delta_t\). These constants are independent
of \(u\) and \(t\). Summing the pieces proves
\(u\in\mc D_{\max}(\Delta_X)\), and, with
\(D=\max\{1,D_1,\ldots,D_{s_k}\}\),
\[
\begin{aligned}
\|u\|_{\Delta_X}
&\leq\|\chi_0u\|_{\Delta_X}
+\sum_{j=1}^{s_k}\|\chi_ju\|_{\Delta_X}\\
&\leq\|\chi_0u\|_{\widetilde\Delta_t}
+\sum_{j=1}^{s_k}D_j\|\chi_ju\|_{\widetilde\Delta_t}\\
&\leq D\sum_{\alpha=0}^{s_k}\|\chi_\alpha u\|_{\widetilde\Delta_t}
\leq D'\|u\|_{\widetilde\Delta_t}.
\end{aligned}
\]
This proves equality of the maximal domains and
\eqref{eq:common-graph-norm-comparison}, and hence also equality
of the minimal domains as explained above.
\end{proof}

With the common graph domain established, we now study the
parameter dependence of the operators on this fixed space.

\begin{thm}\label{thm:laplacian-family-differentiability}
Equip \(\mc D_{\max}(\Delta_X)\) with the graph norm of
\(\Delta_X\). Then \(t\mapsto\widetilde\Delta_t\) is a
\(C^1\) family in
\(\mathcal L(\mc D_{\max}(\Delta_X),L^2(X',dA_g))\),
with respect to the real coordinates \((\Re t,\Im t)\).
After shrinking \(\delta\), there is a constant \(C>0\) such that
\[
\|(\widetilde\Delta_t-\Delta_X)u\|_{L^2(X',dA_g)}
\leq C|t|\,\|u\|_{\Delta_X},
\qquad u\in\mc D_{\max}(\Delta_X),\quad |t|<\delta.
\]
\end{thm}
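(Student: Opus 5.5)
The plan is to localize with the same fixed cover and partition of unity \(\{\chi_0,\chi_1,\ldots,\chi_{s_k}\}\) used in the proof of Theorem~\ref{thm:common-laplacian-domains}, and to handle the two kinds of pieces separately. By that theorem, \(\widetilde\Delta_t\) acts on the common space \(\mc D_{\max}(\Delta_X)\) for every \(|t|<\delta\). On \(U=X\setminus K_k\) the metrics agree, so \(\widetilde\Delta_t\chi_0u=\Delta_X\chi_0u\) and this piece does not depend on \(t\). Near each conic point \(P_{kj}\), on \(V_j\), we have \(\widetilde\Delta_t=a_t^{-1}\Delta_X\), where
\[
a_t^{-1}(\zeta)=\frac{(1+|z_k+t+\zeta^n|^2)^2}{(1+|z_k+\zeta^n|^2)^2}.
\]
This is a smooth positive function, jointly smooth in \((\Re t,\Im t,\zeta)\) on a neighborhood of \(\overline{V_j}\), and it equals \(1\) at \(t=0\).

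The key steps run as follows.

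\emph{Step 1.} I write
\[
(\widetilde\Delta_t-\Delta_X)u=\sum_\alpha(\widetilde\Delta_t-\Delta_X)(\chi_\alpha u).
\]
Here \(\chi_\alpha u\in\mc D_{\max}(\Delta_X)\) with \(\|\chi_\alpha u\|_{\Delta_X}\le C\|u\|_{\Delta_X}\), by the cutoff estimate already proved.

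\emph{Step 2.} For a piece \(v=\chi_ju\) supported in \(V_j\cup A_j\), I split the action into two parts.
\begin{itemize}
\item On \(V_j\cap X'\): \((\widetilde\Delta_t-\Delta_X)v=(a_t^{-1}-1)\Delta_Xv\). Multiplication by \(a_t^{-1}\) is a \(C^1\) map of \(t\) into \(L^\infty\), since \(\sup_\zeta|\partial_t^{\alpha}a_t^{-1}|\) is uniformly bounded for \(|\alpha|\le2\). Hence this part is \(C^1\) in \(\mathcal L(\mc D_{\max},L^2)\), with norm at most \(C|t|\,\|\Delta_Xv\|\).
\item On the annulus \(A_j\subset X'\): \(\widetilde\Delta_t\) is a second-order operator \(\sum_{|\gamma|\le2}c_\gamma(t,\zeta)\partial^\gamma\). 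Its coefficients are jointly smooth on \(\{|t|\le\delta\}\times\overline{A_j'}\) and equal those of \(\Delta_X\) at \(t=0\). Therefore \(t\mapsto\widetilde\Delta_t\) is \(C^1\) in \(\mathcal L(H^2(A_j),L^2(A_j))\), and the difference has norm \(O(|t|)\) by the mean value theorem. The interior estimate \(\|v\|_{H^2(A_j)}\le C\|v\|_{\Delta_X}\) from the preceding proof then converts this into \(\mathcal L(\mc D_{\max}(\Delta_X),L^2)\) bounds.
\end{itemize}
The two regions overlap, but I can use a finer partition of unity inside \(U_{kj}\) to avoid double counting. Alternatively, since both descriptions agree on \(V_j\cap A_j\), it suffices to restrict to each region and use the bound \(\|f\|_{L^2}\le\|f\|_{L^2(V_j)}+\|f\|_{L^2(A_j)}\).

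\emph{Step 3.} I sum the finitely many pieces. Since \(u\mapsto\chi_\alpha u\) is a fixed bounded operator on \(\mc D_{\max}(\Delta_X)\), composing with it preserves \(C^1\) dependence. The \(O(|t|)\) bounds combine into \(C|t|\,\|u\|_{\Delta_X}\). The target norm may be taken with respect to \(dA_g\), because the \(\widetilde g_t\)-norms are uniformly equivalent to it by Lemma~\ref{lem:metric-equivalence}.

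I expect the main obstacle to be establishing uniform \(C^1\) control of the coefficients on the transition annulus. The coefficients there involve \(\widehat H_{kj,t}\), its inverse Jacobian, and the conformal factor \(|H_{kj,t}|^{2n-2}/(1+|\cdot|^2)^2\). Smoothness in \((t,\zeta)\) follows from the explicit formula for \(H_{kj,t}\), which is smooth in \((t,\zeta)\) wherever \(|\zeta|^n>\epsilon_k/2\) because the root's argument stays in \(D_1(1/2)\). Invertibility of \(dH_{kj,t}\) with uniform bounds comes from the appendix estimate~\eqref{eq:H-uniform-C1}. I would first write \(\widetilde g_t\) in the \(\zeta\)-coordinates as a smooth family of positive-definite matrices \(G_t(\zeta)\). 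From \(\widetilde\Delta_t=-|G_t|^{-1/2}\partial_a(|G_t|^{1/2}G_t^{ab}\partial_b)\), every coefficient is then a smooth function of \((t,\zeta)\) on the compact set \(\{|t|\le\delta\}\times\overline{A_j'}\). Taylor's theorem in \(t\) gives both the \(C^1\) property and the linear bound.
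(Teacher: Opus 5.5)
Your proposal is correct and follows essentially the same route as the paper. Both use the same fixed partition of unity, the multiplier identity \(\widetilde\Delta_t-\Delta_X=(a_t^{-1}-1)\Delta_X\) on \(V_j\), and, on the transition annulus, second-order coefficients that are \(C^1\) in \(t\) and vanish at \(t=0\), combined with the interior estimate \(\|u_j\|_{H^2(A_j)}\le C\|u_j\|_{\Delta_X}\) before summing the pieces; your derivation of the annular coefficients from the matrix \(G_t(\zeta)\) of \(\widetilde g_t\) just spells out what the paper attributes to the smooth dependence of \(H_{kj,t}\) on \((t,\zeta)\).
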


\begin{proof}
We use the open cover and the fixed partition of unity from the
proof of Theorem~\ref{thm:common-laplacian-domains}. Thus
\[
X=U\cup\bigcup_{j=1}^{s_k}U_{kj},
\qquad
1=\chi_0+\sum_{j=1}^{s_k}\chi_j,
\]
where \(\supp\chi_0\Subset U\),
\(\supp\chi_j\Subset U_{kj}\), and all cutoffs are independent
of \(t\). For \(u\in\mc D_{\max}(\Delta_X)\), set
\(u_j:=\chi_j u\) for \(0\leq j\leq s_k\). The localization estimate proved
there gives
\[
\sum_{j=0}^{s_k}\|u_j\|_{\Delta_X}
\leq C\|u\|_{\Delta_X}.
\]
Since \(\widetilde\Delta_t=\Delta_X\) on \(U\),
\((\widetilde\Delta_t-\Delta_X)u_0=0\). It therefore remains
to study the difference on each \(U_{kj}\).

Fix \(j\), write \(\zeta=z_{kj}\) and \(n=n_{kj}\), and use
the sets \(V_j\), \(A_j\), and \(A_j'\) chosen in the proof of
Theorem~\ref{thm:common-laplacian-domains}, now with
\(\supp u_j\subset V_j\cup A_j\). On \(V_j\cap X'\),
\[
\widetilde\Delta_t-\Delta_X
=(a_t^{-1}-1)\Delta_X.
\]
The explicit formula for \(a_t\) shows that the map
\(t\mapsto a_t^{-1}\) is \(C^1\), with values in
\(C^\infty(\overline V_j)\), and that \(a_0=1\). After shrinking
the parameter disc, the mean-value theorem gives
\[
\|a_t-1\|_{L^\infty(V_j)}\leq C|t|.
\]
Since \(a_t\) is uniformly bounded away from zero, this also gives
\(\|a_t^{-1}-1\|_{L^\infty(V_j)}\leq C'|t|\).
Consequently,
\[
\|(\widetilde\Delta_t-\Delta_X)u_j\|_{L^2(V_j\cap X',dA_g)}
\leq C|t|\,\|\Delta_Xu_j\|_{L^2(V_j\cap X',dA_g)}
\leq C|t|\,\|u_j\|_{\Delta_X}.
\]

It remains to control the fixed transition annulus \(A_j\).
This extra step is necessary because \(H_{kj,t}\) need not be
holomorphic there, so the pulled-back metric is not in general a
scalar multiple of \(g\). In the fixed coordinate \(\zeta\), write
\[
\widetilde\Delta_t-\Delta_X
=\sum_{|\alpha|\leq2}b_{\alpha,j}(t,\zeta)\partial^\alpha
\qquad\text{on }A_j'.
\]
Here \(\alpha\) is a real multi-index in the coordinates
\((\Re\zeta,\Im\zeta)\).
The smooth dependence of \(H_{kj,t}\) on \((t,\zeta)\), proved in
Appendix~\ref{app:local-interpolation}, implies that every
coefficient \(b_{\alpha,j}\) is \(C^1\) in the real variables
\((\Re t,\Im t)\), uniformly on \(\overline{A_j'}\), and
\(b_{\alpha,j}(0,\zeta)=0\). Hence
\[
\max_{|\alpha|\leq2}
\|b_{\alpha,j}(t,\cdot)\|_{L^\infty(A_j')}
\leq C_j|t|.
\]
The annular elliptic estimate used in the proof of
Theorem~\ref{thm:common-laplacian-domains} gives
\(\|u_j\|_{H^2(A_j)}\leq C\|u_j\|_{\Delta_X}\). Therefore
\[
\|(\widetilde\Delta_t-\Delta_X)u_j\|_{L^2(A_j,dA_g)}
\leq C_j|t|\,\|u_j\|_{H^2(A_j)}
\leq C_j'|t|\,\|u_j\|_{\Delta_X}.
\]
The two local regions cover \(\supp u_j\cap X'\). Summing first
over these regions and then over the partition of unity yields
\[
\begin{aligned}
\|(\widetilde\Delta_t-\Delta_X)u\|_{L^2(X',dA_g)}
&\leq\sum_{j=1}^{s_k}
\|(\widetilde\Delta_t-\Delta_X)u_j\|_{L^2(X',dA_g)}\\
&\leq C|t|\sum_{j=1}^{s_k}\|u_j\|_{\Delta_X}
\leq C'|t|\,\|u\|_{\Delta_X}.
\end{aligned}
\]

The same localization proves the asserted differentiability.
Indeed, on \(V_j\) the real difference quotients are those of the
multiplier \(a_t^{-1}\), acting on \(\Delta_Xu_j\). On \(A_j\)
they are the difference quotients of the finitely many coefficients
\(b_{\alpha,j}(t,\cdot)\), acting on derivatives of \(u_j\) of
order at most two. Their uniform convergence on the fixed local
sets, together with the two estimates above, implies convergence
in
\(\mathcal L(\mc D_{\max}(\Delta_X),L^2(X',dA_g))\).
Continuity of the first real partial derivatives follows in exactly
the same way from the continuity of the corresponding coefficient
derivatives. Thus
\(t\mapsto\widetilde\Delta_t\) is a \(C^1\) family in the stated
operator space.
\end{proof}

The common quotient
\(\mc D_{\max}(\Delta_X)/\mc D_{\min}(\Delta_X)\) is identified
with the critical asymptotic space \(V_S\) introduced earlier.
We denote its projection by
\(\pi_S:\mc D_{\max}(\Delta_X)\to V_S\).
Since the local conic coordinates are fixed near \(S\), the
asymptotic projections \(\pi_{S,t}\) introduced above coincide with
this common projection \(\pi_S\).
Let \(\omega_{S,t}\) be the induced Green form, defined by
\[
\omega_{S,t}([u],[v])
:=\langle\widetilde\Delta_tu,v\rangle_{\widetilde g_t}
 -\langle u,\widetilde\Delta_tv\rangle_{\widetilde g_t},
\qquad u,v\in\mc D_{\max}(\Delta_X).
\]
Here \(\langle\cdot,\cdot\rangle_h\) denotes the inner product
in \(L^2(X',dA_h)\), linear in its first argument.

\begin{prop}
For every \(|t|<\delta\), the Green form satisfies
\(\omega_{S,t}=\omega_S\), where \(\omega_S\) is the
Green form associated with \(\Delta_X\).
\end{prop}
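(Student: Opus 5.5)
The plan is to show that the difference \(\omega_{S,t}-\omega_S\) vanishes by localizing the Green form near the conic points, where the pulled-back metric is conformal to \(g\). The conformal comparison (Lemma~\ref{lem:conformal-domain-comparison}) already gives agreement of Green forms for conformally related metrics, so the only issue is that \(\widetilde g_t\) is conformal to \(g\) near \(S\) but not globally. Since \(\omega_{S,t}([u],[v])\) depends only on the classes of \(u,v\) modulo \(\mc D_{\min}(\Delta_X)\), which is common to all \(t\) by Theorem~\ref{thm:common-laplacian-domains}, it suffices to evaluate both forms on the cutoff representatives \(\chi_Pf_{P,j}\), \(\chi_Pf_{P,j}^\#\). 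Here each \(\chi_P\) is supported in the neighborhood of \(P\) where \(\widetilde g_t=a_tg\), and \(a_t=1\) at conic points not over \(Q_k\).

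First I will note that \(\omega_{S,t}\) is well defined on \(V_S\). Indeed, \(\omega_{S,t}\) is the Green form of \(\widetilde\Delta_t\), and its radical is \(\mc D_{\min}(\widetilde\Delta_t)=\mc D_{\min}(\Delta_X)\). By sesquilinearity it is then enough to compute \(\omega_{S,t}(\pi_S u,\pi_S v)\) for \(u=\chi_P e\) and \(v=\chi_{P'}e'\), with \(e,e'\) basis asymptotic terms.

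If \(P\neq P'\), the supports of \(u\) and \(v\) are disjoint, so both forms vanish. If \(P=P'\), both \(u\) and \(v\) are supported in \(V_j\), or in a fixed neighborhood of a conic point away from \(Q_k\). On that set \(\widetilde\Delta_t=a_t^{-1}\Delta_X\) and \(dA_{\widetilde g_t}=a_t\,dA_g\). This gives
\[
\langle\widetilde\Delta_tu,v\rangle_{\widetilde g_t}
=\langle\Delta_Xu,v\rangle_{g},
\]
and similarly for the other term, so \(\omega_{S,t}(\pi_Su,\pi_Sv)=\omega_S(\pi_Su,\pi_Sv)\). This is the same computation as in the proof of Lemma~\ref{lem:conformal-domain-comparison}, now applied locally. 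Because the representatives span \(V_S\), this proves the equality on all of \(V_S\).

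The main point to be careful about is that the conic coordinates used to define \(\pi_{S,t}\) are the fixed \(\zeta=z_{kj}\), so that the common projection \(\pi_S\) is really the asymptotic projection for each \(t\). This holds because \(H_{kj,t}=\id\) on \(V_j\), which makes the transported coordinates equal the fixed ones there. A secondary check is that the transition annulus, where \(\widetilde g_t\) is not conformal to \(g\), never enters: the cutoffs \(\chi_P\) are supported inside \(V_j\), and differences of representatives lie in \(\mc D_{\min}\), where both forms vanish.
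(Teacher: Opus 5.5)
Your proposal is correct and follows essentially the paper's argument. You localize near each conic point with cutoffs supported where \(\widetilde g_t=a_tg\), the remainder lies in the common minimal domain, cross terms vanish by disjoint supports, and \(\widetilde\Delta_t=a_t^{-1}\Delta_X\), \(dA_{\widetilde g_t}=a_t\,dA_g\) with \(a_t\) real cancel the conformal factor. The differences are cosmetic. The paper splits arbitrary \(u\) as \(\sum_P\chi_Pu+(1-\sum_P\chi_P)u\) rather than testing on cutoff representatives of a basis. Your remark about \(\pi_{S,t}=\pi_S\) is not needed for this statement, since both forms live on the same quotient \(\mc D_{\max}(\Delta_X)/\mc D_{\min}(\Delta_X)\).
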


\begin{proof}
Choose pairwise disjoint neighborhoods \(U_P\) of the points
\(P\in S\) on which the conformal relations above hold, and
cutoffs \(\chi_P\in C_c^\infty(U_P)\) equal to \(1\) near \(P\).
For \(u\in\mc D_{\max}(\Delta_X)\), elliptic regularity away
from \(S\) gives
\((1-\sum_{P\in S}\chi_P)u\in\mc D_{\min}(\Delta_X)\).
Thus the Green form is the sum of the local pairings of
\(u_P:=\chi_Pu\) and \(v_P:=\chi_Pv\).

On \(U_P':=U_P\setminus\{P\}\), the function \(a_t\) is real
and positive, and
\(\widetilde\Delta_t=a_t^{-1}\Delta_X\),
\(dA_{\widetilde g_t}=a_t\,dA_g\). Hence
\[
\begin{aligned}
\omega_{S,t}([u_P],[v_P])
&=\int_{U_P'}
\bigl(a_t^{-1}\Delta_Xu_P\,\overline{v_P}
 -u_P\,\overline{a_t^{-1}\Delta_Xv_P}\bigr)
a_t\,dA_g
\\
&=\int_{U_P'}
\bigl(\Delta_Xu_P\,\overline{v_P}
 -u_P\,\overline{\Delta_Xv_P}\bigr)dA_g
\\
&=\omega_S([u_P],[v_P]).
\end{aligned}
\]
Summing over \(P\in S\) proves the assertion.
\end{proof}

We now use the fixed asymptotic projection and the invariance of the
Green form to identify the Friedrichs condition throughout the
family. The fixed conic coordinates determine the same regular and
singular critical asymptotic modes for every pulled-back Laplacian.
In particular, the Friedrichs
Lagrangian subspace \(\mc F:=\mc F_0=\mc F_X\subset(V_S,\omega_S)\)
is the same for every pulled-back Laplacian. In the notation
introduced above, \(\widetilde{\mc F}_t=\mc F\).
Indeed, the positive conformal change near each conic point
preserves the local Dirichlet energy. The Friedrichs condition
therefore requires the vanishing of the same singular critical
coefficients. Since the maximal domain and the asymptotic
projection are also fixed, the Friedrichs operator domains
coincide as subsets of the common maximal domain.
In this and the following sections, choose \(\mc F^{\#}\) to be
the span of the singular modes in the fixed distinguished coordinates
\(z_{kj}\), and write \(M^X:=M_{\mc F,\mc F^{\#}}^X\).
Thus all subsequent Weyl matrices use the bases
\((\gamma_{kj},\gamma_{kj}^{\#})\).
The common Friedrichs domain is
\[
\mc D_{\mc F}
:=\{u\in\mc D_{\max}(\Delta_X):\pi_S(u)\in\mc F\}.
\]
We write \(\widetilde\Delta_{t,\mc F}
:=\widetilde\Delta_t|_{\mc D_{\mc F}}\) for the corresponding
Friedrichs extension, and
\(\Delta_{X,\mc F}:=\Delta_X|_{\mc D_{\mc F}}\) for the
base operator.

Thus \(\mc D_{\widetilde{\mc F}_t}=\mc D_{\mc F}\). From now on,
we abbreviate
\(\widetilde R_{\mc F,t}(\lambda)
:=\widetilde R_{\widetilde{\mc F}_t,t}(\lambda)
=(\lambda-\widetilde\Delta_{t,\mc F})^{-1}\).
Under this abbreviation, the pullback and trace identities
established at the beginning of the section remain in force. We can
therefore study the trace on the fixed surface \(X\), with the common
Friedrichs domain \(\mc D_{\mc F}\). By metric equivalence, the
varying \(L^2\)-spaces may also be identified, as vector spaces, with
\(L^2(X',dA_g)\).

\begin{lem}\label{lem:resolvent-differentiation}
Let \(\mathscr H:=L^2(X',dA_g)\) and
\(\mathscr D:=\mc D_{\mc F}\), and equip \(\mathscr D\) with the graph norm of
\(\Delta_{X,\mc F}\). After identifying the underlying vector
spaces as above, the map
\(t\longmapsto\widetilde\Delta_{t,\mc F}\)
is \(C^1\) from a disc \(D_0(\delta)\subset\mb C\) into
the space \(\mathcal L(\mathscr D,\mathscr H)\) of bounded
linear operators. Here \(C^1\) refers to the real coordinates
\((\Re t,\Im t)\). If
\(\lambda\in\rho(\Delta_{X,\mc F})\), then, after decreasing
\(\delta\) if necessary,
\(\lambda\in\rho(\widetilde\Delta_{t,\mc F})\) for \(|t|<\delta\), and
\[
\partial_t\widetilde R_{\mc F,t}(\lambda)
=
\widetilde R_{\mc F,t}(\lambda)
(\partial_t\widetilde\Delta_{t,\mc F})
\widetilde R_{\mc F,t}(\lambda).
\]
For every integer \(m>1\), the family
\(\widetilde R_{\mc F,t}(\lambda)^m\) is \(C^1\) in trace norm and
\[
\partial_t
\Tr\widetilde R_{\mc F,t}(\lambda)^m
=
m\Tr\left(
\widetilde R_{\mc F,t}(\lambda)^m
(\partial_t\widetilde\Delta_{t,\mc F})
\widetilde R_{\mc F,t}(\lambda)
\right).
\]
\end{lem}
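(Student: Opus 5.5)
The plan is to treat \(\widetilde\Delta_{t,\mc F}\) as a family of bounded operators between the two fixed Banach spaces \(\mathscr D\) and \(\mathscr H\). In that setting the resolvent is obtained by inverting a \(C^1\) family, and the trace-class statements follow from the ideal property of Schatten classes.

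\emph{Step 1: differentiability of the operators.} Since \(\mc D_{\mc F}\) is a closed subspace of \(\mc D_{\max}(\Delta_X)\) carrying the restricted graph norm, the first assertion follows from Theorem~\ref{thm:laplacian-family-differentiability} by restriction to \(\mathscr D\). The varying inner products \(\langle\cdot,\cdot\rangle_{\widetilde g_t}\) play no role here. The map \(\widetilde\Delta_{t,\mc F}:\mathscr D\to\mathscr H\) is a linear map of fixed vector spaces. By Lemma~\ref{lem:metric-equivalence}, the norms are uniformly equivalent, so boundedness and \(C^1\)-dependence may be measured in the fixed norm of \(L^2(X',dA_g)\).

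\emph{Step 2: the resolvent.} Set \(A_t:=\lambda-\widetilde\Delta_{t,\mc F}\in\mathcal L(\mathscr D,\mathscr H)\). For \(\lambda\in\rho(\Delta_{X,\mc F})\), the operator \(A_0\) is a bijection \(\mathscr D\to\mathscr H\), and its inverse \(R_{\mc F}(\lambda)\) is bounded \(\mathscr H\to\mathscr D\) because \(\mathscr D\) carries the graph norm. By Step 1, \(\|A_t-A_0\|_{\mathcal L(\mathscr D,\mathscr H)}\le C|t|\). Write \(A_t=A_0(I+A_0^{-1}(A_t-A_0))\) and apply a Neumann series on \(\mathscr D\). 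After shrinking \(\delta\), this gives invertibility of \(A_t\), hence \(\lambda\in\rho(\widetilde\Delta_{t,\mc F})\).

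Inversion is smooth on the open set of invertible operators in \(\mathcal L(\mathscr D,\mathscr H)\). Hence \(t\mapsto\widetilde R_{\mc F,t}(\lambda)=A_t^{-1}\) is \(C^1\) in \(\mathcal L(\mathscr H,\mathscr D)\), and a fortiori in \(\mathcal L(\mathscr H)\). Differentiating \(A_t^{-1}\) and using \(\partial_tA_t=-\partial_t\widetilde\Delta_{t,\mc F}\) gives
\[
\partial_tA_t^{-1}=-A_t^{-1}(\partial_tA_t)A_t^{-1}
=\widetilde R_{\mc F,t}(\lambda)(\partial_t\widetilde\Delta_{t,\mc F})\widetilde R_{\mc F,t}(\lambda),
\]
which is the stated resolvent formula.

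\emph{Step 3: trace-norm regularity (the main obstacle).} The difficulty is to obtain Schatten-class control that is uniform and differentiable in \(t\), while the Hilbert structure itself moves. The base resolvent \(R_{\mc F}(\lambda)\) lies in \(S_p(\mathscr H)\) for every \(p>1\); this follows from the two-dimensional Weyl bound recalled from \cite{Cheeger1983,Mooers1999}, which makes the eigenvalues grow linearly. The plan is to factor the perturbed resolvent through it using the resolvent identity
\[
\widetilde R_{\mc F,t}(\lambda)=R_{\mc F}(\lambda)\,G_t,
\qquad
G_t:=I+(\widetilde\Delta_{t,\mc F}-\Delta_{X,\mc F})\widetilde R_{\mc F,t}(\lambda).
\]
By Steps 1 and 2, \(G_t\) is \(C^1\) in \(\mathcal L(\mathscr H)\). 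The ideal property then makes \(\widetilde R_{\mc F,t}(\lambda)\) a \(C^1\) map into \(S_m\). The same argument applies to \(\partial_t\widetilde R_{\mc F,t}(\lambda)=R_{\mc F}(\lambda)\,\partial_tG_t\).

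Writing \(R^m\) as a product of \(m\) factors, each \(C^1\) in \(S_m\), Hölder's inequality shows that \(t\mapsto\widetilde R_{\mc F,t}(\lambda)^m\) is \(C^1\) in \(S_1\) for \(m>1\). Schatten membership and the trace are unchanged under passage to a uniformly equivalent inner product. The reason is that the identity map between the two Hilbert structures is a bounded invertible similarity, and the trace of a trace-class operator is similarity invariant. Consequently \(\Tr\) computed in \(L^2(dA_{\widetilde g_t})\) agrees with \(\Tr\) computed in \(\mathscr H\).

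\emph{Step 4: the trace formula.} Using the Leibniz rule in \(S_1\),
\[
\partial_t\Tr R^m=\sum_{a=0}^{m-1}\Tr\bigl(R^{a}(\partial_tR)R^{m-1-a}\bigr),
\]
where \(R=\widetilde R_{\mc F,t}(\lambda)\). Each summand involves a product of trace-class and bounded factors, so cyclicity of the trace applies and every summand equals \(\Tr(R^{m-1}\partial_tR)\). Inserting \(\partial_tR=R(\partial_t\widetilde\Delta_{t,\mc F})R\) from Step 2 gives \(m\Tr\bigl(R^m(\partial_t\widetilde\Delta_{t,\mc F})R\bigr)\), which is the stated formula.
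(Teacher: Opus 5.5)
Your proposal is correct and follows essentially the paper's own argument: restrict Theorem~\ref{thm:laplacian-family-differentiability} to \(\mathscr D\), invert \(\lambda-\widetilde\Delta_{t,\mc F}\) by a Neumann series, and factor \(\widetilde R_{\mc F,t}(\lambda)\) as \(R_{\mc F}(\lambda)\) times a \(C^1\) family of bounded operators (your \(G_t\) is exactly the paper's \((I-B_t)^{-1}\)). You then conclude, as the paper does, with the Schatten ideal property, cyclicity, and similarity invariance of the trace under the change of \(L^2\) structure. The differences are cosmetic: you work in \(S_m\) with H\"older's inequality where the paper uses Hilbert--Schmidt factors, and in your cyclicity step the pairs being commuted are two Hilbert--Schmidt operators rather than a trace-class operator and a bounded one, which is still enough.
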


\begin{proof}
By Theorem~\ref{thm:laplacian-family-differentiability},
\(A_t:=\widetilde\Delta_{t,\mc F}\) is \(C^1\) in
\(\mathcal L(\mathscr D,\mathscr H)\). Fix
\(\lambda\in\rho(A_0)\) and set \(R_0:=(\lambda-A_0)^{-1}\).
Then \(R_0\in\mathcal L(\mathscr H,\mathscr D)\), and hence
\(B_t:=(A_t-A_0)R_0\in\mathcal L(\mathscr H)\).
Since \(A_t\to A_0\) in
\(\mathcal L(\mathscr D,\mathscr H)\), we have
\(\|B_t\|_{\mathcal L(\mathscr H)}\to0\). After decreasing
\(\delta\), we may therefore assume that
\(\|B_t\|_{\mathcal L(\mathscr H)}<1\) for every
\(|t|<\delta\). Hence \(I-B_t\) is invertible, with
\[
(I-B_t)^{-1}=\sum_{j=0}^{\infty}B_t^j,
\]
where the series converges in operator norm. Moreover,
\[
\lambda-A_t
=\bigl(I-(A_t-A_0)R_0\bigr)(\lambda-A_0)
=(I-B_t)(\lambda-A_0).
\]
It follows that \(\lambda\in\rho(A_t)\) and
\(\widetilde R_{\mc F,t}(\lambda)=R_0(I-B_t)^{-1}\).
Since inversion is \(C^1\) on the open set of invertible bounded
operators, this proves that
\(t\mapsto\widetilde R_{\mc F,t}(\lambda)\) is \(C^1\) in
\(\mathcal L(\mathscr H,\mathscr D)\). Differentiating
\((\lambda-A_t)\widetilde R_{\mc F,t}(\lambda)=I\) with respect
to either real coordinate \(q=\Re t,\Im t\) gives
\[
\partial_q\widetilde R_{\mc F,t}(\lambda)
=
\widetilde R_{\mc F,t}(\lambda)
(\partial_qA_t)
\widetilde R_{\mc F,t}(\lambda).
\]
Taking the corresponding complex linear combination proves the
same identity for the Wirtinger derivative
\(\partial_t=\tfrac12(\partial_{\Re t}
-\sqrt{-1}\,\partial_{\Im t})\).

The same two-dimensional Weyl bound makes \(R_0\)
Hilbert--Schmidt. Since \(t\mapsto(I-B_t)^{-1}\) is \(C^1\) in
operator norm, the ideal estimate
\[
\|R_0C\|_{\mathcal S_2(\mathscr H)}
\leq
\|R_0\|_{\mathcal S_2(\mathscr H)}
\|C\|_{\mathcal L(\mathscr H)}
\]
shows from the factorization that
\(t\mapsto\widetilde R_{\mc F,t}(\lambda)\) is \(C^1\) in
Hilbert--Schmidt norm. The product estimate for two
Hilbert--Schmidt operators
\cite[Chapter~X, \S1.3, equation~(1.14), p.~521]{Kato1995}
then gives \(C^1\)-dependence of every integer power \(m>1\) in
trace norm. For either real coordinate \(q\), the product rule gives
\[
\partial_q\widetilde R_{\mc F,t}(\lambda)^m
=
\sum_{\ell=0}^{m-1}
\widetilde R_{\mc F,t}(\lambda)^{\ell+1}
(\partial_qA_t)
\widetilde R_{\mc F,t}(\lambda)^{m-\ell}.
\]
Here
\((\partial_qA_t)\widetilde R_{\mc F,t}(\lambda)\) is bounded on
\(\mathscr H\), while
\(\widetilde R_{\mc F,t}(\lambda)^m\) is trace class. Cyclicity of
the trace
\cite[Chapter~X, \S1.4, equation~(1.26), p.~524]{Kato1995}
therefore shows that all \(m\) terms have the same trace and yields
\[
\partial_q\Tr\widetilde R_{\mc F,t}(\lambda)^m
=m\Tr\left(
\widetilde R_{\mc F,t}(\lambda)^m
(\partial_qA_t)
\widetilde R_{\mc F,t}(\lambda)
\right).
\]
Taking the same complex linear combination gives the asserted
formula for \(\partial_t\). See also \cite{Simon2005} for these
trace-ideal properties.

Finally, put \(J_t:=dA_{\widetilde g_t}/dA_g\).
By metric equivalence, \(J_t^{\pm1/2}\) are bounded multipliers, and
\[
U_t:=M_{J_t^{1/2}}:
L^2(X',dA_{\widetilde g_t})
\longrightarrow\mathscr H
\]
is unitary. Under the vector-space identification used above,
\(U_t\) is also a bounded invertible multiplier on \(\mathscr H\).
Thus, if \(T\) is trace class on the varying \(L^2\)-space, unitary
invariance and then invariance under bounded similarity give
\[
\Tr_{L^2(dA_{\widetilde g_t})}T
=\Tr_{\mathscr H}(U_tTU_t^{-1})
=\Tr_{\mathscr H}T.
\]
In the last expression, \(T\) denotes the same algebraic operator
after the underlying vector space of
\(L^2(X',dA_{\widetilde g_t})\) has been identified with
\(\mathscr H\).
Thus the trace computed on the fixed space agrees with the trace in
the varying \(L^2\)-space.
\end{proof}

Set
\[
\Delta_{\mc F}:=\Delta_{X,\mc F},
\qquad
\dot{\widetilde\Delta}_{\mc F}
:=
\left.
\frac{\partial\widetilde\Delta_{t,\mc F}}{\partial t}
\right|_{t=0},
\qquad
R_{\mc F}(\lambda)
:=
\widetilde R_{\mc F,0}(\lambda).
\]
By Lemma~\ref{lem:resolvent-differentiation},
\[
\left.
\frac{\partial}{\partial t}
\Tr\widetilde R_{\mc F,t}(\lambda)^m
\right|_{t=0}
=
m\Tr\left(
\dot{\widetilde\Delta}_{\mc F}
R_{\mc F}(\lambda)^{m+1}
\right).
\]

To evaluate this trace, let
\(\{(\lambda_i^{\mc F},\phi_i^{\mc F})\}_{i\geq0}\)
be a complete orthonormal system of eigenpairs of
\(\Delta_{\mc F}\) in
\(L^2(X',dA_{g})\). Then
\[
\Tr\left(
\dot{\widetilde\Delta}_{\mc F}
R_{\mc F}(\lambda)^{m+1}
\right)
=
\sum_i
\left\langle
\dot{\widetilde\Delta}_{\mc F}
R_{\mc F}(\lambda)^{m+1}\phi_i^{\mc F},
\phi_i^{\mc F}
\right\rangle_{L^2(X',dA_{g})}.
\]
Since
\(
R_{\mc F}(\lambda)\phi_i^{\mc F}
=(\lambda-\lambda_i^{\mc F})^{-1}\phi_i^{\mc F},
\)
we obtain
\[
\Tr\left(
\dot{\widetilde\Delta}_{\mc F}
R_{\mc F}(\lambda)^{m+1}
\right)
=
\sum_i
\frac{
\left\langle
\dot{\widetilde\Delta}_{\mc F}\phi_i^{\mc F},
\phi_i^{\mc F}
\right\rangle_{L^2(X',dA_{g})}
}{
(\lambda-\lambda_i^{\mc F})^{m+1}
}.
\]
Having established differentiability of the resolvent trace, we now
turn to the local geometric computation of the infinitesimal operator
variation. We first introduce the vector field that generates the
deformation on the fixed surface and then compute
\(
\left\langle
\dot{\widetilde\Delta}_{\mc F}u,v
\right\rangle_{L^2(X',dA_{g})}
\)
for \(u,v\in\mc D_{\mc F}\). Since the variation is supported in fixed neighborhoods of the
ramification points, it suffices to compute the corresponding local
contributions. More precisely, on \(X_k^o\),
we have \(\Theta_t=\id\) and \(\widetilde g_t=g\).

Let
\(\widetilde\varphi_t:=\varphi_t\circ\Theta_t:
X\to\mb P^1\).
Thus \(\widetilde\varphi_t\) is the family of smooth maps obtained from
\(\varphi_t:X_t\to\mb P^1\) after identifying the varying surfaces \(X_t\)
with the fixed smooth surface \(X\) through the trivialization
\(\Theta_t\).

On the punctured surface \(X'=X\setminus S\), the map \(\varphi\)
is unramified. Hence
\(d\varphi:T_PX'\to T_{\varphi(P)}\mb P^1\) is an isomorphism for
every \(P\in X'\).
We extend \(d\varphi\) complex linearly when applying it to complex
vector fields. We define the
infinitesimal deformation vector field \(\mc X\) on \(X'\) by
\[
d\varphi(\mc X)
=
\left.
\frac{\partial}{\partial t}
\widetilde\varphi_t
\right|_{t=0}
=
\left.
\frac{\partial}{\partial t}
(\varphi_t\circ\Theta_t)
\right|_{t=0}.
\]
For each fixed \(P\in X'\), the map \(t\mapsto\widetilde\varphi_t(P)\)
is holomorphic, so \(\mc X\) is of type \((1,0)\).
Thus \(\mc X\) is the infinitesimal lift to the fixed surface
\(X'\) of the deformation of the meromorphic maps
\(\varphi_t\), regarded as a section of the complexified tangent bundle
\(TX'\otimes_{\mb R}\mb C\).

We now compute the local expression of \(\mc X\). On
\(U_{kj}'\), write \(\zeta=z_{kj}\). Since
\(w_k\circ\varphi_t=t+z_{kj,t}^{n_{kj}}\) and
\(z_{kj,t}\circ\Theta_t=H_{kj,t}\circ z_{kj}\), we have
\[
w_k\circ\widetilde\varphi_t
=
t+H_{kj,t}(\zeta)^{n_{kj}}
=
\zeta^{n_{kj}}
+
\bigl(1-\beta(|\zeta|^{n_{kj}})\bigr)t.
\]
Hence
\(\left. \frac{\partial}{\partial t} (w_k\circ\widetilde\varphi_t) \right|_{t=0} = 1-\beta(|\zeta|^{n_{kj}})\).
Since \(w_k\circ\varphi=\zeta^{n_{kj}}\), we have
\(d(w_k\circ\varphi)
=n_{kj}\zeta^{n_{kj}-1}d\zeta\).
Writing the local expression of \(\mc X\) on
\(U_{kj}'\) as
\(\mc X=b_{kj}(\zeta)\partial/\partial\zeta\), the definition of
\(\mc X\) gives
\(n_{kj}\zeta^{n_{kj}-1}b_{kj}(\zeta)
=1-\beta(|\zeta|^{n_{kj}})\). Therefore, on
\(U_{kj}'\),
\[
\mc X
=
\frac{1-\beta(|\zeta|^{n_{kj}})}
{n_{kj}\zeta^{n_{kj}-1}}
\frac{\partial}{\partial\zeta}.
\]

Recall the local differential expression \(\Delta_t^{(kj)}\)
defined above. Its pullback to the fixed domain \(U_{kj}'\) is
\[
\widetilde\Delta_t
=
\widehat H_{kj,t}^*
\Delta_t^{(kj)}
(\widehat H_{kj,t}^*)^{-1}
\qquad\text{on }U_{kj}'.
\]

Using
\(H_{kj,t}(\zeta)^{n_{kj}}
=\zeta^{n_{kj}}-\beta(|\zeta|^{n_{kj}})t\)
and differentiating the pullback identity at \(t=0\), a direct
computation gives, for every smooth function \(u\) compactly
supported in \(U_{kj}'\),
\[
\left.
\frac{\partial}{\partial t}
\bigl(\widetilde\Delta_{t,c}u\bigr)
\right|_{t=0}
=
\mc X\bigl(\Delta_X^{(kj)}u\bigr)
-
\Delta_X^{(kj)}(\mc Xu).
\]
Hence
\[
\dot{\widetilde\Delta}
=
[\mc X,\Delta_X^{(kj)}]
\qquad\text{on }U_{kj}',
\]
where \([A,B]=AB-BA\).
Here \(\dot{\widetilde\Delta}\) denotes the derivative at \(t=0\)
of the differential expression on \(X'\); on the common Friedrichs
domain, its action is denoted by \(\dot{\widetilde\Delta}_{\mc F}\).

\begin{lem}\label{lem:Xphi-max}
Let \(\phi\in\mc D_{\mc F}\) satisfy
\(\Delta_{\mc F}\phi=\lambda\phi\). Then
\(\mc X\phi\in\mc D_{\max}(\Delta_X)\). In particular,
\(\pi_S(\mc X\phi)\) and the Green form
\(\mk q_X(\mc X\phi,\phi)\) are well defined.
\end{lem}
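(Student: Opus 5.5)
The plan is to localize. The vector field \(\mc X\) is supported where \(\beta(|\zeta|^{n_{kj}})<1\), so only the neighborhoods \(U_{kj}\) of the points over \(Q_k\) matter. Write \(\phi=\chi\phi+(1-\chi)\phi\) with a fixed cutoff \(\chi\) equal to \(1\) on \(\bigcup_j\{|z_{kj}|^{n_{kj}}\le 3\epsilon_k/4\}\) and supported in \(\bigcup_j U_{kj}\). Away from \(S\) the eigenfunction \(\phi\) is smooth by interior elliptic regularity, and \(\mc X\) has smooth coefficients there. Hence \(\mc X\phi\) is smooth on \(X'\), compactly supported in \(\bigcup_jU_{kj}\), and square integrable away from the cone points. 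It therefore suffices to check, near each \(P_{kj}\) with \(n=n_{kj}\ge 2\), that \(\mc X\phi\in L^2\) and \(\Delta_X(\mc X\phi)\in L^2\) in a small punctured disc. If \(n=1\) the point is smooth and \(\mc X=\partial_\zeta\) near it, so nothing needs to be checked.

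Near \(P_{kj}\) we have \(\beta=0\), so \(\mc X=n^{-1}\zeta^{1-n}\partial_\zeta\), a holomorphic vector field with a pole. We use the conic asymptotic expansion of Friedrichs-domain eigenfunctions, which we take from the local model analysis underlying \(V_P\) (\cite[Section~2]{LiouWeyl2026}, transferred to the present metric by Lemma~\ref{lem:conformal-domain-comparison}). In the coordinate \(\zeta\), write \(\phi=\sum_{|j|\le n-1}c_jf_{P,j}+\phi_{\rm rem}\). The remainder has the next-order behavior, namely terms \(r^{\mu}e^{i m\theta}\) with \(\mu\ge n\) plus smooth corrections in \(r^n\), together with the corresponding derivative bounds. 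Applying \(\zeta^{1-n}\partial_\zeta\) termwise gives the following.

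\begin{itemize}
\item The constant mode is killed.
\item Antiholomorphic modes \(\bar\zeta^{j}\) are also killed.
\item Holomorphic modes \(\zeta^j\) go to multiples of \(\zeta^{j-n}\), that is, \(r^{-(n-j)}e^{-i(n-j)\theta}\) with \(1\le n-j\le n-1\). These are exactly the singular critical modes \(f^\#_{P,j-n}\), which lie in \(\mc D_{\max}\) after cutoff.
\item The remainder gives \(O(r^{\mu-n})\) with \(\mu-n\ge0\), which is bounded, hence \(L^2\) for the measure \(|\zeta|^{2n-2}|d\zeta|^2\).
\end{itemize}

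This shows \(\mc X\phi\in L^2\) locally, and it agrees with the off-diagonal structure \(z_kE_{j-n,j}\) seen in Section~\ref{sec:spherical-model}.

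For \(\Delta_X(\mc X\phi)\) we avoid differentiating the expansion twice and use the commutator identity \(\dot{\widetilde\Delta}=[\mc X,\Delta_X^{(kj)}]\) computed above, read distributionally on \(U_{kj}'\). This gives
\[
\Delta_X(\mc X\phi)=\mc X(\Delta_X\phi)-[\mc X,\Delta_X]\phi
=\lambda\,\mc X\phi-[\mc X,\Delta_X]\phi.
\]
Near \(P_{kj}\), \(\Delta_X=-\rho^{-1}n^{-2}|\zeta|^{2-2n}\partial_\zeta\partial_{\bar\zeta}\), where \(\rho\) is smooth and positive. Since \(\mc X\) is holomorphic, it commutes with \(\partial_\zeta\partial_{\bar\zeta}\) up to a term involving \(\partial_\zeta\) of its coefficient. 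A direct computation gives
\[
[\mc X,\Delta_X]=\bigl(\mc X\log(\rho|\zeta|^{2n-2})\bigr)\Delta_X+(\text{terms from }\partial_{\bar\zeta}\partial_\zeta\text{ of the coefficient}).
\]
Because \(\partial_{\bar\zeta}(\zeta^{1-n})=0\) on the punctured disc, the second group consists only of a first-order term \(\partial_\zeta(\zeta^{1-n})\partial_{\bar\zeta}\) times the conformal factor. We then bound each term with the same expansion.

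The step we expect to be the main obstacle is the multiplier term. The function \(\mc X\log|\zeta|^{2n-2}=\tfrac{n-1}{n}\zeta^{-n}\) is singular, so \(\zeta^{-n}\Delta_X\phi=\lambda\zeta^{-n}\phi\) is not obviously \(L^2\) when \(c_0\ne0\). We expect it to cancel against the first-order term \(\partial_\zeta(\zeta^{1-n})\partial_{\bar\zeta}\). The first thing to try is to test against \(\phi\)'s mode expansion in the exact flat cone model \(|\zeta|^{2n-2}|d\zeta|^2\). There \(\mc X\) is, up to constants, the generator of the holomorphic reparametrization \(\zeta\mapsto(\zeta^n+s)^{1/n}\) of the cone, and one can check mode by mode that the commutator maps the Friedrichs expansion into critical modes plus an \(L^2\) remainder. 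The smooth factor \(\rho\) then contributes only bounded multiples of \(\mc X\phi\) and \(\Delta_X\phi\).

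Once both \(L^2\) bounds hold near every \(P_{kj}\), the cutoff argument above gives \(\mc X\phi\in\mc D_{\max}(\Delta_X)\). Continuity of \(\pi_S\) and the definition of \(\mk q_X\) then give the final assertion.
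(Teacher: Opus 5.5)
There is a gap, and it sits in the step you yourself flag as the main obstacle. You never show that \([\mc X,\Delta_X]\phi\in L^2\) near \(P_{kj}\). You only say you expect a cancellation and propose a mode-by-mode check.

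Moreover, you misidentify the term that is supposed to do the cancelling. Write \(\mc X=a\,\partial_\zeta\) with \(a=\zeta^{1-n}/n\) holomorphic, and \(\Delta_X=-q\,\partial_\zeta\partial_{\bar\zeta}\) with \(q=(1+|w|^2)^2/(n^2|\zeta|^{2n-2})\) and \(w=z_k+\zeta^n\). Then the term coming from differentiating the coefficient of \(\mc X\) is \(-(\partial_\zeta a)\,\partial_\zeta\partial_{\bar\zeta}\). This is a multiple of the full second-order operator, not a first-order \(\partial_{\bar\zeta}\) term. A genuinely first-order term could not absorb \(\lambda\zeta^{-n}\phi\), which fails to be in \(L^2(dA_g)\) whenever \(\phi(P_{kj})\neq0\).

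With the correct structure the commutator is exact:
\[
[\mc X,\Delta_X]=\bigl(\mc X\log q-\partial_\zeta a\bigr)\Delta_X
=\frac{2\,\overline{w}}{1+|w|^2}\,\Delta_X .
\]
The reason is that \(\mc X\log|\zeta|^{2-2n}\) and \(\partial_\zeta a\) both equal \(\frac{1-n}{n}\zeta^{-n}\), so they cancel exactly. Equivalently, in the target coordinate \(w\) one has \(\mc X=\partial_w\) and \(\Delta_X=-(1+|w|^2)^2\partial_w\partial_{\overline{w}}\). Hence \([\mc X,\Delta_X]\phi=2\lambda\overline{w}(1+|w|^2)^{-1}\phi\), a bounded multiple of \(\phi\), and no expansion is needed.

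The paper avoids even this computation. By Theorem~\ref{thm:laplacian-family-differentiability}, \(t\mapsto\widetilde\Delta_t\) is \(C^1\) in \(\mathcal L(\mc D_{\max}(\Delta_X),L^2(X',dA_g))\). So \(\dot{\widetilde\Delta}_{\mc F}\phi\in L^2\) automatically, and the commutator identity gives \(\Delta_X(\mc X\phi)=\lambda\mc X\phi-\dot{\widetilde\Delta}_{\mc F}\phi\). On \(V_j\) this derivative is \(\partial_t(a_t^{-1})|_{t=0}\,\Delta_X\), which is exactly the bounded multiplier above. The fixed-domain analysis therefore already controls the commutator; this is the missing idea in your proposal.

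For \(\mc X\phi\in L^2\), your route through a cited conic expansion with derivative bounds is valid for eigenfunctions, since \(\phi\) is smooth in \(\zeta\), but it needs justification. The paper instead uses the finite Dirichlet energy of form-domain elements. The conic weight cancels exactly:
\[
|\mc X\phi|^2\,dA_g=n^{-2}\rho\,|1-\beta|^2|\partial_\zeta\phi|^2\,\tfrac{\sqrt{-1}}{2}\,d\zeta\wedge d\overline{\zeta},
\]
so no asymptotic expansion is needed for this step either.
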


\begin{proof}
The vector field \(\mc X\) is smooth on \(X'\) and vanishes
outside the fixed deformation region \(K_k\). It also vanishes near conic
points not lying over \(Q_k\). Thus it is enough to work near a
ramification point \(P_{kj}\).
Write \(n=n_{kj}\) and \(\zeta=z_{kj}\). On \(U_{kj}'\),
we have
\(\mc X=(1-\beta(|\zeta|^n))(n\zeta^{n-1})^{-1}
\partial_\zeta\) and
\(g=\rho(\zeta,\overline\zeta)|\zeta|^{2n-2}|d\zeta|^2\),
where \(\rho\) is smooth and strictly positive. Since the operator
domain of the nonnegative Friedrichs extension is contained in its
quadratic-form domain, the local Euclidean Dirichlet integral of
\(\phi\) is finite. Consequently,
\[
\begin{aligned}
\int_{U_{kj}'}|\mc X\phi|^2\,dA_g
&=\frac1{n^2}\int_{U_{kj}'}
\rho\,|1-\beta(|\zeta|^n)|^2|\partial_\zeta\phi|^2
\,\frac{\sqrt{-1}}2\,d\zeta\wedge d\overline\zeta\\
&\leq C\int_{U_{kj}'}|\partial_\zeta\phi|^2
\,\frac{\sqrt{-1}}2\,d\zeta\wedge d\overline\zeta
<\infty.
\end{aligned}
\]
Outside neighborhoods of these finitely many ramification points,
\(\mc X\phi\) is smooth and has compact support in \(X'\).
Thus \(\mc X\phi\in L^2(X',dA_g)\).

Theorem~\ref{thm:laplacian-family-differentiability} gives
\(\dot{\widetilde\Delta}_{\mc F}\phi\in L^2(X',dA_g)\)
by differentiability on the common graph domain.
The commutator identity on \(X'\) therefore yields, in the
distributional sense,
\[
\dot{\widetilde\Delta}_{\mc F}\phi
=
[\mc X,\Delta_X]\phi
=
\lambda\mc X\phi-\Delta_X(\mc X\phi).
\]
Hence \(\Delta_X(\mc X\phi)=\lambda\mc X\phi-
\dot{\widetilde\Delta}_{\mc F}\phi\in L^2\).
By the definition of the maximal domain,
\(\mc X\phi\in\mc D_{\max}(\Delta_X)\).
\end{proof}

The commutator identity and Lemma~\ref{lem:Xphi-max} give
\(\langle\dot{\widetilde\Delta}_{\mc F}\phi_i^{\mc F},
\phi_i^{\mc F}\rangle
=-\mk q_X(\mc X\phi_i^{\mc F},\phi_i^{\mc F})\).
Only the ramification points over \(Q_k\) contribute, since
\(\mc X\) vanishes near the other conic points. Hence
\[
\left\langle\dot{\widetilde\Delta}_{\mc F}\phi_i^{\mc F},
\phi_i^{\mc F}\right\rangle
=-\sum_{\substack{1\leq j\leq s_k\\ n_{kj}>1}}
\omega_{P_{kj}}\bigl(
\pi_{P_{kj}}(\mc X\phi_i^{\mc F}),
\pi_{P_{kj}}\phi_i^{\mc F}\bigr).
\]

For each ramification point \(P_{kj}\), let
\[
\gamma_{kj}
=
(f_{kj,\ell})_{\ell\in J_{n_{kj}}},
\qquad
\gamma_{kj}^{\#}
=
(f_{kj,\ell}^{\#})_{\ell\in J_{n_{kj}}}
\]
be the normalized asymptotic bases induced by the coordinate
\(\zeta=z_{kj}\), as in Sections~\ref{sec:conic-analysis}
and~\ref{sec:local-model-comparison}.
Write \(\mc F_{X,P_{kj}}:=\operatorname{span}_{\mb C}\gamma_{kj}\)
for the local regular critical subspace. Write
\[
\pi_{P_{kj}}\phi_i^{\mc F}
=
\sum_{\ell\in J_{n_{kj}}}
c_{i,kj,\ell}\,f_{kj,\ell}.
\]
Since \(\phi_i^{\mc F}\) satisfies the Friedrichs boundary
condition, only the regular basis elements occur in this expansion.

Set \(n=n_{kj}\). Near \(P_{kj}\), the cutoff vanishes, so
\(\mc X=(n\zeta^{n-1})^{-1}\partial_\zeta\).
For \(1\leq p<n\), the normalizations
\(f_{kj,p}=\zeta^p/\sqrt{4\pi p}\) and
\(f_{kj,p-n}^{\#}=\zeta^{p-n}/\sqrt{4\pi(n-p)}\) give
\[
\mc Xf_{kj,p}
=\frac{\sqrt{p(n-p)}}{n}f_{kj,p-n}^{\#},
\qquad
\mc Xf_{kj,-p}=\mc Xf_{kj,0}=0.
\]
To justify extracting critical data after differentiation, we use
the local regularity of Friedrichs eigenfunctions.
The finite Dirichlet energy of \(\phi_i^{\mc F}\), together with
its \(L^2\)-integrability on annuli away from the origin, gives local
Euclidean \(H^1\)-regularity across \(\zeta=0\).
Choose logarithmic cutoffs that vanish near the origin, converge to
\(1\) away from it, and have Euclidean Dirichlet energies tending
to zero. Testing the eigenfunction equation with these cutoffs
times a smooth test function, the cutoff-gradient terms tend to
zero by the Cauchy--Schwarz inequality. The equation therefore
extends weakly across \(\zeta=0\) as
\(\partial_\zeta\partial_{\overline\zeta}\phi_i^{\mc F}
=-\tfrac14\lambda_i^{\mc F}\rho(\zeta,\overline\zeta)
|\zeta|^{2n-2}\phi_i^{\mc F}\), where
\(g=\rho(\zeta,\overline\zeta)|\zeta|^{2n-2}|d\zeta|^2\)
and \(\rho\) is smooth and positive.
Elliptic regularity for this equation implies that
\(\phi_i^{\mc F}\) is smooth at \(\zeta=0\).
Its Taylor polynomial of degree \(2n-1\) has no mixed terms,
because the right-hand side vanishes to order at least \(2n-2\).
Thus we may write
\[
\phi_i^{\mc F}(\zeta)
=a_0+\sum_{r=1}^{2n-1}
\bigl(a_r\zeta^r+b_r\overline\zeta^{\,r}\bigr)
+O(|\zeta|^{2n}),
\]
with the corresponding differentiated remainder of order
\(O(|\zeta|^{2n-1})\).
Applying \(\mc X=(n\zeta^{n-1})^{-1}\partial_\zeta\), the terms
with \(1\leq r<n\) give the singular critical modes; those with
\(n\leq r<2n\) give regular critical modes, and the remainder is
\(O(|\zeta|^n)\). Since
\(\mc X\phi_i^{\mc F}\in\mc D_{\max}(\Delta_X)\) by
Lemma~\ref{lem:Xphi-max}, this identifies its critical asymptotic
data. In particular,
\[
\pi_{P_{kj}}(\mc X\phi_i^{\mc F})
\equiv
\frac{1}{n_{kj}}
\sum_{p=1}^{n_{kj}-1}
\sqrt{p(n_{kj}-p)}\,
c_{i,kj,p}\,
f_{kj,p-n_{kj}}^{\#}
\pmod{\mc F_{X,P_{kj}}}.
\]
The omitted regular terms pair trivially with the Friedrichs
data. By the normalized boundary pairing, we obtain
\[
\omega_{P_{kj}}\bigl(
\pi_{P_{kj}}(\mc X\phi_i^{\mc F}),
\pi_{P_{kj}}\phi_i^{\mc F}\bigr)
=-\frac{1}{n_{kj}}\sum_{p=1}^{n_{kj}-1}
\sqrt{p(n_{kj}-p)}\,
c_{i,kj,p}\,\overline{c_{i,kj,p-n_{kj}}}.
\]
Consequently,
\[
\left\langle
\dot{\widetilde\Delta}_{\mc F}\phi_i^{\mc F},
\phi_i^{\mc F}
\right\rangle
=
\sum_{\substack{1\le j\le s_k\\ n_{kj}>1}}
\frac{1}{n_{kj}}
\sum_{p=1}^{n_{kj}-1}
\sqrt{p(n_{kj}-p)}\,
c_{i,kj,p}\,
\overline{c_{i,kj,p-n_{kj}}}.
\]
Write \(M_{kj}^X(\lambda):=M_{P_{kj},P_{kj}}^X(\lambda)\) for the
\((P_{kj},P_{kj})\)-diagonal block of \(M^X(\lambda)\).
Its matrix representation with respect to the domain basis
\(\gamma_{kj}^{\#}\) and the codomain basis \(\gamma_{kj}\) is the
local \(S\)-matrix
\[
S_{kj}^{X,(\gamma_{kj},\gamma_{kj}^{\#})}(\lambda)
=
\left(
S_{kj,\ell p}^{X,(\gamma_{kj},\gamma_{kj}^{\#})}(\lambda)
\right)_{\ell,p\in J_{n_{kj}}}.
\]
Equivalently, for each \(p\in J_{n_{kj}}\),
\[
M_{kj}^X(\lambda)f_{kj,p}^{\#}
=
\sum_{\ell\in J_{n_{kj}}}
S_{kj,\ell p}^{X,(\gamma_{kj},\gamma_{kj}^{\#})}(\lambda)
f_{kj,\ell}.
\]
We derive the spectral representation in the boundary normalization
used here. Let \(u_{kj,p}(\lambda)\) be the Weyl solution whose
singular boundary datum is \(f_{kj,p}^{\#}\), with all other
singular coefficients zero. The normalized Green pairing gives
\(\mk q_X(u_{kj,p}(\lambda),\phi_i^{\mc F})
=-\overline{c_{i,kj,p}}\). Since
\(\Delta_Xu_{kj,p}(\lambda)=\lambda u_{kj,p}(\lambda)\), it follows
that
\(\langle u_{kj,p}(\lambda),\phi_i^{\mc F}\rangle
=\overline{c_{i,kj,p}}/(\lambda_i^{\mc F}-\lambda)\).
Moreover, \(\partial_\lambda u_{kj,p}(\lambda)\) has zero singular
boundary datum and satisfies
\((\Delta_X-\lambda)\partial_\lambda u_{kj,p}(\lambda)
=u_{kj,p}(\lambda)\). Pairing with
\(u_{kj,\ell}(\overline\lambda)\) therefore gives
\(\partial_\lambda
S_{kj,\ell p}^{X,(\gamma_{kj},\gamma_{kj}^{\#})}(\lambda)
=\langle u_{kj,p}(\lambda),u_{kj,\ell}(\overline\lambda)\rangle\).
Expanding both Weyl solutions in the orthonormal eigenbasis and
differentiating yields, for each integer \(m\geq1\),
\[
\frac{d^m}{d\lambda^m}
S_{kj,\ell p}^{X,(\gamma_{kj},\gamma_{kj}^{\#})}(\lambda)
=m!\sum_i
\frac{c_{i,kj,\ell}\,\overline{c_{i,kj,p}}}
{(\lambda_i^{\mc F}-\lambda)^{m+1}}.
\]
The interchange of differentiation and summation is standard, but
we briefly indicate the required domination. Fix \(\lambda_*<0\).
Parseval's identity applied to \(u_{kj,a}(\lambda_*)\) shows that
the sequence
\(\bigl(c_{i,kj,a}/(\lambda_i^{\mc F}-\lambda_*)\bigr)_i\)
belongs to \(\ell^2\) for every boundary index \(a\). On each compact
subset \(K\) of the resolvent set, the quotients
\((\lambda_i^{\mc F}-\lambda_*)/
(\lambda_i^{\mc F}-\lambda)\) are uniformly bounded for
\(\lambda\in K\) and all \(i\). Hence Cauchy--Schwarz gives locally
uniform absolute convergence of the series for
\(\partial_\lambda S_{kj,\ell p}\). Since
\(\operatorname{dist}(K,\Spec\Delta_{X,\mc F})>0\), every additional
power of \((\lambda_i^{\mc F}-\lambda)^{-1}\) is uniformly bounded
on \(K\). The same argument therefore gives normal convergence of
all the differentiated series, and termwise holomorphic
differentiation yields the displayed identity for every \(m\geq1\).
Reindex the boundary pairing by replacing \(p\) with
\(n_{kj}-p\). Combining this identity with
Lemma~\ref{lem:resolvent-differentiation} and
\((\lambda-\lambda_i^{\mc F})^{-(m+1)}
=(-1)^{m+1}(\lambda_i^{\mc F}-\lambda)^{-(m+1)}\)
yields the trace variation formula
\begin{equation}\label{eq:trace-resolvent-variation}
\left.\frac{\partial}{\partial z_k}
\Tr R_{\mc F}(\lambda)^m\right|_{[\varphi]}
=\frac{(-1)^{m+1}}{(m-1)!}
\sum_{\substack{1\leq j\leq s_k\\ n_{kj}>1}}
\frac{1}{n_{kj}}\sum_{p=1}^{n_{kj}-1}
\sqrt{p(n_{kj}-p)}\,
\frac{d^m}{d\lambda^m}
S_{kj,n_{kj}-p,-p}^{X,(\gamma_{kj},\gamma_{kj}^{\#})}(\lambda),
\end{equation}
valid for every integer \(m>1\). Here the Hurwitz derivative
acts on the local trace function and is evaluated at
\([\varphi]\); it agrees with the \(t\)-derivative of the
pulled-back trace at zero by unitary invariance. Since the base
point was arbitrary, the formula holds throughout the coordinate
ball wherever \(\lambda\) lies in the resolvent set.

The same operator-theoretic conclusions hold for the simultaneous
deformation \(\mf t\in\mb C^N\). Indeed, the trivializations act in
disjoint deformation neighborhoods, and the coefficient estimates in
Theorem~\ref{thm:laplacian-family-differentiability} apply to each
first real Hurwitz coordinate derivative. Thus
\(\mf t\mapsto\widetilde\Delta_{\mf t,\mc F}\) is \(C^1\) from the
coordinate ball to \(\mathcal L(\mathscr D,\mathscr H)\). The
resolvent factorization then gives \(C^1\)-dependence in both
\(\mathcal L(\mathscr H,\mathscr D)\) and Hilbert--Schmidt norm,
locally uniformly on compact subsets of a common resolvent set. The
corresponding resolvent powers are \(C^1\) in trace norm.

\section{The \(S\)-Matrix at Zero Energy}\label{sec:zero-energy}
Since \(\Delta_{X,\mc F}\) is nonnegative, we have
\((-\infty,0)\subset\rho(\Delta_{X,\mc F})\). In what follows, the
spectral parameter \(\lambda\) is restricted to the negative real axis.
The identification of the zero-energy coefficients with the
Schiffer bidifferential follows the computation of Hillairet,
Kalvin, and Kokotov \cite{HillairetKalvinKokotov2018}, which also
establishes the holomorphic continuation of the relevant nonzero-mode
blocks to a neighborhood of zero. We give a self-contained
zero-energy limiting argument in the boundary-data normalization and
notation used here as \(\lambda\to0^-\).

Fix a ramification point \(P_{kj}\) lying over \(Q_k\), with
\(n_{kj}>1\), and, for simplicity,
write \(P_k:=P_{kj}\), \(n_k:=n_{kj}\), \(\gamma_k:=\gamma_{kj}\), and
\(\gamma_k^{\#}:=\gamma_{kj}^{\#}\) throughout this section.
Likewise, write \(f_{k,r}:=f_{kj,r}\) and
\(f_{k,r}^{\#}:=f_{kj,r}^{\#}\) for \(r\in J_{n_k}\).
All local expansions below use the distinguished coordinate
\(z:=z_{kj}\) at \(P_k\).
We first prove zero-energy convergence in the maximal graph norm.
We then recall the construction of harmonic representatives with
prescribed singular data using the Schiffer bidifferential. Their
expansions determine the limiting \(S\)-matrix entries, including
those in the resolvent variation.

Let \(P_0:L^2(X',dA_g)\to\ker\Delta_{X,\mc F}\) be the orthogonal
projection onto the zero eigenspace. It is known that for every
\(F\in L^2(X',dA_g)\),
\[
P_0F
=
\frac{1}{\operatorname{Area}(X,g)}
\int_X F\,dA_g.
\]

\begin{lem}
Let \(F\in\mc D_{\max}(\Delta_X)\) satisfy
\(\Delta_XF=0\) on \(X'\), and set
\(F(\lambda):=Y_{X,\mc F}(\lambda)F\). Then
\[
F(\lambda)\longrightarrow (I-P_0)F
\qquad\text{as }\lambda\to0^-
\]
in \(\mc D_{\max}(\Delta_X)\) with respect to the graph norm.
\end{lem}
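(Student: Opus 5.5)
The plan is to write the projection explicitly and reduce everything to spectral calculus for the Friedrichs resolvent. By definition \(Y_{X,\mc F}(\lambda)=I+R_{\mc F}(\lambda)(\Delta_X-\lambda)\). Since \(\Delta_XF=0\), this gives
\[
F(\lambda)=F-\lambda R_{\mc F}(\lambda)F .
\]
Decompose \(F=P_0F+(I-P_0)F\) in \(L^2(X',dA_g)\). Constants lie in \(\ker\Delta_{X,\mc F}\), so \(R_{\mc F}(\lambda)P_0F=\lambda^{-1}P_0F\). Hence
\[
F(\lambda)=(I-P_0)F-\lambda R_{\mc F}(\lambda)(I-P_0)F .
\]
The claim therefore reduces to showing that \(E(\lambda):=\lambda R_{\mc F}(\lambda)G\to0\) in the graph norm as \(\lambda\to0^-\), where \(G:=(I-P_0)F\).

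The \(L^2\) part follows from the spectral theorem. The operator \(\Delta_{X,\mc F}\) has discrete spectrum, and its first positive eigenvalue is \(\lambda_1>0\). The vector \(G\) is orthogonal to the zero eigenspace. The multiplier \(\lambda/(\lambda-\mu)\) on \(\mu\ge\lambda_1\) is bounded by \(|\lambda|/\lambda_1\). This gives \(\|E(\lambda)\|_{L^2}\le(|\lambda|/\lambda_1)\|G\|_{L^2}\to0\).

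For the Laplacian part, note that \(E(\lambda)\in\mc D_{\mc F}\). On this domain \(\Delta_X=\Delta_{X,\mc F}\), so
\[
\Delta_XE(\lambda)=\lambda\Delta_{\mc F}R_{\mc F}(\lambda)G .
\]
The relevant multiplier is \(\lambda\mu/(\lambda-\mu)\). On \(\mu\ge\lambda_1\) its modulus is at most \(|\lambda|\), because \(\mu/(\mu+|\lambda|)\le1\). Thus \(\|\Delta_XE(\lambda)\|_{L^2}\le|\lambda|\|G\|_{L^2}\to0\). Combining the two bounds gives graph-norm convergence.

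The only step needing care is that \(G\) need not lie in the Friedrichs domain, since \(F\) has singular boundary data in general. The argument avoids this issue: it applies only the bounded operators \(R_{\mc F}(\lambda)\) and \(\Delta_{\mc F}R_{\mc F}(\lambda)\) to \(G\in L^2\), and uses only \(\Delta_XF=0\) in the first display. I also need \(P_0F\in\mc D_{\max}\) and \((I-P_0)F\in\mc D_{\max}\) for the limit to make sense in the stated space. This holds because constants belong to \(\mc D_{\mc F}\subset\mc D_{\max}\). The stated formula for \(P_0\) follows from connectedness: the zero eigenspace of \(\Delta_{X,\mc F}\) consists of constants, since its elements have zero Dirichlet energy.
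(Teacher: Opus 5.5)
Your proof is correct and follows the paper's argument. Both reduce to \(F(\lambda)-(I-P_0)F=-\lambda R_{X,\mc F}(\lambda)(I-P_0)F\) and bound the \(L^2\) part using the spectral gap \(\lambda_1^{\mc F}>0\). The only cosmetic difference is in the Laplacian part: the paper uses \(\Delta_XF(\lambda)=\lambda F(\lambda)\), \(\Delta_X(I-P_0)F=0\), and the uniform boundedness of \(\|F(\lambda)\|_{L^2}\), whereas you use a multiplier bound for \(\lambda\Delta_{X,\mc F}R_{X,\mc F}(\lambda)\), which is equivalent since \(\Delta_{X,\mc F}R_{X,\mc F}(\lambda)=\lambda R_{X,\mc F}(\lambda)-I\).
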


\begin{proof}
Since \(\Delta_XF=0\) on \(X'\), the definition of
\(Y_{X,\mc F}(\lambda)\) gives
\(F(\lambda)=F-\lambda R_{X,\mc F}(\lambda)F\).
Write
\(R_{X,\mc F}^{\perp}(\lambda)
=R_{X,\mc F}(\lambda)-\lambda^{-1}P_0\). Then
\(F(\lambda)-(I-P_0)F
=-\lambda R_{X,\mc F}^{\perp}(\lambda)F\).
Let \(\lambda_1^{\mc F}>0\) denote the first positive eigenvalue of
\(\Delta_{X,\mc F}\). For \(\lambda<0\),
\[
\|R_{X,\mc F}^{\perp}(\lambda)\|_{\mathcal L(L^2(X'))}
\leq \frac{1}{\lambda_1^{\mc F}-\lambda}.
\]
Hence
\[
\|F(\lambda)-(I-P_0)F\|_{L^2(X')}
\leq
\frac{|\lambda|}{\lambda_1^{\mc F}-\lambda}
\|F\|_{L^2(X')}
\longrightarrow0.
\]
Moreover, \(\Delta_XF(\lambda)=\lambda F(\lambda)\) and
\(\Delta_X(I-P_0)F=0\). Since
\[
\|F(\lambda)\|_{L^2(X')}
\leq
\left(1+\frac{|\lambda|}{\lambda_1^{\mc F}-\lambda}\right)
\|F\|_{L^2(X')},
\]
it follows that
\[
\|\Delta_X(F(\lambda)-(I-P_0)F)\|_{L^2(X')}
=
|\lambda|\,\|F(\lambda)\|_{L^2(X')}
\longrightarrow0.
\]
Therefore \(F(\lambda)\to(I-P_0)F\) in the graph norm of
\(\mc D_{\max}(\Delta_X)\).
\end{proof}

\begin{cor}
Let \(F\in\mc D_{\max}(\Delta_X)\) satisfy
\(\Delta_XF=0\) on \(X'\), and let \(F(\lambda)\) be defined as above.
Then
\[
\lim_{\lambda\to0^-}\pi_S(F(\lambda))
=
\pi_S((I-P_0)F).
\]
Equivalently, for every \(P\in S\),
\[
\lim_{\lambda\to0^-}\pi_P(F(\lambda))
=
\pi_P((I-P_0)F).
\]
\end{cor}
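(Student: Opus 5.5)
The corollary follows from the preceding lemma together with the continuity of the boundary trace in the maximal graph norm, recorded in Section~\ref{sec:conic-analysis}. No further spectral analysis is needed.

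First, apply the lemma to the given harmonic \(F\in\mc D_{\max}(\Delta_X)\). It gives \(F(\lambda)\to(I-P_0)F\) in the graph norm \(\|\cdot\|_{\Delta_X}\) as \(\lambda\to0^-\). Both \(F(\lambda)\) and \((I-P_0)F\) lie in the maximal domain: \(F(\lambda)\) is in the image of the projection \(Y_{X,\mc F}(\lambda)\), and \(P_0F\) is a constant, hence in \(\mc D_{\mc F}\subset\mc D_{\max}(\Delta_X)\).

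Second, recall how continuity of \(\pi_S\) was established. Each coefficient of \(\pi_Pu\) in the basis \(\{f_{P,j},f_{P,j}^{\#}\}\) equals \(\mk q_X(u,\chi_Pf_{P,j}^{\#})\) or \(-\mk q_X(u,\chi_Pf_{P,j})\). These functionals are bounded in the graph norm by \(|\mk q_X(u,v)|\le2\|u\|_{\Delta_X}\|v\|_{\Delta_X}\). Hence \(\pi_S:\mc D_{\max}(\Delta_X)\to V_S\) is a bounded linear map into a finite-dimensional space, and
\[
\|\pi_S(F(\lambda))-\pi_S((I-P_0)F)\|\le C\,\|F(\lambda)-(I-P_0)F\|_{\Delta_X}\longrightarrow0 .
\]
This is the first assertion.

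Third, \(\pi_S=\bigoplus_{P\in S}\pi_P\), and convergence in the finite direct sum \(V_S\) is equivalent to convergence of every component. This gives the componentwise statement for each \(P\in S\). Moreover, \(\pi_P(P_0F)\) is a multiple of \(f_{P,0}\), so it lies in \(\mc F_{X,P}\); the subtraction of \(P_0F\) therefore affects only the regular zero-mode coefficient.

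There is no serious obstacle. The only point needing care is that the continuity of \(\pi_S\) is taken with respect to the same graph norm in which the lemma's convergence holds, and this is exactly what was proved earlier.
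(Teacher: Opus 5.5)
Your proof is correct and follows the paper's argument: graph-norm convergence \(F(\lambda)\to(I-P_0)F\) from the preceding lemma, then the graph-norm continuity of \(\pi_S\) from Section~\ref{sec:conic-analysis}, then passing to components. Your additional remarks, that both functions lie in the maximal domain and that \(P_0F\) only shifts the \(f_{P,0}\)-coefficient, are accurate supplements.
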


\begin{proof}
By the preceding lemma,
\(F(\lambda)\to(I-P_0)F\) in
\(\mc D_{\max}(\Delta_X)\) with respect to the graph norm.
The result follows from the continuity of \(\pi_S\) with respect to
the graph norm, established in Section~\ref{sec:conic-analysis}.
The equivalent statement for \(\pi_P\), \(P\in S\), follows by
taking the corresponding component.
\end{proof}

For \(r\in J_{n_k}\setminus\{0\}\), suppose that
\(F_{k,r}\in\mc D_{\max}(\Delta_X)\) satisfies
\(\Delta_XF_{k,r}=0\) on \(X'\) and
\(\widetilde\pi_{\mc F^\#}(F_{k,r})=f_{k,r}^\#\). Write
\[
\pi_{P_k}F_{k,r}
=
f_{k,r}^\#
+
\sum_{\ell\in J_{n_k}}a_{k,\ell,r}f_{k,\ell}.
\]
Set
\(F_{k,r}(\lambda):=Y_{X,\mc F}(\lambda)F_{k,r}\). Then
\[
\pi_{P_k}F_{k,r}(\lambda)
=
f_{k,r}^\#
+
\sum_{\ell\in J_{n_k}}
S_{k,\ell,r}^{X,(\gamma_k,\gamma_k^\#)}(\lambda)f_{k,\ell}.
\]
By the preceding corollary,
\(\pi_{P_k}F_{k,r}(\lambda)\to
\pi_{P_k}(I-P_0)F_{k,r}\) as \(\lambda\to0^-\).
Since \(P_0F_{k,r}\) is constant, it affects only the
\(f_{k,0}\)-coefficient. Hence, for \(\ell\neq0\),
\[
\lim_{\lambda\to0^-}
S_{k,\ell,r}^{X,(\gamma_k,\gamma_k^\#)}(\lambda)
=
a_{k,\ell,r}.
\]
Moreover, since
\(f_{k,0}=1/\sqrt{2\pi}\) and
\(P_0F_{k,r}
=\operatorname{Area}(X,g)^{-1}\int_XF_{k,r}\,dA_g\),
we obtain
\[
\lim_{\lambda\to0^-}
S_{k,0,r}^{X,(\gamma_k,\gamma_k^\#)}(\lambda)
=
a_{k,0,r}
-
\frac{\sqrt{2\pi}}{\operatorname{Area}(X,g)}
\int_XF_{k,r}\,dA_g.
\]

The preceding discussion yields the following zero-energy formula for the local \(S\)-matrix.

\begin{thm}
Let \(r\in J_{n_k}\setminus\{0\}\), and suppose that
\(F_{k,r}\in\mc D_{\max}(\Delta_X)\) satisfies
\(\Delta_XF_{k,r}=0\) on \(X'\) and
\(\widetilde\pi_{\mc F^\#}(F_{k,r})=f_{k,r}^\#\). If
\(\pi_{P_k}F_{k,r}
=
f_{k,r}^\#
+
\sum_{\ell\in J_{n_k}}a_{k,\ell,r}f_{k,\ell}\),
then
\(\lim_{\lambda\to0^-}
S_{k,\ell,r}^{X,(\gamma_k,\gamma_k^\#)}(\lambda)\)
exists for every \(\ell\in J_{n_k}\), and
\[
\lim_{\lambda\to0^-}
S_{k,\ell,r}^{X,(\gamma_k,\gamma_k^\#)}(\lambda)
=
\begin{cases}
a_{k,\ell,r}, & \ell\neq0,\\[4pt]
a_{k,0,r}
-\displaystyle\frac{\sqrt{2\pi}}{\operatorname{Area}(X,g)}
\int_XF_{k,r}\,dA_g, & \ell=0.
\end{cases}
\]
\end{thm}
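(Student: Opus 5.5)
The plan is to assemble the statement from the discussion immediately preceding it, in three steps.

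First, we identify \(F_{k,r}(\lambda):=Y_{X,\mc F}(\lambda)F_{k,r}\) with the Weyl solution \(\Gamma^X_{\mc F,\mc F^\#}(\lambda)f_{k,r}^\#\). By construction \(Y_{X,\mc F}(\lambda)F_{k,r}=F_{k,r}+R_{X,\mc F}(\lambda)(\Delta_X-\lambda)F_{k,r}\) is a formal solution. The correction term lies in \(\mc D_{X,\mc F}\), so its \(\mc F^\#\)-component equals \(\widetilde\pi_{\mc F^\#}F_{k,r}=f_{k,r}^\#\). Uniqueness of the formal solution with prescribed singular component then gives the identification. Consequently, by the definition of the block \(S\)-matrix in the bases \((\gamma_k,\gamma_k^\#)\), the regular coefficients of \(\pi_{P_k}F_{k,r}(\lambda)\) are exactly \(S^{X,(\gamma_k,\gamma_k^\#)}_{k,\ell,r}(\lambda)\).

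Second, we pass to the limit \(\lambda\to0^-\). The lemma above gives convergence \(F_{k,r}(\lambda)\to(I-P_0)F_{k,r}\) in the maximal graph norm, using only \(\Delta_XF_{k,r}=0\). The corollary, through continuity of \(\pi_S\) in that norm, then gives \(\pi_{P_k}F_{k,r}(\lambda)\to\pi_{P_k}F_{k,r}-\pi_{P_k}(P_0F_{k,r})\). Since \(V_{P_k}\) is finite-dimensional, coordinatewise convergence in the basis \((\gamma_k,\gamma_k^\#)\) follows. This proves existence of every limit and identifies it with the corresponding coefficient of \(\pi_{P_k}((I-P_0)F_{k,r})\).

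Third, we compute \(\pi_{P_k}(P_0F_{k,r})\). Because \(P_0F_{k,r}=c:=\operatorname{Area}(X,g)^{-1}\int_XF_{k,r}\,dA_g\) is a constant, its critical data at \(P_k\) are \(c\,\cdot 1=\sqrt{2\pi}\,c\,f_{k,0}\), and it has no singular part. Subtracting this leaves the coefficients \(a_{k,\ell,r}\) unchanged for \(\ell\neq0\) and shifts the \(\ell=0\) coefficient by \(-\sqrt{2\pi}c\). This is exactly the displayed formula.

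We expect no real obstacle. The only points needing care are the following.

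\begin{itemize}
\item The integral \(\int_XF_{k,r}\,dA_g\) is finite because \(F_{k,r}\in L^2\) and the area is finite.
\item The hypothesis \(r\neq0\) is used only to match the setting in which the \(S\)-entries are defined by the preceding discussion. The argument itself does not otherwise depend on \(r\).
\item Identifying \(Y_{X,\mc F}(\lambda)F_{k,r}\) with the Weyl solution requires \(\lambda<0\subset\rho(\Delta_{X,\mc F})\). This holds because \(\Delta_{X,\mc F}\) is nonnegative.
\end{itemize}
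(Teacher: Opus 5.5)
Your proposal is correct and follows the paper's route. You identify \(Y_{X,\mc F}(\lambda)F_{k,r}\) with the Weyl solution, so that its regular coefficients at \(P_k\) are the \(S\)-matrix entries. You then pass to the limit using graph-norm convergence to \((I-P_0)F_{k,r}\) and continuity of \(\pi_S\), and observe that the constant \(P_0F_{k,r}\) shifts only the \(f_{k,0}\)-coefficient, by \(-\sqrt{2\pi}\operatorname{Area}(X,g)^{-1}\int_XF_{k,r}\,dA_g\); your explicit justification of the Weyl-solution identification, which the paper leaves implicit, is a welcome addition.
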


Suppose that \(F_{k,r},\widetilde F_{k,r}\in
\mc D_{\max}(\Delta_X)\) satisfy
\(\Delta_XF_{k,r}=\Delta_X\widetilde F_{k,r}=0\) on \(X'\) and
\[
\widetilde\pi_{\mc F^\#}(F_{k,r})
=
\widetilde\pi_{\mc F^\#}(\widetilde F_{k,r})
=
f_{k,r}^\#.
\]
Set
\(\widehat F_{k,r}:=F_{k,r}-\widetilde F_{k,r}\).
Then
\(\widehat F_{k,r}\in\mc D_{\max}(\Delta_X)\),
\(\Delta_X\widehat F_{k,r}=0\) on \(X'\), and
\(\widetilde\pi_{\mc F^\#}(\widehat F_{k,r})=0\).
Hence
\(\widehat F_{k,r}\in\mc D_{\mc F}\), and therefore
\(
\widehat F_{k,r}\in\ker\Delta_{X,\mc F}.
\)
Since \(X\) is connected,
\(\ker\Delta_{X,\mc F}\) consists of the constant functions.
Thus such harmonic representatives are unique up to an additive
constant. It follows that the coefficients \(a_{k,\ell,r}\) with
\(\ell\neq0\) are independent of the choice of the harmonic
representative \(F_{k,r}\), while the combination
\[
a_{k,0,r}
-
\frac{\sqrt{2\pi}}{\operatorname{Area}(X,g)}
\int_XF_{k,r}\,dA_g
\]
is independent of this choice as well.

We now construct the required harmonic representatives
\(F_{k,r}\), following Hillairet, Kalvin, and Kokotov
\cite[Section~4]{HillairetKalvinKokotov2018}, expressed in the present
coordinates and normalization.

On a sufficiently small neighborhood \(U\) of \(P_k\), use the
distinguished coordinate \(z\) fixed above. For
\(1\le p\le n_k-1\), define the meromorphic differential
\[
\eta_{k,p}
:=
-\frac{1}{(p-1)!}
\left.
\frac{\partial^{p-1}}{\partial\zeta^{p-1}}
\frac{\mc S(\,\cdot\,,\zeta)}{d\zeta}
\right|_{\zeta=0}.
\]
Then \(\eta_{k,p}\) is holomorphic on \(X\setminus\{P_k\}\) and has
a pole of order \(p+1\) at \(P_k\). In the coordinate \(z\),
\[
\eta_{k,p}
=
\left(
-\frac{p}{z^{p+1}}
-
\frac{1}{(p-1)!}
\left.
\frac{\partial^{p-1}}{\partial\zeta^{p-1}}
H_{\mc S}(z,\zeta)
\right|_{\zeta=0}
\right)dz.
\]
We now turn from abstract zero-energy convergence to the explicit
harmonic representative needed for the Schiffer calculation. We
construct a single-valued function \(G_{k,p}\), harmonic on
\(X\setminus\{P_k\}\), such that
\[
\partial G_{k,p}=\eta_{k,p}.
\]
Since \(\eta_{k,p}\) has principal part
\(-pz^{-(p+1)}dz\) at \(P_k\), the function \(G_{k,p}\) has a local
expansion of the form
\[
G_{k,p}(z)
=
z^{-p}
+
h_{k,p}(z)
+
\overline{g_{k,p}(z)},
\]
where \(h_{k,p}\) and \(g_{k,p}\) are holomorphic near \(z=0\).
The regular holomorphic part of \(G_{k,p}\) is determined directly
by \(\eta_{k,p}\). 

As above, write \(C=\operatorname{Im}B\) and
\(C^{ij}:=(C^{-1})_{ij}\). Since
\(\mc S=W-\pi\sum_{i,j=1}^g C^{ij}\omega_i\otimes\omega_j\),
define
\[
\tau_{k,p}
:=
\frac{1}{(p-1)!}
\left.
\frac{\partial^{p-1}}{\partial\zeta^{p-1}}
\frac{W(\,\cdot\,,\zeta)}{d\zeta}
\right|_{\zeta=0},
\qquad
v_{j;k,p}
:=
\frac{1}{(p-1)!}
\left.
\frac{\partial^{p-1}}{\partial\zeta^{p-1}}
\frac{\omega_j(\zeta)}{d\zeta}
\right|_{\zeta=0}.
\]
The same formula defines the jet \(v_{j;k,q}\) for every integer
\(q\geq1\).
Then
\[
\eta_{k,p}
=
-\tau_{k,p}
+
\pi\sum_{i,j=1}^g
C^{ij}v_{j;k,p}\omega_i.
\]

By the normalization of the canonical bidifferential \(W\),
\(\int_{\alpha_\ell}W(\,\cdot\,,Q)=0\) and
\(\int_{\beta_\ell}W(\,\cdot\,,Q)=2\pi\sqrt{-1}\,\omega_\ell(Q)\).
Therefore,
\[
\int_{\alpha_\ell}\tau_{k,p}=0,
\qquad
\int_{\beta_\ell}\tau_{k,p}
=
2\pi\sqrt{-1}\,v_{\ell;k,p}.
\]
It follows that
\[
\begin{aligned}
\int_{\alpha_\ell}\eta_{k,p}
&=
\pi\sum_{j=1}^g
C^{\ell j}v_{j;k,p},
\\
\int_{\beta_\ell}\eta_{k,p}
&=
-2\pi\sqrt{-1}\,v_{\ell;k,p}
+
\pi\sum_{i,j=1}^g
C^{ij}v_{j;k,p}B_{i\ell}.
\end{aligned}
\]
Let
\(
\theta_{k,p}
=
\eta_{k,p}
-
\pi\sum_{i,j=1}^g
C^{ij}v_{j;k,p}\overline{\omega_i}.
\)
Then \(\theta_{k,p}\) is a smooth closed \(1\)-form on
\(X\setminus\{P_k\}\), and, for every \(\ell=1,\ldots,g\),
\[
\int_{\alpha_\ell}\theta_{k,p}=0,
\qquad
\int_{\beta_\ell}\theta_{k,p}=0.
\]
Moreover, since \(\eta_{k,p}\) has zero residue at \(P_k\),
the integral of \(\theta_{k,p}\) around a sufficiently small
positively oriented circle centered at \(P_k\) also vanishes.

Define \(G_{k,p}:X\setminus\{P_k\}\to\mb C\) by
\(G_{k,p}(Q)=\int_{P_*}^{Q}\theta_{k,p}\).
Since all periods of \(\theta_{k,p}\) vanish, the
integral is independent of the choice of path and \(G_{k,p}\) is
well-defined. Moreover, \(dG_{k,p}=\theta_{k,p}\).
Since \(\eta_{k,p}\) is holomorphic and
\(
-\pi\sum_{i,j=1}^g
C^{ij}v_{j;k,p}\overline{\omega_i}
\)
is antiholomorphic, it follows that
\[
\partial G_{k,p}=\eta_{k,p},
\qquad
\bar\partial G_{k,p}
=
-\pi\sum_{i,j=1}^g
C^{ij}v_{j;k,p}\overline{\omega_i}.
\]
\begin{lem}
For \(1\le p\le n_k-1\), the function \(G_{k,p}\) is harmonic on
\(X\setminus\{P_k\}\) and has principal part \(z^{-p}\) at \(P_k\).
Moreover, near \(P_k\),
\[
G_{k,p}
=
z^{-p}+c_{k,p}
+\sum_{\ell\geq1}
\left(
c_{k,p;\ell}z^\ell+d_{k,p;\ell}\bar z^\ell
\right),
\]
where
\[
\begin{aligned}
c_{k,p;\ell}
&=
-\frac{1}{\ell!(p-1)!}
\left.
\frac{\partial^{\ell+p-2}}
{\partial z^{\ell-1}\partial\zeta^{p-1}}
H_{\mc S}(z,\zeta)
\right|_{(z,\zeta)=(0,0)},
\\
d_{k,p;\ell}
&=
-\frac{\pi}{\ell}
\sum_{i,j=1}^g
C^{ij}
v_{j;k,p}\overline{v_{i;k,\ell}}.
\end{aligned}
\]
\end{lem}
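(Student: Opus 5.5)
Harmonicity is immediate from the construction just given. On \(X\setminus\{P_k\}\) we have \(dG_{k,p}=\theta_{k,p}\), with \(\partial G_{k,p}=\eta_{k,p}\) holomorphic and \(\bar\partial G_{k,p}=-\pi\sum C^{ij}v_{j;k,p}\overline{\omega_i}\) antiholomorphic. Hence \(\bar\partial\partial G_{k,p}=\bar\partial\eta_{k,p}=0\), and \(G_{k,p}\) is harmonic away from \(P_k\).

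For the local expansion, write \(G_{k,p}=z^{-p}+h_{k,p}+\overline{g_{k,p}}\) near \(P_k\) with \(h,g\) holomorphic. This splitting is legitimate because \(\theta_{k,p}\) has zero period around a small circle. The principal part \(z^{-p}\) comes from the term \(-pz^{-p-1}dz\) in the coordinate formula for \(\eta_{k,p}\). I then read off the coefficients separately from \(\partial G\) and \(\bar\partial G\).

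\emph{Holomorphic part.} Comparing \(\partial G_{k,p}=(-pz^{-p-1}+h'_{k,p}(z))\,dz\) with the displayed expansion of \(\eta_{k,p}\) gives
\[
h'_{k,p}(z)=-\frac{1}{(p-1)!}\partial_\zeta^{p-1}H_{\mc S}(z,\zeta)|_{\zeta=0}.
\]
Taylor expanding in \(z\), the coefficient of \(z^{\ell-1}\) is \(\frac{1}{(\ell-1)!}\partial_z^{\ell-1}\) of the right side at \(0\). Integrating termwise divides by \(\ell\), giving
\[
c_{k,p;\ell}=-\frac{1}{\ell!(p-1)!}\,\partial_z^{\ell-1}\partial_\zeta^{p-1}H_{\mc S}(0,0).
\]

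\emph{Antiholomorphic part.} Write \(\omega_i=(\omega_i(z)/dz)\,dz\), with Taylor coefficients \(v_{i;k,q}\) of \(z^{q-1}\) by the definition of the jets. Then
\[
\overline{\omega_i}=\sum_{q\ge1}\overline{v_{i;k,q}}\,\bar z^{q-1}d\bar z,
\]
so \(\bar\partial\overline{g_{k,p}}=-\pi\sum_{i,j}C^{ij}v_{j;k,p}\sum_q\overline{v_{i;k,q}}\bar z^{q-1}d\bar z\). Integrating in \(\bar z\) yields \(d_{k,p;\ell}=-\frac{\pi}{\ell}\sum C^{ij}v_{j;k,p}\overline{v_{i;k,\ell}}\). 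Here \(C^{ij}\) is real, which is why no further conjugation of \(C\) appears.

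The constant \(c_{k,p}\) collects the integration constants (it depends on \(P_*\)) and needs no formula. The only delicate point is sign and index bookkeeping: the shift \(\ell\leftrightarrow\ell-1\) when integrating, the \(1/(p-1)!\) normalisation in \(\eta_{k,p}\), and checking that the zero circle period of \(\theta_{k,p}\) rules out a \(\log|z|\) term. That last fact was already verified above via the vanishing residue of \(\eta_{k,p}\). Once the jets are matched, the remaining steps are routine term-by-term power-series comparisons.
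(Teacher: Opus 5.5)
Your proposal is correct and follows the paper's proof essentially step for step. Harmonicity comes from \(\partial G_{k,p}=\eta_{k,p}\) being holomorphic and \(\bar\partial G_{k,p}\) being antiholomorphic, and the coefficients \(c_{k,p;\ell}\) and \(d_{k,p;\ell}\) come from matching the \(z^{\ell-1}\,dz\) and \(\bar z^{\ell-1}\,d\bar z\) terms, using the expansion of \(\eta_{k,p}\) and the jets \(v_{i;k,q}\) (your remark that \(C^{ij}\) is real is true but not needed, since the \(\bar\partial\)-identity already carries unconjugated coefficients).
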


\begin{proof}
The identities for \(\partial G_{k,p}\) and
\(\bar\partial G_{k,p}\) show that the former is holomorphic and
the latter is antiholomorphic. Hence
\(\partial\bar\partial G_{k,p}=0\) on
\(X\setminus\{P_k\}\), so \(G_{k,p}\) is harmonic there.

The local expression for \(\eta_{k,p}=\partial G_{k,p}\) has leading
term \(-pz^{-p-1}dz\). Integration therefore gives the principal
part \(z^{-p}\), together with an undetermined constant
\(c_{k,p}\). For \(\ell\geq1\), comparison of the coefficient of
\(z^{\ell-1}dz\) in \(\partial G_{k,p}=\eta_{k,p}\) gives
\[
\ell c_{k,p;\ell}
=-\frac{1}{(\ell-1)!(p-1)!}
\left.
\frac{\partial^{\ell+p-2}}
{\partial z^{\ell-1}\partial\zeta^{p-1}}
H_{\mc S}(z,\zeta)
\right|_{(z,\zeta)=(0,0)},
\]
which is the asserted formula for \(c_{k,p;\ell}\).
Near \(P_k\), write
\(\omega_i/dz=\sum_{q\geq1}v_{i;k,q}z^{q-1}\).
The identity for \(\bar\partial G_{k,p}\) then gives
\[
\ell d_{k,p;\ell}
=-\pi\sum_{i,j=1}^g
C^{ij}v_{j;k,p}\overline{v_{i;k,\ell}},
\]
which proves the second coefficient formula.
\end{proof}

Let
\(
H_{\mc S}(z,\zeta)
=
\sum_{r,s\geq0}
C_{k,rs}^{\mc S}z^r\zeta^s
\)
be the Taylor expansion of \(H_{\mc S}\) near \((P_k,P_k)\). Then
\[
C_{k,rs}^{\mc S}
=
\frac{1}{r!s!}
\left.
\frac{\partial^{r+s}H_{\mc S}(z,\zeta)}
{\partial z^r\partial\zeta^s}
\right|_{(z,\zeta)=(0,0)}.
\]
\begin{thm}
For \(1\le p,\ell\le n_k-1\), the zero-energy limits of the local
\(S\)-matrix entries
\(S_{k,\ell,-p}^{X,(\gamma_k,\gamma_k^{\#})}(\lambda)\) and
\(S_{k,-\ell,-p}^{X,(\gamma_k,\gamma_k^{\#})}(\lambda)\)
exist and are given by
\[
\begin{aligned}
\lim_{\lambda\to0^-}
S_{k,\ell,-p}^{X,(\gamma_k,\gamma_k^{\#})}(\lambda)
&=
-\frac{1}{\sqrt{p\ell}}\,
C_{k,\ell-1,p-1}^{\mc S},
\\
\lim_{\lambda\to0^-}
S_{k,-\ell,-p}^{X,(\gamma_k,\gamma_k^{\#})}(\lambda)
&=
-\frac{\pi}{\sqrt{p\ell}}
\sum_{i,j=1}^g
C^{ij}
v_{j;k,p}\overline{v_{i;k,\ell}}.
\end{aligned}
\]
\end{thm}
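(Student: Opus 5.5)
The plan is to apply the preceding zero-energy theorem with \(r=-p\), using a suitably normalized harmonic representative built from \(G_{k,p}\). The singular mode with index \(-p\) is \(f_{k,-p}^{\#}=r^{-p}e^{-\sqrt{-1}p\theta}/\sqrt{4\pi p}=\overline{z}^{\,-p}/\sqrt{4\pi p}\), so the natural candidate is
\[
F_{k,-p}:=\frac{1}{\sqrt{4\pi p}}\,\overline{G_{k,p}},
\]
multiplied by a global cutoff argument only in the sense that \(G_{k,p}\) is already globally defined and harmonic on \(X\setminus\{P_k\}\).

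First, check that \(F_{k,-p}\) meets the hypotheses of that theorem. It is harmonic on \(X\setminus\{P_k\}\), hence \(\Delta_XF_{k,-p}=0\) on \(X'\). Near the other conic points it is smooth, so it has only regular critical data there; this holds because smooth functions near a conic point lie in \(\mc F\oplus\mc D_{\min}\), their Taylor terms of degree below the cone index being regular modes. Near \(P_k\), the lemma on \(G_{k,p}\) gives
\[
\overline{G_{k,p}}=\overline z^{-p}+\overline{c_{k,p}}+\sum_{\ell\ge1}\bigl(\overline{c_{k,p;\ell}}\,\overline z^{\ell}+\overline{d_{k,p;\ell}}\,z^{\ell}\bigr).
\]
Since \(p\le n_k-1\), the term \(\overline z^{-p}\) is in \(L^2\) for the conic area form \(|z|^{2n_k-2}dA\). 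This is the only singular critical term, so \(\widetilde\pi_{\mc F^\#}F_{k,-p}=f_{k,-p}^{\#}\). The terms with \(\ell\ge n_k\) are noncritical.

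Next, read off the regular coefficients \(a_{k,\pm\ell,-p}\) for \(1\le\ell\le n_k-1\) using \(z^\ell=\sqrt{4\pi\ell}\,f_{k,\ell}\) and \(\overline z^{\ell}=\sqrt{4\pi\ell}\,f_{k,-\ell}\). This gives
\[
a_{k,\ell,-p}=\sqrt{\ell/p}\;\overline{d_{k,p;\ell}},\qquad a_{k,-\ell,-p}=\sqrt{\ell/p}\;\overline{c_{k,p;\ell}}.
\]
Substituting the formulas from the lemma yields
\[
\sqrt{\ell/p}\,\overline{c_{k,p;\ell}}=-\frac{1}{\sqrt{p\ell}}\,\overline{C^{\mc S}_{k,\ell-1,p-1}},
\]
and similarly \(a_{k,\ell,-p}\) equals \(-\frac{\pi}{\sqrt{p\ell}}\sum C^{ij}\overline{v_{j;k,p}}v_{i;k,\ell}\). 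Since \(\ell\neq0\), the theorem says these coefficients are the limits directly, with no area correction.

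The main obstacle is reconciling this with the stated formulas, which carry no complex conjugation and assign the \(C^{\mc S}\) term to \(+\ell\) and the \(\pi C^{ij}\) term to \(-\ell\). So the representative may instead need to be \(G_{k,p}\) itself, matched to \(z^{-p}\) rather than \(\overline z^{-p}\). I would therefore recheck the sign and orientation convention relating \(e^{\sqrt{-1}j\theta}\) for negative \(j\) to powers of \(z\). If \(f^{\#}_{k,-p}\) is in fact proportional to \(z^{-p}\) under the paper's conventions, then \(F_{k,-p}=G_{k,p}/\sqrt{4\pi p}\), with \(z^\ell\leftrightarrow f_{k,-\ell}\) and \(\overline z^{\ell}\leftrightarrow f_{k,\ell}\), reproduces the stated formulas exactly. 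The holomorphic coefficients \(c_{k,p;\ell}\) then give \(-C^{\mc S}_{k,\ell-1,p-1}/\sqrt{p\ell}\), and the antiholomorphic \(d_{k,p;\ell}\) give the \(\pi C^{ij}\) term. The remaining checks are routine: factors of \(\ell!\,(p-1)!\) cancel against the Taylor normalization of \(C^{\mc S}\), and the constant \(c_{k,p}\) affects only the \(\ell=0\) entry, which is not claimed.
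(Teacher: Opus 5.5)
Your strategy is the paper's: apply the preceding zero-energy theorem to the harmonic representative built from \(G_{k,p}\) and read off its regular critical coefficients. The execution, however, contains a concrete convention error, and that error decides exactly which entry receives which term. With \(z=re^{\sqrt{-1}\theta}\), one has \(r^{-p}e^{-\sqrt{-1}p\theta}=(re^{\sqrt{-1}\theta})^{-p}=z^{-p}\), not \(\overline z^{\,-p}\). Hence \(f^{\#}_{k,-p}=z^{-p}/\sqrt{4\pi p}\), whereas \(\overline z^{\,-p}/\sqrt{4\pi p}=r^{-p}e^{\sqrt{-1}p\theta}/\sqrt{4\pi p}=f^{\#}_{k,p}\). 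Your representative \(\overline{G_{k,p}}/\sqrt{4\pi p}\) therefore has singular datum \(f^{\#}_{k,p}\). Your main computation is internally correct, but it gives the limits of the column \(r=+p\), namely \(S_{k,\ell,p}\to\sqrt{\ell/p}\,\overline{d_{k,p;\ell}}\) and \(S_{k,-\ell,p}\to\sqrt{\ell/p}\,\overline{c_{k,p;\ell}}\). These are not the entries in the statement. Your proposed repair does not fix this cleanly. You rightly switch to \(G_{k,p}\), but you also swap the regular identifications to \(z^\ell\leftrightarrow f_{k,-\ell}\) and \(\overline z^{\,\ell}\leftrightarrow f_{k,\ell}\). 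That contradicts the definition \(f_{P,j}=r^{|j|}e^{\sqrt{-1}j\theta}/\sqrt{4\pi|j|}\). It would also put the \(C^{\mc S}\) term on the \(-\ell\) entry and the \(\pi C^{ij}\) term on the \(+\ell\) entry, which is the statement with \(\pm\ell\) interchanged, not ``exactly'' the stated formulas.

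The correct bookkeeping keeps the regular identifications you used in your reading-off paragraph and corrects only the singular mode: \(f_{k,\ell}=z^\ell/\sqrt{4\pi\ell}\), \(f_{k,-\ell}=\overline z^{\,\ell}/\sqrt{4\pi\ell}\), and \(f^{\#}_{k,-p}=z^{-p}/\sqrt{4\pi p}\). Then \(F_{k,-p}:=G_{k,p}/\sqrt{4\pi p}\) satisfies \(\widetilde\pi_{\mc F^{\#}}F_{k,-p}=f^{\#}_{k,-p}\); at the other conic points it is smooth and harmonic, so it has only regular critical data there, as you note. The coefficient of \(f_{k,\ell}\) is \(\sqrt{\ell/p}\,c_{k,p;\ell}\) and that of \(f_{k,-\ell}\) is \(\sqrt{\ell/p}\,d_{k,p;\ell}\). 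Since \(\ell\neq0\), these are the limits, with no area correction. Finally, \(c_{k,p;\ell}=-\frac{(\ell-1)!\,(p-1)!}{\ell!\,(p-1)!}C^{\mc S}_{k,\ell-1,p-1}=-\ell^{-1}C^{\mc S}_{k,\ell-1,p-1}\). The factorials do not fully cancel; they leave the factor \(1/\ell\), and \(\sqrt{\ell/p}\cdot\ell^{-1}=1/\sqrt{p\ell}\). The same factor applied to \(d_{k,p;\ell}\) gives the second formula, exactly as in the paper's proof.
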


\begin{proof}
Set
\(
F_{k,-p}
=
(4\pi p)^{-1/2}G_{k,p}.
\)
Since \(1\le p\le n_k-1\), the principal part \(z^{-p}\) is
\(L^2\) with respect to the conic metric near \(P_k\). Hence
\(F_{k,-p}\in L^2(X')\). Since \(\Delta_XF_{k,-p}=0\) on \(X'\), it
follows from the definition of the maximal domain that
\(F_{k,-p}\in\mc D_{\max}(\Delta_X)\). Moreover,
\(
\widetilde{\pi}_{\mc F^{\#}}F_{k,-p}
=
f_{k,-p}^{\#}.
\)
By the preceding zero-energy convergence result, the limiting regular
boundary coefficients are determined by the corresponding coefficients
in the local expansion of \(F_{k,-p}\). Since
\[
G_{k,p}
=
z^{-p}+c_{k,p}
+\sum_{\ell\ge1}
\left(
c_{k,p;\ell}z^\ell
+
d_{k,p;\ell}\bar z^\ell
\right),
\]
the coefficients of \(f_{k,\ell}\) and \(f_{k,-\ell}\) in the boundary
expansion of \(F_{k,-p}\) are respectively
\[
\sqrt{\frac{\ell}{p}}\,c_{k,p;\ell},
\qquad
\sqrt{\frac{\ell}{p}}\,d_{k,p;\ell}.
\]
Substituting the formulas for \(c_{k,p;\ell}\) and \(d_{k,p;\ell}\)
gives the two asserted limits.
\end{proof}

The coefficients \(C^{\mc S}_{k,rs}\) are related to the Schiffer
projective connection by
\(H_{\mc S}(z,z)=\frac16S_{\mathrm{Sch}}(z)\). Therefore, for every
\(m\ge0\),
\[
\sum_{r+s=m}C^{\mc S}_{k,rs}
=
\frac{1}{6m!}
\left.
\frac{d^m}{dz^m}S_{\mathrm{Sch}}(z)
\right|_{z=0}.
\]
In particular, since
\((n_k-p-1)+(p-1)=n_k-2\), we obtain
\[
\sum_{p=1}^{n_k-1}
C^{\mc S}_{k,n_k-p-1,p-1}
=
\frac{1}{6(n_k-2)!}
\left.
\frac{d^{n_k-2}}{dz^{n_k-2}}
S_{\mathrm{Sch}}(z)
\right|_{z=0}.
\]
Thus the sum of the coefficients relevant to the resolvent variation
is determined by the \((n_k-2)\)-jet of the Schiffer projective
connection at \(P_k\).

This completes the zero-energy calculation needed in the determinant
variation. We now combine it with the high-energy comparison and the
resolvent trace formula.

\section{Zeta Determinants on Hurwitz Spaces}\label{sec:hurwitz-determinants}

We now combine the operator-theoretic and complex-analytic results.
After writing the zeta function as a Mellin transform of the reduced
resolvent trace, we use its locally uniform control at zero and infinity.
An endpoint identity then converts the trace variation into the
difference between the zero- and high-energy Weyl coefficients,
which can be integrated in the Hurwitz coordinates.

Recall from the introduction that
\(\mathcal D([\varphi])
=\operatorname{Det}_{\zeta}(\Delta_{[\varphi],\mc F})\), with the
zero eigenvalue omitted \cite{RaySinger1971}. We first verify
regularity of the zeta function at zero.
In the spherical conic coordinates used in the
model comparison, the metric near a ramification point of index
\(n\) has the form
\(4n^2|w|^{2n-2}(1+|w|^{2n})^{-2}|dw|^2\).
It is therefore admissible in the sense of
\cite[Definition~2.1 and Remark~2.2]{Kalvin2021}.
The heat-trace expansion in \cite[Lemma~2.5]{Kalvin2021} has no
\(t^0\log t\) term. Subtracting the constant contribution of the
kernel preserves this property, so the reduced spectral zeta
function is regular at \(s=0\). Thus the determinant above is
well defined. For the underlying conic heat-kernel construction,
see also \cite{Mooers1999}.

Equivalent coverings have unitarily equivalent Friedrichs
Laplacians, so \(\mathcal D\) is independent of the representative.
We compute its logarithmic Hurwitz derivatives from the zero- and
high-energy \(S\)-matrix coefficients, following the variational
approach of
\cite{HillairetKokotov2013,HillairetKalvinKokotov2018,KalvinKokotov2019,Kalvin2019}.

Related determinant comparison formulas for conformal changes of
conic metrics are obtained in \cite{Kalvin2021}; for Polyakov
variation formulas with cone angles below \(2\pi\), see also
\cite{AldanaKirstenRowlett2026}.

Fix a Hurwitz coordinate ball \(\mc B([\varphi],\delta)\).
We use the family \(\varphi_{\mf t}:X_{\mf t}\to\mb P^1\)
with coordinates \(\mf z+\mf t\), and the operators and
trivializations of Section~\ref{sec:resolvent-variation}.
For variation in the \(k\)-th coordinate, write
\(\mf t=t e_k\) and \(\varphi_t=\varphi_{t e_k}\); then
\(\partial_t|_{t=0}=\partial_{z_k}|_{[\varphi]}\).
All \(S\)-matrices below use the distinguished conic coordinates.
We suppress only the basis labels, writing
\[
S_{kj,\ell p}^{X_{\mf t}}(\lambda)
:=
S_{kj,\ell p}^{X_{\mf t},
(\gamma_{kj,\mf t},\gamma_{kj,\mf t}^{\#})}(\lambda).
\]
At \(\mf t=0\), we write \(S_{kj,\ell p}^{X}(\lambda)\).

For each \(\mf t\) in this coordinate ball, set
\(g_{\mf t}:=\varphi_{\mf t}^*ds_{\mathrm{rd}}^2\), and denote the
Friedrichs Laplacian on
\(L^2(X_{\mf t}',dA_{g_{\mf t}})\) by
\(\Delta_{\mf t,\mc F_{\mf t}}\). Let
\(\lambda_i(\mf t)=\lambda_i^{\mc F_{\mf t}}\), \(i\geq0\), be its
eigenvalues, counted with multiplicity. Since \(X_{\mf t}\) is
connected, \(\lambda_0(\mf t)=0\) and
\(\dim\ker\Delta_{\mf t,\mc F_{\mf t}}=1\). Let
\[
P_{0,\mf t}:L^2(X_{\mf t}',dA_{g_{\mf t}})
\longrightarrow
\ker\Delta_{\mf t,\mc F_{\mf t}}
\]
be the orthogonal projection onto the zero eigenspace.

For \(\lambda\in\rho(\Delta_{\mf t,\mc F_{\mf t}})\), define the
reduced resolvent by
\[
R_{\mc F_{\mf t}}^{\perp}(\lambda)
:=
R_{\mc F_{\mf t}}(\lambda)(I-P_{0,\mf t})
:
L^2(X_{\mf t}',dA_{g_{\mf t}})
\longrightarrow
\mc D_{\mc F_{\mf t}}\cap
\bigl(\ker\Delta_{\mf t,\mc F_{\mf t}}\bigr)^\perp .
\]
When \(\mf t=t e_k\), we abbreviate these objects by replacing
\(\mf t\) with \(t\); thus, for example,
\(\Delta_{t,\mc F_t}:=\Delta_{t e_k,\mc F_{t e_k}}\) and
\(R_{\mc F_t}^{\perp}:=R_{\mc F_{t e_k}}^{\perp}\).
For \(r>0\), the spectral decomposition gives
\[
\Tr R_{\mc F_t}^{\perp}(-r)^2
=
\Tr R_{\mc F_t}(-r)^2
-
\frac{1}{r^2}
=
\sum_{i=1}^{\infty}
\frac{1}{(r+\lambda_i(t))^2}.
\]

For \(1<\Re s<2\), absolute convergence allows us to interchange
the sum and the integral. Using the beta-integral identity
\[
\int_0^\infty
\frac{r^{1-s}}{(r+\mu)^2}\,dr
=
\frac{\pi(1-s)}{\sin(\pi s)}\,\mu^{-s},
\qquad \mu>0,
\]
we obtain
\[
\zeta_{\Delta_{t,\mc F_t}}(s)
=
\frac{\sin(\pi s)}{\pi(1-s)}
\int_0^\infty
r^{1-s}
\Tr R_{\mc F_t}^{\perp}(-r)^2\,dr.
\]
We next record the endpoint estimates that allow the Mellin integral
to be differentiated in the Hurwitz coordinates and continued to
\(s=0\).

The endpoint estimates needed below follow from the standard
resolvent estimates used in the preceding sections. We record
briefly why these estimates are uniform in the present
family. After pulling the family back to the fixed surface \(X\),
the metrics are uniformly equivalent and their coefficients vary
smoothly on compact subsets of the Hurwitz coordinate ball. Hence
the Davies--Gaffney estimate is locally uniform in the Hurwitz
coordinates. The same is true of the interior elliptic and annular
estimates.
The local-model comparison therefore gives, for \(q=0,1,2\) and
every \(M>0\),
\[
\partial_r^q\left(
S_{kj,n_{kj}-p,-p}^{X_{\mf t}}(-r)
-S_{kj,n_{kj}-p,-p}^{X_{\mf t}}(-\infty)
\right)=O(r^{-M})
\qquad(r\to\infty),
\]
locally uniformly in \(\mf t\), for \(1\leq k\leq N\),
\(1\leq j\leq s_k\) with \(n_{kj}>1\), and
\(1\leq p<n_{kj}\), where
\[
S_{kj,n_{kj}-p,-p}^{X_{\mf t}}(-\infty)
:=
\frac{\sqrt{p(n_{kj}-p)}}{n_{kj}}
\frac{\overline{z_k+t_k}}{1+|z_k+t_k|^2}.
\]

Near \(r=0\), the zero eigenvalue is separated by writing the
resolvent as the sum of its rank-one zero-mode contribution and a
part holomorphic at zero. It follows that the relevant \(S\)-matrix
entries and their first two \(r\)-derivatives are locally uniformly
bounded, as are the reduced trace
\(\Tr R_{\mc F_{\mf t}}^\perp(-r)^2\) and its first real
Hurwitz-coordinate derivatives. Together with the preceding
high-energy estimates and the trace variation formula, these bounds
justify differentiation under the Mellin integral for
\(1<\Re s<2\), analytic continuation of the resulting identity to
a neighborhood of \(s=0\), and the interchange of Hurwitz
differentiation with differentiation in \(s\) at \(s=0\).
Accordingly, we may differentiate the Mellin integral and apply
\eqref{eq:trace-resolvent-variation} with \(m=2\). We obtain,
for \(1<\Re s<2\),
\[
\begin{aligned}
\left.
\frac{\partial}{\partial t}
\zeta_{\Delta_{t,\mc F_t}}(s)
\right|_{t=0}
&=
-\frac{\sin(\pi s)}{\pi(1-s)}
\sum_{\substack{1\leq j\leq s_k\\ n_{kj}>1}}
\frac{1}{n_{kj}}
\sum_{p=1}^{n_{kj}-1}
\sqrt{p(n_{kj}-p)}\\
&\qquad\times
\int_0^\infty
r^{1-s}
\partial_r^2 S_{kj,n_{kj}-p,-p}^{X}(-r)\,dr.
\end{aligned}
\]

\begin{lem}\label{lem:mellin-endpoints}
Let \(f\in C^\infty((0,\infty);\mb C)\). Assume that the limits
\(f_0:=\lim_{r\to0^+}f(r)\) and
\(f_\infty:=\lim_{r\to\infty}f(r)\) exist. Suppose moreover that
\(f'(r)\) and \(f''(r)\) remain bounded as \(r\to0^+\), and that
\(f(r)=f_\infty+O(r^{-\infty})\) as \(r\to\infty\), with
\(f^{(j)}(r)=O(r^{-\infty})\) for \(j=1,2\). Then
\[
K_f(s)
:=
-\frac{\sin(\pi s)}{\pi(1-s)}
\int_0^\infty r^{1-s}f''(r)\,dr
\]
extends holomorphically to a neighborhood of \(s=0\), with
\(K_f(s)=s(f_\infty-f_0)+O(s^2)\) as \(s\to0\).
\end{lem}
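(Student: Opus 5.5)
The plan is to integrate by parts once in the Mellin integral, so that the problem reduces to the behaviour of $f'$, and then to read off the value at $s=0$ from a single boundary term.

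\textbf{Convergence and holomorphy.} First I determine where the integral converges. Near $r=0$, $f''$ is bounded, so the integrand is $O(r^{1-\Re s})$, which is integrable for $\Re s<2$. Near infinity, $f''=O(r^{-\infty})$, so the integral converges for every $s$. Hence
$$I(s):=\int_0^\infty r^{1-s}f''(r)\,dr$$
is holomorphic on $\Re s<2$. Differentiation under the integral sign is justified by the same bounds, with an extra factor $\log r$ that is harmless at both ends. In particular $I$ is holomorphic near $s=0$, and so is the prefactor $-\sin(\pi s)/(\pi(1-s))$. Thus $K_f$ is holomorphic near $s=0$. The prefactor has the expansion $-s+O(s^2)$, so
$$K_f(s)=-s\,I(0)+O(s^2),$$
and it remains to compute $I(0)$.

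\textbf{Computing $I(0)$.} At $s=0$ the integral is
$$I(0)=\int_0^\infty r\,f''(r)\,dr .$$
Integrating by parts on $[\varepsilon,R]$ gives
$$\bigl[r f'(r)\bigr]_\varepsilon^R-\int_\varepsilon^R f'(r)\,dr .$$
The boundary term behaves as follows:
\begin{itemize}
\item $\varepsilon f'(\varepsilon)\to0$, because $f'$ is bounded near $0$;
\item $Rf'(R)\to0$, because $f'=O(r^{-\infty})$.
\end{itemize}
The remaining integral is
$$\int_\varepsilon^R f'(r)\,dr=f(R)-f(\varepsilon)\to f_\infty-f_0 .$$
Therefore $I(0)=-(f_\infty-f_0)$. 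Substituting into the expansion gives
$$K_f(s)=s(f_\infty-f_0)+O(s^2),$$
which is the claimed statement.

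\textbf{Main obstacle.} There is no real difficulty here; the only points needing care are the endpoint bookkeeping. One must check that boundedness of $f'$ at $0$, rather than any decay, already suffices to kill $\varepsilon f'(\varepsilon)$. One must also check that the $\log r$ factors produced by differentiating in $s$ do not spoil the locally uniform domination on a neighbourhood of $s=0$. This is where the bounds on $f''$ at both ends are used.
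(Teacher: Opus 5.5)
Your proof is correct and follows the same route as the paper: you establish holomorphy of the integral on \(\Re s<2\), use the vanishing of the prefactor at \(s=0\), and integrate by parts once to get \(\int_0^\infty r f''(r)\,dr=f_0-f_\infty\), with both boundary terms \(rf'(r)\) vanishing. The paper phrases this as computing \(K_f(0)=0\) and \(K_f'(0)=f_\infty-f_0\), which is equivalent to your expansion of the prefactor as \(-s+O(s^2)\).
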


\begin{proof}
The integral defining \(K_f\) converges locally uniformly for
\(\Re s<2\), since \(f''\) is bounded near zero and rapidly
decreasing at infinity. It is holomorphic on this half-plane, and
the prefactor is holomorphic near \(s=0\). Thus \(K_f(0)=0\).
Differentiating at zero and integrating by parts, we obtain
\[
K_f'(0)
=-\int_0^\infty r f''(r)\,dr
=\int_0^\infty f'(r)\,dr
=f_\infty-f_0.
\]
The boundary term \(rf'(r)\) vanishes at both endpoints by the
assumptions. The Taylor expansion at \(s=0\) gives the result.
\end{proof}

Since the relevant entries of the \(S\)-matrix satisfy the assumptions
of the preceding lemma, we apply it to
\(f(r)=S_{kj,n_{kj}-p,-p}^{X}(-r)\).
It follows that
\[
\begin{aligned}
\left.
\frac{\partial}{\partial t}
\zeta_{\Delta_{t,\mc F_t}}(s)
\right|_{t=0}
&=
s
\sum_{\substack{1\leq j\leq s_k\\ n_{kj}>1}}
\frac{1}{n_{kj}}
\sum_{p=1}^{n_{kj}-1}
\sqrt{p(n_{kj}-p)}
\\
&\qquad\times
\left(
S_{kj,n_{kj}-p,-p}
^{X}(-\infty)
-
S_{kj,n_{kj}-p,-p}
^{X}(0)
\right)
+O(s^2).
\end{aligned}
\]

Here
\(S_{kj,n_{kj}-p,-p}^{X}(0)\)
and
\(S_{kj,n_{kj}-p,-p}^{X}(-\infty)\)
denote the limits as \(r\to0^+\) and \(r\to\infty\), respectively.

By the high-energy asymptotics obtained above,
\[
S_{kj,n_{kj}-p,-p}^{X}(-\infty)
=
\frac{\sqrt{p(n_{kj}-p)}}{n_{kj}A_k}\,\overline{z_k}.
\]
Restoring the full point index, write \(C^{\mc S}_{kj,rs}\) for
the Taylor coefficients denoted by \(C^{\mc S}_{k,rs}\) in
Section~\ref{sec:zero-energy}. The zero-energy computation gives
\[
S_{kj,n_{kj}-p,-p}^{X}(0)
=
-\frac{1}{\sqrt{p(n_{kj}-p)}}
C^{\mc S}_{kj,n_{kj}-p-1,p-1}.
\]
Therefore,
\[
\begin{aligned}
\left.
\frac{\partial}{\partial t}
\log\mathcal D([\varphi_t])
\right|_{t=0}
&=
-\sum_{\substack{1\leq j\leq s_k\\ n_{kj}>1}}
\left[
\frac{\overline{z_k}}{n_{kj}^2A_k}
\sum_{p=1}^{n_{kj}-1}p(n_{kj}-p)
+
\frac{1}{n_{kj}}
\sum_{p=1}^{n_{kj}-1}
C^{\mc S}_{kj,n_{kj}-p-1,p-1}
\right].
\end{aligned}
\]

For each ramified \(P_{kj}\), put \(n=n_{kj}\). Using
\(\sum_{p=1}^{n-1}p(n-p)=n(n^2-1)/6\) and
\[
\sum_{p=1}^{n-1}C^{\mc S}_{kj,n-p-1,p-1}
=
\frac{1}{6(n-2)!}
\left.
\frac{d^{n-2}}{dz_{kj}^{\,n-2}}
S_{\mathrm{Sch}}(z_{kj})
\right|_{z_{kj}=0},
\]
we may sum the coefficients using \(A_k=1+|z_k|^2\) and
\(\sum_{j=1}^{s_k}n_{kj}=d\) to obtain
\[
\begin{aligned}
\left.
\frac{\partial}{\partial t}
\log\mathcal D([\varphi_t])
\right|_{t=0}
&=
-\frac{1}{6}
\left(d-\sum_{j=1}^{s_k}\frac{1}{n_{kj}}\right)
\frac{\overline{z_k}}{1+|z_k|^2}
\\
&\quad
-\sum_{\substack{1\leq j\leq s_k\\ n_{kj}>1}}
\frac{1}{6n_{kj}(n_{kj}-2)!}
\left.
\frac{d^{n_{kj}-2}}{dz_{kj}^{\,n_{kj}-2}}
S_{\mathrm{Sch}}(z_{kj})
\right|_{z_{kj}=0}.
\end{aligned}
\]

This is the \(k\)-th Hurwitz coordinate derivative at
\([\varphi]\), since \(\left.\partial_t\right|_{t=0}
=\left.\partial_{z_k}\right|_{[\varphi]}\).
By the relation established above,
\[
\left.
\frac{1}{6n_{kj}(n_{kj}-2)!}
\frac{d^{n_{kj}-2}}{dz_{kj}^{\,n_{kj}-2}}
S_{\mathrm{Sch}}(z_{kj})
\right|_{z_{kj}=0}
=
\operatorname{Res}_{P_{kj}}
\frac{\mc S_{\varphi}}{d_k\varphi},
\]
while
\[
\frac{\partial}{\partial z_k}\log\tau_{\mathrm{Sch}}
=
-\sum_{\substack{1\leq j\leq s_k\\ n_{kj}>1}}
\operatorname{Res}_{P_{kj}}
\frac{\mc S_{\varphi}}{d_k\varphi}.
\]
Therefore,
\[
\left.
\frac{\partial}{\partial z_k}\log\mathcal D
\right|_{[\varphi]}
=
-\frac{1}{6}
\left(d-\sum_{j=1}^{s_k}\frac{1}{n_{kj}}\right)
\frac{\overline{z_k}}{1+|z_k|^2}
+
\left.
\frac{\partial}{\partial z_k}\log\tau_{\mathrm{Sch}}
\right|_{[\varphi]}.
\]

In either standard affine coordinate on \(\mb P^1\), write
\(ds_{\mathrm{rd}}^2=\rho(z,\overline z)|dz|^2\), where
\(\rho(z,\overline z)=4(1+|z|^2)^{-2}\). Then
\(\partial_z\log\rho(z,\overline z)
=-2\overline z/(1+|z|^2)\).
The coefficient \(c_k\) introduced above can be written as
\[
c_k
=
\sum_{\substack{1\leq j\leq s_k\\ n_{kj}>1}}
\frac{n_{kj}^2-1}{12n_{kj}}
=
\frac{1}{12}
\left(d-\sum_{j=1}^{s_k}\frac{1}{n_{kj}}\right).
\]
Since the base point was arbitrary and the ramification profiles
are fixed on the coordinate ball, we obtain, for \(1\leq k\leq N\),
\[
\frac{\partial}{\partial z_k}\log\mathcal D
=
\frac{\partial}{\partial z_k}\log\tau_{\mathrm{Sch}}
+
c_k
\frac{\partial}{\partial z_k}
\log\rho(z_k,\overline{z_k}).
\]
The parameter-dependent resolvent estimates above show that
\(\log\mathcal D\) is \(C^1\) in the real Hurwitz coordinates.
The right-hand sides are smooth. Since \(\log\mathcal D\) is
real valued, the conjugate equations give its
\(\partial_{\bar z_k}\)-derivatives. Thus all its first real
partial derivatives are smooth, and \(\mathcal D\) is smooth
on the Hurwitz space.
We summarize the result as follows.

\begin{thm}\label{thm:hurwitz-determinant}
The function
\(\mathcal D:H_{g,d,N}^{\mb P^1}\to\mb R_{>0}\)
defined above is smooth, and \(d\log\mathcal D\) is a globally
defined exact real \(1\)-form.
On \(\mc B([\varphi],\delta)\), let \(B\) and \(\tau_B\) denote
the period matrix and the Bergman tau-function associated with the
transported homological marking fixed above. Then
\[
d\log\mathcal D
=
d\log\left(
\det\operatorname{Im}B\,|\tau_B|^2
\prod_{k=1}^N
\rho(z_k,\overline{z_k})^{c_k}
\right),
\]
where \(c_k=\frac1{12}\sum_{j=1}^{s_k}(n_{kj}-n_{kj}^{-1})\).
Consequently, on \(\mc B([\varphi],\delta)\), there exists a constant
\(C>0\) such that
\[
\mathcal D
=
C\,\det\operatorname{Im}B\,|\tau_B|^2
\prod_{k=1}^N
\rho(z_k,\overline{z_k})^{c_k}.
\]
When \(g=0\), \(\det\operatorname{Im}B\) is understood as the empty
determinant, equal to \(1\).
\end{thm}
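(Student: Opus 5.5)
The proof will integrate the holomorphic derivative formula established just before the statement,
\[
\frac{\partial}{\partial z_k}\log\mathcal D
=\frac{\partial}{\partial z_k}\log\tau_{\mathrm{Sch}}
+c_k\frac{\partial}{\partial z_k}\log\rho(z_k,\overline{z_k}),
\qquad 1\leq k\leq N,
\]
valid at every point of \(\mc B([\varphi],\delta)\). Smoothness of \(\mathcal D\) was also obtained there: \(\log\mathcal D\) is \(C^1\) with smooth first derivatives, hence smooth. So the only task is to pass from the \(\partial_{z_k}\)-equations to an equality of full differentials and then integrate.

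First I will rewrite the target. Set \(F:=\det\operatorname{Im}B\,|\tau_B|^2\prod_k\rho(z_k,\overline{z_k})^{c_k}\). The function \(\tau_B\) is holomorphic and nonvanishing, so \(\partial_{z_k}\log|\tau_B|^2=\partial_{z_k}\log\tau_B=\mk B_k\), since \(\log\overline{\tau_B}\) is antiholomorphic. Using \(\tau_{\mathrm{Sch}}=\det\operatorname{Im}B\cdot\tau_B\), this gives
\[
\partial_{z_k}\log F=\partial_{z_k}\log\det\operatorname{Im}B+\mk B_k+c_k\,\partial_{z_k}\log\rho(z_k,\overline{z_k})=\partial_{z_k}\log\tau_{\mathrm{Sch}}+c_k\,\partial_{z_k}\log\rho .
\]
Here \(\rho(z_m,\overline{z_m})\) for \(m\neq k\) contributes nothing. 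Hence \(\partial_{z_k}(\log\mathcal D-\log F)=0\) for every \(k\).

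Next, \(\log\mathcal D\) and \(\log F\) are both real valued: \(\det\operatorname{Im}B>0\), \(|\tau_B|^2>0\) and \(\rho>0\). Conjugating the identities therefore gives \(\partial_{\bar z_k}(\log\mathcal D-\log F)=0\). Thus \(d(\log\mathcal D-\log F)=0\) on the connected ball, so \(\log\mathcal D-\log F\) is constant there, and \(C:=\mathcal D/F>0\). The constant does not depend on the Hurwitz coordinates; it depends only on the multiplicative normalization of \(\tau_B\) and the chosen marking. The form \(d\log\mathcal D\) is globally defined because \(\mathcal D\) is a function on the Hurwitz space, independent of representatives by unitary equivalence. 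It is exact, being the differential of a global function.

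Two bookkeeping points remain. For branch values at \(\infty\), the chart \(y\) is used, and all earlier local computations were stated to hold analogously, with \(\rho\) the same expression in that chart. For \(g=0\), every \(B\)-term is empty and \(\tau_{\mathrm{Sch}}=\tau_B\). The real substance is the already-proved derivative formula, so I expect no step here to be an obstacle. The only point needing care is that \(\log|\tau_B|^2\) has \(\partial_{z_k}\)-derivative exactly \(\mk B_k\), which rests on the holomorphicity of \(\tau_B\) from its definition via the closed holomorphic form \(\mu\).
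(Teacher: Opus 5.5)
Your proposal is correct and follows the paper's own route. The paper likewise presents the theorem as a summary of the identity \(\partial_{z_k}\log\mathcal D=\partial_{z_k}\log\tau_{\mathrm{Sch}}+c_k\,\partial_{z_k}\log\rho(z_k,\overline{z_k})\) derived just before it, gets the \(\partial_{\bar z_k}\)-equations and smoothness from real-valuedness together with the \(C^1\) resolvent estimates, and integrates on the connected coordinate ball; your explicit check that \(\partial_{z_k}\log|\tau_B|^2=\mk B_k\) only spells out a step the paper leaves implicit.
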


Although the factors in the formula use local choices, their
logarithmic differential represents the global form
\(d\log\mathcal D\).

\clearpage
\section{Conclusion}\label{sec:conclusion}

We have established the local determinant formula for the Friedrichs
Laplacian of a spherical pullback metric on Hurwitz spaces with
arbitrary ramification profiles. The formula includes several
ramification points over a single branch value. Each point of
index \(n\) contributes the weight \((n-n^{-1})/12\) to the
corresponding metric factor, and the contributions within each
fiber add to give \(c_k\). The remaining factor is expressed
through the period matrix and the Bergman tau-function.

The proof implements the variational approach on a fixed smooth
surface. It adapts the cutoff-identification idea of
\cite{HillairetKalvinKokotov2018,KalvinKokotov2019} to the
arbitrary-ramification gluing construction. Compatible
trivializations give common maximal, minimal, and Friedrichs domains,
and the resulting graph-norm estimates yield trace-norm
differentiability of resolvent powers. The local
comparison with spherical conic models controls the diagonal
Weyl blocks at high energy and the spectral derivatives needed in the determinant
variation. At zero energy, the relevant coefficients are determined
by the Schiffer bidifferential. Their combination is expressed by
residues at the ramification points, which the Rauch formulas
identify with logarithmic derivatives of the Schiffer tau-function.
Uniform estimates at both spectral endpoints justify the passage
from the resolvent variation to the zeta-regularized determinant.

Although the period and tau-function factors are written using a
local homological marking and Hurwitz coordinates, the logarithmic
differential of their product with the metric factors is the
globally defined form \(d\log\mathcal D\).
Integration determines the determinant up
to a positive multiplicative constant on each coordinate ball.
This constant cancels in determinant ratios relative to a fixed
reference covering in the same coordinate ball.
The matrix comparison also retains the individual Weyl
coefficients beyond the scalar combination entering the
determinant formula.

The determinant formula proved here concerns the Friedrichs
realization. For a fixed conic surface, the results of
\cite[Theorem~5.3 and Corollary~5.4]{LiouWeyl2026} compare its
positive-spectrum zeta determinant with that of another self-adjoint
realization satisfying the zeta-regularity hypotheses imposed there.
The comparison is expressed through the finite-dimensional boundary
determinant determined by the Weyl curve, together with a correction
from the omitted nonpositive spectrum. Thus the Friedrichs
determinant computed here serves as a reference determinant for
studying other such self-adjoint realizations. A corresponding
formula over the Hurwitz space would additionally require control of
the comparison constant and of the chosen Lagrangian boundary
condition under deformation.

\clearpage
\appendix

\section{Estimates for the Local Interpolation Diffeomorphisms}
\label{app:local-interpolation}

We supply the estimates and global arguments used in
Section~\ref{sec:smooth-trivializations}. Fix \(k\) and use the
notation introduced there. In particular, \(H_{kj,t}\) is the local
interpolation map defined using the cutoff \(\beta\).

We first recall two elementary facts about smooth maps
\(F:\mb C\to\mb C\). Identifying \(\mb C\) with \(\mb R^2\),
we regard \((dF)_{z_0}\) as an \(\mb R\)-linear map.
Since \(F_x=F_z+F_{\bar z}\) and
\(F_y=i(F_z-F_{\bar z})\), we have
\((dF)_{z_0}(\xi)=F_z(z_0)\xi+F_{\bar z}(z_0)\bar\xi\)
for every \(\xi\in\mb C\).
Hence its Euclidean operator norm satisfies
\(\|(dF)_{z_0}\|\leq|F_z(z_0)|+|F_{\bar z}(z_0)|\).
Moreover, its real Jacobian determinant is
\(J_F(z_0)=|F_z(z_0)|^2-|F_{\bar z}(z_0)|^2\).
In particular, \(F\) is an orientation-preserving local
diffeomorphism near \(z_0\) whenever
\(|F_z(z_0)|>|F_{\bar z}(z_0)|\).

Fix \(j\), and continue to write \(n=n_{kj}\). Define
\[
F_t(\zeta)
:=
\begin{cases}
\displaystyle
1-\beta(|\zeta|^n)\frac{t}{\zeta^n},
& \zeta\neq0,\\[8pt]
1,
& \zeta=0.
\end{cases}
\]
Then \(F_t\) is smooth on \(\mb C\), takes values in
\(D_1(1/2)\), and satisfies
\(H_{kj,t}(\zeta)=\zeta F_t(\zeta)^{1/n}\).
Since \(H_{kj,t}\) is the identity near the origin,
\((H_{kj,t})_\zeta(0)=1\) and
\((H_{kj,t})_{\bar\zeta}(0)=0\).

For \(\zeta\neq0\), we have
\(\partial_\zeta|\zeta|^n
=(n/2)|\zeta|^{n-2}\bar\zeta\) and
\(\partial_{\bar\zeta}|\zeta|^n
=(n/2)|\zeta|^{n-2}\zeta\).
It follows that
\begin{align*}
(F_t)_\zeta
&=
-\frac{nt}{2}\beta'(|\zeta|^n)|\zeta|^{n-2}
\frac{\bar\zeta}{\zeta^n}
+
nt\beta(|\zeta|^n)\frac{1}{\zeta^{n+1}},
\\
(F_t)_{\bar\zeta}
&=
-\frac{nt}{2}\beta'(|\zeta|^n)|\zeta|^{n-2}
\frac{\zeta}{\zeta^n}.
\end{align*}
Using the chain rule and the identity
\(F_t+\beta(|\zeta|^n)t/\zeta^n=1\), we obtain
\begin{align*}
(H_{kj,t})_\zeta
&=
F_t^{1/n}
+\frac{\zeta}{n}F_t^{1/n-1}(F_t)_\zeta
\\
&=
F_t^{1/n-1}
\left(
1-\frac{t}{2}\beta'(|\zeta|^n)|\zeta|^{n-2}
\frac{\bar\zeta}{\zeta^{n-1}}
\right),
\\
(H_{kj,t})_{\bar\zeta}
&=
\frac{\zeta}{n}F_t^{1/n-1}(F_t)_{\bar\zeta}
\\
&=
-\frac{t}{2}F_t^{1/n-1}
\beta'(|\zeta|^n)|\zeta|^{n-2}\zeta^{2-n}.
\end{align*}

Set \(r_{kj,t}:=H_{kj,t}-\id_{\mb C}\).
The operator-norm estimate above gives
\[
\|(dr_{kj,t})_\zeta\|
\leq
\left|(H_{kj,t})_\zeta(\zeta)-1\right|
+
\left|(H_{kj,t})_{\bar\zeta}(\zeta)\right|.
\]
We now estimate the right-hand side uniformly in \(\zeta\).
By construction,
\(|F_t(\zeta)-1|\leq2|t|/\epsilon_k<1/2\), and hence
\(1/2<|F_t(\zeta)|<3/2\).
Set \(C_n:=\frac{n-1}{n}2^{2-1/n}\).
On \(D_1(1/2)\), the derivative of the chosen branch of
\(w\mapsto w^{1/n-1}\) has modulus at most \(C_n\).
Since this disc is convex, integration along the line segment
from \(1\) to \(F_t(\zeta)\) yields
\[
\left|F_t(\zeta)^{1/n-1}-1\right|
\leq
C_n|F_t(\zeta)-1|
\leq
\frac{2C_n}{\epsilon_k}|t|.
\]
Also,
\(|F_t(\zeta)^{1/n-1}|\leq2^{1-1/n}\).

For \(\zeta\neq0\), both
\(|\zeta|^{n-2}\bar\zeta/\zeta^{n-1}\) and
\(|\zeta|^{n-2}\zeta^{2-n}\) have modulus \(1\).
Therefore,
\begin{align*}
\left|(H_{kj,t})_\zeta(\zeta)-1\right|
&\leq
\left(
\frac{2C_n}{\epsilon_k}
+
2^{-1/n}\|\beta'\|_\infty
\right)|t|,
\\
\left|(H_{kj,t})_{\bar\zeta}(\zeta)\right|
&\leq
2^{-1/n}\|\beta'\|_\infty|t|.
\end{align*}
These inequalities also hold at \(\zeta=0\), where their
left-hand sides vanish.

For each \(j\), set
\(C_{kj}:=2C_{n_{kj}}/\epsilon_k
+2^{1-1/n_{kj}}\|\beta'\|_\infty\), and let
\(C:=\max_{1\leq j\leq s_k}C_{kj}\).
Then \(C>0\) is independent of \(t\) and \(\zeta\), and
\begin{equation}\label{eq:H-uniform-C1}
\sup_{\zeta\in\mb C}\|(dr_{kj,t})_\zeta\|
\leq C|t|,
\qquad 1\leq j\leq s_k.
\end{equation}
Shrink \(\delta\), if necessary, so that
\(0<\delta<\min\{\epsilon_k/4,1/(2C)\}\).
For every \(|t|<\delta\), every \(j\), and every
\(\zeta\in\mb C\), we then have
\(\|(dH_{kj,t})_\zeta-I_2\|\leq C|t|<1/2\).
Thus \((dH_{kj,t})_\zeta\) is invertible, and the inverse
function theorem shows that \(H_{kj,t}\) is a local
diffeomorphism.

\begin{prop}\label{prop:H-global-diffeomorphism}
For every \(|t|<\delta\) and \(1\leq j\leq s_k\), the map
\(H_{kj,t}:\mb C\to\mb C\) is an orientation-preserving smooth
diffeomorphism.
\end{prop}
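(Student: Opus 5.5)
We prove that \(H:=H_{kj,t}\) is a proper local diffeomorphism of \(\mb C\) with positive Jacobian, and then show it is injective.

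\emph{Local structure and orientation.} By \eqref{eq:H-uniform-C1} and the choice of \(\delta\), we have \(\|(dH)_\zeta-I_2\|\leq C|t|<1/2\) for every \(\zeta\). Consequently \(|H_\zeta-1|+|H_{\bar\zeta}|<1/2\), so \(|H_\zeta|>1/2>|H_{\bar\zeta}|\). The Jacobian \(J_H=|H_\zeta|^2-|H_{\bar\zeta}|^2\) is therefore positive everywhere. Hence \(H\) is an orientation-preserving local diffeomorphism, and smoothness was already noted in Section~\ref{sec:smooth-trivializations}.

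\emph{Injectivity.} The cleanest route avoids any topology. Write \(H=\id+r\) with \(r:=r_{kj,t}\). Since \(\mb C\) is convex, integrating \(dr\) along the segment from \(\zeta_1\) to \(\zeta_2\) and using \eqref{eq:H-uniform-C1} gives
\[
|r(\zeta_1)-r(\zeta_2)|\leq C|t|\,|\zeta_1-\zeta_2|\leq\tfrac12|\zeta_1-\zeta_2|.
\]
Therefore
\[
|H(\zeta_1)-H(\zeta_2)|\geq\tfrac12|\zeta_1-\zeta_2|,
\]
and \(H\) is injective. The same inequality makes \(H\) proper: taking \(\zeta_2=0\) and using \(H(0)=0\) gives \(|H(\zeta)|\geq|\zeta|/2\).

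\emph{Surjectivity.} Fix \(w\in\mb C\) and define \(T(\zeta):=w-r(\zeta)\). By the Lipschitz bound above, \(T\) is a contraction of \(\mb C\) with constant \(1/2\). The Banach fixed point theorem gives a unique \(\zeta\) with \(\zeta=w-r(\zeta)\), that is, \(H(\zeta)=w\).

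Alternatively, \(H\) is a proper local homeomorphism, so \(H(\mb C)\) is both open and closed. This again gives surjectivity.

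\emph{Conclusion.} \(H\) is a smooth bijection with everywhere invertible differential. By the inverse function theorem, \(H^{-1}\) is smooth, so \(H\) is an orientation-preserving smooth diffeomorphism.

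No step is a genuine obstacle. The only point that needs care is that the Lipschitz bound uses the global uniform estimate \eqref{eq:H-uniform-C1} on all of \(\mb C\), including near \(\zeta=0\) and outside the cutoff region. This is exactly what was verified before the statement: the estimates hold at \(\zeta=0\) trivially, and \(\beta\) is constant outside \([\epsilon_k/2,3\epsilon_k/4]\). The constant \(C\) is also uniform in \(j\), so the argument applies simultaneously to every \(1\leq j\leq s_k\).
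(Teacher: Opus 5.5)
Your proof is correct and follows essentially the same route as the paper. Both use the uniform bound \eqref{eq:H-uniform-C1}, the Banach contraction \(\zeta\mapsto w-r_{kj,t}(\zeta)\) for bijectivity, and the inverse function theorem for smoothness of the inverse. The differences are minor: you get orientation from \(J_H=|H_\zeta|^2-|H_{\bar\zeta}|^2>0\) rather than the paper's determinant homotopy \((1-s)I_2+s(dH_{kj,t})_{\zeta_0}\), and your separate bi-Lipschitz injectivity step is already supplied by the uniqueness of the fixed point.
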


\begin{proof}
Fix \(j\) and \(t\).
For each \(\eta\in\mb C\), we show that
\(H_{kj,t}(\zeta)=\eta\) has a unique solution.
Define \(\Phi_t:\mb C\to\mb C\) by
\(\Phi_t(\zeta):=\eta-r_{kj,t}(\zeta)\).
The uniform derivative bound and the mean value estimate give
\[
|\Phi_t(\zeta_1)-\Phi_t(\zeta_2)|
=
|r_{kj,t}(\zeta_1)-r_{kj,t}(\zeta_2)|
\leq
C|t|\,|\zeta_1-\zeta_2|.
\]
Since \(C|t|<1/2\), the map \(\Phi_t\) is a contraction.
The completeness of \(\mb C\) and the Banach fixed-point theorem
therefore imply that \(\Phi_t\) has a unique fixed point
\(\zeta_0\).
The identity \(\zeta_0=\eta-r_{kj,t}(\zeta_0)\) is equivalent
to \(H_{kj,t}(\zeta_0)=\eta\).
Thus \(H_{kj,t}\) is bijective.
Since it is also a local diffeomorphism, its inverse is smooth,
so \(H_{kj,t}\) is a smooth diffeomorphism.

To verify that it preserves orientation, fix \(\zeta_0\in\mb C\)
and consider the path of real matrices
\(\Phi(s):=(1-s)I_2+s(dH_{kj,t})_{\zeta_0}\),
\(0\leq s\leq1\).
Since \(\|\Phi(s)-I_2\|\leq sC|t|<1\), every \(\Phi(s)\)
is invertible.
The function \(s\mapsto\det\Phi(s)\) is therefore continuous
and nowhere zero.
As \(\det\Phi(0)=1\), it is positive throughout \([0,1]\).
In particular,
\(\det(dH_{kj,t})_{\zeta_0}>0\).
Since \(\zeta_0\) was arbitrary, \(H_{kj,t}\) preserves
orientation on \(\mb C\).
\end{proof}

\begin{prop}\label{prop:H-domain-diffeomorphism}
For every \(|t|<\delta\) and \(1\leq j\leq s_k\), the map
\(H_{kj,t}\) restricts to an orientation-preserving smooth
diffeomorphism
\(H_{kj,t}:\Omega_{kj}(0)\to\Omega_{kj}(t)\).
In particular,
\(H_{kj,t}(\Omega_{kj}(0))=\Omega_{kj}(t)\).
\end{prop}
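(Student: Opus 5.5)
The plan has two steps. First, I will show that \(H_{kj,t}(\Omega_{kj}(0))=\Omega_{kj}(t)\) as sets. The remaining assertions then follow from Proposition~\ref{prop:H-global-diffeomorphism}: a global orientation-preserving diffeomorphism of \(\mb C\) restricts to one between an open set and its image.

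The key identity is
\[
H_{kj,t}(\zeta)^n=\zeta^n-\beta(|\zeta|^n)\,t,
\]
valid for \(\zeta\neq0\) directly from the definition, and trivially at \(\zeta=0\). Consequently
\[
t+H_{kj,t}(\zeta)^n=\zeta^n+\bigl(1-\beta(|\zeta|^n)\bigr)t.
\]
Recall that \(\Omega_{kj}(0)=\{|\zeta|^n<\epsilon_k\}\) and \(\Omega_{kj}(t)=\{|t+\eta^n|<\epsilon_k\}\). I split the source disc according to the cutoff.

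\emph{Inner region} \(|\zeta|^n\le\epsilon_k/2\). Here \(\beta=0\) and \(H_{kj,t}(\zeta)=\zeta\), so \(|t+H^n|\le\epsilon_k/2+|t|<\epsilon_k\).

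\emph{Outer region} \(|\zeta|^n\ge3\epsilon_k/4\). Here \(\beta=1\) and \(t+H^n=\zeta^n\), so \(|t+H^n|=|\zeta|^n\). Membership is therefore equivalent on both sides.

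\emph{Transition region}. Here \(|t+H^n|\le|\zeta|^n(1-\beta)+\beta|\zeta|^n\) is not directly available. Instead I use \(|t+H^n|\le|\zeta|^n+|t|\le 3\epsilon_k/4+\epsilon_k/4\). To get strict inequality, I use \(|t|<\delta<\epsilon_k/4\) together with \(|\zeta|^n<3\epsilon_k/4\). This gives \(H_{kj,t}(\Omega_{kj}(0))\subset\Omega_{kj}(t)\).

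For the reverse inclusion, I argue by bijectivity of \(H_{kj,t}\) on \(\mb C\). It suffices to show that if \(|\zeta|^n\ge\epsilon_k\), then \(|t+H_{kj,t}(\zeta)^n|\ge\epsilon_k\). Points with \(|\zeta|^n\ge\epsilon_k\) lie in the outer region, where \(t+H^n=\zeta^n\), so this is immediate. Therefore the complement of \(\Omega_{kj}(0)\) maps into the complement of \(\Omega_{kj}(t)\). Since \(H_{kj,t}\) is a bijection of \(\mb C\), it follows that \(H_{kj,t}(\Omega_{kj}(0))=\Omega_{kj}(t)\).

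The only delicate point is the strict inequality in the transition region. There I bound crudely via the triangle inequality rather than through \(\beta\), using \(|t|<\epsilon_k/4\). Everything else reduces to the displayed identity and Proposition~\ref{prop:H-global-diffeomorphism}.
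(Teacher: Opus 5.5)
Your proposal is correct and matches the paper's proof: it uses the same identity \(t+H_{kj,t}(\zeta)^n=\zeta^n+(1-\beta(|\zeta|^n))t\), the same triangle-inequality bound where \(|\zeta|^n<3\epsilon_k/4\) (the paper merges your inner and transition regions into this one case), the same exact equality where \(\beta=1\), and the same reverse inclusion via global bijectivity of \(H_{kj,t}\) from Proposition~\ref{prop:H-global-diffeomorphism}. The diffeomorphism and orientation claims then follow by restriction, as you say.
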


\begin{proof}
Fix \(j\), and write \(n=n_{kj}\).
Recall that
\(\Omega_{kj}(t)=\{\zeta\in\mb C:|t+\zeta^n|<\epsilon_k\}\).
By definition,
\[
t+H_{kj,t}(\zeta)^n
=
\zeta^n+\bigl(1-\beta(|\zeta|^n)\bigr)t
\qquad (\zeta\in\mb C).
\]
Let \(\zeta\in\Omega_{kj}(0)\), so that
\(|\zeta|^n<\epsilon_k\).
If \(|\zeta|^n<3\epsilon_k/4\), then
\[
|t+H_{kj,t}(\zeta)^n|
\leq
|\zeta|^n+|t|
<
\frac34\epsilon_k+\frac14\epsilon_k
=
\epsilon_k.
\]
If \(3\epsilon_k/4\leq|\zeta|^n<\epsilon_k\), then
\(\beta(|\zeta|^n)=1\), and hence
\(|t+H_{kj,t}(\zeta)^n|=|\zeta|^n<\epsilon_k\).
Thus
\(H_{kj,t}(\Omega_{kj}(0))\subset\Omega_{kj}(t)\).

Conversely, let \(\eta\in\Omega_{kj}(t)\).
Since \(H_{kj,t}\) is a diffeomorphism of \(\mb C\),
there is a unique \(\zeta_0\in\mb C\) such that
\(H_{kj,t}(\zeta_0)=\eta\).
If \(|\zeta_0|^n\geq\epsilon_k\), then
\(\beta(|\zeta_0|^n)=1\), so
\(|t+\eta^n|=|\zeta_0|^n\geq\epsilon_k\),
contrary to \(\eta\in\Omega_{kj}(t)\).
Therefore \(\zeta_0\in\Omega_{kj}(0)\), proving the reverse
inclusion.

The restriction is consequently a diffeomorphism from
\(\Omega_{kj}(0)\) onto \(\Omega_{kj}(t)\).
It preserves orientation because \(H_{kj,t}\) does so on
\(\mb C\).
\end{proof}

\section*{Acknowledgements}

The author thanks Professor Chin-Lung Wang of the Department of
Mathematics at National Taiwan University, who introduced him to the
study of conic Riemann surfaces and thereby led him to the problem
investigated in this paper.

The series of works on conic Laplacians, \(S\)-matrices, and spectral
determinants by Hillairet, Kalvin, and Kokotov has had a fundamental
influence on this paper
\cite{HillairetKokotov2013,HillairetKalvinKokotov2018,
KalvinKokotov2019,Kalvin2019}.
The simple-ramification result of Kalvin and Kokotov
\cite{KalvinKokotov2019} motivated the present investigation, and
their variational framework, cutoff identifications, and zero-energy
calculation provide essential foundations for it.

Building on this framework, the present paper treats general branched
data, develops the
fixed-domain operator analysis required in the present setting, and
reformulates the zero-energy calculation in the boundary-data
normalization used here, with the local contributions grouped over
each branch value. The Bergman tau-function and its variational
formulas are taken from Kokotov and Korotkin
\cite{KokotovKorotkin2004}.

Building on these works, the project developed over several years
through research notes, calculations, continued study of the
literature, and successive drafts. The Weyl-curve viewpoint revealed
the local \(S\)-matrix comparison, while Hurwitz coordinates clarified
how to carry it out for arbitrary ramification and how to treat the
varying conic metrics on a fixed smooth surface.

The author is deeply grateful to his family for their help and
support during difficult times.

ChatGPT (OpenAI) was used during the revision of the manuscript for
editorial assistance, including language, exposition, and notation
consistency. The author reviewed all incorporated suggestions and
assumes full responsibility for the mathematical arguments and the
content of the paper.

\end{document}